\newenvironment{myabstract}{\par\noindent
{\bf Abstract . } \small }
{\par\vskip8pt minus3pt\rm}
\newcounter{item}[section]
\newcounter{kirshr}
\newcounter{kirsha}
\newcounter{kirshb}
\newenvironment{enumroman}{\setcounter{kirshr}{1}
\begin{list}{(\roman{kirshr})}{\usecounter{kirshr}} }{\end{list}}
\newenvironment{enumarab}{\setcounter{kirshb}{1}
\begin{list}{(\arabic{kirshb})}{\usecounter{kirshb}} }{\end{list}}
\newtheorem{theorem}{Theorem}[section]
\newtheorem{lemma}[theorem]{Lemma}
\newtheorem{corollary}[theorem]{Corollary}
\newenvironment{demo}[1]{\noindent{\bf #1.}\upshape\mdseries}
{\nopagebreak{\hfill\rule{2mm}{2mm}\nopagebreak}\par\normalfont}
\theoremstyle{definition}
\newtheorem{remark}[theorem]{Remark}
\newtheorem{example}[theorem]{Example}
\newtheorem{definition}[theorem]{Definition}
\def\R{\mathbb{R}}
\def\C{{\mathfrak{C}}}
\def\Fm{{\mathfrak{Fm}}}
\def\At{{\bf At}}
\def\Nr{{\mathfrak{Nr}}}
\def\Sg{{\mathfrak{Sg}}}
\def\Fm{{\mathfrak{Fm}}}
\def\A{{\mathfrak{A}}}
\def\B{{\mathfrak{B}}}
\def\C{{\mathfrak{C}}}
\def\D{{\mathfrak{D}}}
\def\M{{\mathfrak{M}}}
\def\N{{\mathfrak{N}}}
\def\CA{{\bf CA}}
\def\QEA{{\bf QEA}}
\def\Df{{\bf Df}}
\def\PA{{\bf PA}}
\def\PEA{{\bf PEA}}
\def\K{{\bf K}}
\def\K{{\bf K}}
\def\RCA{{\bf RCA}}
\def\Rd{{\ Rd}}
\def\(R)RA{{\bf (R)RA}}
\def\RA{{\bf RA}}
\def\R{\mathbb{R}}
\def\Sc{{\bf Sc}}
\def\c #1{{\cal #1}}
 \def\CA{{\sf CA}}
\def\B{{\sf B}}
\def\G{{\sf G}}
\def\w{{\sf w}}
\def\y{{\sf y}}
\def\g{{\sf g}}
\def\r{{\sf r}}
\def\K{{\sf K}}
 \def\Cm{{\mathfrak{Cm}}}
\def\Nr{{\mathfrak{Nr}}}
\def\SNr{{\bf S}{\mathfrak{Nr}}}
\def\restr #1{{\restriction_{#1}}}
\def\R{\sf R}
\def\Ra{{\mathfrak{Ra}}}
\def\Ca{{\mathfrak{Ca}}}
\def\set#1{\{#1\} }
\def\Ra{{\mathfrak{Ra}}}
\def\Nr{{\mathfrak{Nr}}}
\def\Tm{{\mathfrak{Tm}}}
\def\A{{\mathfrak{A}}}
\def\B{{\mathfrak{B}}}
\def\C{{\mathfrak{C}}}
\def\D{{\mathfrak{D}}}
\def\E{{\mathfrak{E}}}
\def\A{{\mathfrak{A}}}
\def\B{{\mathfrak{B}}}
\def\C{{\mathfrak{C}}}
\def\D{{\mathfrak{D}}}
\def\E{{\mathfrak{E}}}
\def\GG{{\mathfrak{GG}}}
\def\L{{\mathfrak{L}}}
\def\Rd{{\mathfrak{Rd}}}
\def\At{{\mathfrak{At}}}
\def\L{{\mathfrak{L}}}
\def\CA{{\bf CA}}
\def\RA{{\bf RA}}
\def\RCA{{\bf RCA}}
\def\G{{\bf G}}
\def\CRA{{\sf CRA}}
\def\F{{\mathfrak{F}}}
\def\At{{\sf{At}}}
\def\N{\mathbb{N}}
\def\R{\mathfrak{R}}
\def\CRA{{\sf CRA}}
\def\Cs{{\sf Cs}}
\def\RPEA{{\sf RPEA}}
\def\c #1{{\cal #1}}
\def\pa{$\forall$}
\def\pe{$\exists$}
\def\ef{Ehren\-feucht--Fra\"\i ss\'e}
\def\nodes{{\sf nodes}}
\def\restr #1{{\restriction_{#1}}}
\def\Ra{{\mathfrak{Ra}}}
\def\Nr{{\mathfrak{Nr}}}
\def\Z{{\cal Z}}
\def\CA{{\bf CA}}
\def\RCA{{\bf RCA}}
\def\c#1{{\mathcal #1}}
\def\set#1{ \{#1\}}
\def\Ca{{\mathfrak Ca}}
\def\pe{$\exists$}
\def\pa{$\forall$}
\def\Cm{{\mathfrak Cm}}
\def\Sg{{\mathfrak Sg}}
\def\N{{\cal N}}
\def\At{{\sf At}}
\def\rng{{\sf rng}}
\def\dom{{\sf dom}}
\def\Cm{{\sf Cm}}
\def\Mat{{\sf Mat}}
\def\w{{\sf w}}
\def\g{{\sf g}}
\def\y{{\sf y}}
\def\r{{\sf r}}
\def\ws{winning strategy}
\def\ef{Ehren\-feucht--Fra\"\i ss\'e}
 \def\CA{{\sf CA}}
\def\Cs{{\sf Cs}}
\def\RCA{{\sf RCA}}
\def\RA{{\sf RA}}
\def\PA{{\sf PA}}
\def\PEA{\sf PEA}
\def\QEA{{\sf QEA}}
\def\y{{\sf y}}
\def\g{{\sf g}}
\def\r{{\sf r}}
\def\w{{\sf w}}
\def\Z{{\mathbb{Z}}}
\def\N{{\mathbb{N}}}
\def\c{{\sf c}}
\def\d{ Dedekind-MacNeille}
\def\Sc{{\sf Sc}}
\def\Df{{\sf Df}}
\title{Strongly representable atom structures}
\author{Tarek Sayed Ahmed and Mohammed Assem\\
Department of Mathematics, Faculty of Science,\\
Cairo University, Giza, Egypt.
  }
\begin{document}
\maketitle
\begin{myabstract} We show that the variety of representable algebras for many cylindric like algebras of finite dimension $>2$ is
not atom canonical.
Our result works for any  class $\K$ between diagonal free
algebras and polyadic equality algebras
of finite dimension $>2$. We also show, modifying a construction of Hirsch and Hodkinson,
 that for such classes of algebras (in the same dimensions)
the class of strongly representable atom structures is not elementary.
For finite dimensions $n>2$
we also show that several proper subvarieties of the representable cylindric and polyadic equality
algebras of dimension $n$, whose members have a neat embedding property,
are not atom canonical. From this we infer that the
omitting types theorem fails for $L_n$ (first order logic restricted to the first $n$ variables, $n$ finite $>2$)
in quite a strong sense, namely, if we consider clique guarded semantics, and first order definable expansions
of $L_n$. Finally,
we show that it is undecidable to tell whether a finite $\CA_3$ is
in ${\sf S}\Nr_3\CA_{3+k}$ for $k\geq 3$, by reducing the problem to the analogous one for relation algebras.
\end{myabstract}

\section{Introduction}

We follow the notation of \cite{1} which is in conformity with that of \cite{tarski}.
Assume that we have a class of Boolean algebras with operators for which we have a semantical notion of representability
(like Boolean set algebras or cylindric set algebras).
A weakly representable atom structure is an atom structure such that at least one atomic algebra based on it is representable.
It is strongly representable  if all atomic algebras having this atom structure are representable.
The former is equivalent to that the term algebra, that is, the algebra generated by the atoms,
in the complex algebra is representable, while the latter is equivalent  to that the complex algebra is representable.

Could an atom structure be possibly weakly representable but {\it not} strongly representable?
Ian Hodkinson \cite{Hodkinson}, showed that this can indeed happen for both cylindric  algebras of finite dimension $\geq 3$, and relation algebras,
in the context of showing that the class of representable algebras, in both cases,  is not closed under \d\ completions.
In fact, he showed that this can be witnessed on an atomic algebras, so that
the variety of representable relation algebras algebras and cylindric algebras of finite dimension $>2$
are not atom-canonical. (The complex algebra of an atom structure is
the completion of the term algebra.)
This construction is somewhat complicated using a rainbow atom structure. It has the striking consequence
that there are two atomic algebras sharing the same
atom structure, one is representable the other is not.

This construction was simplified and streamlined,
by many authors, including the first author \cite{weak}, but Hodkinson's  construction,
as we indicate below, has a very large potential to prove analogous  theorems on \d\ completions, and atom-canonicity
for several proper  sub-varieties of
the variety of representable cylindric-like algebras such as polyadic algebras with and without equality and Pinter's substitution algebras.

Our first theorem extends this result (existence of weakly representable atom structures that are not strongly representable)
to many cylindric like algebras of relations
using three different constructions based on three different graphs.
The first is due to Hokinson,
the second was given in \cite{weak} and the third is new. The three constructions
presented herein model theoretically gives a polyadic atomic equality algebra of dimension $n>2$
such that  the diagonal free reduct
of its completion is not
representable.

We also show
(the stronger result) that there for $n>2$ finite, there is is a polyadic equality atomic algebra $\A$ such that
$\Rd_{ca}\Cm\At\A\notin \SNr_n\CA_{n+4}$ inferring that the varieties
$S\Nr_n\K_{n+k}$, for $n>2$ finite for any $k\geq 4$, and for $\K\in \{\CA, \PEA\}$ are
not closed under \d\ completions.

Now that we have two distinct classes,
namely, the class of weakly atom structures and that of strongly atom structures; the most pressing need is to try to
classify them.
Venema proved (in a more general setting) that the former is elementary,
while Hirsch and Hodkinson show that the latter is {\it not} elementary. Their proof is amazing
depending on an ultraproduct of Erdos probabilistic graphs, witness theorem \ref{el}.

We know that there is a sequence of strongly representable atom structures whose ultraproduct is {\it only } weakly representable,
it is {\it not } strongly representable. This gives that the class $\K=\A\in \CA_n: \text { $\A$ is atomic and }
\Cm\At\A\in \sf RCA_n\}$ is not elementary, as well.

Here we extend Hirsch and Hodkinson's result to many cylindric-like algebras, answering a question of Hodkinson's for $\PA$ and $\PEA$.
The proof is based on the algebras constructed in \cite{hirsh} by noting that these algebras can be endowed with
polyadic operations (in an obvious way) and that they are generated by elements
whose dimension sets $<n$. The latter implies that an algebra is representable if and only if
its diagonal free reduct
is representable (One side is trivial, the other does not hold in general).

Lately, it has become fashionable in algebraic logic to
study representations of abstract algebras that has a complete representation.
A representation of $\A$ is roughly an injective homomorphism from $f:\A\to \wp(V)$
where $V$ is a set of $n$-ary sequences; $n$ is the dimension of $\A$, and the operations on $\wp(V)$
are concrete  and set theoretically
defined, like the boolean intersection and cylindrifiers or projections.
A complete representation is one that preserves  arbitrary disjuncts carrying
them to set theoretic unions.
If $f:\A\to \wp(V)$ is such a representation, then $\A$ is necessarily
atomic and $\bigcup_{x\in \At\A}f(x)=V$.

Let us focus on cylindric algebras.
It is known that there are countable atomic $\RCA_n$s when $n>2$,
that have no complete representations;
in fact, the class of completely representable $\CA_n$s when $n>2$, is not even elementary \cite{HHbook2}.

Such a phenomena is also closely
related to the algebraic notion of {\it atom canonicity}, as indicated, which is an important persistence property in modal logic
and to the metalogical property of  omitting types in finite variable fragments of first order logic.
Recall that a variety $V$ of boolean algebras with operators is atom-canonical,
if whenever $A\in V$, and $\A$ is atomic, then $\Cm\At\A\in V$.

If $\At$ is a weakly representable but
not strongly  representable, then
$\Cm\At$ is not representable; this gives that $\RCA_n$ for $n>2$ $n$ finite, is not atom canonical.
Also $\Cm\At\A$  is the \d\ completion of $\A$, and so obviously $\RCA_n$ is not closed under \d\ completions.

On the other hand, $\A$
cannot be completely  representable for, it can be shown without much ado,  that
a complete representation of $\A$ induces a representation  of $\Cm\At\A$.

Finally, if $\A$ is countable, atomic and has no complete representation
then the set of co-atoms (a co-atom is the complement of an atom), viewed in the corresponding Tarski Lindenbaum algebra,
$\Fm_T$, as a set of formulas, is a non principal-type that cannot be omitted in any model of $T$.

The reader is referred to \cite{Sayed} for an extensive discussion of such notions
and more.

Our main new results are

Let $n>2$ be finite.
\begin{enumarab}

\item Showing that for any class $\K$ between $\Df_n$ and $\PEA_n,$
there is a weakly representable atom structure that is not strongly representable.
In fact, we show that there is a relation algebra $\R$, such that the set of all $n$ basic matrices form a cylindric bases,
$\Tm{\sf Mat_n}\At\R$ is representable but  $\Rd_{df}\Cm{\sf Mat_n}\At\R$ is not. This gives the same result for $\R,$
cf. theorem \ref{hodkinson}

\item Showing that for $\K\in\{\PEA, \CA\},$ the class $S\Nr_n\K_{n+k}$ is not atom canonical for $k\geq 4$, theorem \ref{can}, extending
a result in \cite{can}. We give two entirely different proofs. This answers a question in \cite[problem 12, p.627]{HHbook}
which  was raised again in \cite[problem 1, p.131]{Sayedneat}.

\item Showing that the class of strongly representable atom structures  for any $\K$ between $\Df_n$ and $\PEA_n$ is not elementary,
theorem \ref{el}. This answers a question of Hodkinson's \cite[p.284]{AU}.

\item We show that  the omitting types theorem fails for finite variable fragments of first order logic, as long as we have more than $2$
variables,  even if we consider clique guarded
semantics, theorem  \ref{OTT}. This is stronger, in a way,  than the result reported in \cite{Sayed} and proved in detail in \cite{ANT}.
The latter result is also approached, and it is strengthened conditionally, cf. theorem \ref{blurs}.
The condition is the existence of certain finite relation
algebras.

\item Studying the omitting types theorem in many multi-dimensional modal logics,
that are cylindrifier free, but are endowed with the operations of finite substitutions or only
some of them, as a sample witness \ref{modal}.
We also prove a new negative omitting types theorem addressing first order definable expansions of first order logic as defined
in \cite{Biro}, theorem \ref{first}.

\item Showing that for a finite algebra in $\CA_3$ and $\PEA_3$,
it is undecidable whether $\A\in S\Nr_3\CA_{3+k}$ or not, for $k\geq 2$, theorem \ref{decidability}.
It follows that are finite algebras with no finite relativized representations. In contrast, we show that when 
we restrict taking subneat reducts to finite $m$ dimensional algebras, then for any $2\leq n<m$, 
every finite algebra has a finite relativized representation.

\end{enumarab}

\subsection*{On the notation}

The notation used is in conformity with  \cite{1} but the following list may help.
For rainbow cylindric algebras, we deviate from the notation therein,  we find it more convenient
to write $\CA_{\G, \Gamma}$ - treating the greens $\sf G$, as a parameter that can vary -
for the rainbow algebra $R(\Gamma)$.
The latter is defined to be $\Cm(\rho(\K))$ where $\K$ is a class of models in the rainbow signature satisfying the
$L_{\omega_1,\omega}$ rainbow theory, and $\rho(\K)$ is the rainbow atom structure \cite{HHbook2}.

We may also add polyadic operations, obtaining polyadic equality rainbow algebras,
denoted by $\PEA_{\G, \Gamma}.$
It is obvious how to add the polyadic operations.

But in all cases our view is the conventional (more restrictive) one adopted in \cite{Hodkinson};
we view these models as coloured graphs,
that is complete graphs labelled by the rainbow colours.
As usual,  we interpret `an edge coloured by a green say', to mean that the pair defining
the edge satisfies the corresponding green binary relation.
Our atom structures will consist of surjections from $n$ (the dimension) to finite coloured graphs,
roughly finite coloured graphs.

$\PEA$ denotes polyadic equality algebras, $\PA$ denotes polyadic algebras, $\Sc$ denotes Pinter's algebras, and of course
$\CA$ denotes cylindric algebras  \cite{1}.
$\PEA_n$ denotes $\PEA$s of dimension $n$ and same for all other algebras.
The following information is folklore: $\CA$s and $\PA$s are proper reducts of $\PEA$s and $\Sc$s
are proper  reducts of all. Finally, following the usual notation again,
$\Df$ denotes diagonal free $\CA$s, and these are
proper reducts of all of the above.
For an atomic $\A$ we write $\At\A$ for its atom structure and for an atom structure $\At$ we write $\Cm\At$ for its complex algebra.
In particular $\Cm\At\A$ is the completion of $\A$ in case our varieties are completely additive.
($\PA$s and $\Sc$s  are not completely additive).

${\sf QEA}$ stands for the class of quasi-polyadic equality algebras,
these coincide with $\PEA$ for finite dimensions,
but in the infinite dimensional case, the difference between these classes are substantial,
for example the latter always has uncountably many operations, which is not the case with the former in case of countable
dimensions.

Furthermore, $\sf QEA$ can be formulated in the context of a system of varieties
definable by a schema in the infinite dimensional case; which is not the case with $\PEA$.
Finally, $\sf QA$ denotes quasipolyadic algebras, and in comparison to $\PA$
the same can be said. To avoid conflicting notation, we write $\sf QEA$ and $\sf QA$
only for infinite dimensions, meaning that we are only dealing with finite cylindrifiers and finite substitutions.

By a graph we will mean a pair $\Gamma=(G, E)$,
where $G\not=\phi$ and $E\subseteq G\times G$ is a reflexive and
symmetric binary relation on $G$. We will often use the same
notation for $\Gamma$ and for its set of nodes ($G$ above). A pair
$(x, y)\in E$ will be called an edge of $\Gamma$. See \cite{graph}
for basic information (and a lot more) about graphs.
For neat reducts we follow \cite{Sayedneat} and for omitting types we follow
\cite{Sayed}.

$L_n$ or $L^n$ will denote first order logic restricted to the first $n$ variables and $L_{\infty, \omega}^n$
will denote $L_{\infty, \omega}$ restricted to the first $n$ variables. (In the latter case infinitary conjunctions, hence also infinitary
disjunctions, are allowed).

\section{Preliminaries}

The action of the non-boolean operators in a completely additive
atomic Boolean algebra with operators is determined by their behavior over the atoms, and
this in turn is encoded by the atom structure of the algebra.

\begin{definition}(\textbf{Atom Structure})
Let $\A=\langle A, +, -, 0, 1, \Omega_{i}:i\in I\rangle$ be
an atomic boolean algebra with operators $\Omega_{i}:i\in I$. Let
the rank of $\Omega_{i}$ be $\rho_{i}$. The \textit{atom structure}
$\At\A$ of $\A$ is a relational structure
$$\langle \At\A, R_{\Omega_{i}}:i\in I\rangle$$
where $\At\A$ is the set of atoms of $\A$ as
before, and $R_{\Omega_{i}}$ is a $(\rho(i)+1)$-ary relation over
$\At\A$ defined by
$$R_{\Omega_{i}}(a_{0},
\cdots, a_{\rho(i)})\Longleftrightarrow\Omega_{i}(a_{1}, \cdots,
a_{\rho(i)})\geq a_{0}.$$
\end{definition}
Similar 'dual' structure arise in other ways, too. For any not
necessarily atomic $BAO$ $\A$ as above, its
\textit{ultrafilter frame} is the
structure
$$\A_{+}=\langle {\sf Uf}(\A),R_{\Omega_{i}}:i\in I\rangle,$$
 where ${\sf Uf}(\A)$
is the set of all ultrafilters of (the boolean reduct of)
$\A$, and for $\mu_{0}, \cdots, \mu_{\rho(i)}\in
{\sf Uf}(\A)$, we put $R_{\Omega_{i}}(\mu_{0}, \cdots,
\mu_{\rho(i)})$ iff $\{\Omega(a_{1}, \cdots,
a_{\rho(i)}):a_{j}\in\mu_{j}$ for
$0<j\leq\rho(i)\}\subseteq\mu_{0}$.
\begin{definition}(\textbf{Complex algebra})
Conversely, if we are given an arbitrary structure
$\mathcal{S}=\langle S, r_{i}:i\in I\rangle$ where $r_{i}$ is a
$(\rho(i)+1)$-ary relation over $S$, we can define its
\textit{complex
algebra}
$$\Cm(\mathcal{S})=\langle \wp(S),
\cup, \setminus, \phi, S, \Omega_{i}\rangle_{i\in
I},$$
where $\wp(S)$ is the power set of $S$, and
$\Omega_{i}$ is the $\rho(i)$-ary operator defined
by$$\Omega_{i}(X_{1}, \cdots, X_{\rho(i)})=\{s\in
S:\exists s_{1}\in X_{1}\cdots\exists s_{\rho(i)}\in X_{\rho(i)},
r_{i}(s, s_{1}, \cdots, s_{\rho(i)})\},$$ for each
$X_{1}, \cdots, X_{\rho(i)}\in\wp(S)$.
\end{definition}
It is easy to check that, up to isomorphism,
$\At(\Cm(\mathcal{S}))\cong\mathcal{S}$ always, and
$\A\subseteq\Cm(\At\A)$ for any
completely additive atomic boolean algebra with operators $\A$. If $\A$ is
finite then of course
$\A\cong\Cm(\At\A)$.\\
\begin{enumerate}
\item{Atom structure of diagonal free-type algebra is $\mathcal{S}=\langle S, R_{c_{i}}:i<n\rangle$, where the $R_{c_{i}}$
is binary relation on $S$.}
\item{Atom structure of cylindric-type algebra is $\mathcal{S}=\langle S, R_{c_{i}}, R_{d_{ij}}:i, j<n\rangle$, where the
$R_{d_{ij}}$, $R_{c_{i}}$ are unary and binary relations on $S$. The
reduct $\mathfrak{Rd}_{df}\mathcal{S}=\langle S,
R_{c_{i}}:i<n\rangle$ is an atom structure of diagonal free-type.}
\item{Atom structure of substitution-type algebra is $\mathcal{S}=\langle S, R_{c_{i}}, R_{s^{i}_{j}}:i, j<n\rangle$, where the
$R_{d_{ij}}$, $R_{s^{i}_{j}}$ are unary and binary relations on $S$,
respectively. The reduct $\mathfrak{Rd}_{df}\mathcal{S}=\langle S,
R_{c_{i}}:i<n\rangle$ is an atom structure of diagonal free-type.}
\item{Atom structure of quasi polyadic-type algebra is $\mathcal{S}=\langle S, R_{c_{i}}, R_{s^{i}_{j}}$, $R_{s_{ij}}:i, j<n\rangle$, where the
$R_{c_{i}}$, $R_{s^{i}_{j}}$ and $R_{s_{ij}}$ are binary relations
on $S$. The reducts $\mathfrak{Rd}_{df}\mathcal{S}=\langle S,
R_{c_{i}}:i<n\rangle$ and $\mathfrak{Rd}_{Sc}\mathcal{S}=\langle S,
R_{c_{i}}, R_{s^{i}_{j}}:i, j<n\rangle$ are atom structures of
diagonal free and substitution types, respectively.}

\item{ The atom structure of quasi polyadic equality-type algebra
is $\mathcal{S}=\langle S, R_{c_{i}}, R_{d_{ij}}, R_{s^{i}_{j}}, R_{s_{ij}}:i, j<n\rangle$, where the $R_{d_{ij}}$
is unary relation on $S$, and $R_{c_{i}}$, $R_{s^{i}_{j}}$ and
$R_{s_{ij}}$ are binary relations on $S$.
\begin{enumerate}\item{The
reduct $\mathfrak{Rd}_{df}\mathcal{S}=\langle S, R_{c_{i}}:i\in
I\rangle$ is an atom structure of diagonal free-type.}
\item{The
reduct $\mathfrak{Rd}_{ca}\mathcal{S}=\langle S, R_{c_{i}},
R_{d_{ij}}:i, j\in I\rangle$ is an atom structure of
cylindric-type.}
\item{The
reduct $\mathfrak{Rd}_{Sc}\mathcal{S}=\langle S, R_{c_{i}},
R_{s^{i}_{j}}:i, j\in I\rangle$ is an atom structure of
substitution-type.}
\item{The
reduct $\mathfrak{Rd}_{qa}\mathcal{S}=\langle S, R_{c_{i}},
R_{s^{i}_{j}}, R_{s_{ij}}:i, j\in I\rangle$ is an atom structure of
quasi polyadic-type.}
\end{enumerate}}
\end{enumerate}
\begin{definition}
An algebra is said to be representable if and only if it is isomorphic to a subalgebra of a direct product of set algebras of the same type.
\end{definition}

\begin{definition}\label{strong rep}Let $\mathcal{S}$ be an $n$-dimensional algebra atom
structure. $\mathcal{S}$ is \textit{strongly representable} if every
atomic $n$-dimensional algebra $\A$ with
$\At\A=\mathcal{S}$ is representable. We write $\sf SDfS_{n}$,
$\sf SCS_{n}$, $\sf SSCS_{n}$, $\sf SQS_{n}$ and $\sf SQES_{n}$ for the classes of
strongly representable ($n$-dimensional) diagonal free, cylindric,
substitution, quasi polyadic and quasi polyadic equality algebra
atom structures, respectively.
\end{definition}
Note that for any $n$-dimensional algebra $\A$ and atom
structure $\mathcal{S}$, if $\At\A=\mathcal{S}$ then
$\A$ embeds into $\Cm\mathcal{S}$, and hence
$\mathcal{S}$ is strongly representable iff
$\Cm\mathcal{S}$ is representable.

\section{Weakly representable atom structures that are not strongly representable, and atom canonicity}

Here we use fairly simple model theoretic arguments, to prove the non atom-canonicity of several classes consisting of subneat reducts.
We need  some standard model theoretic preparations.
This proof unifies Rainbow and Monk like constructions used in \cite{Hodkinson} and
\cite{weak}, and introduces  a new Monk-like algebra based on a new graph (different than those used in the later two references).
Such Monk-like algebras will be abstracted later on, presented as atom structures of models of a first order theory
as presented in \cite{HHbook2}; however, the construction is different.

\begin{theorem}
Let $\Theta$ be an $n$-back-and-forth system
of partial isomorphism on a structure $A$, let $\bar{a}, \bar{b} \in {}^{n}A$,
and suppose that $ \theta = ( \bar{a} \mapsto \bar{b})$ is a map in
$\Theta$. Then $ A \models \phi(\bar{a})$ iff $ A \models
\phi(\bar{b})$, for any formula $\phi$ of $L^n_{\infty \omega}$.
\end{theorem}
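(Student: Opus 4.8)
The plan is to argue by induction on the structure of $\phi$. Since $L^n_{\infty\omega}$ permits infinitary conjunctions (and hence infinitary disjunctions), this should be an induction on the well-founded rank of $\phi$ rather than on its length, so that an infinitary conjunction is treated as a single inductive step ranging over all of its conjuncts at once. The inductive claim to be carried along is exactly the statement of the theorem, phrased uniformly over $\Theta$: for \emph{every} $\theta = (\bar{a} \mapsto \bar{b}) \in \Theta$ one has $A \models \phi(\bar{a})$ iff $A \models \phi(\bar{b})$. For the atomic case, I would simply invoke the hypothesis that each $\theta \in \Theta$ is a partial isomorphism of $A$: by definition a partial isomorphism preserves and reflects all atomic formulas in the variables $x_0,\dots,x_{n-1}$, so the biconditional is immediate when $\phi$ is atomic.

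The Boolean cases are routine consequences of the induction hypothesis. If $\phi = \neg\psi$, then applying the hypothesis to $\psi$ and negating both sides of the biconditional gives the result. If $\phi = \bigwedge_{i \in I}\phi_i$, then the induction hypothesis yields $A \models \phi_i(\bar{a})$ iff $A \models \phi_i(\bar{b})$ for each $i \in I$, and conjoining these biconditionals over all $i \in I$ gives $A \models \phi(\bar{a})$ iff $A \models \phi(\bar{b})$; the disjunctive case is dual. It is worth stressing that nothing here requires $I$ to be finite, which is precisely what allows the argument to cover the infinitary fragment $L^n_{\infty\omega}$ and not merely $L^n_{\omega\omega}$.

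The heart of the proof, and the only place where the back-and-forth structure of $\Theta$ is actually consumed, is the quantifier case $\phi = \exists x_j\,\psi$ with $j < n$. Assuming $A \models \exists x_j\,\psi(\bar{a})$, there is some $c \in A$ with $A \models \psi(\bar{a}')$, where $\bar{a}'$ agrees with $\bar{a}$ off coordinate $j$ and carries $c$ in coordinate $j$. Here I would apply the ``forth'' clause in the definition of an $n$-back-and-forth system to $\theta$, the element $c$, and the position $j$, obtaining a map $\theta' = (\bar{a}' \mapsto \bar{b}') \in \Theta$ with $\bar{b}'$ agreeing with $\bar{b}$ off coordinate $j$. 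The induction hypothesis applied to $\psi$ and $\theta'$ then gives $A \models \psi(\bar{b}')$, whence $A \models \exists x_j\,\psi(\bar{b})$; the reverse implication uses the symmetric ``back'' clause. The main obstacle is not conceptual but bookkeeping: one must verify that, because $\phi$ involves only the $n$ variables $x_0,\dots,x_{n-1}$, the quantifier $\exists x_j$ rewrites exactly the single coordinate $j$, so that $n$-tuples—and therefore the maps available in $\Theta$—genuinely suffice at every stage, and that the coordinate-$j$ overwrite matches precisely the data the forth and back clauses are required to supply. Once this alignment is pinned down, no further difficulty arises.
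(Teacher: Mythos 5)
Your proof is correct and takes exactly the approach the paper does: the paper's entire proof is the single line ``By induction on the structure of $\phi$,'' and your write-up is a faithful expansion of that induction, with the atomic case handled by the partial-isomorphism property, the (possibly infinitary) Boolean cases by the inductive hypothesis, and the quantifier case by the forth and back clauses of $\Theta$.
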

\begin{proof} By induction on the structure of $\phi$.
\end{proof}
Suppose that $W \subseteq {}^{n}A$ is a given non-empty set. We can
relativize quantifiers to $W$, giving a new semantics $\models_W$
for $L^n_{\infty \omega}$, which has been intensively studied in
recent times. If $\bar{a} \in W$:
\begin{itemize}
\item for atomic $\phi$, $A\models_W \phi(\bar{a})$
iff $A \models \phi(\bar{a})$

\item the boolean clauses are as expected

\item for $ i < n, A \models_W \exists x_i \phi(\bar{a})$ iff $A \models_W
\phi(\bar{a}')$ for some $ \bar{a}' \in W$ with $\bar{a}' \equiv_i
\bar{a}$.
\end{itemize}

\begin{theorem} If $W$ is $L^n_{\infty \omega}$ definable, $\Theta$ is an
 $n$-\textit{back-and-forth} system
of partial isomorphisms on $A$, $\bar{a}, \bar{b} \in W$, and $
\bar{a} \mapsto \bar{b} \in \Theta$, then $ A \models \phi(\bar{a})$
iff $ A \models \phi(\bar{b})$ for any formula $\phi$ of
$L^n_{\infty \omega}$.
\end{theorem}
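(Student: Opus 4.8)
The plan is to prove this by structural induction on $\phi$, closely paralleling the previous theorem but now carefully tracking the relativization to $W$ and exploiting the hypothesis that $W$ is $L^n_{\infty\omega}$-definable. I would first set up the statement as an induction on the complexity of formulas in $L^n_{\infty\omega}$, proving simultaneously for all pairs $\bar a, \bar b \in W$ with $\bar a \mapsto \bar b \in \Theta$ that $A \models_W \phi(\bar a)$ iff $A \models_W \phi(\bar b)$. Since $\Theta$ is a back-and-forth system, it is in particular a set of partial isomorphisms, so the atomic case follows immediately: for atomic $\phi$, the relativized semantics agrees with the ordinary semantics by definition, and $\theta = (\bar a \mapsto \bar b)$ being a partial isomorphism preserves and reflects atomic formulas. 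The boolean cases (negation, and infinitary conjunction/disjunction) are routine since $\models_W$ treats these exactly as expected and the induction hypothesis applies uniformly across the possibly infinite index set.

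The substantive case is the quantifier step, and this is where the role of $W$ becomes essential. Suppose $A \models_W \exists x_i\, \psi(\bar a)$, so there is $\bar a' \in W$ with $\bar a' \equiv_i \bar a$ and $A \models_W \psi(\bar a')$. I would use the forth property of the $n$-back-and-forth system $\Theta$: given $\theta = (\bar a \mapsto \bar b) \in \Theta$ and the new point dictated by $\bar a'$ in coordinate $i$, the forth condition supplies an extension or companion map $\theta' = (\bar a' \mapsto \bar b') \in \Theta$ with $\bar b' \equiv_i \bar b$. Applying the induction hypothesis to $\psi$ with the pair $\bar a', \bar b'$ gives $A \models_W \psi(\bar b')$, and hence $A \models_W \exists x_i\, \psi(\bar b)$, provided we know $\bar b' \in W$. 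The converse direction uses the back property symmetrically.

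The main obstacle, and the reason the hypothesis that $W$ is $L^n_{\infty\omega}$-definable is needed, is precisely verifying that the witness $\bar b'$ produced by the back-and-forth system actually lies in $W$; without this, the relativized existential quantifier on the $\bar b$ side cannot legitimately use $\bar b'$ as its witness. Here I would invoke the definability of $W$: let $\chi$ be an $L^n_{\infty\omega}$ formula defining $W$, so $\bar a' \in W$ means $A \models \chi(\bar a')$. By the \emph{unrelativized} theorem just proved (the first theorem in this pair), applied to $\chi$ and the map $\bar a' \mapsto \bar b' \in \Theta$, we get $A \models \chi(\bar b')$, which is exactly $\bar b' \in W$. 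This is the crux: the plain back-and-forth transfer theorem for $L^n_{\infty\omega}$ feeds into the relativized version by guaranteeing that membership in the definable set $W$ is itself preserved along maps in $\Theta$, closing the quantifier case and completing the induction.
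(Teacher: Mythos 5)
Your proof is correct, but it takes a genuinely different route from the paper's. You run a fresh structural induction on $\phi$ for the relativized semantics $\models_W$, handling the quantifier step semantically: restrict the map $\bar{a}\mapsto\bar{b}$ away from coordinate $i$, use the forth (resp.\ back) property of $\Theta$ to produce the companion witness $\bar{b}'$, and then invoke the \emph{unrelativized} transfer theorem, applied to the defining formula $\chi$ of $W$, to certify that $\bar{b}'\in W$ so the induction hypothesis and the relativized existential clause apply. The paper instead avoids any new induction over $\Theta$: it syntactically relativizes each formula, setting $(\exists x_i\phi)^\psi=\exists x_i(\psi\wedge\phi^\psi)$ where $\psi$ defines $W$, observes that $A\models_W\phi(\bar{a})$ iff $A\models\phi^\psi(\bar{a})$ for $\bar{a}\in W$ (an easy induction not involving $\Theta$), and then applies the unrelativized theorem once to the formula $\phi^\psi$, which still lies in $L^n_{\infty\omega}$ since the guard $\psi$ uses no new variables. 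The paper's reduction is shorter and pushes all the work into syntax; your argument is longer but makes explicit exactly where definability of $W$ is needed, namely to keep back-and-forth witnesses inside $W$ --- a point the translation argument hides. One small detail you elided: producing $\theta'=(\bar{a}'\mapsto\bar{b}')$ requires first restricting $\theta$ to $\{a_m:m\neq i\}$ before extending, which is legitimate because $n$-back-and-forth systems are closed under restrictions (the paper verifies this property for its systems $\Theta^\chi$), and the restricted map has size at most $n-1$, so the forth property indeed applies.
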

\begin{proof} Assume that $W$ is definable by the $L^n_{\infty \omega}$
formula $\psi$, so that $W = \{ \bar{a} \in {}^{n}A:A\models \psi(a)\}$. We may
relativize the quantifiers of $L^n_{\infty \omega}$-formulas to
$\psi$. For each $L^n_{\infty
\omega}$-formula $\phi$ we obtain a relativized one, $\phi^\psi$, by
induction, the main clause in the definition being:
\begin{itemize}
\item $( \exists x_i \phi)^\psi = \exists x_i ( \psi \wedge
\phi^\psi)$.
\end{itemize}
 Then clearly, $ A \models_W \phi(\bar{a})$ iff $ A \models
 \phi^\psi(\bar{a})$, for all $ \bar{a} \in W$.
\end{proof}

The following theorem unifies and generalizes the main theorem in \cite{Hodkinson} and in \cite{weak}.
It shows that sometime Monk like algebras and rainbow algebras do the same thing.
We shall see that each of the constructions has its assets and liabilities.
In the rainbow case the construction can be refined by truncating the greens and reds to be finite, to give sharper results
as shown in theorem \label{smooth}.

While Monk's algebra in one go gives the required result for both relation and cylindric like algebras.
and it can be generalized to show that the class of strongly representable atom structures for
both relation and cylindric algebras is not elementary reproving a profound result
of Hirsch and Hodkinson, using Erdos probabilistic graphs, or {\it anti Monk} ultraproducts \ref{el}.

First a piece of notation. For an atomic  relation algebra $\R$, ${\sf Mat_n}\At\R$
denotes the set of basic matrices on $\R$. We say that ${\sf Mat_n}\At\R$ is a polyadic basis
if it is a symmetric cylindric bases
(closed under substitutions).

The idea of the proof, which is quite technical,  is summarized in the following:

\begin{enumarab}

\item  We construct a labelled hypergraph $M$ that can be viewed as an $n$ homogeneous  model of a certain theory
(in the rainbow case it is an $L_{\omega_1, \omega}$ theory, in the Monk case it is first order.)
This model gets its labels from a fixed in advance graph $\G$; that also determines the signature of $M$.
By $n$ homogeneous we mean that every partial isomorphism of $M$ of size $\leq n$ can be extended
to an automorphism of $M$.

\item We have finitely many shades of red, outside the signature; these are basically non principal ultrafilters, but
they can be used as labels.

\item We build a relativized set algebra based on $M$, by discarding all assignments whose edges are labelled
by shades of red getting $W\subseteq {}^nM$.

\item $W$ is definable in $^nM$ be an $L_{\infty, \omega}$ formula
hence the semantics with respect to  $W$ coincides with classical Tarskian semantics (when assignments are in $^nM$).
This is proved using certain $n$ back and forth systems.

\item The set algebra based on $W$ (consisting of sets of sequences (without shades of reds labelling edges)
satisfying formulas in $L^n$ in the given signature)
will be an atomic algebra such that its completion is not representable.
The completion will consist of interpretations of $L_{\infty,\omega}^n$ formulas; though represented over $W$,
it will not be, and cannot be, representable in the classical  sense.

\end{enumarab}

\begin{theorem}\label{hodkinson}

\begin{enumarab}
\item There exists a polyadic equality atom structure $\At$ of dimension $n$,
such that $\Tm\At$ is representable as a $\sf PEA_n$, but not strongly representable.
In fact $\Rd_t\Cm\At$ is not representable for any signature $t$ between that of $\sf Df$ and $\sf Sc$.

\item Furthermore,
there exists an atomic  relation algebra $\R$ that that the set of all basic matrices
forms an $n$ dimensional polyadic basis
such that $\Tm{\sf Mat}_n\At\R$ is  representable as a polyadic equality algebra, while
$\Rd_{df}\Cm{\sf Mat_n}\At\R$ is not (as a diagonal free cylindric algebra of dimension $n$).
In particular, $\At\R$ is weakly but not strongly representable.
\end{enumarab}
\end{theorem}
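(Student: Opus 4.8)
The plan is to realise the five-step programme sketched above, running it in parallel for each of the three graphs $\G$ and reducing both parts of the theorem to one underlying combinatorial object. First I would fix the graph $\G$ supplying the colours: in the Monk and new-graph cases $\G$ is a countable first-order structure whose finite colourings cannot avoid a designated forbidden monochromatic configuration, while in the rainbow case the colours split into greens and reds governed by the $L_{\omega_1,\omega}$ rainbow theory. Using a Fra\"\i ss\'e-style amalgamation of the finite coloured graphs I would build a single countable model $M$ that is $n$-homogeneous, so that every partial isomorphism of size $\le n$ extends to an automorphism; the resulting automorphisms supply an $n$-back-and-forth system $\Theta$ of partial isomorphisms on $M$. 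The atom structure $\At$ is then manufactured from the surjections from $n$ onto finite coloured subgraphs of $M$, the cylindric, diagonal and substitution relations being read off from agreement and overlap of these surjections, so that $\Tm\At$ is the subalgebra of $\Cm\At$ generated by the atoms and $\Cm\At$ is its completion.

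\textbf{Representability of the term algebra.} Next I would introduce the finitely many shades of red as labels lying outside the signature of $M$ (behaving like non-principal ultrafilters) and set $W\subseteq{}^nM$ to be the set of tuples no edge of which carries a shade of red. The crucial point is that $W$ is definable in $M$ by a single $L^n_{\infty\omega}$-formula $\psi$; by the relativised back-and-forth theorem proved above, the relativised semantics $\models_W$ then coincides with ordinary Tarskian satisfaction for every $L^n_{\infty\omega}$-formula evaluated on tuples of $W$. This lets me build a set algebra on $W$ sending each atom of $\At$ to the set of tuples of $W$ satisfying the corresponding $L^n$-formula in the signature of $\G$; the system $\Theta$ guarantees that cylindrifications, diagonals and finite substitutions computed concretely over $W$ match the abstract operations, so the map is an injective homomorphism, i.e. a genuine $\PEA_n$ representation of $\Tm\At$. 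For part (2) the same argument, applied to the basic matrices, represents $\Tm{\sf Mat}_n\At\R$ once one checks that ${\sf Mat}_n\At\R$ is closed under substitutions and hence forms a symmetric (polyadic) cylindric basis.

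\textbf{Non-representability of the completion --- the main obstacle.} The hard part is to show that $\Cm\At$ is not representable, and in fact that already its diagonal-free reduct $\Rd_{df}\Cm\At$ fails to be representable; since representability passes to reducts, this simultaneously rules out every signature $t$ between $\Df$ and $\Sc$. Here the shades of red re-enter: in $\Cm\At$ the infinite suprema of the red atoms exist (they are precisely the elements missing from $\Tm\At$), and a hypothetical representation of $\Rd_{df}\Cm\At$ would have to assign these limiting red elements concrete denotations. Pushing a large clique of the representing base through these denotations produces an edge-colouring of an arbitrarily large complete graph by the finitely many colours supplied by $\G$. Since $\G$ was chosen precisely so that its finite colourings cannot avoid the forbidden monochromatic configuration, a Ramsey/pigeonhole argument forces such a configuration inside the colouring, contradicting the consistency conditions encoded in $\At$. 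I expect this Ramsey-theoretic step --- together with the bookkeeping showing that the suprema of reds genuinely force the colouring --- to be the technical heart and the main difficulty of the whole argument.

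\textbf{Conclusion and the relation-algebra version.} Combining the two halves, $\At$ carries a representable term algebra but a non-representable complex algebra, so by the criterion that $\mathcal S$ is strongly representable iff $\Cm\mathcal S$ is representable, $\At$ is weakly but not strongly representable, giving part (1). For part (2) I would take $\R$ to be the relation algebra underlying the same coloured-graph construction, verify that the set of all $n$-dimensional basic matrices forms a polyadic (symmetric cylindric) basis, and transport the above through the basis: $\Tm{\sf Mat}_n\At\R$ is representable as a $\PEA_n$ while $\Rd_{df}\Cm{\sf Mat}_n\At\R$ is not representable as a $\Df_n$, whence $\At\R$ itself is weakly but not strongly representable. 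The only genuinely new ingredients over the Hodkinson and \cite{weak} reductions are the verification of $n$-homogeneity of $M$ and of the polyadic-basis property of ${\sf Mat}_n\At\R$; everything else runs uniformly across the Monk, rainbow and new-graph constructions.
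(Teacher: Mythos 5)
Your overall architecture (the $n$-homogeneous model $M$, relativization to $W$, the back-and-forth systems making $\models_W$ agree with classical semantics, the term algebra as the algebra of $L^n$-definable subsets of $W$, the completion as the algebra of $L^n_{\infty\omega}$-definable sets, and the reduction of part (2) to the same combinatorial object via basic matrices) is exactly the paper's, and your treatment of the positive half is essentially right. The gap is in the negative half, and it sits precisely at the step you yourself flag as the ``technical heart''. You propose to refute a representation of $\Rd_{df}\Cm\At$ directly, by ``pushing a large clique of the representing base through these denotations'' to obtain an edge-colouring of a large complete graph and then applying Ramsey. But a representation of the \emph{diagonal-free} reduct gives you no way to manufacture that colouring: without the diagonals ${\sf d}_{ij}$ there are no definable substitutions ${\sf s}^i_j={\sf c}_i({\sf d}_{ij}\cdot x)$, no notion of a clique, and no way to pass from the $n$-ary relations interpreting elements of $\Cm\At$ to a well-defined colour on \emph{pairs} of base points; the Monk/Ramsey argument needs genuine binary data. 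This is why the paper never runs Ramsey against a $\Df$ representation. In the Monk case it runs the argument at the relation-algebra level, showing $\Cm\alpha(\G)\notin \sf RRA$, where the finitely many ``colours'' are not colours supplied by $\G$ (which has infinitely many nodes), nor suprema of red atoms (the Monk algebras have no red atoms at all, tuples with red labels having been discarded from $W$), but the finitely many joins $J=\{1', [N\N+r,s]: r<N,\ s<n\}$ of residue classes of atoms, which exist only in the completion. Non-representability then transfers to the cylindric completion through the relation-algebra embedding of $\Cm\alpha(\G)$ into $\Ra\Cm\Mat_n\At\R$.

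The second missing ingredient is the passage from ``$\Cm\At$ is not representable as a $\CA_n$'' to ``$\Rd_{df}\Cm\At$ is not representable''. Your remark that representability passes to reducts goes the wrong way for this purpose: it lets you deduce failure for every signature $t$ between $\Df$ and $\Sc$ \emph{from} the $\Df$ failure, but it contributes nothing towards proving the $\Df$ failure itself. The paper closes this gap with a transfer argument that exploits a special feature of these algebras, namely that they are generated by elements whose dimension sets have size $<n$ (they are binary generated): a representation of $\Rd_{df}\Cm\At\A$ would induce a complete representation of $\Rd_{df}\A$, hence a complete representation of $\A$ itself, which in turn yields a representation of $\Cm\At\A$ as a $\CA_n$ --- contradicting the relation-algebra step. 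Without either this generation property or the detour through relation algebras, your sketch (once the Ramsey details are filled in at the cylindric level, where diagonals are available) establishes only that $\Cm\At\notin\RCA_n$, which is strictly weaker than the ``In fact'' clause of item (1) and than the $\Rd_{df}$ claim of item (2).
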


\begin{proof}
\begin{enumarab}

\item {\sf Constructing an $n$ homogeneous model}

$L^+$ is the rainbow signature consisting of the binary
relation symbols $\g_i :i<n-1 , \g_0^i: i< |\sf G|$, $\w, \w_i: i <n-2, \r_{jk}^i:  i<\omega, j<k<|\sf R|$
and the $(n-1)$ ary-relation symbols
$\y_S: S\subseteq {\sf G}$
together with a shade of red $\rho$ which is outside the rainbow signature,
but it is a binary relation, in the sense that it can label edges in coloured
graphs. Here we take, like Hodkinson \cite{Hodkinson}, $\sf G=\sf R=\omega$.
graphs. In the following theorem we shall see that by varying these parameters,
namely when $|\sf G|=n+2$ and $|\sf R|=n+1$ we get sharper results.

Let $\GG$ be the class of all coloured graph in this rainbow signature.
Let $T_r$ denote the rainbow $L_{\omega_1,\omega}$ theory \cite{HHbook2}.
Let $\G$ by a countable disjoint union of cliques each of size $n(n-1)/2$ or $\N$ with edge relation defined by $(i,j)\in E$ iff $0<|i-j|<N$.
Let $L^+$ be the signature consisting of the binary
relation symbols $(a, i)$, for each $a \in \G \cup \{ \rho \}$ and
$i < n$. Let $T_m$ denote the following (Monk) theory:

$M\models T_m$ iff
for all $a,b\in M$, there is a unique $p\in \G\cup \{\rho\}\times n$, such that
$(a,b)\in p$ and if  $M\models (a,i)(x,y)\land (b,j)(y,z)\land (c,l)(x,z)$, then $| \{ i, j, l \}> 1 $, or
$ a, b, c \in \G$ and $\{ a, b, c\} $ has at least one edge
of $\G$, or exactly one of $a, b, c$ -- say, $a$ -- is $\rho$, and $bc$ is
an edge of $\G$, or two or more of $a, b, c$ are $\rho$.
We denote the class of models of $T_m$ which can also be viewed as coloured graphs with labels coming from
$\G\cup {\rho}\times n$ also by $\GG$.

We deliberately
use this notation to emphasize the fact
that the proof for all three cases considered is essentially the same; the
difference is that our our  three term algebras, to be constructed  are set algebras with domain a subset of $^nM$, where
$M$ is a model that embeds al coloured graphs and to be defined shortly,
are just based on a different graph. So such a notation simplifies matters considerably.

Now in all cases,  there is a countable $n$ homogeneous coloured  graph  (model) $M\in \GG$ of both theories with the following
property:\\
$\bullet$ If $\triangle \subseteq \triangle' \in \GG$, $|\triangle'|
\leq n$, and $\theta : \triangle \rightarrow M$ is an embedding,
then $\theta$ extends to an embedding $\theta' : \triangle'
\rightarrow M$.

We proceed as follows. We use a simple game.
Two players, $\forall$ and $\exists$, play a game to build a
labelled graph $M$. They play by choosing a chain $\Gamma_0
\subseteq \Gamma_1 \subseteq\ldots $ of finite graphs in $\GG$; the
union of
the chain will be the graph $M.$
There are $\omega$ rounds. In each round, $\forall$ and $\exists$ do
the following. Let $ \Gamma \in \GG$ be the graph constructed up to
this point in the game. $\forall$ chooses $\triangle \in \GG$ of
size $< n$, and an embedding $\theta : \triangle \rightarrow
\Gamma$. He then chooses an extension $ \triangle \subseteq
\triangle^+ \in \GG$, where $| \triangle^+ \backslash \triangle |
\leq 1$. These choices, $ (\triangle, \theta, \triangle^+)$,
constitute his move. $\exists$ must respond with an extension $
\Gamma \subseteq \Gamma^+ \in \GG$ such that $\theta $ extends to an
embedding $\theta^+ : \triangle^+ \rightarrow \Gamma^+$. Her
response ends the round.
The starting graph $\Gamma_0 \in \GG$ is arbitrary but we will take
it to be the empty graph in $\GG$.
We claim that $\exists$ never gets stuck -- she can always find a suitable
extension $\Gamma^+ \in \GG$.  Let $\Gamma \in \GG$ be the graph built at some stage, and let
$\forall$ choose the graphs $ \triangle \subseteq \triangle^+ \in
\GG$ and the embedding $\theta : \triangle \rightarrow \Gamma$.
Thus, his move is $ (\triangle, \theta, \triangle^+)$.
We now describe $\exists$'s response. If $\Gamma$ is empty, she may
simply plays $\triangle^+$, and if $\triangle = \triangle^+$, she
plays $\Gamma$. Otherwise, let $ F = rng(\theta) \subseteq \Gamma$.
(So $|F| < n$.) Since $\triangle$ and $\Gamma \upharpoonright F$ are
isomorphic labelled graphs (via $\theta$), and $\GG$ is closed under
isomorphism, we may assume with no loss of generality that $\forall$
actually played $ ( \Gamma \upharpoonright F, Id_F, \triangle^+)$,
where $\Gamma \upharpoonright F \subseteq \triangle^+ \in \GG$,
$\triangle^+ \backslash F = \{\delta\}$, and $\delta \notin \Gamma$.
We may view $\forall$'s move as building a labelled graph $ \Gamma^*
\supseteq \Gamma$, whose nodes are those of $\Gamma$ together with
$\delta$, and whose edges are the edges of $\Gamma$ together with
edges from $\delta$ to every node of $F$. The labelled graph
structure on $\Gamma^*$ is given by\\
$\bullet$ $\Gamma$ is an induced subgraph of $\Gamma^*$ (i.e., $
\Gamma \subseteq \Gamma^*$)\\
$\bullet$ $\Gamma^* \upharpoonright ( F \cup \{\delta\} ) =
\triangle^+$.
Now $ \exists$ must extend $ \Gamma^*$ to a complete
graph on the same node and complete the colouring yielding a graph
$ \Gamma^+ \in \GG$. Thus, she has to define the colour $
\Gamma^+(\beta, \delta)$ for all nodes $ \beta \in \Gamma \backslash
F$, in such a way as to meet the required conditions.  For rainbow case \pe\ plays as follows:
\begin{enumarab}

\item If there is no $f\in F$,
such that $\Gamma^*(\beta, f), \Gamma^*(\delta ,f)$ are coloured $\g_0^t$ and $\g_0^u$
for some $t,u$, then \pe\ defined $\Gamma^+(\beta, \delta)$ to  be $\w_0$.

\item Otherwise, if for some $i$ with $0<i<n-1$, there is no $f\in F$
such that $\Gamma^*(\beta,f), \Gamma^*(\delta, f)$ are both coloured $\g_i$, then \pe\
defines the colour $\Gamma^+(\beta,\delta)$ to
to be $\w_i$ say the least such

\item Otherwise $\delta$ and $\beta$ are both the apexes on $F$ in $\Gamma^*$ that induce
the same linear ordering on (there are nor green edges in $F$ because
$\Delta^+\in \GG$, so it has no green triangles).
Now \pe\ has no choice but to pick a red. The colour she chooses is $\rho.$

\end{enumarab}
This defines the colour of edges. Now for hyperedges,
for  each tuple of distinct elements
$\bar{a}=(a_0,\ldots a_{n-2})\in {}^{n-1}(\Gamma^+)$
such that $\bar{a}\notin {}^{n-1}\Gamma\cup {}^{n-1}\Delta$ and with no edge $(a_i, a)$
coloured greed in  $\Gamma^+$, \pe\ colours $\bar{a}$ by $\y_{S}$
where
$S=\{i <\omega: \text { there is a $i$ cone with base  } \bar{a}\}$.
Notice that $|S|\leq F$. This strategy works.

For the Monk case \pe\ plays as follows:

Now $ \exists$ must extend $ \Gamma^*$ to a complete
graph on the same node and complete the colouring yielding a graph
$ \Gamma^+ \in \GG$. Thus, she has to define the colour $
\Gamma^+(\beta, \delta)$ for all nodes $ \beta \in \Gamma \backslash
F$, in such a way as to meet the conditions of definition 1. She
does this as follows. The set of colours of the labels in $ \{
\triangle^+(\delta, \phi) : \phi \in F \} $ has cardinality at most
$ n - 1 $. Let  $ i < n$ be a "colour" not in this set. $ \exists$
labels $(\delta, \beta) $ by $(\rho, i)$ for every $ \beta \in
\Gamma \backslash F$. This completes the definition of $ \Gamma^+$.
\\
It remains to check that this strategy works--that the conditions
from the definition of $\GG$ are met. It is
sufficient to note that

\begin{itemize}
 \item if $\phi \in F$ and $ \beta \in \Gamma \backslash F$, then
the labels in $ \Gamma^+$ on the edges of the triangle $(\beta,
\delta, \phi)$ are not all of the same colour ( by choice of $i$ )

\item if $ \beta, \gamma \in \Gamma \backslash F$, then two the
labels in $ \Gamma^+$ on the edges of the triangle $( \beta, \gamma,
\delta )$ are $( \rho, i)$.\\

\end{itemize}

Now there are only countably many
finite graphs in $ \GG$ up to isomorphism, and each of the graphs
built during the game is finite. Hence $\forall$ may arrange to play
every possible $(\triangle, \theta, \triangle^+)$ (up to
isomorphism) at some round in the game. Suppose he does this, and
let $M$ be the union of the graphs played in the game. We check that
$M$ is as required. Certainly, $M \in \GG$, since $\GG$ is clearly
closed under unions of chains. Also, let $\triangle \subseteq
\triangle' \in \GG$ with $|\triangle'| \leq n$, and $ \theta :
\triangle \rightarrow M$ be an embedding. We prove that $\theta$
extends to $\triangle'$, by induction on $d = | \triangle'
\backslash \triangle|.$ If this is $0$, there is nothing to prove.
Assume the result for smaller $d$. Choose  $ a \in \triangle'
\backslash \triangle $ and let $ \triangle^+ = \triangle'
\upharpoonright ( \triangle \cup \{ a \} ) \in \GG$. As, $
|\triangle| < n$, at some round in the game, at which the graph
built so far was $\Gamma$, say, $\forall$ would have played
$(\triangle, \theta, \triangle^+)$ (or some isomorphic triple).
Hence, if $\exists$ constructed $ \Gamma^+$ in that round, there is
an embedding $ \theta^+ : \triangle^+ \rightarrow \Gamma^+ $
extending $\theta$. As $ \Gamma^+ \subseteq M,  \theta^+$ is also an
embedding: $ \triangle^+ \rightarrow M$. Since $ |\triangle'
\backslash \triangle^+| < d, \theta^+$ extends inductively to an
embedding $\theta' : \triangle' \rightarrow M $, as required.

\item {\sf Relativization, back and forth systems ensuring that relativized semantics coincide with the classical semantics}

For the rainbow algebra, let $$W_r = \{ \bar{a} \in {}^n M : M \models ( \bigwedge_{i < j < n,
l < n} \neg \rho(x_i, x_j))(\bar{a}) \},$$
and for the Monk like algebra, $W_m$
is defined exactly the same way by discarding assignments whose edges are coloured by one of the $n$ reds
$(\rho, i)$, $i<n$.  In the former case we are discarding assignments who have a $\rho$ labelled edge,
and in the latter we are discarding assignments
that involve edges coloured by any of the $n$ reds $(\rho, i)$, $i<n$.
We denote both by $W$ relying on context.

The $n$-homogeneity built into
$M$, in all three cases by its construction implies that the set of all partial
isomorphisms of $M$ of cardinality at most $n$ forms an
$n$-back-and-forth system. But we can even go
further. We formulate our statement for the Monk algebra based on $\G$ whose underlying set
is $\N$ since this is the new case. (For the other case the reader is referred to \cite{Hodkinson} and \cite{weak}).

A definition: Let $\chi$ be a permutation of the set $\omega \cup \{ \rho\}$. Let
$ \Gamma, \triangle \in \GG$ have the same size, and let $ \theta :
\Gamma \rightarrow \triangle$ be a bijection. We say that $\theta$
is a $\chi$-\textit{isomorphism} from $\Gamma$ to $\triangle$ if for
each distinct $ x, y \in \Gamma$,
\begin{itemize}
\item If $\Gamma ( x, y) = (a, j)$ with $a\in \N$, then there exist unique $l\in \N$ and $r$ with $0\leq r<N$ such that
$a=Nl+r$.
\begin{equation*}
\triangle( \theta(x),\theta(y)) =
\begin{cases}
( N\chi(i)+r, j), & \hbox{if $\chi(i) \neq \rho$} \\
(\rho, j),  & \hbox{otherwise.} \end{cases}
\end{equation*}
\end{itemize}

\begin{itemize}
\item If $\Gamma ( x, y) = ( \rho, j)$, then
\begin{equation*}
\triangle( \theta(x),\theta(y)) \in
\begin{cases}
\{( N\chi(\rho)+s, j): 0\leq s < N \}, & \hbox{if $\chi(\rho) \neq \rho$} \\
\{(\rho, j)\},  & \hbox{otherwise.} \end{cases}
\end{equation*}
\end{itemize}

We now have for any permutation $\chi$ of $\omega \cup \{\rho\}$, $\Theta^\chi$
is the set of partial one-to-one maps from $M$ to $M$ of size at
most $n$ that are $\chi$-isomorphisms on their domains. We write
$\Theta$ for $\Theta^{Id_{\omega \cup \{\rho\}}}$.

We claim that for any any permutation $\chi$ of $\omega \cup \{\rho\}$, $\Theta^\chi$
is an $n$-back-and-forth system on $M$.

Clearly, $\Theta^\chi$ is closed under restrictions. We check the
``forth" property. Let $\theta \in \Theta^\chi$ have size $t < n$.
Enumerate $\dom(\theta)$, $\rng(\theta).$ respectively as $ \{ a_0,
\ldots, a_{t-1} \}$, $ \{ b_0,\ldots b_{t-1} \}$, with $\theta(a_i)
= b_i$ for $i < t$. Let $a_t \in M$ be arbitrary, let $b_t \notin M$
be a new element, and define a complete labelled graph $\triangle
\supseteq M \upharpoonright \{ b_0,\ldots, b_{t-1} \}$ with nodes
$\{ b_0,\ldots, b_{t} \}$ as follows.\\

Choose distinct "nodes"$ e_s < N$ for each $s < t$, such that no
$(e_s, j)$ labels any edge in $M \upharpoonright \{ b_0,\dots,
b_{t-1} \}$. This is possible because $N \geq n(n-1)/2$, which
bounds the number of edges in $\triangle$. We can now define the
colour of edges $(b_s, b_t)$ of $\triangle$ for $s = 0,\ldots, t-1$.

\begin{itemize}
\item If $M ( a_s, a_t) = ( Ni+r, j)$, for some $i\in \N$ and $0\leq r<N$, then
\begin{equation*}
\triangle( b_s, b_t) =
\begin{cases}
( N\chi(i)+r, j), & \hbox{if $\chi(i) \neq \rho$} \\
\{(\rho, j)\},  & \hbox{otherwise.} \end{cases}
\end{equation*}
\end{itemize}

\begin{itemize}
\item If $M ( a_s, a_t) = ( \rho, j)$, then assuming that $e_s=Ni+r,$ $i\in \N$ and $0\leq r<N$,
\begin{equation*}
\triangle( b_s, b_t) =
\begin{cases}
( N\chi(\rho)+r, j), & \hbox{if $\chi(\rho) \neq \rho$} \\
\{(\rho, j)\},  & \hbox{otherwise.} \end{cases}
\end{equation*}
\end{itemize}

This completes the definition of $\triangle$. It is easy to check
that $\triangle \in \GG$. Hence, there is a graph embedding $ \phi : \triangle \rightarrow M$
extending the map $ Id_{\{ b_0,\ldots b_{t-1} \}}$. Note that
$\phi(b_t) \notin \rng(\theta)$. So the map $\theta^+ = \theta \cup
\{(a_t, \phi(b_t))\}$ is injective, and it is easily seen to be a
$\chi$-isomorphism in $\Theta^\chi$ and defined on $a_t$.
The converse,``back" property is similarly proved ( or by symmetry,
using the fact that the inverse of maps in $\Theta$ are
$\chi^{-1}$-isomorphisms).

We now  derive a connection between classical and
relativized semantics in $M$, over the set $W$:\\

Recall that $W$ is simply the set of tuples $\bar{a}$ in ${}^nM$ such that the
edges between the elements of $\bar{a}$ don't have a label involving
$\rho$ (these are $(\rho, i) i<n$). Their labels are all of the form $(Ni+r, j)$. We can replace $\rho$-labels by suitable $(a, j)$-labels
within an $n$-back-and-forth system. Thus, we may arrange that the
system maps a tuple $\bar{b} \in {}^n M \backslash W$ to a tuple
$\bar{c} \in W$ and this will preserve any formula
containing no relation symbols $(a, j)$ that are ``moved" by the
system.

Indeed, we show that the
classical and $W$-relativized semantics agree.
$M \models_W \varphi(\bar{a})$ iff $M \models \varphi(\bar{a})$, for
all $\bar{a} \in W$ and all $L^n$-formulas $\varphi$.

The proof is by induction on $\varphi$. If $\varphi$ is atomic, the
result is clear; and the boolean cases are simple.
Let $i < n$ and consider $\exists x_i \varphi$. If $M \models_W
\exists x_i \varphi(\bar{a})$, then there is $\bar{b} \in W$ with
$\bar{b} =_i \bar{a}$ and $M \models_W \varphi(\bar{b})$.
Inductively, $M \models \varphi(\bar{b})$, so clearly, $M \models_W
\exists x_i \varphi(\bar{a})$.
For the (more interesting) converse, suppose that $M \models_W
\exists x_i \varphi(\bar{a})$. Then there is $ \bar{b} \in {}^n M$
with $\bar{b} =_i \bar{a}$ and $M \models \varphi(\bar{b})$. Take
$L_{\varphi, \bar{b}}$ to be any finite subsignature of $L$
containing all the symbols from $L$ that occur in $\varphi$ or as a
label in $M \upharpoonright \rng(\bar{b})$. (Here we use the fact
that $\varphi$ is first-order. The result may fail for infinitary
formulas with infinite signature.) Choose a permutation $\chi$ of
$\omega \cup \{\rho\}$ fixing any $i'$ such that some $(Ni'+r, j)$
occurs in $L_{\varphi, \bar{b}}$ for some $r<N$, and moving $\rho$.
Let $\theta = Id_{\{a_m : m \neq i\}}$. Take any distinct $l, m \in
n \setminus \{i\}$. If $M(a_l, a_m) = (Ni'+r, j)$, then $M( b_l,
b_m) = (Ni'+r, j)$ because $ \bar{a} = _i \bar{b}$, so $(Ni'+r, j)
\in L_{\varphi, \bar{b}}$ by definition of $L_{\varphi, \bar{b}}$.
So, $\chi(i') = i'$ by definition of $\chi$. Also, $M(a_l, a_m) \neq
( \rho, j)$(any $j$) because $\bar{a} \in W$. It now follows that
$\theta$ is a $\chi$-isomorphism on its domain, so that $ \theta \in
\Theta^\chi$.
Extend $\theta $ to $\theta' \in \Theta^\chi$ defined on $b_i$,
using the ``forth" property of $ \Theta^\chi$. Let $
\bar{c} = \theta'(\bar{b})$. Now by choice of of $\chi$, no labels
on edges of the subgraph of $M$ with domain $\rng(\bar{c})$ involve
$\rho$. Hence, $\bar{c} \in W$.
Moreover, each map in $ \Theta^\chi$ is evidently a partial
isomorphism of the reduct of $M$ to the signature $L_{\varphi,
\bar{b}}$. Now $\varphi$ is an $L_{\varphi, \bar{b}}$-formula.
We have $M \models \varphi(\bar{a})$ iff $M \models \varphi(\bar{c})$.
So $M \models \varphi(\bar{c})$. Inductively, $M \models_W
\varphi(\bar{c})$. Since $ \bar{c} =_i \bar{a}$, we have $M
\models_W \exists x_i \varphi(\bar{a})$ by definition of the
relativized semantics. This completes the induction.

\item {\sf The atoms, the required atomic algebra}

Now let $L$ be $L^+$ without the red relation symbols.
The logics $L_n$ and $L^n_{\infty \omega}$ are taken in this
signature.

For an $L^n_{\infty \omega}$-formula $\varphi $,  define
$\varphi^W$ to be the set $\{ \bar{a} \in W : M \models_W \varphi
(\bar{a}) \}$.

Then the set algebra $\A$ (actually  in all three cases) is taken to be the relativized set algebra with domain
$$\{\varphi^W : \varphi \,\ \textrm {a first-order} \;\ L_n-
\textrm{formula} \}$$  and unit $W$, endowed with the algebraic
operations ${\sf d}_{ij}, {\sf c}_i, $ etc., in the standard way, and of course formulas are taken in the suitable
signature.
The completion of $\A$ is the algebra $\C$ with universe $\{\phi^W: \phi\in L_{\infty,\omega}^n\}$
with operations defined as for $\A$, namely, usual cylindrifiers and diagonal elements,
reflecting existential quantifiers, polyadic operations and equality.
Indeed, $\A$ is a representable (countable) atomic polyadic algebra of dimension $n$

Let $\cal S$ be the polyadic set algebra with domain  $\wp ({}^{n} M )$ and
unit $ {}^{n} M $. Then the map $h : \A
\longrightarrow S$ given by $h:\varphi ^W \longmapsto \{ \bar{a}\in
{}^{n} M: M \models \varphi (\bar{a})\}$ can be checked to be well -
defined and one-one. It clearly respects the polyadic operations, also because relativized semantics and classical semantics coincide on $L_n$
formulas in the given signature, this is a representation of $\A.$

\item {\sf The atoms}

A formula $ \alpha$  of  $L_n$ is said to be $MCA$
('maximal conjunction of atomic formulas') if (i) $M \models \exists
x_0\ldots x_{n-1} \alpha $ and (ii) $\alpha$ is of the form
$$\bigwedge_{i \neq j < n} \alpha_{ij}(x_i, x_j),$$
where for each $i,j,\alpha_{ij}$ is either $x_i=x_i$ or $R(x_i,x_j)$
for some binary relation symbol $R$ of $L$.

Let $\varphi$ be any $L^n_{\infty\omega}$-formula, and $\alpha$ any
$MCA$-formula. If $\varphi^W \cap \alpha^W \neq \emptyset $, then
$\alpha^W \subseteq \varphi^W $.
Indeed, take $\bar{a} \in  \varphi^W \cap \alpha^W$. Let $\bar{a} \in
\alpha^W$ be arbitrary. Clearly, the map $( \bar{a} \mapsto
\bar{b})$ is in $\Theta$. Also, $W$ is
$L^n_{\infty\omega}$-definable in $M$, since we have
$$ W = \{
\bar{a} \in {}^n M : M \models (\bigwedge_{i < j< n} (x_i = x_j \vee
\bigvee_{R \in L} R(x_i, x_j)))(\bar{a})\}.$$
We have $M \models_W \varphi (\bar{a})$
 iff $M \models_W \varphi (\bar{b})$. Since $M \models_W \varphi (\bar{a})$, we have
$M \models_W \varphi (\bar{b})$. Since $\bar{b} $ was arbitrary, we
see that $\alpha^W \subseteq \varphi^W$.
Let $$F = \{ \alpha^W : \alpha \,\ \textrm{an $MCA$},
L^n-\textrm{formula}\} \subseteq \A.$$
Evidently, $W = \bigcup F$. We claim that
$\A$ is an atomic algebra, with $F$ as its set of atoms.
First, we show that any non-empty element $\varphi^W$ of $\A$ contains an
element of $F$. Take $\bar{a} \in W$ with $M \models_W \varphi
(\bar{a})$. Since $\bar{a} \in W$, there is an $MCA$-formula $\alpha$
such that $M \models_W \alpha(\bar{a})$. Then $\alpha^W
\subseteq \varphi^W $. By definition, if $\alpha$ is an $MCA$ formula
then $ \alpha^W$ is non-empty. If $ \varphi$ is
an $L^n$-formula and $\emptyset \neq \varphi^W \subseteq \alpha^W $,
then $\varphi^W = \alpha^W$. It follows that each $\alpha^W$ (for
$MCA$ $\alpha$) is an atom of $\A$.

Now for the rainbow signature, a formula $ \alpha$  of  $MCA$ formulas of $L_n$
are adapted to the rainbow signature.
$\alpha$ is such if  $M \models \exists
x_0\ldots x_{n-1} \alpha $ and (ii) $\alpha$ is of the form
$$\bigwedge_{i \neq j < n} \alpha_{ij}(x_i, x_j) \land y_S(x_0,\ldots x_{n-1}),$$
where for each $i,j,\alpha_{ij}$ is either $x_i=x_i$ or $R(x_i,x_j)$
for some binary relation symbol $R$ of the rainbow signature.
In this case there is a one to one correspondence
between coloured graphs whose edges do not involve the red shade and the $MCA$ formulas, and both are the the atom
of $\A$, and for that matter its completion.
(here any $s$ satisfying an $MCA$ formula defines a coloured graph and for any other $s'$,  the graph determined by
it is isomorphic to that determined by $s$; and thats why precisely
$MCA$ formulas are atoms, namely, surjections from $n$ to coloured graphs.

\item {\sf The complex algebra}

Define $\C$ to be the complex algebra over $\At\A$, the atom structure of $\A$.
Then $\C$ is the completion of $\A$. The domain of $\C$ is $\wp(\At\A)$. The diagonal ${\sf d}_{ij}$ is interpreted
as the set of all $S\in \At\A$ with $a_i=a_j$ for some $\bar{a}\in S$.
The cylindrification ${\sf c}_i$ is interpreted by ${\sf c}_iX=\{S\in \At\A: S\subseteq c_i^{\A}(S')\text { for some } S'\in X\}$, for $X\subseteq \At\A$.
Finally ${\sf p}_{ij}X=\{S\in \At\A: S\subseteq {\sf p}_{ij}^{\A}(S')\text { for some } S'\in X\}.$
Let $\cal D$ be the relativized set algebra with domain $\{\phi^W: \phi\text { an $L_{\infty\omega}^n$ formula }\}$,  unit $W$
and operations defined like those of $\cal A$.

${\C}\cong \D$, via the map $X\mapsto \bigcup X$.

The map is clearly injective. It is surjective, since
$$\phi^W=\bigcup\{\alpha^W: \alpha \text { an $MCA$-formula}, \alpha^W\subseteq \phi^W\}$$
for any $L_{\infty\omega}^n$ formula $\phi$.
Preservation of the Boolean operations and diagonals is clear.
We check cylindrifications. We require that for any $X\subseteq \At\A$,
we have
$\bigcup {\sf c}_i^{\cal C}X={\sf c}_i^{\cal D}(\bigcup X)$ that is
$$\bigcup \{S\in At\A: S\subseteq {\sf c}_i^{\A}S'\text { for some $S'\in X$ }\}=$$
$$\{\bar{a}\in W: \bar{a}\equiv_i \bar{a'} \text { for some } \bar{a'}\in \bigcup X\}.$$
Let $\bar{a}\in S\subseteq c_iS'$, where $S'\in X$. So there is $\bar{a'}\equiv_i \bar{a}$ with $\bar{a'}\in S'$, and so $\bar{a'}\in \bigcup X$.

Conversely, let $\bar{a}\in W$ with $\bar{a}\equiv_i \bar{a'}$ for some $\bar{a'}\in \bigcup X$.
Let $S\in At\A$, $S'\in X$ with $\bar{a}\in S$ and $\bar{a'}\in S'$.
Choose $MCA$ formulas $\alpha$ and $\alpha'$ with $S=\alpha^W$ and $S'=\alpha'^{W}$.
then $\bar{a}\in \alpha^{W}\cap (\exists x_i\alpha')^W$ so $\alpha^W\subseteq (\exists x_i\alpha')^W$, or $S\subseteq c_i^{\A}(S')$.
The required now follows. We leave the checking of substitutions to the reader.

The non representability of the complex algebra in the case of the rainbow algebra is in \cite{Hodkinson},
witness also the argument in the coming theorem \ref{smooth}.

\item{\sf Using the Monk algebras}

To prove item (2) we use Monk's algebras. Any of the two will do just as well:

We show that their atom structure consists of the $n$
basic matrices of a relation algebra.
In more detail, we define a relation algebra atom structure $\alpha(\G)$ of the form
$(\{1'\}\cup (\G\times n), R_{1'}, \breve{R}, R_;)$.
The only identity atom is $1'$. All atoms are self converse,
so $\breve{R}=\{(a, a): a \text { an atom }\}.$
The colour of an atom $(a,i)\in \G\times n$ is $i$. The identity $1'$ has no colour. A triple $(a,b,c)$
of atoms in $\alpha(\G)$ is consistent if
$R;(a,b,c)$ holds. Then the consistent triples are $(a,b,c)$ where

\begin{itemize}

\item one of $a,b,c$ is $1'$ and the other two are equal, or

\item none of $a,b,c$ is $1'$ and they do not all have the same colour, or

\item $a=(a', i), b=(b', i)$ and $c=(c', i)$ for some $i<n$ and
$a',b',c'\in \G$, and there exists at least one graph edge
of $G$ in $\{a', b', c'\}$.

\end{itemize}

$\alpha(\G)$ can be checked to be a relation atom structure. It is exactly the same as that used by Hirsch and Hodkinson in \cite{HHbook}, except
that we use $n$ colours, instead of just $3$, so that it a Monk algebra not a rainbow one. However, some monochromatic triangles
are allowed namely the 'dependent' ones.
This allows the relation algebra to have an $n$ dimensional cylindric basis
and, in fact, the atom structure of of $M(\G)$ is isomorphic (as a cylindric algebra
atom structure) to the atom structure $\Mat_n$ of all $n$-dimensional basic
matrices over the relation algebra atom structure $\alpha(\G)$.

Indeed, for each  $m  \in {\Mat}_n, \,\ \textrm{let} \,\ \alpha_m
= \bigwedge_{i,j<n}  \alpha_{ij}. $ Here $ \alpha_{ij}$ is $x_i =
x_j$ if $ m_{ij} = 1$' and $R(x_i, x_j)$ otherwise, where $R =
m_{ij} \in L$. Then the map $(m \mapsto
\alpha^W_m)_{m \in {\cal M}_n}$ is a well - defined isomorphism of
$n$-dimensional cylindric algebra atom structures.

We show $\Cm\alpha(\G)$ is not in $\sf RRA$.
Hence the full complex cylindric algebra over the set of $n$ by $n$ basic matrices
- which is isomorphic to $\cal C$ is not in $\RCA_n$ for we have a relation algebra
embedding of $\Cm\alpha(\G)$ onto $\Ra\Cm\M_n$.

Assume for contradiction that $\Cm\alpha(\G)\in \RCA_n$
For $Y\subseteq \N$ and $s<n$, set
$$[Y,s]=\{(l,s): l\in Y\}.$$
For $r\in \{0, \ldots N-1\},$ $N\N+r$ denotes the set $\{Nq+r: q\in \N\}.$
Let $$J=\{1', [N\N+r, s]: r<N,  s<n\}.$$
Then $\sum J=1$ in $\Cm\alpha(\G).$
As $J$ is finite, we have for any $x,y\in X$ there is a $P\in J$ with
$(x,y)\in h(P)$.
Since $\Cm\alpha(\G)$ is infinite then $X$ is infinite.
By Ramsey's Theorem, there are distinct
$x_i\in X$ $(i<\omega)$, $J\subseteq \omega\times \omega$ infinite
and $P\in J$ such that $(x_i, x_j)\in h(P)$ for $(i, j)\in J$, $i\neq j$. Then $P\neq 1'$.
Also $(P;P)\cdot P\neq 0$. if $x,y, z\in M$,
$a,b,c\in \Cm\alpha(\G)$, $(x,y)\in h(a)$, $(y, z)\in h(b)$, and
$(x, z)\in h(c)$, then $(a;b)\cdot c\neq 0$.
A non -zero element $a$ of $\Cm\alpha(\G)$ is monochromatic, if $a\leq 1'$,
or $a\leq [\N,s]$ for some $s<n$.
Now  $P$ is monochromatic, it follows from the definition of $\alpha$ that
$(P;P)\cdot P=0$.

This contradiction shows that
$\Cm\alpha(\G)$ is not representable and so $\Cm\Mat_n\alpha(G)$ is non representable

\item {\sf Summarizing}

We have in all cases a labelled graph  defined $M$ as a model of a first order theory in the Monk case
and of $L_{\omega_1, \omega}$ in the rainbow case.
We have also relativized $^nM$ to  $W\subseteq {}^nM$ by deleting assignments
whose edges involve reds, and we defined an algebra containing the term algebra, namely,
$\A$ as the atomic relativized set algebra
with unit $W$. In the case of  Monk's algebra we defined the relation algebra $\sf R$, such that
$\At\A\cong {\sf Mat}_n\At\R$, and in all cases the complex algebra $\C$
of the atom structure is isomorphic to the set algebra
$\{\phi^M: \phi\in L_{\infty, \omega}^n\}$.
Finally, we the $\sf Df$ reduct of this algebra, namely, $\C=\Cm\At\A$, cannot be representable for else this induces a complete representation
of $\Rd_{df}\A$, hence a complete representation of $\A$,
which in turn induces a representation of $\C$, but we have shown that the latter cannot be representable.
\end{enumarab}
\end{proof}
\begin{corollary}Let $\sf L$ be any signature between $\sf Df$ and $\sf PEA$. 
\begin{enumarab}
\item There is an $\sf L$ atom structure that is weakly representable, but not strongly representable.
In particular, there is an $\sf L$ atom structure, that carries two (atomic) algebras, one is representable, the other is not.
\item The class of $\sf L$ representable algebras is not atom-canonical, it is not closed under 
\d\ completions and cannot be axiomatized by Sahlqvitst equations. 
\item The same  holds for the class of representable 
relation algebras
\end{enumarab}

\end{corollary}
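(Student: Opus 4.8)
The plan is to read all three clauses off Theorem~\ref{hodkinson}, which already supplies the only genuinely new content: a polyadic equality atom structure $\At$ with $\Tm\At$ representable as a $\PEA_n$ but $\Rd_{df}\Cm\At$ not representable, and a relation algebra $\R$ with $\Tm\At\R$ representable but $\Cm\At\R\notin\sf RRA$. What remains is to propagate non-representability across the signatures lying between $\Df$ and $\PEA$, and to repackage the resulting weakly-but-not-strongly-representable phenomenon as the three stated closure failures.

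For clause (1), fix $\sf L$ with $\Df\subseteq{\sf L}\subseteq\PEA$. The engine is the elementary fact that a reduct of a representable algebra is again representable, so that along the tower of signatures non-representability can only spread \emph{upward}. Since $\Rd_{df}\Cm\At$ is not representable and $\Df\subseteq{\sf L}$, the reduct $\Rd_{\sf L}\Cm\At$ cannot be representable, for any representation of it would restrict to one of its $\Df$-reduct $\Rd_{df}\Cm\At$. As $\Cm$ commutes with passage to a subsignature, $\Rd_{\sf L}\Cm\At=\Cm(\Rd_{\sf L}\At)$, this says precisely that the $\sf L$ atom structure $\Rd_{\sf L}\At$ is not strongly representable. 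Conversely, $\Rd_{\sf L}\Tm\At$ is an atomic algebra with this same atom structure and is representable as a reduct of the representable $\Tm\At$ (so that the term algebra $\Tm(\Rd_{\sf L}\At)$, a subalgebra of it, is representable); hence $\Rd_{\sf L}\At$ is weakly representable. The two atomic algebras $\Rd_{\sf L}\Tm\At$ and $\Cm(\Rd_{\sf L}\At)$ then witness the ``in particular'' assertion: they share an atom structure, yet one is representable and the other is not.

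For clause (2), non-atom-canonicity is immediate, since $\Rd_{\sf L}\Tm\At$ is atomic and representable while its complex algebra $\Cm\At(\Rd_{\sf L}\Tm\At)=\Rd_{\sf L}\Cm\At$ is not. For closure under \d\ completions I would observe that, although $\Sc$ and $\PA$ are not completely additive as abstract varieties, our witness $\Rd_{\sf L}\Tm\At$ is isomorphic to a concrete set algebra and is therefore completely additive; consequently its \d\ completion coincides with $\Cm\At(\Rd_{\sf L}\Tm\At)=\Rd_{\sf L}\Cm\At$, which is not representable, so the variety is not closed under \d\ completions. The Sahlqvist clause then follows by contraposition from the preservation theorem of Givant and Venema, whereby a variety axiomatized by Sahlqvist equations is closed under \d\ completions of its completely additive atomic members, and is therefore atom canonical --- contradicting what was just established.

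Clause (3) is the same derivation run on the relation-algebra side through Theorem~\ref{hodkinson}(2): $\Tm\At\R$ is representable and, being a set relation algebra, completely additive, whereas $\Cm\At\R\notin\sf RRA$ is exactly its \d\ completion; hence $\sf RRA$ is not atom canonical, not closed under \d\ completions, and not Sahlqvist axiomatizable. I expect no serious obstacle beyond Theorem~\ref{hodkinson} itself: the only points needing care are the commutation $\Rd_{\sf L}\Cm\At=\Cm(\Rd_{\sf L}\At)$ and the complete-additivity bookkeeping that licenses reading $\Cm\At$ as the \d\ completion in the $\Sc$ and $\PA$ cases, where the ambient variety is not completely additive but the specific set-algebra witnesses are.
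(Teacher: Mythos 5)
Your overall route is sound and is, in substance, the argument the paper itself runs: the paper's proof of this corollary is a pointer to Corollary \ref{sah}, whose proof consists of exactly your ingredients (a weakly-but-not-strongly representable witness, identification of the complex algebra with the \d\ completion of that witness, and preservation of Sahlqvist equations under completions, \cite[theorem 2.96]{HHbook}), while your clause (1) --- non-representability spreading upward from the $\Df$-reduct together with $\Rd_{\sf L}\Cm\At=\Cm(\Rd_{\sf L}\At)$, and weak representability coming from $\Rd_{\sf L}\Tm\At$ --- is precisely how one passes from Theorem \ref{hodkinson} to all intermediate signatures. However, there is one genuinely false step: to identify $\Cm(\Rd_{\sf L}\At)$ with the \d\ completion of the witness when $\sf L$ has replacements but no diagonals (e.g.\ ${\sf L}=\Sc$ or $\PA$), you argue that the witness ``is isomorphic to a concrete set algebra and is therefore completely additive.'' That implication is false, and the paper itself refutes it: Example \ref{modal} exhibits a concrete set algebra in the signature with substitutions in which ${\sf s}_0^1$ is not completely additive (one has $\sum_k {\sf s}_0^1 R_{\{k\}}=\emptyset$ while ${\sf s}_0^1(\sum_k R_{\{k\}})={\sf s}_0^1 R_{\mathbb{Z}^+}=R_{\mathbb{Z}^+}$), and the earlier example borrowed from \cite{AGMNS} (the sets $X\times \sim X$) does the same with a single replacement. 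Complete additivity is a property of an operation relative to the suprema of the algebra, and in a proper subalgebra of $\wp(V)$ suprema need not be unions, so concreteness of the operations buys nothing; this is exactly the subtlety the paper is flagging when it warns that $\PA$s and $\Sc$s are not completely additive.

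The step has a correct justification, and it is the one the paper gives in proving Corollary \ref{sah}: the witness is a \emph{reduct of a} $\PEA_n$. In any $\PEA_n$ the cylindrifiers and transpositions are conjugated, hence completely additive; the diagonals are constants; and the replacements are term-definable, ${\sf s}^i_j x={\sf c}_i({\sf d}_{ij}\cdot x)$, hence completely additive as compositions of completely additive maps (meet with a fixed element preserves all existing suprema). Passing to $\Rd_{\sf L}$ changes neither the operations nor the Boolean structure, hence neither the suprema, so $\Rd_{\sf L}\Tm\At$ is a completely additive atomic algebra for \emph{every} $\sf L$ between $\Df$ and $\PEA$, and its \d\ completion is indeed $\Cm(\Rd_{\sf L}\At)$. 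With this substitution your clause (2) goes through verbatim, and clause (3) needs no such care at all: in any relation algebra, abstract or concrete, composition and converse are conjugated by the axioms, so complete additivity is automatic there.
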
 
\begin{proof} A special case of theorem \ref{sah} below.
\end{proof}
In contrast, we have:
\begin{theorem} Let $\R$ and $M$ be as above. If $\R$
is finitely generated and ${\sf Th}M$ is $\omega$ categorial or $M$ is ultrahomogeneous, then
$\Tm{\Mat_n\At\R}$ is strongly representable.
\end{theorem}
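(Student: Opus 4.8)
The plan is to show that under these hypotheses the completion $\C=\Cm\Mat_n\At\R$ is representable, which by the remark following Definition \ref{strong rep} is exactly strong representability of the atom structure $\Mat_n\At\R$, i.e.\ of $\Tm\Mat_n\At\R$. From the proof of Theorem \ref{hodkinson} I already have that $\C$ is isomorphic, via $X\mapsto \bigcup X$, to the relativized set algebra $\D$ with unit $W$ and universe $\{\phi^W:\phi\in L^n_{\infty\omega}\}$, and that the candidate map $h:\phi^W\mapsto\{\bar{a}\in {}^nM:M\models\phi(\bar{a})\}$ is a genuine (classical) representation of the first-order subalgebra $\A$. Since Tarskian semantics on $\wp({}^nM)$ respects all the cylindric and polyadic operations, the whole problem reduces to one point: showing that $h$ is well defined and injective on \emph{all} of $L^n_{\infty\omega}$, that is, that the relativized and classical semantics $M\models_W\varphi(\bar{a})\iff M\models\varphi(\bar{a})$ agree for every $L^n_{\infty\omega}$ formula and not merely for first-order ones.

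First I would recall exactly where first-orderness was used in Theorem \ref{hodkinson}. That coincidence was proved by running the back-and-forth systems $\Theta^\chi$, and the induction step for $\exists x_i$ required passing to a \emph{finite} subsignature $L_{\varphi,\bar{b}}$ and choosing a permutation $\chi$ of $\omega\cup\{\rho\}$ fixing the finitely many colours occurring there while moving $\rho$. Thus the argument already delivers the coincidence for any formula mentioning only finitely many relation symbols. So the real content of the present theorem is to show that, under our hypotheses, \emph{every} $L^n_{\infty\omega}$ formula is equivalent over $M$ to one mentioning only finitely many symbols --- in fact to a first-order $L_n$ formula, indeed to a finite disjunction of $MCA$ formulas.

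This is where the two hypotheses enter, and this is the step I expect to be the main obstacle. If ${\sf Th}M$ is $\omega$-categorical, then $M$ being countable makes $\mathrm{Aut}(M)$ oligomorphic, so for each $n$ the set ${}^nM$ splits into finitely many orbits; every $L^n_{\infty\omega}$ formula is preserved by automorphisms and hence defines a union of these finitely many orbits, each of which is already first-order $\emptyset$-definable, so the formula is equivalent over $M$ to a first-order one. If instead $M$ is ultrahomogeneous, I would use that the $\mathrm{Aut}(M)$-orbit of a tuple is precisely its quantifier-free type class; finite generation of $\R$ should bound the number of quantifier-free $n$-types realized in $M$, so that these classes are exactly the (finitely many) $MCA$-atoms and every automorphism-invariant $L^n_{\infty\omega}$ formula is a finite union of them, hence again first-order. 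Pinning down exactly how finite generation yields this bound is the delicate bookkeeping. Either way $\{\phi^W:\phi\in L^n_{\infty\omega}\}=\{\phi^W:\phi\in L_n\}$, so $\D=\A$ and hence $\C\cong\A$.

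Finally, since $\A$ was shown to be representable in Theorem \ref{hodkinson} and now coincides with its own completion $\C$, the map $h$ embeds $\C$ into the set algebra $\wp({}^nM)$; hence $\C$ is representable and $\Tm\Mat_n\At\R$ is strongly representable. It is worth stressing that the obstacle identified above is precisely the point that \emph{fails} in Theorem \ref{hodkinson}: there $\G=\omega$, the labelling signature carries infinitely many colours, and no single permutation $\chi$ can fix all the colours occurring in a given infinitary formula while still moving $\rho$, so the back-and-forth coincidence breaks down for $L^n_{\infty\omega}$ and $\C$ becomes non-representable. The role of $\omega$-categoricity (oligomorphy), or of ultrahomogeneity together with finite generation (finitely many quantifier-free types), is exactly to reinstate the finiteness that lets the orbit and back-and-forth arguments cover all of $L^n_{\infty\omega}$.
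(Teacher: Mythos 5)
Your proposal is sound, but it is a genuinely different argument from the one in the paper. Your key step is a collapse of the completion: under $\omega$-categoricity, oligomorphicity of $\mathrm{Aut}(M)$ gives finitely many orbits on ${}^{n}M$, and since the sets $\alpha^{W}$ ($\alpha$ an $MCA$ formula) are pairwise disjoint, nonempty and invariant (recall $W$ itself is $L^{n}_{\infty\omega}$-definable, hence invariant), there are only finitely many atoms; in the ultrahomogeneous case, reading finite generation of $\R$ as finiteness of the binary relational signature, ultrahomogeneity identifies orbits with quantifier-free type classes, of which there are finitely many. Either way $\A$ is then a finite algebra, so $\A\cong\Cm\At\A$ and the representation $h$ from Theorem \ref{hodkinson} is already a representation of the completion. (One small repair in your $\omega$-categorical case: the isolating formulas of orbits may need more than $n$ variables or symbols outside $L$, so to land back inside $\A$ you should pass, as you do in the ultrahomogeneous case, to finite joins of the $MCA$-atoms rather than to arbitrary first-order formulas.) The paper argues quite differently: from ultrahomogeneity plus the finite relational signature it deduces quantifier elimination for $M$; together with the coincidence of relativized and classical semantics for $L^{n}$ this shows $\A\in\Nr_{n}\CA_{\omega}$; then, since $\A$ is countable and atomic, the co-atoms form a non-principal type in the dilation $\B\in{\sf Lf}_{\omega}$, the Orey--Henkin omitting types theorem produces a complete representation of $\A$, and finally any complete representation $f$ of $\A$ induces a representation of $\Cm\At\A$ via $c\mapsto\bigcup\{f(x):x\in\A,\ x\leq c\}$. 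Your route is more elementary -- no neat reducts, no omitting types -- and it exposes the underlying phenomenon, namely that the hypotheses force the atom structure to be finite so that completion adds nothing; it also covers the $\omega$-categorical disjunct directly, which the paper's written proof does not (quantifier elimination does not follow from $\omega$-categoricity, and the paper only treats the ultrahomogeneous case). What the paper's route buys is the stronger intermediate conclusion that $\A$ is completely representable, obtained by a method that does not hinge on finiteness of the atom structure and would therefore survive any weakening of the hypotheses that still yields $\A\in\Nr_{n}\CA_{\omega}$. Finally, the step you flag as ``delicate bookkeeping'' -- passing from finite generation of $\R$ to finiteness of the signature -- is precisely the step the paper itself asserts without proof, so on that point your proposal and the paper stand or fall together.
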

\begin{proof}
Assume that $M$ is ultrahomogeneous and that $\R$ is finitely generated.
Then $M$ is a countable model in binary relational finite signature,
hence it has quantifier elimination. Since the relativized semantics for $L^n$ formulas
coincide with the classical one, we have that $\A\in \Nr_n\CA_{\omega}$. (Remember that semantics  is perturbed
at formulas in $L_{\infty, \omega}^n$.) Now we show that $\A$ is completely representable, then we build from this complete representation
a representation of the complex algebra.
Let $X=\At\A$. Let $\B\in {\sf Lf}_{\omega}$ be such that $\A=\Nr_n\B$ and $A$ generates $\B$, hence $\sum X=1$ in $\B$.
Let $Y$ be the set of co-atoms, then $Y\subseteq \Nr_n\B$ is a non-principal type. By the usual Orey Henkin omitting
types theorem for first order logic, we get a model, that is a representation that omits $Y$.
That is for every non-zero $b\in \B$, we get a set algebra $\C$ having countable base, and
$f:\B\to \C$ such that $f(a)\neq 0$ and $\bigcap_{y\in Y} f(y)=\emptyset$.
Define $g(x)=\{s\in {}^nU: s^+\in f(x)\},$ where $s^+=s$ on $n$ and fixes all other points in $\omega\sim n$,
then $g$ gives a complete simple representation such that $g(a)\neq 0$.
Taking disjoint unions over the non zero elements in $\A$ as is common practise in algebraic logic, we
get the desired complete representation, call it $f$.

Next we represent the complex algebra $\C$. Recall that $\C$ is the
completion of $\A$. Let $c\in C$, let $M$ be the base of the complete representation of $\A$,
and let $c^*=\{x\in \A: x\leq c\}$. Define $g(c)=\bigcup_{x\in c^*} f(x)$, then clearly $g$ defines
a representation of $\C$ into $\wp(^nM),$ and we are done.
\end{proof}
Recall that we dealt with relativized algebras in our previous proof.
For an algebra $\A$ and a logic $L$, that is an extension of $n$
variable first order logic,
$L(A)$ denotes the signature (taken in this logic) obtained by adding an $m$
relation symbol for every element of $\A$.
The notion of relativized representations is indeed important
in algebraic logic. $M$ is a relativized representation of an abstract algebra $\A$, if there exists $V\subseteq {}^mM$, and an injective
homomorphism $f:\A\to \wp(V)$. We write $M\models 1(\bar{s})$, if $\bar{s}\in V$.

\begin{definition}\label{rel}
In the next definition $m$ denotes the dimension.
\begin{enumarab}
\item  Let $M$ be a relativized representation of a $\CA_m$.
A clique in $M$ is a subset $C$ of $M$ such that $M\models 1(\bar{s})$ for all $\bar{s}\in {}^mC$.
For $n>m$, let $C^{n}(M)=\{\bar{a}\in {}^nM: \text { $\rng(\bar{a})$ is a clique in $M$}\}.$
\item Let $\A\in \CA_m$, and $M$ be a relativized representation of $\A$. $M$ is said to be $n$ square, $n>m$,
if whenever $\bar{s}\in C^n(M)$, $a\in A$, and $M\models {\sf c}_ia(\bar{s})$,
then there is a $t\in C^n(M)$ with $\bar{t}\equiv _i \bar{s}$,
and $M\models a(\bar{t})$.
\item  $M$ is infinitary $n$ flat if  for all $\phi\in L(A)_{\infty, \omega}^n$, for all $\bar{a}\in C^n(M)$, for all $i,j<n$, we have
$$M\models \exists x_i\exists x_j\phi\longleftarrow \exists x_j\exists x_i\phi(\bar{a}).$$
$M$ is just $n$ flat, if the above holds for first order formulas using $n$ variables,
namely, for $\phi\in L(A)^n$.
\item $M$ is said to be $n$ smooth if it is $n$ square, and there is are equivalence relations $E^t$, for $t=1,\ldots n$ on $C^{t}(M)$
such that  $\Theta=\{(\bar{x}\to \bar{y}): (\bar{x}, \bar{y})\in \bigcup_{1\leq t\leq n} E^t\}$
(here   $\bar{x}\to \bar{y}$ is the map $x_i\mapsto y_i$)
is an $n$ back and for system of partial isomorphisms on $M$.
\end{enumarab}
\end{definition}

Note that all of the above are a local form of representation where roughly cylindrifiers have witnesses only
on $<m$ cliques.
Closely related to relativized representations are the notion of hyperbasis. This is only formulated for relation algebras \cite[12.2.2]{HHbook}.
We extend it to the polyadic case, and all its reducts down to $\Sc$s.
\begin{definition}
Let $\A\in \sf PEA_j$. Let $j+1\leq m\leq n\leq k<\omega$, and let $\Lambda$ be a non-empty set.
An $n$ wide $m$ dimensional $\Lambda$ hypernetwork over $\A$ is a map
$N:{}^{\leq n}m\to \Lambda\cup \At\A$ such that
$N(\bar{x})\in \At\A$ if $|\bar{x}|=j$ and $N(\bar{x})\in \Lambda$ if $|\bar{x}|\neq j$,
(so that $j$ edges are labelled by atoms, and other edges with labels from $\Lambda$, 
with the following properties for each $i,k<j$, $\delta\in {}^{\leq n}m$, $\bar{z}\in ^{\leq n-2}m$
and $d\in m$:
\begin{itemize}
\item $N(\delta^i_j)\leq {\sf d}_{ij}$
\item $N(x,x, \bar{z})\leq d_{01}$
\item $N(\delta[i\to d])\leq {\sf c}_iN(\delta)$
\item $N(\delta\circ [i,j])={\sf s}_{[i,j]}N(\delta).$
\item If $\bar{x}, \bar{y}\in {}^{\leq n}m$, $|\bar{x}|=|\bar{y}|$ and $\exists \bar{z}$, such that
$N(x_i,y_i,\bar{z})\leq {\sf d}_{01}$,  for all $i<|\bar{x}|$, then $N(\bar{x})=N(\bar{y})$
\item when $n=m$, then $N$ is called an $n$ dimensional $\Lambda$ hypernetwork.
\end{itemize}
\end{definition}

Then $n$ wide $m$ dimensional {\it hyperbasis} can be defined like the relation algebra case \cite{HHbook}, where the amalgamation property is
the most important (it corresponds to commutativity of cylindrifiers).

For an $n$ wide $m$ dimensional $\Lambda$ hypernetworks, and $\bar{x}\in {}^{<\omega}m)$, we define
$N\equiv_x M$
if $N(\bar{y})=M(\bar{y})$ for all
$\bar{y}\in {}^{\leq n}(m\sim \rng(\bar{x}))$.

In more detail:

\begin{definition} The set of all $n$ wide $m$ dimensional hypernetworks will be denoted by $H_m^n(\A,\Lambda)$.
An $n$ wide $m$ dimensional $\Lambda$, with $j+1\leq m\leq n$
hyperbasis for $\A$ is a set $H\subseteq H_m^n(\A,\lambda)$ with the following properties:
\begin{itemize}
\item For all $a\in \At\A$, there is an $N\in H$ such that $N(0,1,\ldots j)=a$
\item For all $N\in H$ all $\bar{x}\in {}^n\nodes(N)$, for all $i<j$ for all $a\in\At\A$ such that
$N(\bar{x})\leq {\sf c_i}a$, there exists $\bar{y}\equiv_i \bar{x}$ such that $N(\bar{y})=a$
\item For all $M,N\in H$ and $x,y<n$, with $M\equiv_{xy}N$, there is $L\in H$ such that
$M\equiv_xL\equiv_yN$. Here $M\equiv_SN$, means that $M$ and $N$ agree off of $S$.

\item Assume that $j+1\leq m\leq n\leq k$.
For a $k$ wide $n$ dimensional hypernetwork $N$, we let $N|_m^k$ the restriction of the map $N$ to $^{\leq k}m$.
So we restrict the first $m$ nodes but keep all hyperlabels on them.
For $H\subseteq H_n^k(\A,\Lambda)$ we let $H|_k^m=\{N|_m^k: N\in H\}$.

\item When $n=m$, $H_n(\A,\Lambda)$ is called an $n$ dimensional hyperbases.
\end{itemize}
We say that $H$ is symmetric, if whenever $N\in H$ and
$\sigma:m\to m$, then $N\circ\sigma\in H$, the latter, which we denote simply by $N\sigma$ is defined by
$N\sigma(\bar{x})=N(\sigma(x_0),\ldots ,\sigma(x_{n)})$, for $\bar{x}\in {}^n\nodes(N)$.

\end{definition}

When $n=m$, then $H_n^n(\A, \Lambda)$ denoted simply by $H_n(\A, \Lambda)$ is called an $n$ dimensional hyperbasis.
If an addition, $|\Lambda|=1$, then $H_m^n(\A, \Lambda)$ is called an $n$ dimensional cylindric basis. (These were defined
by Maddux, and later generalized by Hirsch and Hodkinson to hyperbasis).

\begin{lemma}\label{step} If  $\A\in \PEA_m$ has an $n$ dimensional hyperbasis
then $\A$ has a relativized $n$ smooth representation.
\end{lemma}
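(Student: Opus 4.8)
The plan is to construct an $n$ smooth relativized representation of $\A$ directly from the $n$ dimensional hyperbasis $H = H_n(\A,\Lambda)$, following the blueprint established for relation algebras in Hirsch--Hodkinson \cite{HHbook} and adapting it to the polyadic equality setting. The base of the representation will be built as a saturated chain of hypernetworks: I would run an $\omega$-round building game in which $\forall$ challenges with cylindrifier witnesses and amalgamation pairs, and $\exists$ answers using exactly the three closure properties in the definition of a hyperbasis (the witness property for $\cyl{i}$, and the amalgamation property $M\equiv_{xy}N \Rightarrow \exists L\, (M\equiv_x L\equiv_y N)$). Because there are only countably many demands to meet when $\A$ is countable (and one can dovetail otherwise), a standard back-and-forth bookkeeping produces a limit hypernetwork $N_\omega$ whose node set $M$ carries all the required labels coherently.

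Next I would define the unit $V\subseteq{}^nM$ to be the set of tuples whose range is a clique in the sense induced by $N_\omega$, and define $f:\A\to\wp(V)$ by sending $a\in\A$ to the set of tuples $\bar s$ with $N_\omega(\bar s)\leq a$ at the $j$-labelled edges (using that $j$ edges are labelled by atoms). The defining clauses $N(\delta^i_j)\leq\diag{i}{j}$, $N(\delta[i\to d])\leq\cyl{i}N(\delta)$, and $N(\delta\circ[i,j])=\sub{}{[i,j]}N(\delta)$ are precisely what is needed to verify that $f$ respects diagonals, cylindrifiers, and the polyadic substitutions; injectivity and the Boolean clauses follow from the first witness property of $H$ (every atom appears as some $N(0,1,\ldots,j-1)$). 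The squareness of $M$ — that every $\cyl{i}a$ holding at an $n$-clique tuple has an $\equiv_i$ witnessing clique tuple — is read off directly from the cylindrifier witness clause in the hyperbasis, so $M$ is $n$ square.

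To upgrade squareness to $n$ smoothness I must supply the equivalence relations $E^t$ on $C^t(M)$ for $1\le t\le n$ and check that $\Theta=\{(\bar x\to\bar y):(\bar x,\bar y)\in\bigcup_t E^t\}$ is an $n$ back-and-forth system of partial isomorphisms. The natural choice is to declare two clique tuples equivalent when the hypernetwork assigns them the same hyperlabels and atoms on all corresponding edges and subtuples; the amalgamation property of $H$ is exactly the combinatorial input that lets a partial isomorphism in $\Theta$ be extended one node at a time, giving the ``forth'' (and by symmetry ``back'') condition. The hyperlabels drawn from $\Lambda$ are the device that records enough of the local isomorphism type to make this extension coherent, mirroring their role in the relation algebra case.

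The main obstacle I anticipate is the verification that $\Theta$ genuinely forms an $n$ back-and-forth system — that is, that the extension step in the smoothness clause can always be carried out using only the hyperbasis axioms, rather than requiring a richer ($k$ wide, $k>n$) structure. In the relation algebra theory this is the delicate heart of the argument, where one must show the amalgamation moves preserve hyperlabels consistently across overlapping tuples; transcribing it to $\PEA_m$ requires checking that the added polyadic substitution operators $\sub{i}{j}$ interact correctly with the relabelling maps, and that the uniqueness condition (if $N(x_i,y_i,\bar z)\leq\diag{0}{1}$ for all $i$ then $N(\bar x)=N(\bar y)$) suffices to keep the induced partial maps well defined. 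I expect everything else to be routine bookkeeping once this back-and-forth extension lemma is in hand.
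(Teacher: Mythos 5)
Your proposal is correct and follows essentially the same route as the paper: a step-by-step (equivalently, game-theoretic) construction of a labelled hypergraph base from the hyperbasis, with squareness read off from the cylindrifier witness clause and smoothness obtained by defining equivalence relations $E^t$ on clique tuples via agreement of the induced hypernetwork labels (the paper pads a $t$-tuple $\bar a$ to an $n$-tuple $a^*$ and sets $E^t(\bar a,\bar b)$ iff $M(a^*)=M(b^*)$, which is your label-agreement condition). The only slip is notational: the unit of the representation should consist of $m$-tuples (the dimension of $\A$), i.e. the atom-hyperedges, not $V\subseteq{}^nM$; the $n$-cliques enter only in the squareness and smoothness conditions, exactly as in the paper's construction.
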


\begin{proof}
We build a relativized representation $M$
in a step- by-step fashion; it will be a labelled hypergraph satisfying the the following properties

\begin{enumarab}

\item Each hyperedge is labelled by an atom of $\A$

\item $M(\bar{x})\leq {\sf d}_{ij}$ iff $x_i=x_j$. (In this case, we say that $M$ is strict).

\item For any clique $\{x_0,\ldots, x_{n-1}\}\subseteq M$,
there is a unique $N\in H$ such that $(x_0,\ldots x_{n-1})$ is labelled by $N$ and we write this as
$M(x_0,\ldots x_{n-1})=N$

\item  If $x_0,\ldots, x_{n-1}\in M$ and $M(\bar{x})=N\in H$,
then for all $i_0\ldots i_{m-1}<n$, $(x_{i_0}\ldots x_{i_{m-1}})$
is a hyperedge and $M(x_{i_0}, x_{i_1}, \ldots  x_{i_{m-1}})=N(i_0,\ldots i_{m-1})\in \At\A$.

\item $M$ is symmetric; it closed under substitutions. That is, if $x_0,\ldots x_{n-1}\in M$
are such that  $M(x_0,\ldots x_{n-1})=N$,
and $\sigma:n\to n$ is any map, then  we have
$M(x_{\sigma(0)}, \ldots x_{\sigma(n-1)})=N\sigma$.

\item If $\bar{x}$ is a clique, $k<n$ and $N\in H$, then $M(\bar{x})\equiv_k N$ if
and only if there is
$y\in M$ such that $M(x_0,\ldots x_{k-1}, y, x_{k+1}, \ldots x_{n-1})=N.$

\item For every $N\in H$, there are $x_0,\ldots,  x_{n-1}\in M$, $M(\bar{x})=N$

\end{enumarab}

We build a chain of hypergraphs $M_t:t<\omega$ their limit (defined in a precise
sense) will as required.  Each $M_t$ will have hyperedges of length $m$ labelled by
atoms of $\A$, the labelling of other hyperedges will be done later.

To avoid confusion we call hyperedges that are to be labelled by atoms atom-hyperedges, these have length
exactly $m$, and those that have $\leq n$ length not equal to $m$ (recall that $3\leq m<n$)
that are labelled by elements in $n$ simply by hyperedges.

We proceed by a step by step manner, where the defects are treated one by one, and they are diffused at the limit obtaining the required
hypergraph, which also consists of
two parts, namely, atom-hyperedges and hyperedges.

This limiting hypergraph will be compete (all atom hypergraphs and hypergraphs will be labelled;
which might not be the case with $M_t$ for $t<\omega$.)
Every atom-hyperedge will indeed be labelled by an atom of $\A$ and hyperedges with length
$\neq  m$ will also be labelled, but  by indices from $n$.

We require inductively that $M_t$ also satisfies:

Any clique in $M_t$ is contained in $\rng(v)$ for some $N\in H$ and some embedding $v:N\in M_t$.
such that $v:n\to \dom(M_t)$
and this embedding satisfies the following two conditions:

(a) if $(v(i_0), \ldots v_(i_{m-1})$ is a an atom  hyperedge of $M_t$, then
$M_t(\bar{v})=N(i_0,\ldots i_{n-1}).$

(b) Whenever $a\in {}^{\leq ^n}n$ with $|a|\neq m$, then $v(a)$ is a
hyperedge of $M_t$ and is labelled by $N(\bar{a})$.

(Note that an embedding might not be injective).
A network is strict if  $\N(\bar{x})\leq {\sf d}_{ij}$, then $x_i=x_j$.
For the base of the induction we construct  $M_0.$ We proceed as follows.
Let $N$ be a hypernetwork and let $S\subseteq n$. $N|S$ is maximal strict if it is strict
and  any for  $T\subseteq n$, such that  $T$ contains $S$, $N|T$ is not strict.

Let $M_0$ be the disjoint union
of all maximal strict labelled hypergraph
$N|S$ where $N\in H$ and $S\subseteq N$
and the atom-hyperedges and hyperedges are induced (the natural way) from $N$.
For all $i<n$, there is a unique $s_i\in S$, and $\bar{z}$, such that
$N(i, s_i, \bar{z})\leq {\sf d}_{01}$,  then set  $v(i)=s_i$, for $i<n$

Now assume inductively that $M_t$ has been  defined for all $t<\omega$.

We now define $M_{t+1}$ such that for every quadruple
$(N, v, k, N')$ where $N, N'\in H$, $k<n$ and $M\equiv_k N'$ and $v:N\to M_t$
is an embedding, then  the restriction $v\upharpoonright n\sim \{k\}$ extends to an embedding
$v'; N'\to M_{t+1}$

\begin{itemize}

\item For each such $(N, v,k, N')$ we add just one new node $\pi$,
and we add the following atom hyperedges $(\pi, v(i_1)\ldots v(i_{m-2})$ for each $i_1,\ldots  i_{m-2}\in n\sim k$
that are pairwise distinct. Such new atom-hyperedges are labelled by $N'(k, i_0,\ldots i_{m-2}) $. This is well defined.
We extend $ v$ by defining $v'(k)=\pi$.

\item We add a new hyperedge $v'(\bar{a})$ for every $\bar{a}\in {}^nn$ of length $\neq m$,
with $k\in \rng (\bar{a})$ labelled by $N'(\bar{a})$.

\end{itemize}
Then $M_{t+1}$ will be $M_t$ with its old labels, atom hyperedges,  labelled hyperedges
together with the new ones define as above.

It is easy to check that the inductive hypothesis  are preserved.

Now define a labelled hypergraph as follows:
$\nodes(M)=\bigcup \nodes(M_t)$,
for any $\bar{x}$ that is an atom-hyperedge; then it is a one in some $M_t$
and its label is defined in $M$ by $M_t(\bar{x})$

The hyperedges are $n$ tuples $(x_0,\ldots x_{n-1})$.
For each such tuple, we let $t<\omega$,
such that $\{x_0\ldots x_{n-1}\} \subseteq  M_t$,
and we set $M(x_0,\ldots x_{n-1})$   to be the unique
$N\in H$ such that there is an embedding
$v:N\to M$ with  $\bar{x}\subseteq \rng(v).$
This can be easily checked to be well defined. We check existence and uniqueness. The latter is
clear from the definition of embedding. Note that there is an $N\in H$
and an embedding $v:N\to M_t$ with $(x_0, \ldots x_{n-1})\subseteq \rng(v)$.
So take $\tau:n\to n$, such that $x_i=v(\tau(i))$ for each $i<n$.
As $H$ is symmetric, $N\tau\in H$, and clearly $v\circ \tau: N\tau\to M_t$
is also an embedding. But $x_i=v\circ \tau(u)$ for each $i<n$,
then let $M(x_0,\ldots x_{n-1})=N\tau$

Now we show that $M$ is an $n$ square relativized representation.
Let $L(A)$ be the signature obtained by adding an $n$ ary relation symbol for each element
of $\A$.
Define $M\models r(\bar{x})$ if $\bar{x}$ is an atom  hyperedge and $M(\bar{x})\leq r$

Now let $\bar{x}\in C^n(M),$ $k<m$ and $M\models r(\bar{x})$. We require that there exists $y\in C^n(M)$
$y\equiv_k x$ and $M\models r(\bar{y})$. Take $i_0, \ldots i_{m-1}<n$ different from $k$.
Now $M(\bar{x})=N$ for some $N\in H$.
By properties of hyperbasis, there is $P\in H$  with $P\equiv_k N$.
Hence $P(i_0, \ldots i_{k-1}, k, i_{k+1}\ldots )\leq r$. But by properties of $M$, there  is
an $n$ tuple $\bar{y}\equiv k\bar{x}$ such that $M(y)=P$.
Then $\bar{y}\in C^n(M)$ is as required.

Finally, for $n$ smoothness we need to define equivalence relations
$E^m$ on $C^m(M)$, satisfying the definition, for $0\leq m\leq n$.
For each such $m$ and $\bar{a}\in C^m(M)$,
define  $a^*=(a_0, a_1,\ldots a_{m-1}, a_0,\ldots a_0)\in C^n(M)$, where we have $n$
copies of $a_0$ after $a_0,a_1,\ldots a_{m-1}$. Now for $\bar{a}$, $\bar{b}\in C^n(M)$, set
$$E^m(\bar{a},\bar{b})\longleftrightarrow M(a^*)=M(b^*).$$
This is as required.
\end{proof}

\begin{theorem}\label{smooth} For every $n\geq 3$ there exists a
countable atomic $\sf PEA_n$, such that the $\CA$ reduct of
its completion  does not have an $n+4$ smooth representation, in particular, it is not  representable. Furthermore, its
${\sf Df}$ reduct is not representable.
\end{theorem}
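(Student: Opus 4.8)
The plan is to re-run the rainbow construction of Theorem \ref{hodkinson}, but with the colours truncated exactly as flagged there: take $|\sf G|=n+2$ greens $\g_0^0,\dots,\g_0^{n+1}$ and $|\sf R|=n+1$ reds, while leaving all the surrounding machinery intact (the $n$-homogeneous model $M$, the relativisation to $W$, the back-and-forth systems $\Theta^\chi$, the atomic relativised set algebra $\A$, and its completion $\C=\Cm\At\A$). With this single change Theorem \ref{hodkinson} already delivers, verbatim, that $\A$ is a countable atomic $\sf PEA_n$ which is representable, i.e.\ that $\At\A$ is weakly representable. Since taking completions does not alter the atom structure, $\At\C=\At\A$, so the only genuinely new work is to show that the finite truncation buys the stronger conclusion for the completion, namely that $\Rd_{ca}\C$ has no $(n+4)$-smooth representation.

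I would phrase the target as $\Rd_{ca}\C\notin\SNr_n\CA_{n+4}$ and then use the dictionary between relativised representations, hyperbases and neat-embedding games. By Lemma \ref{step} an $(n+4)$-dimensional hyperbasis yields an $(n+4)$-smooth representation; running the correspondence in the other direction, using the back-and-forth system $\Theta$ built into the notion of $n$-smoothness in Definition \ref{rel}, an $(n+4)$-smooth representation of $\Rd_{ca}\C$ would supply player $\exists$ with a winning strategy in the usual $\omega$-round atomic game played on $\At\C$ with $n+4$ available nodes. As $\C$ is complete and atomic, it therefore suffices to exhibit a winning strategy for player $\forall$ in this $(n+4)$-node game over the rainbow atom structure $\At\C$.

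The combinatorial heart, which I expect to be the main obstacle, is the design of $\forall$'s strategy: it is the classical rainbow cone attack tuned to the truncated parameters. He fixes a base and plays over it successive cones, colouring their apexes with the $n+2$ distinct greens $\g_0^0,\dots,\g_0^{n+1}$; the cylindrifier moves then force $\exists$ to colour every edge joining two apexes by a red, and the rainbow consistency condition on red triangles forces the indices of these reds to form a strictly descending sequence. With $n+4$ nodes $\forall$ can sustain $n+2$ such cones, and so demand a descending chain of red indices of length $n+2$, whereas only $n+1$ reds are available; by pigeonhole $\exists$ is eventually compelled to repeat an index or reverse the order, i.e.\ to label a triangle by a red configuration that is not a consistent atom of $\At\C$, so she has no legal response and loses. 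The delicate point is exactly the bookkeeping that $n+2$ greens against $n+1$ reds with $n+4$ nodes is the precise threshold at which the descending-red argument closes; this is where replacing Hodkinson's $\sf G=\sf R=\omega$ by finite colour sets pays off. One must also check that the cone structure is genuinely forced and survives the relativisation to $W$, so that the attack lives on the true atoms of $\C$, even though the very same reds, treated merely as shades, allow $\exists$ to survive on the finite-rank term algebra $\A$ — which is what keeps $\A$ itself representable.

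Having shown $\forall$ wins the $(n+4)$-node game, we conclude $\Rd_{ca}\C\notin\SNr_n\CA_{n+4}$, so $\Rd_{ca}\C$ has no $(n+4)$-smooth representation. A genuine classical representation would be $m$-smooth for every finite $m$, since it yields hyperbases, hence back-and-forth systems, of every finite dimension; in particular $\Rd_{ca}\C$ is not representable. For the diagonal-free reduct I would argue exactly as in the summary of Theorem \ref{hodkinson}: the algebra is generated by elements whose dimension sets are $<n$, so a representation of $\Rd_{df}\C$ would induce a complete representation of $\Rd_{df}\A$, hence of $\A$, and thereby a representation of $\C$, contradicting the previous step. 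Therefore $\Rd_{df}\C$ is not representable either, which completes the proof.
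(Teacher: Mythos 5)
Your bookends match the paper: the truncated rainbow parameters $|{\sf G}|=n+2$, $|{\sf R}|=n+1$ with the machinery of Theorem \ref{hodkinson}, and the final transfer to the $\Df$ reduct via generation by elements of dimension set $<n$, are exactly what the paper does, and the cone-plus-pigeonhole combinatorics you describe is indeed the heart of the matter. The genuine gap is your bridge between smoothness and games. You claim that an $(n+4)$-smooth representation of $\Rd_{ca}\C$ would supply \pe\ with a winning strategy in the $\omega$-round atomic game on $\At\C$ with $n+4$ nodes, and you then refute this by giving \pa\ a winning strategy. But the atomic game characterizes \emph{complete} neat embeddings and \emph{complete} relativized representations: \pa\ winning the $(n+4)$-node game yields only $\C\notin S_c\Nr_n\CA_{n+4}$, whereas a plain (not complete) $(n+4)$-smooth representation yields only $\C\in {\bf S}\Nr_n\CA_{n+4}$ (the paper's equivalence theorem), and these two facts are compatible. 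Concretely, in a plain smooth representation $g:\C\to \wp(V)$ a clique tuple $\bar{a}$ need not satisfy any atom of $\C$ — the ultrafilter $\{c\in \C: \bar{a}\in g(c)\}$ can be non-principal — so \pe\ cannot read off atomic-network responses from the representation; the back-and-forth system in Definition \ref{rel} lives on the base $M$ and does not repair this. For the same reason your argument as written does not even rule out classical representability, since a classical representation gives a plain, not complete, embedding into a neat reduct. The paper's Corollary \ref{can} shows which way the traffic actually flows: from $\C\in {\bf S}\Nr_n\CA_{n+4}$ one passes to the canonical extension $\C^+\in S_c\Nr_n\D^+$, builds an $(n+4)$-dimensional hyperbasis, and only then gets a smooth representation, contradicting Theorem \ref{smooth}; the game/hyperbasis statement is \emph{derived from} the representation statement, not conversely. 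Note also that \pe\ may well survive your game played on $\At\C^+$ rather than $\At\C$, because the ultrafilter frame contains the shades of red as extra atoms.

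The minimal repair, which is exactly the paper's proof, is to run your cone attack directly inside the assumed representation rather than in the atomic game. Since $\C$ is the completion, it contains the infinitary joins $R_{jk}=\bigvee_{i}\r^{i}_{jk}(x_0,x_{n-1})^W$, and any homomorphism must respect these \emph{elements}. One locates a tuple realizing $\y_{n+2}$ in the representation, then uses the relativized witnessing of cylindrifiers (this is where the clique bound $n+4$ enters) to produce apexes $c_t$, $t<n+3$, of cones over this base; every edge between two apexes must then lie in the image of some $R_{jk}$, and the pigeonhole over the finitely many red indices forces a tuple satisfying an inconsistent red triangle, i.e.\ $g$ sends a nonzero element of $\C$ to $\emptyset$ — a contradiction valid for an arbitrary, not necessarily complete, representation. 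This formulation also explains why the term algebra $\A$ escapes: the joins $R_{jk}$ simply do not exist there. A last, secondary point: your pebble arithmetic ("with $n+4$ nodes \pa\ can sustain $n+2$ cones") is doubtful in the game setting, where the base already occupies $n-1$ nodes; in the representation setting it is a non-issue, since all apexes coexist in the model and only cliques of size at most $n+4$ are ever invoked.
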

\begin{proof}
Here we closely follow \cite{Hodkinson}; but our reds and greens are finite,
so we obtain a stronger result. We take $|{\sf G}|=n+2$, and ${\sf R}=n+1$.
Let $L^+$ be the rainbow signature consisting of the binary
relation symbols $\g_i :i<n-1, \g_0^i: \ i< n+2, \w, \w_i: i <n-2, \r_{jk}^i  (i<n+1, j<k<n)$
and the $(n-1)$ ary-relation symbols
$\y_S: S\subseteq n+2)$, together with a shade of red $\rho$ that is outside the rainbow
signature but is a binary relation in the sense that it can label edges
of coloured graphs. Let $\GG$ be the class of corresponding rainbow coloured graphs.
By the same methods as above, there is a countable model  $M\in \GG$ with the following
property:\\
$\bullet$ If $\triangle \subseteq \triangle' \in \GG$, $|\triangle'|
\leq n$, and $\theta : \triangle \rightarrow M$ is an embedding,
then $\theta$ extends to an embedding $\theta' : \triangle'
\rightarrow M$. Now let $W = \{ \bar{a} \in {}^n M : M \models ( \bigwedge_{i < j < n,
l < n} \neg \rho(x_i, x_j))(\bar{a}) \}$. Then $\A$ with universe $\{\phi^W: \phi\in L_n\}$ and operations defined the usual way,
is representable, and its
completion, the complex algebra over the  above rainbow atom structure, $\C$ has universe $\{\phi^W: \phi\in L_{\infty, \omega}^n\}$.

We show that $\C$ is as desired. Assume, for contradiction, that $g:\C\to \wp(V)$ induces a relativized $n+4$ flat representation.
Then $V\subseteq {}^nN$ and we can assume that
$g$ is injective because $\C$ is simple. First there are $b_0,\ldots b_{n-1}\in N$ such $\bar{b}\in
h(\y_{n+2}(x_0,\ldots x_{n-1}))^W$, cf \cite[lemma 5.7]{Hodkinson}.
This tuple will be the base of finitely many cones, that will be used to force an inconsistent triple of reds.
This is because $\y_{n+2}(\bar{x})^W\neq \emptyset$.  For any $t<n+3$, there is a $c_t\in N$, such
that $\bar{b}_t=(b_0,\ldots b_{n-2},\ldots c_t)$ lies in $h(\g_0^t(x_0, x_{n-1})^W$ and in $h(\g_i(x_i, x_{n-1})^W$ for each $i$ with
$1\leq i\leq n-2$, cf \cite[lemma 5.8]{Hodkinson}.  The $c_t$'s are the apexes of the cones with base $\y_{n+2}$.
Take the formula
$$\phi_t=\y_{n+2}(x_0,\ldots ,x_{n-2})\to \exists x_{n-1}(\g_0^t(x_0, x_{n-1}))\land \bigwedge_{1\leq i\leq n-2}\g_i(x_i, x_{n-1})),$$
then $\phi_t^{W}=W$. Pick $c_t$ and $\bar{b_t}$ as above, and define for each $s<t<n+3,$ $\bar{c_{st}}$ to be
$(c_s,b_1,\ldots  b_{n-2}, c_t)\in {}^nN.$
Then $\bar{c}_{st}\notin h((x_0,\ldots  x_{n-1})^W$. Let $\mu$ be the formula
$$x_0=x_{n-1}\lor \w_0(x_0, x_{n-1})\lor \bigvee \g(x_0, x_{n-1}),$$
the latter formula is a first order formula consisting of the disjunction of  the (finitely many ) greens.
For $j<k<n$, let $R_{jk}$ be the $L_{\infty\omega}^n$-formula $\bigvee_{i<\omega}\r_{jk}^i(x_0, x_{n-1})$.
Then
$\bar{c}_{st}\notin h(\mu^W)$, now for each $s<t< n+3$, there are $j<k<n$ with $c_{st}\in h(R_{jk})^W.$
By the pigeon- hole principle, there are $s<t< n+3$ and $j<k<n$
with $\bar{c}_{0s}, \bar{c}_{0t}\in h(R_{jk}^W)$. We have also $\bar{c}_{st}\in h(R_{j',k'}^W)$
for some $j', k'$ then the sequence $(c_0, c_s,\ldots,  b_2,\ldots b_{n-2},\ldots, c_t)\in h(\chi^W)$
where
$$\chi=(\exists_1R_{jk})(\land (\exists x_{n-1}(x_{n-1}=x_1\land \exists x_1 R_{jk})\land (\exists x_0(x_0=x_1)\land \exists x_1R_{j'k})),$$
so $\chi^W\neq \emptyset$. Let $\bar{a}\in \chi ^W$. Then $M\models _W  R_{jk}(a_0,a_{n-1})\land R_{jk}(a_0,a_1)\land R_{j'k'}(a_1, a_{n-1})$.
Hence there are
$i$, $i'$ and $i''<\omega$ such that
$$M\models _W\r_{jk}^{i}(a_0,a_{n-1})\land \r_{jk}^{i'}(a_0,a_1)\land \r_{j'k'}^{i''}(a_1, a_{n-1}),$$
cf. \cite[lemma 5.12]{Hodkinson}.
But this triangle is inconsistent. Note that this inconsistent red was forced by an $n+4$ red clique
labelling edges between apexes of the same cone, with base labelled by $\y_{n+2}$.

For the last part, if its $\sf Df$ reduct is representable, then $\Rd_{df}\A$ will be completely representable,
hence $\A$ itself will be completely representable because it is generated by elements whose dimension set
$<n$, which is a contradiction.
\end{proof}
\begin{corollary}\label{can} We have $\C\notin S\Nr_n\CA_{n+4}$. In particular for any $k\geq 4$,  the variety
$S\Nr_n\CA_{n+k}$ is not atom canonical.
\end{corollary}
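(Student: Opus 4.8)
The plan is to deduce the corollary from Theorem~\ref{smooth} together with the machinery relating neat embeddings to smooth relativized representations set up above. Write $\C$ for the $\CA$ reduct of the completion produced in Theorem~\ref{smooth}; we already know that $\C$ admits no relativized $(n+4)$-smooth representation. Arguing by contradiction, I would assume $\C\in S\Nr_n\CA_{n+4}$ and extract from this an $(n+4)$-dimensional hyperbasis for $\C$. First I would recall the correspondence between neat embeddability and hyperbases, extending the relation algebra case of \cite{HHbook} to the cylindric setting: if a $\CA_n$ embeds into $\Nr_n\B$ for some $\B\in\CA_{n+4}$, then from the four extra dimensions one reads off an $(n+4)$-wide $n$-dimensional hyperbasis $H$ for $\C$, the hypernetworks being obtained by evaluating the cylindric terms of the dilation on the first $n$ coordinates, and the crucial amalgamation property of $H$ being exactly the commutativity of cylindrifiers in $\B$. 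Once $H$ is in hand, Lemma~\ref{step} (applied to the relevant reduct) manufactures a relativized $(n+4)$-smooth representation of $\C$, contradicting Theorem~\ref{smooth}. Hence $\C\notin S\Nr_n\CA_{n+4}$.

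For the ``in particular'' clause I would use two elementary observations. Monotonicity in the number of dimensions gives $S\Nr_n\CA_{n+k}\subseteq S\Nr_n\CA_{n+4}$ for every $k\geq 4$, since any witnessing dilation $\B\in\CA_{n+k}$ has an $(n+4)$-dimensional reduct lying in $\CA_{n+4}$ with the same $n$-neat reduct; thus the first part already yields $\C\notin S\Nr_n\CA_{n+k}$ for all such $k$. To conclude the failure of atom-canonicity, recall from Theorem~\ref{smooth} that the atomic algebra $\A$ with unit $W$ is representable as a $\PEA_n$, so its $\CA$ reduct lies in $\RCA_n=S\Nr_n\CA_\omega\subseteq S\Nr_n\CA_{n+k}$, while $\Cm\At\A=\C$ does not. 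Since $\A$ is atomic and $\Cm\At\A$ is its \d\ completion, this is a single atomic member of $S\Nr_n\CA_{n+k}$ whose completion leaves the class, witnessing non-atom-canonicity for each $k\geq 4$.

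The main obstacle is the first implication: that $\C\in S\Nr_n\CA_{n+4}$ forces a genuine $(n+4)$-dimensional hyperbasis of the \emph{smooth} strength demanded by Lemma~\ref{step}, rather than merely the weaker flat or square relativized data that neat embeddability supplies off the shelf. Neat embeddability delivers commutativity of cylindrifiers on cliques, hence an $(n+4)$-flat (a fortiori $(n+4)$-square) representation; but the contradiction in Theorem~\ref{smooth} is phrased for smooth representations, and smoothness additionally requires the back-and-forth system of partial isomorphisms encoded by the equivalence relations $E^t$. Bridging this gap is precisely what routing the argument through the hyperbasis and Lemma~\ref{step} accomplishes, so I would carry everything through hyperbases rather than through flat representations directly, taking care that the polyadic substitution operations are respected so that the symmetry clause on $H$ holds and Lemma~\ref{step} applies without modification.
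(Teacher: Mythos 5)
Your proposal follows the paper's own route almost verbatim: assume for contradiction that $\C\in S\Nr_n\CA_{n+4}$, extract an $(n+4)$-dimensional hyperbasis via the cylindric lifting of the relation-algebraic correspondence in \cite{HHbook}, feed it to Lemma~\ref{step} to obtain an $(n+4)$-smooth relativized representation, and contradict Theorem~\ref{smooth}; the monotonicity-plus-representability argument you give for the ``in particular'' clause is also the intended one. The only point you leave implicit, and which the paper makes explicit, is that the dilation $\D$ witnessing $\C\subseteq\Nr_n\D$ need not be atomic, so one must first pass to canonical extensions ($\C^+\in S_c\Nr_n\D^+$, with $\D^+$ and each $\Nr_l\D^+$ atomic) before the atom-labelled hypernetworks $N_x$ can be defined at all; the hyperbasis is then a hyperbasis for $\C^+$, whose smooth representation restricts to one of $\C$.
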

\begin{proof}
The first part. Assume, for contradiction,  that $\C\in S\Nr_n\CA_{n+4}$; let $\C\subseteq \Nr_n\D$.
Then $\C^+\in S_c\Nr_n\D^+$, and $\D^+$ is of course atomic. We show that $\C^+$ has an $n+4$ dimensional hyperbasis,
then that it has an $n+4$ smooth representation, which contradicts the previous
theorem. Here again hyperbasis are defined for cylindric algebras by a straightforward lifting from the relation algebra case.

First note that for every $n\leq l\leq m$, $\Nr_l\D^+$ is atomic.
Indeed, if $x$ is an atom in $\D^+$, and and $n\leq l<m$,
then ${\sf c}_{l}\ldots {\sf c}_{m-l+1}x$ is an atom in $\Nr_l\D^+$,
so if $c\neq 0$ in the latter, then there exists $0\neq a\in \At\D^+$,
such that $a\leq c$, and so ${\sf c}_{l}\ldots {\sf c}_{m-1+1}a\leq c_{l}\ldots {\sf c}_{m-1+1}c=c$.

Let $\Lambda=\bigcup_{k<n+3}\At\Nr_k\D^+$, and let $\lambda\in \Lambda$.
In this proof we follow closely section 13.4 in \cite{HHbook}. The details are omitted because they are identical
to the corresponding ones in op.cit.
For each atom $x$ of $\D$, define $N_x$, easily checked to be an $m$ dimensional   $\Lambda$ hypernetwork, as follows.
Let $\bar{a}\in {}^{n+4}n+4$ Then if $|a|=n$,  $N_x(a)$ is the unique atom $r\in \At\D$ such that $x\leq {\sf s}_{\bar{a}}r$.
Here substitutions are defined as above.
If $n\neq |\bar{a}| <n+3$, $N_x(\bar{a})$ the unique atom $r\in \Nr_{|a|}\D$ such that $x\leq s_{\bar{a}}r.$
$\Nr_{|a|}\D$ is easily checked to be atomic, so this is well defined.

Otherwise, put  $N_x(a)=\lambda$.
Then $N_x$ as an $n+4$ dimensional $\Lambda$ hyper-network, for each such chosen $x$ and $\{N_x: x\in \At\C\}$
is an $n+4$ dimensional $\Lambda$ hyperbasis.
Then viewing those as a saturated set of mosaics, one can can construct a complete
$n+4$ smooth representation of $M$ of $\C$, see \cite[proposition 13.37]{HHbook}. But this contradicts the previous theorem
\ref{smooth}
\end{proof}
\subsubsection{A different view, blowing up and blurring a finite rainbow cylindric algebra}

In the following we denote the rainbow algebra $R(\Gamma)$ defined in \cite{HHbook2} by $\CA_{\sf G, \sf R}$ where $\sf R=\Gamma$
is the graph of reds, which will be a complete irreflexive graph, and
$\sf G$ the indices greens with subscript $0$.

The idea used here is a typical instance of a blow up and blur construction. Let ${\sf L}\subseteq \RCA_n$ be closed under forming subalgebras.
Start with a finite (atomic) algebra $\C$ such that $\C\notin \sf L$.
Then blow up and blur its atom structure buy splitting each of its atoms into infinitely
many. This way we get  a new infinite atom structure $\At$ which has a finite
set of blurs (non principal ultrafilters).
These blurs play a double role.
They blur $\A$ at this level, so
it does not embed in $\Tm\At\A$,
and viewed as colours
they are also
used to represent
$\Tm\At$.

But $\C$ is still there on the global level, meaning that it embeds into $\Cm\At$ by sending each atom to
the infinite disjunct of
its copies, the latter is complete, so these joins exist. (They {\it do not exist} in the term algebra, for otherwise it would also
be non representable.)

This implies that  $\Cm\At$ is also outside $\sf L$ because $\C\notin \sf L$,
and $\sf L$ is closed under forming subalgebras.

Let $\At$ be the rainbow atom structure in \cite{Hodkinson} except that we have $n+2$ greens and
$n+1$ reds, that is the rainbow atom structure dealt with in \ref{smooth}.
The rainbow signature now consists of $\g_i: i<n-1$, $\g_0^i\i\in n+2$, $\r_{kl}^t: k,l\in n+1$, $t\in \omega$,
binary relations and $\y_S$ $S\subseteq \Z$,
$S$ finite and a shade of red $\rho$; the latter is outside the rainbow signature,
but it labels coloured graphs during the game, and in fact \pe\ can win the $\omega$ rounded game
and build the $n$ homogeneous model $M$ by using $\rho$ when
she is forced a red.

Then $\Tm\At$ is representable; this can be proved exactly as in \cite{Hodkinson}.
The atoms of $\Tm\At$ are coloured graphs whose edges are not labelled by
the one shade of red  $\rho$; it can also be viewed as a set algebra based on $M$
by relativizing semantics discarding assignments whose edges are labelled
by $\rho$. A coloured graph (an atom) in $\CA_{n+2, n+1}$
is one such that at least one of its edges is labelled red.
Now $\CA_{n+2, n+1}$ embeds into $\Cm\At\A$,
by taking every red graph to the join of its copies, which exists because $\Cm\At$ is complete
(these joins do not exist in the (not complete) term algebra; only joins of finite or cofinitely many reds do, hence it serves non representability.)
A copy of a red graph is one that is isomorphic to this graph, modulo removing superscripts of reds.
Another way to do this is to take every coloured graph to the interpretation of an infinite disjunct of the $MCA$ formulas
(as defined in \cite{Hodkinson}), and to be dealt with below; such formulas define coloured graphs whose edges are not labelled by the shade of red,
hence the atoms, corresponding
to its copies, in the relativized semantics; this defines an embedding,  because $\Cm\At$ is isomorphic to
the set algebra based on the same relativized semantics
using $L_{\infty,\omega}^n$ formulas in the rainbow signature. Here again $M$ is the $n$ homogeneous model constructed
in the new rainbow signature, though the construction is the
same.
But \pa\ can win a certain finite rounded game on $\CA_{n+1, n+2},$ hence it is
outside $S\Nr_n\CA_{n+4}$ and so is $\Cm\At$,  because the former is embeddable in the latter
and $S\Nr_n\CA_{n+4}$ is a variety; in particular, it is closed
under forming subalgebras.

For the definition of $n+k$ complex blur the reader is referred to \cite[definition 3.1]{ANT};
this involves a set $J$ of blurs and a ternary relation
$E$.

\begin{theorem}\label{blurs} Let $k\geq 1$. Assume that there exists a finite relation algebra $\R\notin S\Ra\CA_{n+k+1}$
that has  $n+k$  complex blur  $(J, E)$. Let $\At$ be the infinite atom structure obtained by blowing up and blurring $\R$, in the sense of
\cite[p.72]{ANT}.
Then $\Mat_{n+k}\At$ is an $n+k$ dimensional cylindric basis. Furthermore there exists representable algebras $\C_n$ and
$\C_{n+k}$ such that  $\Tm\Mat_n\At\subseteq \C_n$ and $\Tm\Mat_{n+k}\At\subseteq \C_{n+k}$, $\C_n=\Nr_n\C_{n+k}$ and finally
and $\Cm\At\notin S\Nr_n\CA_{n+k+1}$.
\end{theorem}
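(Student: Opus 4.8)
The plan is to follow the blow up and blur template laid out just before the statement, keeping careful track of the two levels on which $\R$ lives: the local level of the term algebra, where the finitely many blurs in $J$ dissolve $\R$ so that it fails to embed, and the global level of the complex algebra, where $\R$ reappears as a subalgebra. I would begin with the purely combinatorial claim that $\Mat_{n+k}\At$ is an $n+k$ dimensional cylindric basis. This should be read off directly from the clauses defining the $(n+k)$ complex blur $(J,E)$ in \cite[definition 3.1]{ANT}: the ternary relation $E$ is exactly what forces the basic matrices to close under the amalgamation condition (which encodes commutativity of cylindrifiers), and $J$ supplies the labels guaranteeing that the matrices demanded by the basis axioms actually exist. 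I would simply match the clauses of the definition of a cylindric basis against the blur axioms; no new idea is needed here, and the corresponding check for $\Mat_n\At$ comes for free.

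Next I would produce $\C_{n+k}$, $\C_n$ and the neat embedding. The representability of $\Tm\Mat_{n+k}\At$ is the substantive ``blur half'' of the method: one builds a set representation by a step by step (Henkin style) construction in which the blurs, read as extra colours, supply the witnesses that repair the cylindrifier defects, exactly as in \cite{ANT}. I would take $\C_{n+k}$ to be the representable $\CA_{n+k}$ so obtained (the subalgebra of a set algebra generated by the image of $\Tm\Mat_{n+k}\At$) and set $\C_n=\Nr_n\C_{n+k}$, which is representable because neat reducts of representable algebras are representable. The real work is the inclusion $\Tm\Mat_n\At\subseteq\C_n$: I must view each $n$ dimensional basic matrix as a genuinely $n$ dimensional (neat) element of the $(n+k)$ dimensional algebra. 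This is where I expect the main obstacle to be, since it requires the representation of $\Tm\Mat_{n+k}\At$ to restrict compatibly to all lower dimensions, so that the $n$-matrices embed as the matrices cut out by the first $n$ coordinates; this cross-dimensional compatibility is exactly what the matrix/hyperbasis framework of Maddux and of Hirsch--Hodkinson is designed to deliver.

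For the final and decisive clause I would argue by contradiction on the global level, where the earlier machinery pays off quickly. The finite algebra $\R$ embeds into the relation-algebra complex algebra $\Cm\At$ by sending each atom to the join of its infinitely many copies, a join that exists because $\Cm\At$ is complete; equivalently, using the isomorphism $\Cm\At\cong\Ra\Cm\Mat_n\At$ established just as the embedding of $\Cm\alpha(\G)$ onto $\Ra\Cm\M_n$ in Theorem \ref{hodkinson}, we have $\R\hookrightarrow\Ra\Cm\Mat_n\At$. Now suppose, for contradiction, that the completion $\Cm\Mat_n\At$ (the algebra $\Cm\At$ of the statement) lies in $S\Nr_n\CA_{n+k+1}$, say $\Cm\Mat_n\At\subseteq\Nr_n\D$ with $\D\in\CA_{n+k+1}$. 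Applying the $\Ra$ functor, which preserves embeddings, gives $\Ra\Cm\Mat_n\At\subseteq\Ra\Nr_n\D$, and since $n\geq 3$ the relation-algebra operations already live in the first three dimensions, so $\Ra\Nr_n\D=\Ra\D\in\Ra\CA_{n+k+1}\subseteq S\Ra\CA_{n+k+1}$. As $S\Ra\CA_{n+k+1}$ is closed under subalgebras and $\R\hookrightarrow\Ra\Cm\Mat_n\At\subseteq\Ra\D$, we conclude $\R\in S\Ra\CA_{n+k+1}$, contradicting the hypothesis; hence $\Cm\At\notin S\Nr_n\CA_{n+k+1}$.

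In summary, the basis check and the non-representability punchline are short once the transfer identity $\Ra\Nr_n\D=\Ra\D\in S\Ra\CA_{n+k+1}$ and the global embedding of $\R$ are in hand; the genuine effort, and the step I would guard most carefully, is the cross-dimensional neat embedding $\Tm\Mat_n\At\subseteq\C_n=\Nr_n\C_{n+k}$ with both algebras representable, which hinges on the mutual compatibility of the blur-based representations in dimensions $n$ and $n+k$. I would also keep close track of the uniform ``$+1$'' in the dimension bookkeeping, so that the $(n+k)$ dimensionality of the complex blur matches $\R\notin S\Ra\CA_{n+k+1}$ on the relation side and $\Cm\At\notin S\Nr_n\CA_{n+k+1}$ on the cylindric side.
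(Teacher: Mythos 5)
Your proposal is correct and takes essentially the same route as the paper: the paper's entire proof is the remark that the argument is ``exactly like the proof in \cite{ANT}, replacing the Maddux algebra there by $\R$'', and what you have written is precisely a reconstruction of that ANT blow-up-and-blur argument with $\R$ in place of Maddux's algebra (reading the cylindric basis off the blur axioms, building the blur-based representations to get $\C_n=\Nr_n\C_{n+k}$, and embedding $\R$ into $\Cm\At$ at the global level so that $\Ra$-transfer contradicts $\R\notin S\Ra\CA_{n+k+1}$). The only difference is one of presentation: you spell out the template that the paper compresses into a citation.
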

\begin{proof} Exactly like the proof in \cite{ANT} by replacing the Maddux algebra defined on p.84 denoted by $\M$ by $\R.$
\end{proof}
We note that $k$ cannot be equal to $0$,
because Andr\'eka provided a Sahlqvist axiomatization of $S\Nr_n\CA_{n+1}$, for any finite $n$,
hence the latter is necessarily
atom canonical.

\begin{theorem}\label{neat} Any class $\K$, such that $\K$ contains
the class of completely representable algebras and is contained in $S_c\Nr_n\CA_{n+3}$ is not
elementary.
\end{theorem}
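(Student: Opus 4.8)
The class $\K$ is sandwiched, so a single witness refutes elementarity of every intermediate class at once: since the completely representable algebras $\mathbf{CRA}_n$ sit at the bottom and $S_c\Nr_n\CA_{n+3}$ at the top, it suffices to find atomic algebras $\A_i$ $(i<\omega)$ that are completely representable, together with a non-principal ultrafilter $U$, such that $\prod_{i/U}\A_i\notin S_c\Nr_n\CA_{n+3}$. Indeed, completely representable algebras lie in $\mathbf{CRA}_n\subseteq\K$, so each $\A_i\in\K$; while $\prod_{i/U}\A_i\notin S_c\Nr_n\CA_{n+3}\supseteq\K$ forces $\prod_{i/U}\A_i\notin\K$. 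No elementary class can behave this way, since elementary classes are closed under ultraproducts; hence $\K$ is not elementary. Equivalently, one may work with a single completely representable $\A$ and a non-principal ultrapower $\B\equiv\A$ that leaves $S_c\Nr_n\CA_{n+3}$, refuting closure under elementary equivalence.

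To build the witnesses I would follow the Hirsch--Hodkinson proof that complete representability is not preserved by ultraproducts (the Erd\H{o}s-graph / anti-Monk construction already invoked for Theorem \ref{el}), endowed with polyadic operations exactly as elsewhere in this paper, and then pin the neat-reduct threshold at $n+3$. Concretely, take the finite-parameter rainbow algebras of Theorem \ref{smooth}, but tuned so that the inconsistent red clique is forced at size $n+3$ rather than $n+4$ (shrinking the greens and reds by one, to $|{\sf G}|=n+1$ and $|{\sf R}|=n$). The point is that complete representability is not preserved by ultraproducts, as the functors $\Cm$ and $\At$ do not commute with $\prod_{i/U}$; one can therefore arrange each $\A_i$ to be completely representable while in the ultraproduct the accumulated greens let \pa\ assemble the base of enough cones to force an inconsistent triple of reds. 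By the contrapositive of the chain used in Corollary \ref{can} --- membership in $S_c\Nr_n\CA_{n+3}$ would yield an $(n+3)$-dimensional hyperbasis (Lemma \ref{step}) and hence an $n+3$ smooth representation --- a \pa\ win in the $(n+3)$ game certifies $\prod_{i/U}\A_i\notin S_c\Nr_n\CA_{n+3}$.

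The main obstacle is the simultaneous control of both ends of the sandwich with one construction: each factor must be good enough to be completely representable, yet the ultraproduct must be bad enough to fall out of $S_c\Nr_n\CA_{n+3}$ at exactly the dimension $n+3$. This is delicate precisely because the relevant game characterizations tend to make both endpoints look first order on \emph{countable} atomic algebras, so the escape must be powered by the uncountable ultraproduct, where \pe's finite-round wins no longer assemble into a complete neat embedding and the limiting reds realize the forbidden $(n+3)$-clique. Getting the rainbow parameters to deliver complete representability below while producing the sharp $n+3$ obstruction above --- rather than the softer $n+4$ of Theorem \ref{smooth} --- is the heart of the argument; the remainder is bookkeeping, transferring the \pa-strategy through the fundamental theorem on ultraproducts.
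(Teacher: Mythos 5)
There is a genuine gap here: you run the ultraproduct in the wrong direction, and neither of the constructions you invoke can supply the witnesses your argument needs. The paper's proof takes a \emph{single} rainbow algebra $\A=\CA_{\Z,\N}$ (infinite parameters: greens indexed by $\Z$, reds by $\N$) and shows two things: (i) \pa\ has a \ws\ in the $\omega$-rounded, $(n+3)$-pebble game $F^{n+3}$, by playing cones with green tints $\g_0^0,\g_0^{-1},\g_0^{-2},\ldots$ so that \pe\ is forced to produce an infinite strictly decreasing sequence of red indices in $\N$; since $\A\in S_c\Nr_n\CA_{n+3}$ would imply that \pe\ wins $F^{n+3}$, this puts $\A$ outside $S_c\Nr_n\CA_{n+3}\supseteq\K$; and (ii) \pe\ wins every \emph{finite}-rounded atomic game on $\A$ (she has infinitely many reds to spend), so by passing to an ultrapower and using an elementary chain argument one extracts a countable $\B\equiv\A$ in which \pe\ wins the $\omega$-rounded game, i.e.\ $\B$ is completely representable, hence $\B\in\K$. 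Thus non-elementarity is exhibited as failure of closure under elementary equivalence, with the \emph{good} algebra extracted from an ultrapower of the \emph{bad} one --- the exact opposite of your plan of a bad ultraproduct of good factors. Note also that the threshold $n+3$ is reached by this pebble-game argument and Hirsch's results in \cite{r}, not by the hyperbasis/smooth-representation route of corollary \ref{can}, which inherently costs $n+4$.

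Your concrete plan founders on Lo\'{s}'s theorem. Every completely representable algebra satisfies every Lyndon condition (these are first-order sentences coding \pe's \ws\ in the finite-rounded games), so \emph{any} ultraproduct of completely representable algebras satisfies all Lyndon conditions. But your intended ultraproduct --- a rainbow algebra "tuned" to $|{\sf G}|=n+1$, $|{\sf R}|=n$ in which \pa\ forces an inconsistent red clique by pigeon-holing finitely many reds --- fails a Lyndon condition, because that pigeon-hole argument takes only finitely many rounds; so no ultraproduct of completely representable algebras can be such an algebra. If instead you insist the badness appear only in the $\omega$-rounded game (to dodge Lo\'{s}), then the finite pigeon-hole mechanism is unavailable and you need the order mismatch of $\Z$ against $\N$, which is precisely the paper's construction and is not obtained as a limit of completely representable factors. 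The Erd\H{o}s-graph construction does not rescue the plan either: it concerns \emph{strong} representability of atom structures, its factors $\mathfrak{C}(\Gamma_{\kappa})$ are nowhere shown to be completely representable, and what fails in the limit is representability of $\Cm$ of the ultraproduct atom structure; the ultraproduct of the algebras themselves remains representable ($\RCA_n$ is a variety, hence closed under ultraproducts), and nothing in that construction decides whether it lies in $S_c\Nr_n\CA_{n+3}$. So both halves of your sandwich argument --- complete representability of the factors and exclusion of the ultraproduct from $S_c\Nr_n\CA_{n+3}$ --- are unsupported, and the first is incompatible with the second as you have set it up.
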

\begin{proof} Let $G_k$ be the usual atomic game played on networks with $k$ rounds \cite[definition 3.3.2]{HHbook2}.
Let $\A$ be the rainbow algebra $\CA_{\Z,\N}$, then \pe\ has a \ws\ in $G_k$ for all finite $k\geq n$
hence it is elementary equivalent to a
countable completely representable algebra $\B$ \cite{HH}.
Let $F^m$ be the $\omega$ rounded atomic game
except that \pa\ s moves are limited to $m$ pebbles.
Then it can be shown that if $\A\in S_c\Nr_n\CA_m$, then \pe\ has a \ws\ in $F^m$, as follows: 
In the initial round \pa\ plays a graph $\Gamma$ with nodes $0,1,\ldots n-1$ such that $\Gamma(i,j)=\w$ for $i<j<n-1$
and $\Gamma(i, n-1)=\g_i$
$(i=1, \ldots n-2)$, $\Gamma(0,n-1)=\g_0^0$ and $\Gamma(0,1\ldots n-2)=\y_{B}$.
In the following move \pa\ chooses the face $(0,\ldots n-2)$ and demands a node $n$
with $\Gamma_2(i,n)=\g_i$ $(i=1,\ldots n-2)$, and $\Gamma_2(0,n)=\g_0^{-1}.$
\pe\ must choose a label for the edge $(n+1,n)$ of $\Gamma_2$. It must be a red atom $r_{mn}$. Since $-1<0$ we have $m<n$.
In the next move \pa\ plays the face $(0, \ldots n-2)$ and demands a node $n+1$, with $\Gamma_3(i,n)=\g_i$ $(i=1,\ldots n-2)$,
such that  $\Gamma_3(0,n+2)=\g_0^{-2}$.
Then $\Gamma_3(n+1,n)$ $\Gamma_3(n+1,n-1)$ both being red, the indices must match.
$\Gamma_3(n+1,n)=r_{ln}$ and $\Gamma_3(n+1, n-1)=r_{lm}$ with $l<m$.
In the next round \pa\ plays $(0,1\ldots n-2)$ and reuses the node $2$ such that $\Gamma_4(0,2)=\g_0^{-3}$.
This time we have $\Gamma_4(n,n-1)=\r_{jl}$ for some $j<l\in \N$.
Continuing in this manner leads to a decreasing sequence in $\N$.

But it can also be shown that \pa\ has a \ws\ in $F^{n+3}$ \cite[theorem 33, lemma 41]{r}.
It follows that $\A\notin S_c\Nr_n\CA_{n+3}$, but it is elementary equivalent
to a countable completely representable algebra. Indeed, using ultrapowers and an elementary chain argument,
we obtain $\B$ such  $\A\equiv \B$ \cite[lemma 44]{r},
and \pe\ has a \ws\ in $G_{\omega}$, so by \cite[theorem 3.3.3]{HHbook2}, $\B$ is completely representable.
So if $\K$ is as above, then $\A\notin \K$ but $\B$
is in $\K$, since $\A\equiv \B$, it readily follows that $\K$ is not
elementary.
\end{proof}

\subsection{Omitting types in clique guarded semantics, and in other contexts}

We now formulate, and prove,  three (negative)
new omitting types theorems, that are consequences of the algebraic results formulated in theorems \ref{can}, \ref{neat} and
\ref{blurs}

Let $T$ be a countable first order theory and $\Gamma$ be a type that is realized in every model of $\Gamma$.
Then the usual Orey Henkin therorem
tells us that this type is necessarily principal, that is, it is isolated by a formula $\phi$.
We call such a $\phi$ an $m$ witness, if $\phi$ is built up of $m$ variables.

\begin{theorem}\label{OTT}
\begin{enumarab}
\item There is a countable $L_n$ theory $T$ a type $\Gamma$,
realized in every smooth $n+4$ model but has no witness.

\item Assume the hypothesis of theorem \ref{blurs}.
Then there is a countable theory $T$, a type realized in every $n+k+1$
smooth model, but there is no $n+k$ witness.

\item There is a countable $L_n$ theory $T$ and a type $\Gamma$ such that
$\Gamma$ is realized in every smooth $n+3$ model, but does not have a witness.

\end{enumarab}
\end{theorem}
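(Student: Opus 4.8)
The plan is to translate each algebraic non-atom-canonicity result into a metalogical omitting-types failure via the standard Tarski--Lindenbaum correspondence, exactly as sketched in the introduction of the paper. The common template is this: given an atomic algebra $\A$ whose completion $\C=\Cm\At\A$ fails to be representable (in the appropriate local, clique-guarded sense), one reads $\A$ as a Tarski--Lindenbaum algebra $\Fm_T$ of a countable consistent $L_n$ theory $T$. The set of co-atoms of $\A$, viewed as a set of $L_n$-formulas, becomes a type $\Gamma$. The key dictionary entry is that a \emph{witness} (an isolating formula) for $\Gamma$ corresponds to a \emph{principal} ultrafilter disjoint from the co-atoms, i.e.\ to an atom below every negated co-atom; and a model \emph{omitting} $\Gamma$ corresponds to a (clique-guarded) representation in which the co-atoms have empty intersection. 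So ``$\Gamma$ has no $m$-witness'' is the algebraic statement that $\C$ has no atom of the required kind, while ``$\Gamma$ is realized in every $n+k$ smooth model'' is precisely the statement that every $n+k$ smooth (relativized) representation fails to omit $\Gamma$, which is the contrapositive of the non-existence of a suitable smooth representation of $\C$ established in theorems~\ref{smooth}, \ref{can} and~\ref{blurs}.

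For part~(1) I would take $\A$ to be the countable atomic $\sf PEA_n$ produced by theorem~\ref{smooth}, whose $\CA$-reduct's completion $\C$ admits no $n+4$ smooth representation. Setting $T$ to be the first-order $L_n$ theory whose Lindenbaum algebra is (the $\CA$-reduct of) $\A$, and $\Gamma$ the set of co-atoms, the clique-guarded completeness theorem for $L_n$ (the version in which $n+4$ smooth models correspond to $n+4$ smooth representations) gives: a model of $T$ realizing $\Gamma$ everywhere is exactly an $n+4$ smooth representation witnessing the failure, and the nonexistence of a witness is the nonexistence of a principal such type, which would otherwise contradict the atom structure of $\C$. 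Part~(3) is identical in shape but draws on theorem~\ref{neat} (through $S_c\Nr_n\CA_{n+3}$), giving the $n+3$ bound, and part~(2) draws on theorem~\ref{blurs}, producing the $n+k$ bound under the stated hypothesis on the existence of a finite relation algebra $\R\notin S\Ra\CA_{n+k+1}$ with an $n+k$ complex blur. In each case I would first write down $\A$, $T$, $\Gamma$ explicitly, then invoke the clique-guarded omitting types/completeness correspondence, then cite the relevant algebraic theorem to kill the witness.

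The technical heart I would foreground is the equivalence, for clique-guarded semantics, between ``$T$ has an $n+k$ smooth model omitting $\Gamma$'' and ``$\C$ has an $n+k$ smooth representation''. This is the clique-guarded analogue of the classical Orey--Henkin theorem, and it is where the finiteness assumption on $\varphi$ (first-order, finite signature) that was used in theorem~\ref{hodkinson} reappears: the witness $\varphi$ must be an honest $L_n$-formula, i.e.\ built from finitely many variables, so that the back-and-forth machinery of the relativized semantics applies to it. The subtle point is that the co-atoms form a \emph{non-principal} type in $\Fm_T$ precisely because $\A$ is atomic but $\C$ is not representable in the local sense; the non-principality is inherited from the fact, established in the cited theorems, that the relevant joins (of copies of split atoms) exist only in $\Cm\At$ and not in the term algebra.

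The main obstacle I expect is bookkeeping the exact notion of ``smooth $m$ model'' so that it matches Definition~\ref{rel} of an $m$ smooth relativized representation, and verifying that a witness in the logical sense forces an atom in $\C$ lying below all negations of co-atoms --- which would yield a nonzero principal element contradicting atomicity combined with the failure of smooth representability. Concretely, the delicate step is showing that realizing $\Gamma$ in \emph{every} $n+k$ smooth model is not merely consistent with, but forced by, $\C\notin S\Nr_n\CA_{n+k+1}$ (or the $S_c$ variant): one must check that an $n+k$ smooth model omitting $\Gamma$ would, via $g(c)=\bigcup_{x\in c^*}f(x)$ as in the proof of the strong-representability theorem above, manufacture the forbidden smooth representation of $\C$. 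Once that implication is nailed down, each of the three items follows by plugging in the appropriate algebra from theorems~\ref{smooth}, \ref{blurs} and~\ref{neat} respectively.
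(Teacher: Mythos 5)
Your proposal is correct and takes essentially the same route as the paper's own proof: it uses the same three algebras (the one from theorem \ref{smooth}/corollary \ref{can} for part (1), theorem \ref{blurs} for part (2), and the term algebra of $\CA_{\Z,\N}$ from theorem \ref{neat} for part (3)), each read as $\Fm_T$ with $\Gamma$ the set of co-atoms, and it runs the same two-pronged argument --- a smooth model omitting $\Gamma$ would yield a complete smooth representation of $\A$ and hence the forbidden smooth representation of $\Cm\At\A$, while a witness is killed by atomicity. The one slip to fix is your dualized description of a witness: an isolating formula corresponds to a \emph{nonzero element lying below every co-atom}, hence disjoint from every atom (not ``an atom below every negated co-atom''), and it is precisely this that contradicts atomicity of $\A$, exactly as in the paper's argument where the would-be witness is shown to intersect some atom.
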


\begin{proof}
\begin{enumarab}
\item Let $\A$ be as in corollary \ref{can}, and let $\Gamma$ be the set of atoms. Then we claim
that $\Gamma$ is realized in all $n+4$
models. Towards proving this claim consider a model $\M$ of $T$. If  $\Gamma$ is not realized in $\M$,
then this gives an $n+4$ complete representation of $\A=\Fm_T$,
which is impossible,  because $\Cm\At\A$ is not in $S\Nr_n\CA_{n+4}$.

Assume that $\phi$ is witness.
Then $\A$ is simple, and so we can assume
without loss of generality, that it is set algebra with a countable
base. Let $\M=(M,R)$  be the corresponding model to this set algebra in the sense of \cite{tarski} sec 4.3.
Then $\M\models T$ and $\phi^{\M}\in \A$.
But $T\models \exists x\phi$, hence $\phi^{\M}\neq 0,$
from which it follows that  $\phi^{\M}$ must intersect an atom $\alpha\in \A$ (recall that the latter is atomic).
Let $\psi$ be the formula, such that $\psi^{\M}=\alpha$. Then it cannot
be the case that
that $T\models \phi\to \neg \psi$,
hence $\phi$ is not a $k$ witness,
and we are done.

\item The same argument exactly using instead the statement in \ref{blurs}

\item No we use  \ref{neat}. Let $\A$ be the term algebra of $\CA_{\Z,\N}$.
Then $\A$ is an  atomic countable representable algebra that is not
in $S_c\Nr_n\CA_{n+3}$ (for the same reasons as above, namely,  \pe\ still can win all finite rounded games,
while \pa\ can win the game $F^{n+3}$, because $\A$ and the term algebra have the same
atom structure).

Assume that $\A=\Fm_T$, and let $\Gamma=\{\phi: \neg \phi_T \text { is an atom }\}$.
Then $\Gamma$ is a non-principal type, because $\A$ is atomic, but it has no $n+3$ flat representation
omitting $\Gamma$,  for such a representation would necessarily yield a complete $n+3$ complete representation of $\A,$ which in turn
implies that it would be in
$S_c\Nr_n\CA_{n+3},$ and we know that this is not the case.
\end{enumarab}
\end{proof}

An unpublished result of Andr\'eka and N\'emeti shows that the omitting types theorem fails for $L_2$ though
Vaught's theorem on existence of atomic models for atomic theories hold, \cite{Sayed}, \cite{HHbook}, \cite{Vaught}
In the next example we show that even Vaught's theorem, hence $OTT$, fails when we consider logics without equality
reflected algebraically by many reducts of polyadic algebras.

We first start with algebras that are cylindrifier free reducts of polyadic algebras. In
this case set algebras are defined exactly like polyadic set algebras by discarding
cylindrifiers. Such algebras are expansions of Pinter's algebras studied by S\'agi,
and explicitly mentioned by Hodkinson \cite{AU}
in the context of searching for algebras, where atomicity coincides with complete representability.

Assem showed that for any ordinal $\alpha>1$, and any infinite cardinal $\kappa$,
there is an atomic set algebra (having as extra Boolean operations only finite substitutions)
with $|A|=\kappa$,  that is not  completely representable.
In particular, $\A$ can be countable, and so the omitting types theorem, and for that matter Vaught's theorem fail.
This works for all dimensions, except that in
the infinite dimensional case, semantics is relativized to weak set algebras.
Do we have an analogous result, concerning failure of the omitting types theorem for fragments of $L_n$ without equality, but with quantifiers.
The answer is yes.

This theorem holds for Pinter's algebras and polyadic algebras, let $\K$ denote either class.
It suffices to show that there exists $\B\in {\sf RK}_n\cap \Nr_n\K_{n+k}$ that is not completely completely representable.
But this is not hard to show. Let $\A$ be the cylindric algebra of dimension $n\geq 3$, $n$ finite,
provided by theorem 1.1 in \cite{ANT}.
Then first we can expand $\A$ to a polyadic equality
algebra because it is a subalgebra of the complex algebra based on the atom structure of basic matrices.
This new algebra will also be in ${\sf RPEA}_n\cap \Nr_n{\sf PEA}_{n+k}$. Its reduct, whether the polyadic or the Pinter, will be as desired.

Indeed consider the $\sf PA$ case, with the other case completely analogous,
this follows from the fact that $\Nr_n\K_n\subseteq \Rd\Nr_n{\sf PEA_{n+k}}=\Nr_n\Rd{\sf PEA}_{n+k}\subseteq \Nr_n{\sf PA}_{n+k}$,
and that $\A$ is completely representable if and only if its diagonal free reduct is.
(This is proved by Hodkinson in \cite{AU}, the proof depends essentially on the fact that algebras considered
are binary generated).

Now what if we only have cylindrifiers, that is dealing with ${\sf Df}_n$, $n\geq 3$.
Let $\A$ be the cylindric algebra as in the previous paragraph. Assume that
there a type $\Gamma$,
that is realized in every representation of $\A$ but has no witness using extra $k$ variables. Let $\B=\Rd_{df}\A$.

Let $f:\B\to \C$ be a diagonal free representation of $\B$. 
The point is that though $\Gamma$ is realized in every {\it cylindric} representation of $\A$,
there might be a representation of its diagonal free reduct that omits $\Gamma$,
these are more, because we do not require preservation of the diagonal
elements. This case definitely needs further research, and we are tempted to think that it is not easy.

\subsection{Omitting types for finite first order expansions of $L_n$}

Such formalisms were studied in \cite{Biro} and \cite{basim}.
First we recall what we mean by first order definable operation on the algebra level. These will be logical connectives
in the corresponding logic, thus expanding $L_n$.

\begin{definition}
Let $\Lambda$ be a first order language with countably many
relation symbols, $R_0, \ldots R_i,\ldots : i\in \omega$
each of arity $n$.
Let $\Cs_{n,t}$ denote the following class of similarity type $t$:
\begin{enumroman}
\item $t$ is an expansion of $t_{\CA_n}.$
\item  $S\Rd_{ca}\Cs_{n,t}=\Cs_n.$ In particular, every algebra in $\Cs_{n,t}$ is a boolean
field of sets with unit $^nU$ say,
that is closed under cylindrifications and contains diagonal elements.
\item For any $m$-ary operation $f$ of $t$, there exists a first order formula
$\phi$ with free variables among $\{x_0,\ldots, x_n\}$
and having exactly $m,$ $n$-ary relation symbols
$R_0, \ldots R_{m-1}$ such that,
for any set algebra ${\A}\in \Cs_{n,t}$
with base $U$, and $X_0, \ldots X_{m-1}\in {\A}$,
we have:
$$\langle a_0,\ldots a_{n-1}\rangle\in f(X_0,\ldots X_{m-1})$$
if and only if
$${\cal M}=\langle U, X_0,\ldots X_{n-1}\rangle\models \phi[a_0,\ldots a_{n-1}].$$
Here $\cal M$ is the first order structure in which for each $i\leq m$,
$R_i$ is interpreted as $X_i,$ and $\models$ is the usual satisfiability relation.
Note that cylindrifications and diagonal elements are so definable.
Indeed for $i,j<n$,  $\exists x_iR_0(x_0\ldots x_{n-1})$
defines $\sf c_i$ and $x_i=x_j$ defines $\d_{ij}.$
\item With $f$ and $\phi$ as above,
$f$ is said to be a first order definable operation with $\phi$ defining $f$,
or simply a first order definable
operation, and $\Cs_{n,t}$ is said to be a first order definable
expansion of $\Cs_n.$
\item $\RCA_{n,t}$ denotes the class $SP\Cs_{n,t}$, i.e. the class of all subdirect products
of algebras
in $\Cs_{n,t}.$ We also refer to
$\RCA_{n,t}$ as a first order definable expansion of $\RCA_n.$
\end{enumroman}
\end{definition}

\begin{theorem} Let $t$ be a finite expansion of the $\CA$ type.
Then there are atomic algebras in $\RCA_{n,t}$ that are not completely representable
\end{theorem}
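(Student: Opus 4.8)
The plan is to produce a single atomic algebra $\A^+\in\RCA_{n,t}$ whose cylindric reduct $\Rd_{ca}\A^+$ is a representable atomic $\CA_n$ that is \emph{not} completely representable. This suffices: a complete representation $g$ of the $t$-algebra $\A^+$ is, after forgetting the operations outside the $\CA$ signature, a representation of $\Rd_{ca}\A^+$, and it still carries all suprema to unions because $\A^+$ and $\Rd_{ca}\A^+$ have the same Boolean reduct and hence the same sums. Thus $g$ would yield a complete representation of $\Rd_{ca}\A^+$, contradicting its choice. So non-complete-representability of $\A^+$ is automatic once it is built, and the entire content lies in constructing $\A^+$.

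To fix the data coming from $t$, note that since $t$ is a \emph{finite} expansion its operations $f_1,\dots,f_r$ are given by finitely many first order formulas $\phi_1,\dots,\phi_r$ as in the definition of $\Cs_{n,t}$; let $N\geq n$ exceed the index of every variable occurring (free or bound) in any $\phi_\ell$. I would then invoke Theorem~1.1 of \cite{ANT}, run (as in the construction behind Theorem~\ref{blurs}) with a dilation dimension $m\geq N$: this produces an atomic, representable $\A\in\RCA_n$ that is not completely representable, together with a \emph{representable} $\B\in\CA_m$ such that $\A=\Nr_n\B$. (Equivalently one may start from an atomic $\A\in\Nr_n\CA_\omega\cap\RCA_n$ that is not completely representable and let $\B\in\CA_\omega$ be a representable dilation; in either case the dilation is representable and has dimension at least $N$.)

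The key step is to realize $f_1,\dots,f_r$ on $\A$ without enlarging its universe, using the extra dimensions of $\B$ to absorb the quantifiers of the $\phi_\ell$. Each $\phi_\ell$ uses at most $N\leq m$ variables, so by the usual (Henkin--Monk--Tarski) translation it becomes a $\CA_m$ term $\tau_\ell$ built from $\cyl{0},\dots,\cyl{m-1}$, the diagonals and the substitutions they generate; for $X_0,\dots,X_{s-1}\in\A\subseteq\B$ I would set $f_\ell(X_0,\dots,X_{s-1})=\tau_\ell^{\B}(X_0,\dots,X_{s-1})$. Since the free variables of $\phi_\ell$ lie among $x_0,\dots,x_{n-1}$, the element $\tau_\ell^{\B}(\bar X)$ is fixed by $\cyl{i}$ for $n\leq i<m$, hence lies in $\Nr_n\B=\A$; so $\A$ is closed under each $f_\ell$ and $\A^+=(\A,f_1,\dots,f_r)$ has the same universe, and in particular the same atomic Boolean reduct, as $\A$. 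For representability I would take a set representation $h:\B\to\wp({}^mU)$; its restriction displays $\A=\Nr_n\B$ as $n$-ary relations on $U$, and because $h$ interprets the higher cylindrifiers $\cyl{i}$ $(i\geq n)$ as genuine quantifiers over $U$, the value $h(f_\ell(\bar X))=\tau_\ell^{\wp({}^mU)}(h(\bar X))$ coincides with the set-theoretic first order operation defined by $\phi_\ell$. Hence $h\restriction\A$ is a $\Cs_{n,t}$ representation (a subdirect one when the representation of $\B$ is subdirect), so $\A^+\in\RCA_{n,t}$ and $\A^+$ is atomic.

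Combining the two halves, $\A^+\in\RCA_{n,t}$ is atomic while $\Rd_{ca}\A^+=\A$ is not completely representable, so by the first paragraph $\A^+$ is not completely representable, as required. The one real obstacle is the closure-with-preservation step of the third paragraph: I must keep the universe (hence atomicity) unchanged and simultaneously guarantee that the chosen representation computes the new first order operations correctly. Both are delivered by insisting that the defining formulas be absorbed into a representable dilation $\B$ of dimension $\geq N$ with $\A=\Nr_n\B$ --- the surplus dimensions supply precisely the quantifiers that $\phi_1,\dots,\phi_r$ require, which is exactly where the finiteness of $t$ (bounding $N$) is used.
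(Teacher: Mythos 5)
Your proposal is correct and is essentially the paper's own proof: the paper likewise uses the finiteness of $t$ to bound the variables of the defining formulas by some $n+k$, takes the atomic, non-completely-representable $\A\in \RCA_n\cap \Nr_n\CA_{n+k}$ supplied by \cite{ANT}, observes that the neat-reduct structure makes $\A$ closed under (and correctly represented with) the first order definable operations, and concludes that the expansion is not completely representable because its $\CA$ reduct is not; your write-up merely fills in the closure and representability details that the paper leaves implicit. One caveat: your parenthetical ``equivalent'' alternative, starting from an atomic $\A\in \Nr_n\CA_{\omega}\cap \RCA_n$ that is not completely representable, is not available (at least for countable algebras), since countable atomic algebras in $S_c\Nr_n\CA_{\omega}$ are completely representable \cite[theorem 5.3.6]{Sayedneat}; the finite-dilation route that you give as your main line is the correct one.
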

\begin{proof} Let $k\in \omega$ such that all first order definable operations are built up using at most $n+k$ variables.
This $k$ exists, because we have only finitely many of those operations. Let
$\A\in \RCA_n\cap \Nr_n\CA_{n+k}$ be an atomic cylindric algebra that
is not completely representable. Exists by \cite{ANT}
Then $\A$ is closed under all the first order definable operations. But the cylindric reduct of $\A$, is not completely representable, then
$\A$ itself is not.
\end{proof}

$\RCA_{n,t}$ is a variety, this can be proved exactly like the $\RCA_n$ case, and in fact it is a
discriminator variety with discriminator term $c_0\ldots c_{n-1}$.
(So it is enough to show that it is closed under ultraproducts.)
A result of Biro says that is not finitely axiomatizable.

The logic corresponding to $\RCA_{n,t}$ has $n$ variables, and it has the usual first order connectives;
(here we view cylindrifiers as unary connectives) and it has one connective for each first order definable operation.
In atomic formulas the variables only appear in their natural order; so that they are {\it restricted}, in the sense of \cite{HMT1} sec 4.3.

Semantics are defined as follows: For simplicity we assume that we have an at most  countable collection of $n$-ary relation symbols
$\{R_i: i\in \omega\}$.
(We will be considering countable languages anyway, to violate the omitting types theorem).

The inductive definition for the first order (usual) connectives is the usual.
Now given a model $\M=(M, R_i)_{i\in I}$ and a formula $\psi$ of $L_n^+$; with a corresponding connective
$f_{\phi}$ which we assume is unary to simplify
matters, and $s\in {}^nM$, then for any formula $\psi$:
$$\M\models f_{\phi}(\psi)[s]\Longleftrightarrow (\M, \phi^{\M})\models \psi[s].$$

$_rL_n$ is the logic corresponding to $\CA_n$, which has only restricted formulas,
and $L_n$ is that corresponding to $\RPEA_n$; the latter of course is a first order extension of the former
because the substitutions corresponding to transpositions
are first order definable.

\begin{corollary}\label{first}
No first order finite extension of $L_n$  enjoys an omitting types theorem. This holds for languages with just one binary relation.
\end{corollary}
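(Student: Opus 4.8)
The plan is to obtain this corollary from the theorem immediately preceding it by the standard dictionary between complete representations and omitting types, exactly along the lines of the proof of theorem~\ref{OTT}. Write $L_n^+$ for the logic attached to a fixed finite first order definable expansion $\RCA_{n,t}$ of $\RCA_n$, so that the Tarski--Lindenbaum algebra $\Fm_T$ of any countable $L_n^+$-theory $T$ is a countable member of $\RCA_{n,t}$, and, conversely, every countable $\A\in\RCA_{n,t}$ arises this way. By the preceding theorem there is a countable atomic $\A\in\RCA_{n,t}$ that is not completely representable, which we may take to be simple (as in theorem~\ref{OTT}). Since the cylindric algebra of \cite{ANT} underlying $\A$ is binary generated, its theory $T$ can be formulated over a language whose only non-logical symbol is a single binary relation; this is what will deliver the last sentence of the corollary.

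The core step is to produce from $\A$ a consistent $L_n^+$-theory together with a non-principal type realised in every model. I would present $\A=\Fm_T$ and set $\Gamma=\{\phi:\neg\phi_T\text{ is an atom of }\A\}$, the type of co-atoms, exactly as in theorem~\ref{OTT}(3). First I would check that $\Gamma$ is non-principal: were some consistent $\theta$ to isolate $\Gamma$, then $\theta_T$ would lie below every co-atom of $\A$, hence below $-\sum_{a\in\At\A}a=0$ since $\A$ is atomic, forcing $\theta$ inconsistent. Moreover $T$ is consistent, because $\A$, being in $\RCA_{n,t}$, has at least one representation and therefore at least one model.

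It then remains to see that $\Gamma$ cannot be omitted. Here I would use simplicity of $\A$, which makes every nonzero homomorphism out of $\A$ injective. Given any $\M\models T$, the assignment $\phi_T\mapsto\phi^{\M}$ is then a representation $f:\A\to\wp({}^nM)$, and $\M$ omits $\Gamma$ precisely when every tuple $s$ in the unit lies in $f(a)$ for some atom $a$, i.e.\ when $\bigcup_{a\in\At\A}f(a)$ is the whole unit. As recorded in the introduction, for the classes at hand this says exactly that $f$ is a \emph{complete} representation. Since $\A$ has no complete representation, no model of $T$ omits $\Gamma$; thus the non-principal type $\Gamma$ is realised in every model of $T$, contradicting any omitting types theorem for $L_n^+$.

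Beyond routine bookkeeping the only genuine content is the bridge in the last paragraph, and the point needing most care is that each model of $T$ yields an honest (injective) representation rather than a mere homomorphism; this is precisely what the reduction to a simple $\A$ buys, and it is the step I would justify most carefully. The refinement to a single binary relation then follows once the binary generation of the \cite{ANT}-algebra is invoked, since the entire argument then takes place over a language with one binary relation symbol.
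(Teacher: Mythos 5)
Your proposal is correct and takes essentially the same route as the paper's own (very terse) proof: present the atomic, non-completely-representable algebra $\A\in\RCA_{n,t}$ supplied by the preceding theorem as a Tarski--Lindenbaum algebra $\Fm_T$, and observe that the type determined by the atoms cannot be omitted in any model of $T$, since omitting it would yield a complete representation. You merely fill in the details the paper suppresses (non-principality, consistency, injectivity via simplicity, and the single-binary-relation refinement via the generation properties of the algebra from \cite{ANT}), and you correctly formulate $\Gamma$ as the set of co-atoms, which is what the paper's own introduction and theorem \ref{OTT}(3) intend, even though the proof in the paper literally writes the set of atom-formulas.
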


\begin{proof} Let $\A$ be as above, and let $\Fm_T$ be the corresponding Tarski-Lindenbaum algebra.
We assume that $\A=\Fm_T$ (not just isomorphic) Let $X=\At\A$ and let $\Gamma=\{\phi: \phi/\equiv\in X\}.$
Then $\Gamma$ cannot be omitted.
\end{proof}

\subsection{Omitting types and complete representations for the multi modal logic of substitutions}

This section is based on joint work with Assem \cite{Assem}.
We are certain that the ordinary omitting types theorem fails
for $L_n$ without equality (that is for ${\sf Df}_n$) for $n\geq 3$. One way, among many other, is to construct a
representable countable atomic algebra $\A\in {\sf RDf}_n$, that is not completely representable.
The diagonal free reduct of the cylindric algebra constructed in \cite{ANT} is such.

Now what about ${\sf Df_2}$? We do not know.
But if we have only {\it one} replacement then it fails.
For higher dimensions, the result follows from the following example from
\cite{AGMNS}.

We give the a sketch of the proof, the interested reader can work out the details himself
or either directly consult \cite{AGMNS}.

\begin{example}

Let $\B$ be an atomless Boolean set algebra with unit $U$, that has the following property:
For any distinct $u,v\in U$, there is $X\in B$ such that $u\in X$ and $v\in {}\sim X$.
For example $\B$ can be taken to be the Stone representation of some atomless Boolean algebra.
The cardinality of our constructed algebra will be the same as $|B|$.
Let $R=\{X\times Y: X,Y\in \B\}$
and
$A=\{\bigcup S: S\subseteq R: |S|<\omega\}.$
Then $|R|=|A|=|B|$ and
$\A$ is a subalgebra of $\wp(^2U)$.
Also the only subset of $D_{01}$ in $\A$ is the empty set.
Let $S=\{X\times \sim X: X\in B\}.$
$\bigcup S={}\sim D_{01}.$
and $\sum{}^{\A}S=U\times U.$
But
$S_0^1(X\times \sim X)=(X\cap \sim X)\times U=\emptyset.$
for every $X\in B$.
Thus $S_0^1(\sum S)=U\times U$
and
$\sum \{S_{0}^1(Z): Z\in S\}=\emptyset.$

For $n>2$, one takes $R=\{X_1\times\ldots\times X_n: X_i\in \B\}$ and the definition of $\A$ is the same. Then,
in this case, one takes $S$ to be
$X\times \sim X\times U\times\ldots\times U$
such that $X\in B$. The proof survives verbatim.
By taking $\B$ to be countable, then $\A$ can be countable, and so it violates the omitting types theorem.
\end{example}

\begin{example} We consider a very simple case, when we have only transpositions. In this case omitting types theorems
holds for countable languages and
atomic theories have atomic models. Here all substitutions corresponding to bijective maps are definable.
This class is defined by translating a finite presentation of $S_n$, the symmetric group on $n$ to equations,
and postulating in addition that
the substitution operators are Boolean endomorphisms. In this case, given an abstract algebra $\A$ satisfying these equations and
$a\in A$, non zero, and $F$ {\it any} Boolean ultrafilter containing $a$,
then the map $f:\A\to \wp(S_n)$ defined by $\{\tau\in S_n: s_{\tau}a\in F\}$
defines a Boolean endomorphism such that $f(a)\neq 0$.

\begin{enumarab}

\item Now we show that the omitting types theorem holds. We use a fairly standard Baire category argument.
Each $\eta\in S_n$  is a composition of transpositions, so that ${\sf s}_{\eta}$, a composition of complete endomorphisms,
is itself complete. Therefore $\prod {\sf s}_{\eta}X=0$ for all $\eta\in S_n$.
Then for all $\eta\in S_n$, $B_{\eta}=\bigcap_{x\in X} N_{s_{\eta}}x$ is nowhere dense in the Stone topology
and $B=\bigcup_{\eta\in S_n} B_{\eta}$ is of the first category (in fact, $B$
is also nowhere dense, because it is only a finite union of nowhere dense sets).

Let $F$ be an ultrafilter that contains $a$ and is outside $B$. This ultrafilter exists by the celebrated Baire category theorem,
 since the complement of $B$ is dense. (Stone spaces are compact and Hausdorff). Then for all $\eta\in S_n$,
there exists $x\in X$ such that ${s}_{\tau}x\notin F$. Let $h:\A\to \wp(S_n)$ be the usual
representation function; $h(x)=\{\eta\in S_n: { s}_{\eta}x\in F\}$.
Then clearly $\bigcap_{x\in X} h(x)=\emptyset.$

\item Now  further, with no restriction on cardinalities, every atomic algebra is completely
representable. Indeed, let $\B$ be an atomic transposition algebra, let $X$ be the set of atoms, and
let $c\in \B$ be non-zero. Let $S$ be the Stone space of $\B$, whose underlying set consists of all Boolean ultrafilters of
$\B$. Let $X^*$ be the set of principal ultrafilters of $\B$ (those generated by the atoms).
These are isolated points in the Stone topology, and they form a dense set in the Stone topology since $\B$ is atomic.
So we have $X^*\cap T=\emptyset$ for every nowhere dense set $T$ (since principal ultrafilters,
which are isolated points in the Stone topology,
lie outside nowhere dense sets).
Recall that for $a\in \B$, $N_a$ denotes the set of all Boolean ultrafilters containing $a$.

Now  for all $\tau\in S_n$, we have
$G_{X, \tau}=S\sim \bigcup_{x\in X}N_{{\sf s}_{\tau}x}$
is nowhere dense. Let $F$ be a principal ultrafilter of $S$ containing $c$.
This is possible since $\B$ is atomic, so there is an atom $x$ below $c$; just take the
ultrafilter generated by $x$. Also $F$ lies outside the $G_{X,\tau}$'s, for all $\tau\in S_n$
Define, as we did before,  $f_c$ by $f_c(b)=\{\tau\in S_n: {\sf s}_{\tau}b\in F\}$.
Then clearly for every $\tau\in S_n$ there exists an atom $x$ such that $\tau\in f_c(x)$, so that $S_n=\bigcup_{x\in \At\A} f_c(x)$
Now for each $a\in A$, let
$V_a=S_n$ and let $V$ be the disjoint union of the $V_a$'s.
Then $\prod_{a\in A} \wp(V_a)\cong \wp(V)$. Define $f:\A\to \wp(V)$ by $f(x)=g[(f_ax: a\in A)]$.
Then $f: \A\to \wp(V)$ is an embedding such that
$\bigcup_{x\in \At\A}f(x)=V$. Hence $f$ is a complete representation.
\end{enumarab}
\end{example}

Let us consider algebras when both substitutions corresponding to replacements and transpositions
are available.
For any $\alpha\geq 2$ (infinite included)  we denote this class by $\sf SA_{\alpha}$. Here we have a strong completeness
theorem, namely, there is a finite schema of equations $\Sigma$ such that if $\A\models \Sigma$,
then $\A$ is representable (in the infinite dimensional case we use
weak units) but the class of subdirect products
of set algebras is a variety. When $\alpha$ is finite the finite schema is simply a finite set of equations.
Here is an example that these algebras may not be completely additive (idea borrowed from \cite{AGMNS}):

\begin{example}\label{modal}

First it is easy to show that complete representability forces that the non-boolean operations are completely additive.
Therefore it suffices to construct an atomic algebra such that $\Sigma_{x\in \At\A} {\sf s}_0^1x\neq 1$.

In what follows we produce such an algebra. (This algebra will be uncountable,
due to the fact that it is infinite and complete, so it cannot be countable.
In particular, it cannot be used to violate the omitting types theorem, the most it can say is that the omitting types theorem fails
for uncountable languages, which is not too much of a surprise).

Let $\mathbb{Z}^+$ denote the set of positive integers.
Let $U$ be an infinite set. Let $Q_n$, $n\in \omega$, be a family of $n$-ary relations that form  partition of $^nU$
such that $Q_0=D_{01}=\{s\in {}^nU: s_0=s_1\}$. And assume also that each $Q_n$ is symmetric; for any $i,j\in n$, $S_{ij}Q_n=Q_n$.
Clearly such a partition exists.
Now fix $F$ a non-principal ultrafilter on $\mathcal{P}(\mathbb{Z}^+)$. For each $X\subseteq \mathbb{Z}^+$, define
\[
 R_X =
  \begin{cases}
   \bigcup \{Q_k: k\in X\} & \text { if }X\notin F, \\
   \bigcup \{Q_k: k\in X\cup \{0\}\}      &  \text { if } X\in F
  \end{cases}
\]

Let $$\A=\{R_X: X\subseteq \mathbb{Z}^+\}.$$
Notice that $\A$ is uncountable. Then $\A$ is an atomic set algebra with unit $R_{\mathbb{Z}^+}$, and its atoms are $R_{\{k\}}=Q_k$ for $k\in \mathbb{Z}^+$.
(Since $F$ is non-principal, so $\{k\}\notin F$ for every $k$).
We check that $\A$ is indeed closed under the operations.
Let $X, Y$ be subsets of $\mathbb{Z}^+$. If either $X$ or $Y$ is in $F$, then so is $X\cup Y$, because $F$ is a filter.
Hence
$$R_X\cup R_Y=\bigcup\{Q_k: k\in X\}\cup\bigcup \{Q_k: k\in Y\}\cup Q_0=R_{X\cup Y}$$
If neither $X$ nor $Y$ is in $F$, then $X\cup Y$ is not in $F$, because $F$ is an ultrafilter.
$$R_X\cup R_Y=\bigcup\{Q_k: k\in X\}\cup\bigcup \{Q_k: k\in Y\}=R_{X\cup Y}$$
Thus $A$ is closed under finite unions. Now suppose that $X$ is the complement of $Y$ in $\mathbb{Z}^+$.
Since $F$ is an ultrafilter exactly one of them, say $X$ is in $F$.
Hence,
$$\sim R_X=\sim{}\bigcup \{Q_k: k\in X\cup \{0\}\}=\bigcup\{Q_k: k\in Y\}=R_Y$$
so that  $\A$ is closed under complementation (w.r.t $R_{\mathbb{Z}^+}$).
We check substitutions. Transpositions are clear, so we check only replacements. It is not too hard to show that
\[
 S_0^1(R_X)=
  \begin{cases}
   \emptyset & \text { if }X\notin F, \\
   R_{\mathbb{Z}^+}      &  \text { if } X\in F
  \end{cases}
\]

Now
$$\sum \{S_0^1(R_{k}): k\in \mathbb{Z}^+\}=\emptyset.$$
and
$$S_0^1(R_{\mathbb{Z}^+})=R_{\mathbb{Z}^+}$$
$$\sum \{R_{\{k\}}: k\in \mathbb{Z}^+\}=R_{\mathbb{Z}^+}=\bigcup \{Q_k:k\in \mathbb{Z}^+\}.$$
Thus $$S_0^1(\sum\{R_{\{k\}}: k\in \mathbb{Z}^+\})\neq \sum \{S_0^1(R_{\{k\}}): k\in \mathbb{Z}^+\}.$$
Our next theorem gives a plethora of algebras that are not completely representable. Any algebra which shares the atom structure of $\A$
constructed above cannot have a complete representation. Formally:
\end{example}

\begin{theorem} Let $\A$ be as in the previous example. Let $\B$ be an atomic an atomic algebra in $\sf SA_n$ such that
$\At\A\cong \At\B$. Then $\B$ is not completely representable
\end{theorem}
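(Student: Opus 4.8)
The plan is to derive a contradiction from the single algebraic defect already isolated in the preceding example, namely that the substitution operator ${\sf s}_0^1$ fails to be completely additive on $\A$, and to observe that this defect is recorded entirely in the atom structure and the defining equations of $\sf SA_n$, so that it transfers verbatim to any $\B$ with $\At\B\cong\At\A$. First I would invoke the general principle, noted at the start of the example, that a complete representation forces the non-Boolean operations to be completely additive. Concretely, suppose toward a contradiction that $f:\B\to\wp(V)$ is a complete representation. Since $\B$ is atomic and $f$ preserves arbitrary suprema, we have $\bigcup_{b\in\At\B}f(b)=f(1)$. Applying ${\sf s}_0^1$, which is completely additive in the concrete set algebra $\wp(V)$ because it is defined set-theoretically and distributes over arbitrary unions, and using that $f$ is a homomorphism, I obtain
$$f({\sf s}_0^1(1))={\sf s}_0^1\Bigl(\bigcup_{b\in\At\B}f(b)\Bigr)=\bigcup_{b\in\At\B}{\sf s}_0^1(f(b))=\bigcup_{b\in\At\B}f({\sf s}_0^1(b)).$$
Since $f$ is injective and preserves the relevant supremum, this yields ${\sf s}_0^1(1)=\sum_{b\in\At\B}{\sf s}_0^1(b)$ in $\B$.

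Next I would evaluate both sides of this forced identity directly from the hypotheses. On one side, $\B\in\sf SA_n$, and in every such algebra the substitution operators are Boolean endomorphisms, hence preserve the unit; so ${\sf s}_0^1(1)=1$ (consistent with $S_0^1(R_{\mathbb{Z}^+})=R_{\mathbb{Z}^+}$ in the example, as $\mathbb{Z}^+\in F$). On the other side, the action of ${\sf s}_0^1$ on atoms is read off from the atom structure: in $\A$ one has ${\sf s}_0^1(R_{\{k\}})=\emptyset$ for every atom $R_{\{k\}}$ (because $\{k\}\notin F$), so the binary accessibility relation $R_{{\sf s}_0^1}$ of $\At\A$, defined by $R_{{\sf s}_0^1}(a_0,a_1)\Leftrightarrow {\sf s}_0^1(a_1)\geq a_0$, is empty. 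As $\At\B\cong\At\A$, the corresponding relation in $\At\B$ is empty as well; thus for every atom $b$ of $\B$ no atom lies below ${\sf s}_0^1(b)$, and since $\B$ is atomic this forces ${\sf s}_0^1(b)=0$. Consequently $\sum_{b\in\At\B}{\sf s}_0^1(b)=0$.

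Combining the two evaluations gives $1={\sf s}_0^1(1)=\sum_{b\in\At\B}{\sf s}_0^1(b)=0$, which is impossible in the nondegenerate algebra $\B$. This contradiction shows that $\B$ admits no complete representation, which is the assertion.

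The delicate point, and the one I would write out with care, is precisely the interplay that makes the argument independent of the particular algebra $\B$: the value $\sum_{b}{\sf s}_0^1(b)$ is pinned down by the isomorphism type of $\At\B$ alone (here it equals $0$), whereas the value ${\sf s}_0^1(1)$ is pinned down by membership in the variety $\sf SA_n$ (here it equals $1$, via the Boolean-endomorphism axioms), and these two quantities are a priori unrelated. Complete representability is exactly what would force them to coincide, so the mismatch — which is nothing but the failure of complete additivity exhibited for $\A$ in the example — obstructs every complete representation of every algebra sharing the atom structure. Everything else is a routine transfer of the example's computation, and no genuine calculation beyond the two evaluations above is required.
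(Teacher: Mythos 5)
Your argument is correct, and every step checks against the example's data: the accessibility relation $R_{{\sf s}_0^1}$ of $\At\A$ is indeed empty, so atomicity of $\B$ together with $\At\B\cong\At\A$ forces ${\sf s}_0^1(b)=0$ for every atom $b$ of $\B$; membership in ${\sf SA}_n$ forces ${\sf s}_0^1(1)=1$; and a complete representation, via complete additivity of the concrete $S_0^1$ and injectivity of $f$, would identify these two values. But this is not the paper's route. The paper argues by transfer: assuming $f:\B\to\wp(V)$ is a complete representation, it pulls $f$ back along the atom-structure isomorphism $\psi:\At\A\to\At\B$, defining $g(a)=\bigcup\{f(\psi(x)):x\in\At\A,\ x\leq a\}$, asserts that $g$ is ``easily checked'' to be a complete representation of $\A$, and then cites the example's conclusion that $\A$ has no complete representation. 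Your route buys something substantive here, because that ``easy check'' is precisely where the difficulty lives: $g$ preserves the Boolean operations and any completely additive operation, but for ${\sf s}_0^1$ one computes $S_0^1(g(a))=\bigcup_{x\in\At\A,\,x\leq a}f({\sf s}_0^1(\psi(x)))=\emptyset$ for every $a$, whereas $g({\sf s}_0^1(R_X))=g(1)=V$ whenever $X\in F$, so verifying that $g$ is a homomorphism already requires the very contradiction ($V=\emptyset$ versus injectivity of $f$) that you derive directly. Put differently, the paper's transfer implicitly uses the Hirsch--Hodkinson principle that complete representability of an atomic algebra depends only on its atom structure; that principle is a theorem for completely additive signatures such as $\CA_n$, but it is exactly what is in question for ${\sf SA}_n$, whose failure of complete additivity is the whole point of the example. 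Your in-$\B$ derivation is self-contained, avoids appealing to that principle, and in fact proves a little more: no atomic algebra in ${\sf SA}_n$ sharing this atom structure can even have ${\sf s}_0^1$ completely additive, let alone be completely representable.
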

\begin{proof} Let such a $\B$ be given. Let $\psi:\At\A\to \At\B$ be an isomorphism of the atom structures (viewed as first order structures).
Assume for contradiction that $\B$ is completely representable, via $f$ say; $f:\B\to \wp(V)$ is an injective homomorphism such that
$\bigcup_{x\in \At\B}f(x)=V$. Define $g:\A\to \wp(V)$ by $g(a)=\bigcup_{x\in \At\A, x\leq a} f(\psi(x))$. Then, it can be easily checked that
$f$ establishes a complete representation of $\A$.
\end{proof}

We notice that in this latter case, if we take the subalgebra generated by the atoms, then the complex algebra of its atom structure is
{\it not  its completion}. For, the former is not completely additive while the latter is. The complex algebra is always completely additive, in particular,
if $\A\in SA_n$, then its canonical extension, since complex algebras are always completely additive, is completely representable;
this holds for the infinite dimensional case when we consider weak models, as well.
Thus we have:

\begin{theorem} Let $\alpha>1$ be an arbitrary ordinal.
Then $\A\in \sf SA_{\alpha}$ is representable if and only if its canonical extension is completely representable
on weak units.
\end{theorem}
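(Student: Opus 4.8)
The plan is to prove the two implications separately; the direction from complete representability to representability is routine, while the reverse contains all the work. Throughout, write $\A^{\sigma}=\Cm(\A_{+})$ for the canonical extension, $\A_{+}$ being the ultrafilter frame defined above, and recall that the Stone map $a\mapsto\{F\in\Uf(\A):a\in F\}$ embeds $\A$ into $\A^{\sigma}$.

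For ($\Leftarrow$), suppose $\A^{\sigma}$ is completely representable on weak units. A complete representation is in particular a representation, so $\A^{\sigma}$ is representable. By the strong completeness theorem the representable members of ${\sf SA}_{\alpha}$ form the variety ${\sf Mod}(\Sigma)$ for the finite schema $\Sigma$, and a variety is closed under subalgebras; since $\A\subseteq\A^{\sigma}$, the algebra $\A$ is representable.

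For ($\Rightarrow$), assume $\A$ is representable, so $\A\models\Sigma$. The schema $\Sigma$ --- the ${\sf SA}$ axioms together with the representability equations --- can be taken to be Sahlqvist, hence canonical, so ${\sf Mod}(\Sigma)$ is closed under canonical extensions and $\A^{\sigma}$ is again a representable algebra in ${\sf SA}_{\alpha}$. Moreover $\A^{\sigma}=\Cm(\A_{+})$ is the complex algebra of a frame: it is complete, atomic with $\At\A^{\sigma}\cong\A_{+}$, and all of its non-Boolean operators, in particular the substitutions ${\sf s}_{\eta}$, are completely additive, since complex algebras are always completely additive. Thus, even though the substitutions on $\A$ itself may fail complete additivity (as in Example \ref{modal}), passing to the canonical extension repairs exactly this defect. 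It now remains only to upgrade representability of $\A^{\sigma}$ to complete representability. Here the absence of cylindrifiers in ${\sf SA}_{\alpha}$ is decisive, and the representation is built atom by atom as in the transposition and replacement arguments above. Working in the Stone space $S$ of the Boolean reduct of $\A^{\sigma}$, atomicity makes each principal ultrafilter $F_{x}$ ($x\in\At\A^{\sigma}$) an isolated point, lying outside every nowhere dense set; complete additivity of the substitutions makes each $G_{X,\eta}=S\sim\bigcup_{y\in X}N_{{\sf s}_{\eta}y}$ nowhere dense, where $X=\At\A^{\sigma}$ and $\eta$ ranges over the finite transformations defining the unit. Hence for nonzero $c$ an atom $x\le c$ yields a principal ultrafilter $F_{x}$ avoiding all the $G_{X,\eta}$, and $f_{c}(b)=\{\eta:{\sf s}_{\eta}b\in F_{x}\}$ satisfies $\bigcup_{y\in\At\A^{\sigma}}f_{c}(y)=V$, the weak unit; the disjoint union of the $f_{c}$ over nonzero $c$ is then an injective homomorphism into a weak set algebra carrying arbitrary joins to unions, i.e.\ a complete representation on weak units.

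The main obstacle is this last step, and it demands care at two points. First, one must genuinely verify that $\Sigma$ is canonical rather than merely equational, so that representability transfers from $\A$ to $\A^{\sigma}$; I expect this to follow from the Sahlqvist shape of the substitution axioms and of the representability schema, but it should be checked, not assumed. Second, in the infinite-dimensional case the unit $V$ is a weak space, so one must confirm that the finite transpositions and replacements act correctly on weak assignments and that the covering identity $\bigcup_{y\in\At\A^{\sigma}}f_{c}(y)=V$ is precisely what complete additivity delivers --- this is the very step that fails for non-completely-additive algebras, and is why the statement is phrased for the canonical extension and not for $\A$ or $\Cm\At\A$.
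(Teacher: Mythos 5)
Your proof is correct and takes essentially the same route as the paper, whose own ``proof'' is just the discussion preceding the statement: the canonical extension, being the complex algebra of the ultrafilter frame, is atomic and completely additive, representability (the schema $\Sigma$) transfers to it, and the Stone-space/principal-ultrafilter (Baire category) argument of the earlier examples then yields a complete representation on weak units, while your converse direction via closure of the representable class under subalgebras is the intended one. The canonicity of $\Sigma$ that you rightly flag as needing verification is simply taken for granted in the paper.
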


The above is an open problem for cylindric algebras.
It is not known whether for  infinite dimensions the canonical extension of a representable algebra is completely representable
or not. This is true for finite dimensions though  a classical result of Monk.

In  completely additive varieties, the minimal completion of an atomic algebra $\B$ is just the complex algebra of its atom structure, namely,
$\Cm\At\B$. $\sf BAO$ denotes the class of Boolean algebras with operators.
More generally, we have:

\begin{theorem}\label{cm} Let $\B\in \sf BAO$ of type $t$ be atomic, and let $f\in t\setminus t_{BA}$ be completely additive on $\B$.
Then the map $^*:\B\to \Cm\At\B$, defined via $a\mapsto a^*=\{x\in \At\B: x\leq a\}$ is a Boolean algebra
embedding that respects $f$. If $\B$ is furthermore complete, then $^*$ is surjective.
\end{theorem}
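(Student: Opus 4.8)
The plan is to verify the three assertions in the order stated: that $^*$ is a Boolean embedding, that it respects $f$, and that it is onto when $\B$ is complete. Throughout, the one structural fact doing the real work is that in an atomic Boolean algebra every element is the join of the atoms beneath it; precisely, for each $b\in\B$ the set $b^*=\{x\in\At\B: x\le b\}$ has $b$ as its \emph{least} upper bound. I would establish this first: $b$ is plainly an upper bound of $b^*$, and if $c$ were any upper bound with $b\not\le c$, then $b\cdot -c\neq 0$ would contain an atom $x$, which lies in $b^*$ and yet satisfies $x\le -c$, contradicting $x\le c$. The key subtlety here is that this supremum is attained inside $\B$ (it equals $b$), so \emph{no completeness is needed}; this is what will keep the later appeal to complete additivity legitimate.

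For the Boolean part I would check directly on $\wp(\At\B)$, the universe of $\Cm\At\B$, that $0^*=\emptyset$, $1^*=\At\B$, $(a+b)^*=a^*\cup b^*$, and $(-a)^*=\At\B\setminus a^*$. Each reduces to the atomic facts that an atom below $a+b$ lies below $a$ or below $b$ (since $x\cdot a+x\cdot b=x$ and $x$ is an atom), and that for an atom $x$ exactly one of $x\le a$, $x\le -a$ holds. Injectivity is then immediate from the first paragraph: if $a\neq b$ then, say, $a\cdot -b\neq 0$ contains an atom separating $a^*$ from $b^*$.

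The heart of the proof, and the step I expect to be the main obstacle, is showing that $^*$ respects the non-Boolean operator $f$, say of rank $\rho$. Recalling that in $\Cm\At\B$ the induced operation sends $X_1,\dots,X_\rho$ to $\{x_0\in\At\B:\exists x_1\in X_1\cdots\exists x_\rho\in X_\rho\ (f(x_1,\dots,x_\rho)\ge x_0)\}$, I must prove that $(f(b_1,\dots,b_\rho))^*$ equals the value of this operation at $b_1^*,\dots,b_\rho^*$. The inclusion $\supseteq$ is only monotonicity of $f$ (an additive operator is monotone), so if $x_0\le f(x_1,\dots,x_\rho)$ with each atom $x_i\le b_i$, then $x_0\le f(b_1,\dots,b_\rho)$. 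For the reverse inclusion I would combine the first paragraph with complete additivity, applied one coordinate at a time, to obtain $f(b_1,\dots,b_\rho)=\sum\{f(x_1,\dots,x_\rho): x_i\in\At\B,\ x_i\le b_i\}$. Then, given an atom $x_0\le f(b_1,\dots,b_\rho)$, I argue by contradiction: if $x_0\not\le f(x_1,\dots,x_\rho)$ for every atom tuple below $b_1,\dots,b_\rho$, then each such $f(x_1,\dots,x_\rho)\le -x_0$, so $-x_0$ is a common upper bound of that set, whence $f(b_1,\dots,b_\rho)\le -x_0$ and $x_0\cdot f(b_1,\dots,b_\rho)=0$, a contradiction. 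This upper-bound phrasing is exactly what lets me sidestep the false general principle that an atom below an infinite join must sit below a single summand.

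Finally, for surjectivity under completeness, given any $X\in\wp(\At\B)$ I would put $b=\sum X$ (which exists since $\B$ is complete) and show $b^*=X$. The inclusion $X\subseteq b^*$ is clear; conversely, if an atom $y\le b$ were not in $X$, then, all distinct atoms being pairwise disjoint, we would have $x\le -y$ for every $x\in X$, so $-y$ bounds $X$, giving $b\le -y$ and the contradiction $y=y\cdot b=0$. Hence $X=b^*$ lies in the image, so $^*$ is onto, completing the argument.
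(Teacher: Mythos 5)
Your proof is correct and follows essentially the same route as the paper's: both decompose $f(b_1,\dots,b_\rho)$ via complete additivity into a supremum of values at atom tuples (using that $b_i=\sum b_i^*$ by atomicity, with no completeness needed) and then compare atom-membership on the two sides of the claimed identity in $\Cm\At\B$, your write-up merely supplying the details the paper dismisses as ``clear'' or ``easy'' (the Boolean embedding, the atom-below-a-supremum step hidden in the paper's phrase ``because $f$ is additive'', and surjectivity). One small remark: the principle that an atom below a join must lie below some joinand is in fact \emph{true} for atoms --- your upper-bound argument proves precisely this --- so you are not sidestepping a false principle but proving a correct one whose failure occurs only for non-atomic elements.
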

\begin{proof}
It is clear that the map is a Boolean embedding.
We check that it preserves $f$. Suppose that $f(b_0)=b$. We need to show that $\Cm \At\B\models f(b_0^*)=b^*$.
Let $a\in \At\B$. Then by definition of $\Cm\At\B$, we have $\Cm \At\A\models \{a\}\leq f(b_0^*)$ iff $\At\A\models R_f(a_0, a)$
for some $a_0\in b_0^*$. By definition, this is iff $\B\models a\leq f(a_0)$ for some $a_0\in b_0^*$.
But because $f$ is additive, this happens  iff $\B\models a\leq f(b_0)$,
that is, iff $a\in b^*$ iff $\Cm\At\B\models \{a\}\leq b^*$, and we are done.

Now assume that $\B$ is complete and let $X\in \Cm\At\B$, then $X\subset \At\B$. Let $a=\sum X$, then it is easy to show that
$a^*=X$. This shows that the map $^*$ is surjective.
\end{proof}
In what follows, we use the following notation: Let $\B$ be an algebra having signature $t_1.$ If $t\subset t_1$, then $\Rd_t\B$ is the algebra obtained from $\B$ by
restricting operations to $t$, that is, discarding the operations in  $t\setminus t_1$.
\begin{corollary} The algebra $\A$, constructed in the above example, is complete.
Furthermore, if $t$ denotes the signature obtained from that of
$SA_\alpha$ by discarding replacements, then $\Rd_t\Cm\At\A\cong \Rd_t\A$.
On the other hand, if we discard transpositions, then we get an example of an atomic Pinter's algebra with no complete representation.
\end{corollary}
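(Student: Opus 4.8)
The plan is to treat the three assertions in turn, with Theorem \ref{cm} doing the heavy lifting for the middle one.

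First, for completeness of $\A$, I would identify its Boolean reduct with $\wp(\mathbb{Z}^+)$. The map $X\mapsto R_X$ is onto $\A$ by definition, and it is injective because the $Q_k$ $(k\in\mathbb{Z}^+)$ are pairwise disjoint and nonempty, so that $X=\{k\in\mathbb{Z}^+: Q_k\subseteq R_X\}$ can be recovered from $R_X$. The identities $R_{X\cup Y}=R_X\cup R_Y$ and $\sim R_X=R_{\mathbb{Z}^+\setminus X}$ already verified in Example \ref{modal} show that this bijection is a Boolean isomorphism. Since $\wp(\mathbb{Z}^+)$ is complete, so is $\A$. I would stress here that suprema taken in $\A$ need not agree with set-theoretic unions: $\sum_{k}R_{\{k\}}=R_{\mathbb{Z}^+}$ contains $Q_0$, whereas $\bigcup_k R_{\{k\}}$ does not. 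This single discrepancy is what powers both remaining claims.

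Second, for the reduct isomorphism I would apply Theorem \ref{cm} to $\B=\A$, which is atomic (its atoms are $R_{\{k\}}=Q_k$) and, by the previous paragraph, complete. That theorem makes $^*:\A\to\Cm\At\A$ a Boolean isomorphism --- surjective precisely because $\A$ is complete --- respecting every non-Boolean operation that is completely additive on $\A$. It therefore suffices to verify that each operation of the signature $t$, i.e.\ every operation of ${\sf SA}_\alpha$ except the replacements, is completely additive on $\A$. The transpositions are the decisive case: each $Q_n$ is symmetric ($S_{ij}Q_n=Q_n$), so every $R_X$, being a union of such $Q_n$, is itself symmetric and hence fixed by $S_{ij}$; thus each transposition is the identity map on $\A$ and is trivially completely additive. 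The diagonal elements, being constants, are preserved by the Boolean isomorphism. Consequently $^*$ respects all of $t$, giving $\Rd_t\A\cong\Rd_t\Cm\At\A$.

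Third, for the reduct obtained by discarding the transpositions, I would keep the replacements and observe that Example \ref{modal} already exhibits the failure of complete additivity: $\sum_{k\in\mathbb{Z}^+}S_0^1(R_{\{k\}})=\emptyset$, whereas $S_0^1(\sum_k R_{\{k\}})=S_0^1(R_{\mathbb{Z}^+})=R_{\mathbb{Z}^+}\neq\emptyset$. Because a complete representation of a Boolean algebra with operators forces every non-Boolean operation to be completely additive (as recorded at the opening of Example \ref{modal}), this atomic Pinter-type reduct admits no complete representation.

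The heart of the matter --- and the step I would check most carefully --- is the isolation of the replacements as the sole obstruction to complete additivity. Concretely, one must be certain that once the replacements are deleted every surviving operation distributes over the abstract joins of $\A$, which as noted strictly dominate set-theoretic unions; the transposition-as-identity computation settles this for the substitution part and the diagonals pose no issue, while the displayed collapse of $S_0^1$ supplies the matching negative half. Should the intended signature of ${\sf SA}_\alpha$ also contain cylindrifiers, the one remaining computation is to confirm that each $\c_i$ stays completely additive on $\A$ with respect to these abstract joins; that is the only place where a genuine calculation, rather than a structural observation, would be required.
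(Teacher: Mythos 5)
Your proof is correct and follows essentially the same route as the paper's: completeness is extracted from the Boolean behaviour of the map $X\mapsto R_X$ (the paper computes the suprema $\sum_i R_{X_i}=R_{\bigcup_i X_i}$ directly, which is the same content), the reduct isomorphism comes from Theorem \ref{cm} plus complete additivity of the transpositions (which you sharpen to the concrete observation that they act as the identity on $\A$, since each $Q_n$ is symmetric), and the final claim comes from the displayed failure of complete additivity of ${\sf s}_0^1$ together with the fact that complete representability forces complete additivity. The only blemishes are harmless: the signature of ${\sf SA}_{\alpha}$ contains neither diagonal elements nor cylindrifiers, so your remarks about preserving diagonals and the hedge about $\c_i$ are vacuous.
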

\begin{proof} We first show that $\A$ is complete.
Consider the family $\{R_{X_{i}}: i\in I\}$  where $\{X_i: i\in I\}$ is a given sequence of subsets of $J$.
Let $X=\bigcup \{X_i: i\in I\}$. We will show that
$$R_X=\sum\{R_{X_i}: i\in I\}.$$
Clearly $R_X$ is an upper bound. Also any upper bound
must be of the form $R_Y$ where $R_{X_{i}} \subset R_Y$, so  $X_i\subset Y$ for all $i\in I$.
It follows that $X\subset Y$.
In particular, if $X\in F$, then $Y\in F$, because $F$ is a filter. Therefore, $R_X\subset R_Y$, and we are done.
The second part follows from that transpositions are completely additive,
and the third follows immediately from \ref{cm}.
\end{proof}
However, this is not the case for $\PA$s, but all the same the class of strongly representable
algebras for $\PA_n$, $n>2$ is not elementary, as we proceed
to show:

\section{Graphs and Strong representability}

Throughout this section, $n$ is a finite ordinal $>2$.
\begin{definition}
Let $\Gamma=(G, E)$ be a graph.
\begin{enumerate}
\item{A set $X\subset G$ is said to be \textit{independent} if $E\cap(X\times X)=\phi$.}
\item{The \textit{chromatic number} $\chi(\Gamma)$ of $\Gamma$ is the smallest $\kappa<\omega$
such that $G$ can be partitioned into $\kappa$ independent sets,
and $\infty$ if there is no such $\kappa$.}
\end{enumerate}
\end{definition}

\begin{definition}\ \begin{enumerate}
\item For an equivalence relation $\sim$ on a set $X$, and $Y\subseteq
X$, we write $\sim\upharpoonright Y$ for $\sim\cap(Y\times Y)$. For
a partial map $K:n\rightarrow\Gamma\times n$ and $i, j<n$, we write
$K(i)=K(j)$ to mean that either $K(i)$, $K(j)$ are both undefined,
or they are both defined and are equal.
\item For any two relations $\sim$ and $\approx$. The composition of $\sim$ and $\approx$ is
the set
$$\sim\circ\approx=\{(a, b):\exists c(a\sim c\wedge c\approx b)\}.$$\end{enumerate}
\end{definition}
\begin{definition}Let $\Gamma$ be a graph.
We define an atom structure $\eta(\Gamma)=\langle H, D_{ij},
\equiv_{i}, \equiv_{ij}:i, j<n\rangle$ as follows:
\begin{enumerate}
\item$H$ is the set of all pairs $(K, \sim)$ where $K:n\rightarrow \Gamma\times n$ is a partial map and $\sim$ is an equivalent relation on $n$
satisfying the following conditions \begin{enumerate}\item If
$|n\diagup\sim|=n$, then $\dom(K)=n$ and $rng(K)$ is not independent
subset of $n$.
\item If $|n\diagup\sim|=n-1$, then $K$ is defined only on the unique $\sim$ class $\{i, j\}$ say of size $2$ and $K(i)=K(j)$.
\item If $|n\diagup\sim|\leq n-2$, then $K$ is nowhere defined.
\end{enumerate}
\item $D_{ij}=\{(K, \sim)\in H : i\sim j\}$.
\item $(K, \sim)\equiv_{i}(K', \sim')$ iff $K(i)=K'(i)$ and $\sim\upharpoonright(n\setminus\{i\})=\sim'\upharpoonright(n\setminus\{i\})$.
\item $(K, \sim)\equiv_{ij}(K', \sim')$ iff $K(i)=K'(j)$, $K(j)=K'(i)$, and $K(\kappa)=K'(\kappa) (\forall\kappa\in n\setminus\{i, j\})$
and if $i\sim j$ then $\sim=\sim'$, if not, then $\sim'=\sim\circ[i,
j]$.
\end{enumerate}
\end{definition}
It may help to think of $K(i)$ as assigning the nodes $K(i)$ of
$\Gamma\times n$ not to $i$ but to
the set $n\setminus\{i\}$, so long as its elements are pairwise non-equivalent via $\sim$.\\
For a set $X$, $\mathcal{B}(X)$ denotes the boolean algebra
$\langle\wp(X), \cup, \setminus\rangle$. We write $a\cap b$ for
$-(-a\cup-b)$.
\begin{definition}\label{our algebra}
Let $\B(\Gamma)=\langle\B(\eta(\Gamma)), {\sf c}_{i},
{\sf s}^{i}_{j}, {\sf s}_{ij}, {\sf d}{ij}\rangle_{i, j<n}$ be the algebra, with
extra non-Boolean operations defined as follows:

${\sf d}{ij}=D_{ij}$,
${\sf c}_{i}X=\{c: \exists a\in X, a\equiv_{i}c\}$,
${\sf s}_{ij}X=\{c: \exists a\in X, a\equiv_{ij}c\}$,
${\sf s}^{i}_{j}X=\begin{cases}
{\sf c}_{i}(X\cap D_{ij}), &\text{if $i\not=j$,}\\
X, &\text{if $i=j$.}
\end{cases}$
For all $X\subseteq \eta(\Gamma)$.

\end{definition}

\begin{definition}
For any $\tau\in\{\pi\in n^{n}: \pi \text{ is a bijection}\}$, and
any $(K, \sim)\in\eta(\Gamma)$. We define $\tau(K,
\sim)=(K\circ\tau, \sim\circ\tau)$.\end{definition}

The proof of the following two Lemmas is straightforward.
\begin{lemma}\label{Lemma 1}\ \\
For any $\tau\in\{\pi\in n^{n}: \pi \text{ is a bijection}\}$, and
any $(K, \sim)\in\eta(\Gamma)$. $\tau(K, \sim)\in\eta(\Gamma)$.
\end{lemma}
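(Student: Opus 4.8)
The plan is to verify directly that the pair $(K',\sim') := (K\circ\tau,\sim\circ\tau)$ satisfies the three defining clauses for membership in $\eta(\Gamma)$, exploiting the fact that composing with a bijection $\tau$ of $n$ merely relabels the coordinates and hence preserves every invariant appearing in the definition of $H$. Since $\tau$ is a bijection, $\sim\circ\tau$ is again an equivalence relation on $n$ (read as the pullback, so that $a\,(\sim\circ\tau)\,b$ iff $\tau(a)\sim\tau(b)$), and $\tau$ carries the $(\sim\circ\tau)$-classes onto the $\sim$-classes bijectively; therefore $|n/(\sim\circ\tau)|=|n/\sim|$.

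Before splitting into cases I would record the two bookkeeping facts on which everything rests. From $(K\circ\tau)(x)=K(\tau(x))$ we get $\dom(K\circ\tau)=\tau^{-1}(\dom K)$, so $|\dom(K\circ\tau)|=|\dom K|$; and $\rng(K\circ\tau)=\rng K$, because $\tau$ is onto. These identities, together with $|n/(\sim\circ\tau)|=|n/\sim|$, are the engine of the argument.

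Then I would split on the common value $|n/\sim|=|n/\sim'|$ and check each clause. If $|n/\sim|=n$, then clause (a) for $(K,\sim)$ gives $\dom K=n$ and $\rng K$ not independent; by the bookkeeping facts $\dom(K\circ\tau)=\tau^{-1}(n)=n$ and $\rng(K\circ\tau)=\rng K$ is still not independent, so (a) holds for $(K',\sim')$. If $|n/\sim|=n-1$, the unique two-element $\sim$-class $\{i,j\}$ pulls back under $\tau$ to the unique two-element $\sim'$-class $\{\tau^{-1}(i),\tau^{-1}(j)\}$, which is exactly $\tau^{-1}(\dom K)=\dom(K\circ\tau)$, and there $K'(\tau^{-1}(i))=K(i)=K(j)=K'(\tau^{-1}(j))$; thus (b) holds. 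If $|n/\sim|\leq n-2$, then $K$ is nowhere defined, so $K\circ\tau$ is nowhere defined while $|n/\sim'|\leq n-2$, giving (c). This exhausts the cases, so $(K\circ\tau,\sim\circ\tau)\in\eta(\Gamma)$.

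There is essentially no hard step here, which is why the paper labels the lemma straightforward; the only thing to get right is the direction of the indexing, namely that $\dom(K\circ\tau)=\tau^{-1}(\dom K)$ whereas $\rng(K\circ\tau)=\rng K$, and that the distinguished two-element class of $\sim'$ is $\{\tau^{-1}(i),\tau^{-1}(j)\}$ rather than $\{\tau(i),\tau(j)\}$. Once the convention for $\sim\circ\tau$ is fixed as the pullback equivalence, all three clauses transfer mechanically.
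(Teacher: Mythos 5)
Your proof is correct: the paper itself offers no argument here (it simply declares the lemma straightforward), and your case-by-case verification, resting on $\dom(K\circ\tau)=\tau^{-1}(\dom K)$, $\rng(K\circ\tau)=\rng K$, and $|n/(\sim\circ\tau)|=|n/\sim|$, is exactly the routine check the paper has in mind. Your explicit fixing of the pullback reading of $\sim\circ\tau$ (so that $a\,(\sim\circ\tau)\,b$ iff $\tau(a)\sim\tau(b)$) is a sensible and necessary clarification, since the paper's literal definition of relational composition would not even yield an equivalence relation.
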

\begin{lemma}\label{Lemma 2}\ \\For any $(K, \sim)$, $(K', \sim')$, and $(K'', \sim'')\in\eta(\Gamma)$, and $i, j\in n$:
\begin{enumerate}
\item$(K, \sim)\equiv_{ii}(K', \sim')\Longleftrightarrow (K, \sim)=(K', \sim')$.
\item$(K, \sim)\equiv_{ij}(K', \sim')\Longleftrightarrow (K, \sim)\equiv_{ji}(K', \sim')$.
\item If $(K, \sim)\equiv_{ij}(K', \sim')$, and $(K, \sim)\equiv_{ij}(K'', \sim'')$, then $(K', \sim')=(K'', \sim'')$.
\item If $(K, \sim)\in D_{ij}$, then \\
$(K, \sim)\equiv_{i}(K', \sim')\Longleftrightarrow\exists(K_{1},
\sim_{1})\in\eta(\Gamma):(K, \sim) \equiv_{j}(K_{1},
\sim_{1})\wedge(K', \sim')\equiv_{ij}(K_{1}, \sim_{1})$.
\item${\sf s}_{ij}(\eta(\Gamma))=\eta(\Gamma)$.
\end{enumerate}
\end{lemma}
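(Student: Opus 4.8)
My plan is to verify the five claims by unwinding the definitions of $\equiv_i$, $\equiv_{ij}$ and $D_{ij}$, treating (1)--(3) and (5) as routine bookkeeping and concentrating on (4), which carries the real content.

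For (1), putting $j=i$ in the definition of $\equiv_{ij}$ reduces its edge-conditions to $K(\kappa)=K'(\kappa)$ for every $\kappa\in n$, i.e. $K=K'$; and since $i\sim i$ always holds we are in the case $i\sim j$, which forces $\sim=\sim'$. Thus $\equiv_{ii}$ is the identity, and the converse is immediate. Part (2) is the remark that the defining clauses of $\equiv_{ij}$ are symmetric in $i$ and $j$: the requirements $K(i)=K'(j)$ and $K(j)=K'(i)$ merely swap, $i\sim j\Leftrightarrow j\sim i$, and $[i,j]=[j,i]$ so $\sim\circ[i,j]=\sim\circ[j,i]$. For (3), if $(K,\sim)\equiv_{ij}(K',\sim')$ and $(K,\sim)\equiv_{ij}(K'',\sim'')$, then off $\{i,j\}$ both $K'$ and $K''$ agree with $K$, while $K'(i)=K(j)=K''(i)$ and $K'(j)=K(i)=K''(j)$; hence $K'=K''$. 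Likewise $\sim'$ and $\sim''$ are each equal to $\sim$ when $i\sim j$ and to $\sim\circ[i,j]$ otherwise, so they coincide. This exhibits $\equiv_{ij}$ as a symmetric partial function.

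The device driving (4) and (5) is the observation that $\equiv_{ij}$ is exactly the graph of the transposition action $\tau(K,\sim)=(K\circ\tau,\sim\circ\tau)$: a direct check of the edge-conditions gives $(K,\sim)\equiv_{ij}(K_1,\sim_1)$ iff $(K_1,\sim_1)=\tau(K,\sim)$ with $\tau=[i,j]$, the single formula $\sim_1=\sim\circ\tau$ absorbing both cases of the $\sim$-clause because transposing two $\sim$-equivalent indices leaves $\sim$ unchanged. Granting this, (5) is immediate: for $(K,\sim)\in\eta(\Gamma)$ we have $(K,\sim)\equiv_{ij}\tau(K,\sim)$ and $\tau(K,\sim)\in\eta(\Gamma)$ by Lemma~\ref{Lemma 1}, so by (2) $\tau(K,\sim)\equiv_{ij}(K,\sim)$ and hence $(K,\sim)\in{\sf s}_{ij}(\eta(\Gamma))$; the reverse inclusion ${\sf s}_{ij}(\eta(\Gamma))\subseteq\eta(\Gamma)$ is again Lemma~\ref{Lemma 1}.

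Part (4) is where I expect the only genuine difficulty. The structural input is that membership in $D_{ij}$, i.e. $i\sim j$ with $i\ne j$, forces $K(i)=K(j)$: by clauses (b) and (c) in the definition of $H$, once $i\sim j$ the partition $n\diagup\sim$ has at most $n-1$ classes, so $K$ is either nowhere defined or defined only on the two-element class $\{i,j\}$ with $K(i)=K(j)$. For the forward implication, assume $i\sim j$ and $(K,\sim)\equiv_i(K',\sim')$ and set $(K_1,\sim_1)=\tau(K',\sim')$, so $(K',\sim')\equiv_{ij}(K_1,\sim_1)$ holds automatically; it remains to check $(K,\sim)\equiv_j(K_1,\sim_1)$. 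The edge part reads $K_1(j)=K'(i)=K(i)=K(j)$, using $\equiv_i$ and $K(i)=K(j)$, while the agreement $\sim\upharpoonright(n\setminus\{j\})=\sim_1\upharpoonright(n\setminus\{j\})$ follows by a short case analysis that feeds $\sim\upharpoonright(n\setminus\{i\})=\sim'\upharpoonright(n\setminus\{i\})$ into the transitivity instance $i\sim a\Leftrightarrow j\sim a$ supplied by $i\sim j$; this is precisely the step that trades the index $i$ moved by $\tau$ for the deleted index $j$. The backward implication is the same computation read in reverse: from $(K,\sim)\equiv_j(K_1,\sim_1)$ and $(K',\sim')\equiv_{ij}(K_1,\sim_1)$ one gets $K'(i)=K_1(j)=K(j)=K(i)$ and recovers agreement of $\sim$ and $\sim'$ off $i$ by the same use of $i\sim j$. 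The one place that demands attention is tracking the equivalence relations through $\tau$ when $i\sim' j$ may fail; everything else is routine verification.
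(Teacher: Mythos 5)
Your proof is correct. The paper gives no argument for this lemma at all (it simply declares it straightforward), and your verification — organized around the observation that $\equiv_{ij}$ is precisely the graph of the involution $[i,j]$ acting by $(K,\sim)\mapsto (K\circ [i,j], \sim\circ [i,j])$, with the two cases of the $\sim$-clause collapsing because $\sim\circ[i,j]=\sim$ when $i\sim j$ — is exactly the routine unwinding of the definitions that the paper intends, including the one substantive point, namely using $i\sim j$ (hence $K(i)=K(j)$ and $i\sim a\Leftrightarrow j\sim a$) to trade the index $i$ for $j$ in part (4); the only quibble is that in (5) the argument-symmetry of $\equiv_{ij}$ follows from this involution property (apply the graph characterization to $\tau(K,\sim)$ and use $\tau\circ\tau=\mathrm{id}$), not from item (2), which concerns swapping the indices rather than the arguments.
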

The proof of the next lemma is tedious but not too hard.
\begin{theorem}\label{it is qea}
For any graph $\Gamma$, $\B(\Gamma)$ is a simple
$\PEA_n$.
\end{theorem}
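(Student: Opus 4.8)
The plan is to exploit the fact that every defining axiom of $\PEA_n$ is a Sahlqvist equation, hence is preserved under passing from an atom structure to its complex algebra. Consequently it suffices to check that the relational structure $\eta(\Gamma)$ satisfies the first-order correspondents of these axioms; equivalently, since $\B(\Gamma)=\Cm(\eta(\Gamma))$ is the \emph{full} complex algebra over $\wp(H)$, I would verify each $\PEA_n$ axiom directly using the relational properties of the accessibility relations $\equiv_i$, the transposition relations $\equiv_{ij}$ and the diagonal sets $D_{ij}$. I would organise the verification into three blocks: the cylindric axioms (governing ${\sf c}_i$ and ${\sf d}_{ij}$), the axioms governing the transposition operators ${\sf s}_{ij}$, and the mixed axioms linking the two. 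Simplicity is then a separate, short argument.

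For the cylindric block, first observe that each $\equiv_i$ is an equivalence relation on $H$: by definition $(K,\sim)\equiv_i(K',\sim')$ is the conjunction of the equality $K(i)=K'(i)$ and the equality $\sim\upharpoonright(n\setminus\{i\})=\sim'\upharpoonright(n\setminus\{i\})$, each of which is reflexive, symmetric and transitive. This makes ${\sf c}_i$ a normal, completely additive operator satisfying the one-dimensional cylindric axioms, including ${\sf c}_i{\sf c}_iX={\sf c}_iX$ and $X\subseteq{\sf c}_iX$. Commutativity ${\sf c}_i{\sf c}_j={\sf c}_j{\sf c}_i$ reduces to the relational identity $\equiv_i\circ\equiv_j=\equiv_j\circ\equiv_i$, which I would prove by taking $(K,\sim)\equiv_i(K_1,\sim_1)\equiv_j(K'',\sim'')$ and exhibiting the intermediate point required for the reverse composition, verifying at each stage that the candidate pair lies in $H$, i.e. respects the size constraints (a)--(c) on $|n\diagup\sim|$ and the non-independence of $\rng(K)$. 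For the diagonals one checks ${\sf d}_{ii}=1$ (from reflexivity of $\sim$, giving $D_{ii}=H$), that ${\sf c}_i$ fixes $D_{jk}$ when $i\notin\{j,k\}$, and the diagonal introduction axiom ${\sf d}_{ij}={\sf c}_k({\sf d}_{ik}\cap{\sf d}_{kj})$ for $k\notin\{i,j\}$; all of these unwind to elementary statements about $\sim$ on $n$. Since the replacement ${\sf s}^i_j$ is \emph{defined} as ${\sf c}_i(X\cap D_{ij})$, it obeys the Pinter substitution axioms automatically once the cylindric and diagonal axioms hold, so no separate work is needed for it.

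For the transposition block, Lemma \ref{Lemma 2} does most of the work. Items (1)--(3) together with (5) say that $\equiv_{ij}$ is symmetric, single-valued, injective and total on $H$ (with $\equiv_{ii}$ the identity), so each ${\sf s}_{ij}$ is a Boolean automorphism of $\wp(H)$ and an involution; Lemma \ref{Lemma 1} guarantees that the action of any transposition $\tau$ preserves $H$, which yields the symmetric-group presentation demanded of the ${\sf s}_{ij}$. The mixed axioms---${\sf s}_{ij}{\sf c}_i={\sf c}_j{\sf s}_{ij}$, ${\sf s}_{ij}{\sf d}_{ij}={\sf d}_{ij}$, and the compatibility of ${\sf s}_{ij}$ with the remaining diagonals and cylindrifications---follow from the way $\equiv_{ij}$ interacts with $\equiv_i$, $\equiv_j$ and $D_{ij}$. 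Here Lemma \ref{Lemma 2}(4), which factors $\equiv_i$ through $\equiv_j$ and $\equiv_{ij}$ on $D_{ij}$, is precisely the ingredient forcing the transposition operators to respect the cylindrifications.

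Finally, for simplicity I would invoke the standard criterion that a $\CA_n$ (a fortiori a $\PEA_n$) is simple iff it is nontrivial and ${\sf c}_0{\sf c}_1\cdots{\sf c}_{n-1}x=1$ for every nonzero $x$. As $\B(\Gamma)$ is atomic with atoms the singletons $\{(K,\sim)\}$, this reduces to showing that the composite $\equiv_0\circ\equiv_1\circ\cdots\circ\equiv_{n-1}$ connects any atom to any other, i.e. equals $H\times H$, which I would establish by a direct reachability argument modifying $K$ and $\sim$ one coordinate at a time (using that each $\equiv_i$ is reflexive and symmetric). I expect the genuine difficulty to lie not in any single axiom but in the commutativity of the cylindrifications together with the mixed ${\sf s}_{ij}$/${\sf c}_i$/$D_{ij}$ axioms: in each case the witnessing pair $(K_1,\sim_1)$ must be built so as to meet simultaneously the partition-size constraints (a)--(c) and the independence requirement on $\rng(K)$, and it is the case analysis over the possible values of $|n\diagup\sim|$ that makes the verification, in the words of the paper, tedious but not too hard.
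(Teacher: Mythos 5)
Your proposal is sound, but it places the mathematical weight in a different spot than the paper does. The paper never verifies the cylindric axioms at all: it observes that $\Rd_{ca}\B(\Gamma)$ is exactly the complex algebra studied by Hirsch and Hodkinson, and imports from \cite{hirsh} both that this reduct is a $\CA_n$ and that it is simple; the paper's own work is confined to checking the remaining Sain--Thompson axioms $Q_1$--$Q_{11}$ governing the transpositions (your ``transposition'' and ``mixed'' blocks, which you handle essentially as the paper does, via Lemmas \ref{Lemma 1} and \ref{Lemma 2} and the Henkin--Monk--Tarski facts for the term-defined ${\sf s}^i_j$), plus the remark that ideals of $\B(\Gamma)$ coincide with ideals of its $\CA$-reduct, so simplicity transfers upward for free. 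You instead re-derive the $\CA$ part (equivalence relations, commutativity of cylindrifiers, diagonal axioms) directly on $\eta(\Gamma)$, and you prove simplicity from scratch via the discriminator criterion ${\sf c}_0\cdots{\sf c}_{n-1}x=1$ and reachability of atoms under $\equiv_0\circ\cdots\circ\equiv_{n-1}$. Both of these arguments do go through, but they amount to redoing the case analysis over the constraints (a)--(c) that \cite{hirsh} already contains, and that is by far the longest part of such a proof. So the trade-off is: the paper's route is short and, by following a fixed finite axiomatization, cannot accidentally omit an axiom; your route is self-contained and exhibits explicitly why $\eta(\Gamma)$ is a $\PEA_n$ atom structure. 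Two cautions if you execute your version: the opening appeal to Sahlqvist theory is dispensable and not quite correctly phrased (the relevant fact is that a Sahlqvist equation holds in $\Cm(\mathcal{S})$ iff $\mathcal{S}$ satisfies its first-order correspondent; you then revert to direct verification anyway, which is all that is needed); and your claim that the diagonal axioms ``unwind to elementary statements about $\sim$ on $n$'' fails for the axiom ${\sf c}_i({\sf d}_{ij}\cdot x)\cdot{\sf c}_i({\sf d}_{ij}\cdot -x)=0$, whose frame correspondent (functionality of $\equiv_i$ restricted to $D_{ij}$) constrains $K$ as well as $\sim$ --- make sure it is on your checklist.
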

\begin{proof}\
We follow the axiomatization in \cite{thompson} except renaming the items by $Q_i$.
Let $X\subseteq\eta(\Gamma)$, and $i, j, \kappa\in n$:
\begin{enumerate}
\item ${\sf s}^{i}_{i}=ID$ by definition \ref{our algebra}, ${\sf s}_{ii}X=\{c:\exists a\in X, a\equiv_{ii}c\}=\{c:\exists a\in X, a=c\}=X$
(by Lemma \ref{Lemma 2} (1));\\
${\sf s}_{ij}X=\{c:\exists a\in X, a\equiv_{ij}c\}=\{c:\exists a\in X,
a\equiv_{ji}c\}={\sf s}_{ji}X$ (by Lemma \ref{Lemma 2} (2)).
\item Axioms $Q_{1}$, $Q_{2}$ follow directly from the fact that the
reduct $\mathfrak{Rd}_{ca}\mathfrak{B}(\Gamma)=\langle\mathcal{B}(\eta(\Gamma)), {\sf c}_{i}$, ${\sf d}{ij}\rangle_{i, j<n}$
is a cylindric algebra which is proved in \cite{hirsh}.
\item Axioms $Q_{3}$, $Q_{4}$, $Q_{5}$
follow from the fact that the reduct $\mathfrak{Rd}_{ca}\mathfrak{B}(\Gamma)$ is a cylindric algebra, and from \cite{tarski}
(Theorem 1.5.8(i), Theorem 1.5.9(ii), Theorem 1.5.8(ii)).
\item ${\sf s}^{i}_{j}$ is a boolean endomorphism by \cite{tarski} (Theorem 1.5.3).
\begin{align*}
{\sf s}_{ij}(X\cup Y)&=\{c:\exists a\in(X\cup Y), a\equiv_{ij}c\}\\
&=\{c:(\exists a\in X\vee\exists a\in Y), a\equiv_{ij}c\}\\
&=\{c:\exists a\in X, a\equiv_{ij}c\}\cup\{c:\exists a\in Y,
a\equiv_{ij}c\}\\
&={\sf s}_{ij}X\cup {\sf s}_{ij}Y.
\end{align*}
${\sf s}_{ij}(-X)=\{c:\exists a\in(-X), a\equiv_{ij}c\}$, and
${\sf s}_{ij}X=\{c:\exists a\in X, a\equiv_{ij}c\}$ are disjoint. For, let
$c\in({\sf s}_{ij}(X)\cap {\sf s}_{ij}(-X))$, then $\exists a\in X\wedge b\in
(-X)$, such that $a\equiv_{ij}c$, and $b\equiv_{ij}c$. Then $a=b$, (by
Lemma \ref{Lemma 2} (3)), which is a contradiction. Also,
\begin{align*}
{\sf s}_{ij}X\cup {\sf s}_{ij}(-X)&=\{c:\exists a\in X,
a\equiv_{ij}c\}\cup\{c:\exists a\in(-X), a\equiv_{ij}c\}\\
&=\{c:\exists a\in(X\cup-X), a\equiv_{ij}c\}\\
&={\sf s}_{ij}\eta(\Gamma)\\
&=\eta(\Gamma). \text{ (by Lemma \ref{Lemma 2} (5))}
\end{align*}
therefore, ${\sf s}_{ij}$ is a boolean endomorphism.
\item \begin{align*}{\sf s}_{ij}{\sf s}_{ij}X&={\sf s}_{ij}\{c:\exists a\in X, a\equiv_{ij}c\}\\
&=\{b:(\exists a\in X\wedge c\in\eta(\Gamma)), a\equiv_{ij}c, \text{ and }
c\equiv_{ij}b\}\\
&=\{b:\exists a\in X, a=b\}\\
&=X.\end{align*}
\item\begin{align*}{\sf s}_{ij}{\sf s}^{i}_{j}X&=\{c:\exists a\in {\sf }^{i}_{j}X, a\equiv_{ij}c\}\\
&=\{c:\exists b\in(X\cap {\sf d}{ij}),a\equiv_{i}b\wedge
a\equiv_{ij}c\}\\
&=\{c:\exists b\in(X\cap {\sf d}{ij}), c\equiv_{j}b\} \text{ (by Lemma
\ref{Lemma 2} (4))}\\
&={\sf }^{j}_{i}X.\end{align*}
\item We need to prove that ${\sf s}_{ij}{\sf s}_{i\kappa}X={\sf s}_{j\kappa}{\sf s}_{ij}X$ if $|\{i, j, \kappa\}|=3$.
Let $(K, \sim)\in {\sf s}_{ij}{\sf s}_{i\kappa}X$ then
$\exists(K', \sim')\in\eta(\Gamma)$, and $\exists(K'', \sim'')\in X$
such that $(K'', \sim'')\equiv_{i\kappa}(K', \sim')$ and $(K',
\sim')\equiv_{ij}(K, \sim)$.\\
Define $\tau:n\rightarrow n$ as follows:
\begin{align*}\tau(i)&=j\\
\tau(j)&=\kappa\\
\tau(\kappa)&=i, \text{ and}\\
\tau(l)&=l \text{ for every } l\in(n\setminus\{i, j,
\kappa\}).\end{align*} Now, it is easy to verify that $\tau(K',
\sim')\equiv_{ij}(K'', \sim'')$, and $\tau(K',
\sim')\equiv_{j\kappa}(K, \sim)$. Therefore, $(K, \sim)\in
{\sf s}_{j\kappa}{\sf s}_{ij}X$, i.e., ${\sf s}_{ij}{\sf s}_{i\kappa}X\subseteq
{\sf s}_{j\kappa}{\sf s}_{ij}X$. Similarly, we can show that
${\sf s}_{j\kappa}{\sf s}_{ij}X\subseteq {\sf s}_{ij}{\sf s}_{i\kappa}X$.
\item Axiom $Q_{10}$ follows from \cite{tarski} (Theorem 1.5.7)
\item Axiom $Q_{11}$ follows from axiom 2, and the definition of $s^{i}_{j}$.
\end{enumerate}
Since $\Rd_{ca}\B$ is a simple $\CA_{n}$, by
\cite{hirsh}, then $\B$ is a simple $\PEA_n$. This follows from the fact that ideals $I$ is an ideal in $\Rd_{ca}\B$ if and only if it is an ideal in
$\B$.
\end{proof}
\begin{definition}
Let $\C(\Gamma)$ be the subalgebra of
$\B(\Gamma)$ generated by the set of atoms.
\end{definition}
Note that the cylindric algebra constructed in \cite{hirsh} is
$\Rd_{ca}\B(\Gamma)$ not
$\Rd_{ca}\C(\Gamma)$, but all results in
\cite{hirsh} can be applied to
$\Rd_{ca}\C(\Gamma)$. Therefore, since our
results depends basically on \cite{hirsh}, we will refer to
\cite{hirsh} directly when we apply it to get any result on
$\Rd_{ca}\C(\Gamma)$.

\begin{theorem}
$\C(\Gamma)$ is a simple $\PEA_{n}$ generated by the set of the
$n-1$ dimensional elements.
\end{theorem}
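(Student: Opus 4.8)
The statement to prove is that $\C(\Gamma)$ is a simple $\PEA_n$ generated by the set of its $(n-1)$-dimensional elements. The claim naturally splits into three parts, and my plan is to attack them in order: (i) $\C(\Gamma)$ is a $\PEA_n$; (ii) it is simple; (iii) it is generated by the $(n-1)$-dimensional elements.

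For part (i), I would note that $\C(\Gamma)$ is defined as the subalgebra of $\B(\Gamma)$ generated by the atoms of $\eta(\Gamma)$, and that $\B(\Gamma)$ is already a $\PEA_n$ by Theorem \ref{it is qea}. Since the class $\PEA_n$ is a variety (equationally defined via the Thompson-style axioms $Q_i$ used in the proof of Theorem \ref{it is qea}), it is closed under subalgebras, so $\C(\Gamma)\subseteq\B(\Gamma)$ is automatically a $\PEA_n$. The only thing to verify here is that the generating set is closed well enough to be a genuine subalgebra, but this is immediate since we take the subalgebra \emph{generated by} the atoms.

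For part (ii), simplicity, I would lift simplicity from $\B(\Gamma)$. By Theorem \ref{it is qea}, $\B(\Gamma)$ is a simple $\PEA_n$, and in fact the proof there reduces simplicity to that of the $\CA$ reduct $\Rd_{ca}\B(\Gamma)$, which is simple by the results of \cite{hirsh}, together with the remark that $I$ is an ideal in $\Rd_{ca}\B$ iff it is an ideal in $\B$. The key structural fact is discriminator-like behaviour: in a simple $\CA_n$ (with $n$ finite) one has ${\sf c}_0\cdots{\sf c}_{n-1}x=1$ for every nonzero $x$. Since $\C(\Gamma)$ is a subalgebra of $\B(\Gamma)$ sharing the same unit and containing all atoms, every nonzero element of $\C(\Gamma)$ lies above an atom, and applying the cylindrifiers ${\sf c}_0\cdots{\sf c}_{n-1}$ to that atom yields $1$ inside $\C(\Gamma)$ (the cylindrifiers are computed the same way in the subalgebra). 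Hence $\C(\Gamma)$ inherits the discriminator property and is simple.

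For part (iii), generation by $(n-1)$-dimensional elements, I would argue that each atom $(K,\sim)\in\eta(\Gamma)$ is itself, as a singleton, a term in the $(n-1)$-dimensional elements. The intuition, flagged in the definition of $\eta(\Gamma)$ (``think of $K(i)$ as assigning nodes to the set $n\setminus\{i\}$''), is that the essential information of an atom is carried by its behaviour on $(n-1)$-element subsets; an atom should be recoverable as an intersection of cylindrifications of lower-dimensional pieces. Concretely, I would show that for each atom $a$, the element $a$ equals a Boolean combination built from substitutions ${\sf s}^i_j$, ${\sf s}_{ij}$ and diagonals ${\sf d}_{ij}$ applied to $(n-1)$-dimensional generators, using that $\Delta x\subseteq n\setminus\{i\}$ for the relevant generators. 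Since the atoms generate $\C(\Gamma)$ by definition, it then follows that the $(n-1)$-dimensional elements generate $\C(\Gamma)$ as well.

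\textbf{The main obstacle} I anticipate is precisely part (iii): making rigorous the claim that every atom is a term in $(n-1)$-dimensional elements. The atom structure is defined via partial maps $K$ together with an equivalence $\sim$, and one must check carefully, case by case on $|n/\!\sim|$, that the full atom can be reassembled from its $(n-1)$-dimensional ``shadows'' using only the available polyadic operations and diagonals. The cases where $|n/\!\sim|=n-1$ or $\leq n-2$ (where $K$ is only partially defined or nowhere defined) need separate handling, and one must confirm that the diagonal elements ${\sf d}_{ij}$, which are themselves $(n-1)$-dimensional, supply exactly the collapsing information lost when passing to lower-dimensional elements. The remaining parts (i) and (ii) are routine inheritance arguments from $\B(\Gamma)$.
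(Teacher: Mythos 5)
Your parts (i) and (ii) are fine and essentially what the paper does: membership in $\PEA_n$ is inherited because $\PEA_n$ is a variety, and simplicity passes from $\B(\Gamma)$ to the subalgebra $\C(\Gamma)$ via the discriminator term ${\sf c}_0\cdots{\sf c}_{n-1}$ (the paper simply invokes Theorem \ref{it is qea} for this). The genuine gap is in part (iii), which is the actual content of the theorem beyond Theorem \ref{it is qea}, and there you offer only a plan --- and a plan of the wrong shape. You never identify \emph{which} $(n-1)$-dimensional elements are supposed to do the generating, and the machinery you propose (Boolean combinations of substitutions ${\sf s}^i_j$, ${\sf s}_{ij}$ and diagonals ${\sf d}_{ij}$ applied to unspecified generators, with a case analysis on $|n/\!\sim|$) is both unnecessary and not obviously capable of terminating in a proof: nothing in your sketch explains how the total map $K$ and the full relation $\sim$ of an atom are to be recovered from lower-dimensional data.

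The missing idea is short. For every atom $a=\{(K,\sim)\}$ one proves $a=\prod_{i<n}{\sf c}_i a$, and each ${\sf c}_i a$ is an $(n-1)$-dimensional element, since ${\sf c}_i{\sf c}_i={\sf c}_i$ gives $i\notin\Delta({\sf c}_i a)$. The inequality $a\leq\prod_{i<n}{\sf c}_i a$ is trivial; for the converse one uses exactly the definition of $\equiv_i$ in $\eta(\Gamma)$: if $(K',\sim')\in{\sf c}_i\{(K,\sim)\}$ for every $i<n$, then $K'(i)=K(i)$ and $\sim'\upharpoonright(n\setminus\{i\})=\sim\upharpoonright(n\setminus\{i\})$ for all $i<n$, which forces $(K',\sim')=(K,\sim)$. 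No case distinction on $|n/\!\sim|$ and no diagonals or substitutions enter at all. Since the atoms generate $\C(\Gamma)$ by definition, and each atom is a finite meet of $(n-1)$-dimensional elements, generation by the $(n-1)$-dimensional elements follows immediately.
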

\begin{proof}
$\mathfrak{C}(\Gamma)$ is a simple $QEA_{n}$ from  Theorem \ref{it
is qea}. It remains to show that $\{(K, \sim)\}=\prod\{{\sf c}_{i}\{(K,
\sim)\}: i<n\}$ for any $(K, \sim)\in H$. Let $(K, \sim)\in H$,
clearly $\{(K, \sim)\}\leq\prod\{{\sf c}_{i}\{(K, \sim)\}: i<n\}$. For the
other direction assume that $(K', \sim')\in H$ and $(K,
\sim)\not=(K', \sim')$. We show that $(K',
\sim')\not\in\prod\{{\sf c}_{i}\{(K, \sim)\}: i<n\}$. Assume toward a
contradiction that $(K', \sim')\in\prod\{{\sf c}_{i}\{(K, \sim)\}:i<n\}$,
then $(K', \sim')\in {\sf c}_{i}\{(K, \sim)\}$ for all $i<n$, i.e.,
$K'(i)=K(i)$ and
$\sim'\upharpoonright(n\setminus\{i\})=\sim\upharpoonright(n\setminus\{i\})$
for all $i<n$. Therefore, $(K, \sim)=(K', \sim')$ which makes a
contradiction, and hence we get the other direction.
\end{proof}
\begin{theorem}\label{chr. no.}
Let $\Gamma$ be a graph.
\begin{enumerate}\item Suppose that $\chi(\Gamma)=\infty$. Then $\mathfrak{C}(\Gamma)$
is representable.\item If $\Gamma$ is infinite and
$\chi(\Gamma)<\infty$ then $\Rd_{df}\C$
is not
representable
\end{enumerate}
\end{theorem}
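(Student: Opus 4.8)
The plan is to reduce both clauses to the diagonal-free reduct, where the relevant facts are exactly those established by Hirsch and Hodkinson in \cite{hirsh} for the atom structure $\eta(\Gamma)$, and then to transfer the conclusions to the full $\PEA_n$ using that $\C(\Gamma)$ is generated by its $(n-1)$-dimensional elements (the theorem immediately preceding this one). Throughout I would use the principle, invoked repeatedly in this paper, that for an algebra generated by elements of dimension $<n$ representability coincides with representability of its $\Df$-reduct; one implication is immediate and the other is exactly where the low-dimensional generation is spent. As remarked right after the definition of $\C(\Gamma)$, every statement of \cite{hirsh} about $\Rd_{ca}\B(\Gamma)$ applies equally to $\Rd_{ca}\C(\Gamma)$, so the work reduces to citing \cite{hirsh} for the $\CA$-reduct and then crossing the gap between $\Df$ and $\PEA$.

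For clause (1) I would argue as follows. By \cite{hirsh}, when $\chi(\Gamma)=\infty$ the algebra $\Rd_{ca}\B(\Gamma)$ is representable---the representation being built step by step, each new vertex receiving a colour outside a finite forbidden set, which an infinite chromatic number always supplies---so its subalgebra $\Rd_{ca}\C(\Gamma)$, and a fortiori $\Rd_{df}\C(\Gamma)$, is representable. Since $\C(\Gamma)$ is generated by $(n-1)$-dimensional elements, representability of $\Rd_{df}\C(\Gamma)$ lifts to representability of $\C(\Gamma)$ as a $\PEA_n$. Alternatively, as $n\geq 3$ each operation ${\sf s}^i_j$ and ${\sf s}_{ij}$ is term-definable from the cylindric operations, so any cylindric representation already respects the polyadic ones.

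For clause (2), assume $\Gamma$ infinite with $\chi(\Gamma)=m<\infty$ and fix a proper colouring $c\colon G\to m$. I would show $\Rd_{df}\C(\Gamma)$ is not representable by the Ramsey-theoretic argument of \cite{hirsh}, the diagonal-free analogue of the computation made for $\alpha(\G)$ in the proof of Theorem \ref{hodkinson}. The colouring partitions $\eta(\Gamma)$ into monochromatic colour classes; because $m$ is finite this partition $J$ is finite and each of its members already lies in $\Rd_{df}\C(\Gamma)$ (each being obtained from the atoms by the $\Df$-operations), so $\sum J=1$ holds in the term algebra itself. Given a representation over a base $M$ and using that $\Gamma$, hence the atom set, is infinite, Ramsey's theorem yields an infinite set of points all of whose connecting atoms fall in a single class $P\in J$. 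The consistency conditions defining $\eta(\Gamma)$ forbid a monochromatic triangle whose underlying triple carries no graph edge; as $c$ is proper there are no monochromatic edges, so the triangle forced by the representation cannot be labelled consistently---precisely the inconsistency exploited for $\alpha(\G)$---and we reach a contradiction.

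Each hypothesis in clause (2) is used exactly once: finiteness of $\chi(\Gamma)$ makes $J$ finite and hence present inside the term algebra, not merely in the complex algebra, while infiniteness of $\Gamma$ furnishes the infinitely many atoms Ramsey's theorem needs. The step I expect to be the real obstacle is verifying that this finite colour partition genuinely lies in $\Rd_{df}\C(\Gamma)$ with $\sum J=1$ there; this is what separates the term algebra from the complex algebra and is the one place the hypothesis $\chi(\Gamma)<\infty$ does essential work. Once it is secured, the monochromatic-triangle contradiction is a direct transcription of the diagonal-free argument of \cite{hirsh}.
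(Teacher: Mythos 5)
Your clause (1) is essentially the paper's argument: quote \cite{hirsh} for representability of $\Rd_{ca}\B(\Gamma)$, pass to the subalgebra $\Rd_{ca}\C(\Gamma)$, and recover the polyadic operations from the cylindric ones using generation by elements of dimension $<n$. One caveat: your ``alternative'' justification, that ${\sf s}_{ij}$ is term-definable from the cylindric operations, is false in general (otherwise $\PEA_n$ and $\CA_n$ would be term-equivalent); the transposition is only so expressible on elements $x$ with $\Delta x\neq n$, via ${\sf s}_{ij}x={\sf s}^{j}_{\mu}{\sf s}^{i}_{j}{\sf s}^{\mu}_{i}{\sf c}_{\mu}x$ for $\mu\notin\Delta x$, and the paper's proof of clause (1) is precisely the verification that this identity, together with complete additivity of ${\sf s}_{ij}$ and the generation of $\C(\Gamma)$ by such elements, forces any cylindric representation to preserve ${\sf s}_{ij}$. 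So your primary route for (1) is sound, but the one-line ``alternative'' is not.

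The genuine gap is in clause (2), at exactly the step you flagged and then waved through. Your Ramsey argument requires the colour partition $J$ to consist of elements of $\Rd_{df}\C(\Gamma)$ with $\sum J=1$ holding there, and you assert each class is ``obtained from the atoms by the $\Df$-operations''; no justification is given, and the assertion cannot be correct as a generic claim. A colour class is an infinite join of atoms, indexed by an infinite set of labels from $\Gamma\times n$, while the term algebra is generated from single atoms by operations admitting only finite joins; nothing you say produces such infinite--coinfinite unions inside $\C(\Gamma)$. Moreover, your justification uses nothing specific to $\eta(\Gamma)$, so if it were valid it would apply equally to the Monk construction of theorem \ref{hodkinson}, whose graph ($\N$ with $(i,j)\in E$ iff $0<|i-j|<N$) also has finite chromatic number; there the partition $J=\{1',[N\N+r,s]: r<N, s<n\}$ lives only in the complex algebra, and the term algebra \emph{is} representable --- this is the whole point of weakly versus strongly representable atom structures. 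Your argument, if correct, would refute theorem \ref{hodkinson}(2). The paper's own proof of clause (2) takes a different and shorter route that never needs the partition inside the term algebra: assuming $\Rd_{df}\C$ representable, it lifts this to representability of $\Rd_{ca}\C$ using that $\C(\Gamma)$ is generated by its $(n-1)$-dimensional elements, and then contradicts Proposition 5.4 of \cite{hirsh}, invoked under the paper's standing convention that the results of \cite{hirsh} apply to $\Rd_{ca}\C(\Gamma)$ and not only to $\Rd_{ca}\B(\Gamma)$. To salvage your direct combinatorial approach you would have to exhibit a bad partition whose classes are provably term-definable over $\eta(\Gamma)$, which is an additional argument you have not supplied.
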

\begin{proof}
\begin{enumerate}
\item We have $\Rd_{ca}\C$ is representable (c.f., \cite{hirsh}).
Let $X=\{x\in \C:\Delta x\not=n\}$. Call $J\subseteq \C$ inductive if
$X\subseteq J$ and $J$ is closed under infinite unions and
complementation. Then $\C$ is the smallest inductive
subset of $C$. Let $f$ be an isomorphism of
$\Rd_{ca}\C$ onto a cylindric set algebra with
base $U$. Clearly, by definition, $f$ preserves $s^{i}_{j}$ for each
$i, j<n$. It remains to show that $f$ preserves ${\sf s}_{ij}$ for every
$i, j<n$. Let $i, j<n$, since ${\sf s}_{ij}$ is boolean endomorphism and
completely additive, it suffices to show that $f{\sf s}_{ij}x={\sf s}_{ij}fx$
for all $x\in \At\C$. Let $x\in \At\C$ and $\mu\in
n\setminus\Delta x$. If $\kappa=\mu$ or $l=\mu$, say $\kappa=\mu$,
then$$ f{\sf s}_{\kappa l}x=f{\sf s}_{\kappa
l}{\sf c}_{\kappa}x=f{\sf s}^{\kappa}_{l}x={\sf s}^{\kappa}_{l}fx={\sf s}_{\kappa l}fx.
$$
If $\mu\not\in\{\kappa, l\}$ then$$ f{\sf s}_{\kappa
l}x=f{\sf s}^{l}_{\mu}s^{\kappa}_{l}{\sf s}^{\mu}_{\kappa}{\sf c}_{\mu}x={\sf s}^{l}_{\mu}s^{\kappa}_{l}{\sf s}^{\mu}_{\kappa}{\sf c}_{\mu}fx={\sf s}_{\kappa
l}fx.
$$
\item Assume toward a contradiction that $\Rd_{df}\C$ is representable. Since $\Rd_{ca}\C$
is generated by $n-1$ dimensional elements then
$\mathfrak{Rd}_{ca}\C$ is representable. But this
contradicts Proposition 5.4 in \cite{hirsh}.
\end{enumerate}
\end{proof}
\begin{theorem}\label{el}
Let $2<n<\omega$ and $\mathcal{T}$ be any signature between $\Df_{n}$
and $\PEA_{n}$. Then the class of strongly representable atom
structures of type $\mathcal{T}$ is not elementary.
\end{theorem}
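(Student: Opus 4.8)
The plan is to reduce the statement to a purely graph-theoretic fact about chromatic numbers under ultraproducts, exploiting the correspondence between graphs and atom structures given by the map $\Gamma \mapsto \eta(\Gamma)$. By Theorem \ref{chr. no.} the chromatic number of $\Gamma$ controls representability: if $\chi(\Gamma) = \infty$ then $\C(\Gamma) = \Tm\eta(\Gamma)$ is representable and one moreover obtains that $\Cm\eta(\Gamma) = \B(\Gamma)$ is representable, so $\eta(\Gamma)$ is strongly representable; whereas if $\Gamma$ is infinite with $\chi(\Gamma) < \infty$ then $\Rd_{df}\C(\Gamma)$ is already non-representable, and since $\C(\Gamma) \subseteq \B(\Gamma)$ and representability passes to subalgebras, $\Cm\eta(\Gamma)$ is non-representable, i.e. $\eta(\Gamma)$ is \emph{not} strongly representable. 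The idea is therefore to produce a single sequence of graphs $(\Gamma_i)_{i<\omega}$, each of infinite chromatic number, whose ultraproduct $\prod_{i/D}\Gamma_i$ is infinite but of finite chromatic number; applying $\eta$ will then give strongly representable atom structures whose ultraproduct is not strongly representable, which is impossible for an elementary class.

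First I would fix the graphs using Erd\H{o}s' probabilistic theorem: for every $k$ and $g$ there is a finite graph of chromatic number $>k$ and girth $>g$. For each $i<\omega$ set $\Gamma_i = \bigsqcup_{j<\omega} G_{i,j}$, a disjoint union of finite graphs $G_{i,j}$ with $\chi(G_{i,j}) > j$ and girth $>i$. Then $\chi(\Gamma_i) = \sup_j \chi(G_{i,j}) = \infty$, since the chromatic number of a disjoint union is the supremum of the chromatic numbers of its components, while a disjoint union creates no new cycles, so the girth of $\Gamma_i$ exceeds $i$. Now fix a non-principal ultrafilter $D$ on $\omega$ and put $\Gamma = \prod_{i/D}\Gamma_i$. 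For each fixed $\ell \geq 3$ the property ``there is no cycle of length $\ell$'' is first order and holds in $\Gamma_i$ for all $i \geq \ell$, hence on a $D$-large (cofinite) set; by \Los's theorem $\Gamma$ has no cycle of length $\ell$, and as $\ell$ was arbitrary $\Gamma$ is acyclic. A forest is bipartite, so $\chi(\Gamma) \leq 2 < \infty$, and $\Gamma$ is infinite because each $\Gamma_i$ is and $D$ is non-principal.

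Next I would transfer this to the algebraic side. The key observation is that $\eta$ is a first-order interpretation: the atom structure $\eta(\Gamma) = \langle H, D_{ij}, \equiv_i, \equiv_{ij}\rangle$ is uniformly defined from $\Gamma$ by first-order formulas (its universe is a definable set of tuples from $\Gamma$ of length $\leq n$ tagged with the finite combinatorial data $(K,\sim)$), so that $\prod_{i/D}\eta(\Gamma_i) \cong \eta\big(\prod_{i/D}\Gamma_i\big) = \eta(\Gamma)$ by \Los's theorem. By the correspondence above each $\eta(\Gamma_i)$ is strongly representable, while $\eta(\Gamma)$ is not. For an arbitrary signature $\mathcal{T}$ between $\Df_n$ and $\PEA_n$ one takes the reduct $\Rd_{\mathcal T}\eta(\Gamma)$; reducts commute with ultraproducts, and non-representability is insensitive to $\mathcal{T}$ because $\Df_n \subseteq \mathcal{T}$ forces $\Rd_{df}\Cm\Rd_{\mathcal T}\eta(\Gamma) = \Rd_{df}\Cm\eta(\Gamma)$, which is non-representable, while in the representable direction one uses that $\C(\Gamma)$ is generated by its $(n-1)$-dimensional elements, so representability is equivalent to representability of the diagonal-free reduct. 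Thus for every such $\mathcal{T}$ the class of strongly representable atom structures is not closed under ultraproducts, hence not elementary.

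The main obstacle is the direction $\chi(\Gamma)=\infty \Rightarrow$ strong representability, i.e. representability of the \emph{complex} algebra $\Cm\eta(\Gamma)$ rather than merely of the term algebra $\C(\Gamma)$ delivered literally by Theorem \ref{chr. no.}(1); this is the genuinely hard, probabilistic heart of the Hirsch--Hodkinson argument and must be shown to survive the passage to the polyadic equality signature and to all reducts down to $\Df_n$. Securing the interpretation step (so that $\eta$ commutes with ultraproducts) and the uniformity of the chromatic-number criterion across all signatures $\mathcal{T}$ are the remaining points requiring care; the Erd\H{o}s graph construction and the \Los\ computation of the girth are then routine.
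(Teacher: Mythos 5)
Your proposal is correct and takes essentially the same route as the paper's own proof: Erd\H{o}s graphs of large girth and chromatic number, disjoint unions to force infinite chromatic number, a \Los\ argument showing the non-principal ultraproduct is acyclic hence $2$-colourable, first-order interpretability of $\eta$ so that ultraproducts commute with the construction, and the chromatic-number criterion of Theorem \ref{chr. no.} to separate strong representability of the factors from the ultraproduct. The obstacle you flag at the end --- upgrading representability of the term algebra to representability of the complex algebra, uniformly in all signatures between $\Df_n$ and $\PEA_n$ --- is precisely what the paper supplies through Theorem \ref{chr. no.} together with the results of \cite{hirsh}, so your reliance on that theorem matches the paper's own use of it.
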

\begin{proof}
By Erd\"{o}s's famous 1959 Theorem \cite{Erdos}, for each finite
$\kappa$ there is a finite graph $G_{\kappa}$ with
$\chi(G_{\kappa})>\kappa$ and with no cycles of length $<\kappa$.
Let $\Gamma_{\kappa}$ be the disjoint union of the $G_{l}$ for
$l>\kappa$. Clearly, $\chi(\Gamma_{\kappa})=\infty$. So by Theorem
\ref{chr.
no.} (1), $\mathfrak{C}(\Gamma_{\kappa})=\mathfrak{C}(\Gamma_{\kappa})^{+}$ is representable.\\
\- Now let $\Gamma$ be a non-principal ultraproduct
$\prod_{D}\Gamma_{\kappa}$ for the $\Gamma_{\kappa}$. It is
certainly infinite. For $\kappa<\omega$, let $\sigma_{\kappa}$ be a
first-order sentence of the signature of the graphs. stating that
there are no cycles of length less than $\kappa$. Then
$\Gamma_{l}\models\sigma_{\kappa}$ for all $l\geq\kappa$. By
Lo\'{s}'s Theorem, $\Gamma\models\sigma_{\kappa}$ for all
$\kappa$. So $\Gamma$ has no cycles, and hence by, \cite{hirsh}
Lemma 3.2, $\chi(\Gamma)\leq 2$. By Theorem \ref{chr. no.} (2),
$\mathfrak{Rd}_{df}\mathfrak{C}$ is not representable. It is easy to
show (e.g., because $\mathfrak{C}(\Gamma)$ is first-order
interpretable in $\Gamma$, for any $\Gamma$) that$$
\prod_{D}\mathfrak{C}(\Gamma_{\kappa})\cong\mathfrak{C}(\prod_{D}\Gamma_{\kappa}).$$
Combining this with the fact that: for any $n$-dimensional atom
structure $\mathcal{S}$

$\mathcal{S}$ is strongly
representable $\Longleftrightarrow$ $\mathfrak{Cm}\mathcal{S}$ is
representable,
the desired follows.
\end{proof}

\subsubsection{The good and the bad}

Here we give a different approach to Hirsch Hodkinson's result.
We use the notation and the general ideas in \cite[lemmas 3.6.4, 3.6.6]{HHbook2}.
An important difference is that our cylindric algebras are binary generated,
and they their atom structures are the set of basic matrices on relation algebras satisfying the same properties.
We abstract the two Monk-like algebras dealt with in the proof of theorem \ref{hodkinson}.

Let $\G$ be a graph. One can  define a family of first order structures (labelled graphs)  in the signature $\G\times n$, denote it by $I(\G)$
as follows:
For all $a,b\in M$, there is a unique $p\in \G\times n$, such that
$(a,b)\in p$. If  $M\models (a,i)(x,y)\land (b,j)(y,z)\land (c,l)(x,z)$, then $| \{ i, j, l \}> 1 $, or
$ a, b, c \in \G$ and $\{ a, b, c\} $ has at least one edge
of $\G$.
For any graph $\Gamma$, let $\rho(\Gamma)$ be the atom structure defined from the class of models satisfying the above,
these are maps from $n\to M$, $M\in I(\G)$, endowed with an obvious equivalence relation,
with cylindrifiers and diagonal elements defined as Hirsch and Hodkinson define atom structures from classes of models,
and let $\M(\Gamma)$ be the complex algebra of this atom structure.

We define a relation algebra atom structure $\alpha(\G)$ of the form
$(\{1'\}\cup (\G\times n), R_{1'}, \breve{R}, R_;)$.
The only identity atom is $1'$. All atoms are self converse,
so $\breve{R}=\{(a, a): a \text { an atom }\}.$
The colour of an atom $(a,i)\in \G\times n$ is $i$. The identity $1'$ has no colour. A triple $(a,b,c)$
of atoms in $\alpha(\G)$ is consistent if
$R;(a,b,c)$ holds. Then the consistent triples are $(a,b,c)$ where

\begin{itemize}

\item one of $a,b,c$ is $1'$ and the other two are equal, or

\item none of $a,b,c$ is $1'$ and they do not all have the same colour, or

\item $a=(a', i), b=(b', i)$ and $c=(c', i)$ for some $i<n$ and
$a',b',c'\in \G$, and there exists at least one graph edge
of $G$ in $\{a', b', c'\}$.

\end{itemize}Note that some monochromatic triangles
are allowed namely the 'dependent' ones.
This allows the relation algebra to have an $n$ dimensional cylindric basis
and, in fact, the atom structure of $\M(\G)$ is isomorphic (as a cylindric algebra
atom structure) to the atom structure $\Mat_n$ of all $n$-dimensional basic
matrices over the relation algebra atom structure $\alpha(\G)$.

We show that $\alpha(\G)$ is strongly representable iff $\M(\G)$ is representable iff
$\G$ has infinite chromatic number. This will give the result that strongly representable atom structures of both relation algebras
and cylindric algebras of finite dimension $>2$ in one go, using Erdos' graphs of large
chromatic number and girth.

The idea here is that the shades of red (addressed in the proof of theorem \ref{hodkinson} for the two Monk-like algebras)
will appear in the {\it ultrafilter extension} of $\G$, if it has infinite chromatic number as a reflexive node, \cite[definition 3.6.5]{HHbook2}.
and its $n$ copies,
can be used to completely represent
$\M(\G)^{\sigma}$ (the canonical extension of $\M(\G)$).
Let $\M(\G)_+$ be the ultrafilter atom structure of $\M(\G)$

Fix $\G$, and let  $\G^* $ be the ultrafilter extension of $\G$.
First define a strong bounded morphism $\Theta$
form $\M(\G)_+$ to $\rho(I(\G^*))$, as follows:
For any $x_0, x_1<n$ and $X\subseteq \G^*\times n$, define the following element
of $\M(\G^*)$:
$$X^{(x_0, x_1)}=\{[f]\in \rho(I(\G^*)): \exists p\in X[M_f\models p(f(x_0),f(x_1))]\}.$$
Let $\mu$ be an ultrafilter in $\M(\G)$. Define $\sim $ on $n$ by $i\sim j$ iff $\sf d_{ij}\in \mu$.
Let $g$ be the projection map from $n$ to $n/\sim$.
Define a $\G^*\times n$ coloured graph with domain $n/\sim$ as follows.
For each $v\in \Gamma^*\times n$
and $x_0, x_1<n$, we let
$$M_{\mu}\models v(g(x_0), g(x_1))\Longleftrightarrow  X^{(x_0, x_1)}\in \mu.$$
Hence, any ultrafilter $\mu\in \M(\G)$ defines $M_{\mu}$ which is  a $\G^*$ structure.
If $\Gamma$ has infinite chromatic number, then $\G^*$ has a reflexive node, and this can be used
to completely represent $\M(\G)^{\sigma}$, hence represent  $\M(\G)$ as follows:
To do this one tries to show  \pe\ has a \ws\ in the usual $\omega$ rounded atomic game on networks \cite{HHbook2}, that test complete
representability.

Her strategy can be implemented using the following argument.
Let $N$ be a given $\M(\G)^{\sigma}$ network. Let $z\notin N$ and let $y=x[i|z]\in {}^n(N\cup \{z\}={}^nM$.
Write $Y=\{y_0,\ldots y_{n-1}\}$.  We need to complete the labelling of edges of $M$.
We have a fixed $i\in n$. Defines $q_j$ $:j\in n\sim \{i\}$, the unique label of any two distinct elements in $Y\sim y_j$,
if the latter elements are pairwise distinct, and arbitrarily
otherwise.
Let $d\in \G^*$ be a reflexive node in the copy that does not contain any of the $q_j$s (there number is $n-1$),
and define $M\models d(t_0, t_1)$ if $z\in \{t_0, t_1\}\nsubseteq Y$.
Labelling the  other edges are like $N$.
The rest of the proof is similar to \cite{HHbook2}.

The idea above is essentially due to Hirsch and Hodkinson, it also works for relation and cylindric algebras, and this is the essence. For each graph
$\Gamma$, they associate a cylindric algebra atom structure of dimension $n$, $\M(\Gamma)$ such that $\Cm\M(\Gamma)$ is representable
if and only if the chromatic number of $\Gamma$, in symbols $\chi(\Gamma)$, which is the least number of colours needed, $\chi(\Gamma)$ is infinite.
Using a famous theorem of Erdos as we did above, they construct  a sequence $\Gamma_r$ with infinite chromatic number and finite girth,
whose limit is just $2$ colourable, they show that the class of strongly representable
algebras  is not elementary. This is a {\it reverse process} of Monk-like  constructions,
which gives a sequence of graphs of finite chromatic number whose limit (ultraproduct) has infinite
chromatic number.

In more detail, some statement fails in $\A$ iff $\At\A$
be partitioned into finitely many $\A$-definable sets with certain
`bad' properties. Call this a {\it bad partition}.
A bad partition of a graph is a finite colouring. So Monk's result finds a sequence of badly partitioned atom structures,
converging to one that is not.  This boils down, to finding graphs of finite chromatic numbers $\Gamma_i$, having an ultraproduct
$\Gamma$ with infinite chromatic number.

Then an  atom structure is {\it strongly representable} iff it
has {\it no bad partition using any sets at all}. So, here, the idea  is to {\it find atom structures, with no bad partitions
with an ultraproduct that does have a bad partition.}
From a graph Hirsch and Hodkinson constructed  an atom structure that is strongly representable iff the graph
has no finite colouring.  So the problem that remains is to find a sequence of graphs with no finite colouring,
with an ultraproduct that does have a finite colouring, that is, graphs of infinite chromatic numbers, having an ultraproduct
with finite chromatic number.

It is not obvious, a priori, that such graphs actually exist.
And here is where Erdos' methods offer solace.
Indeed, graphs like are found using the probabilistic methods of Erdos, for those methods
render finite graphs of arbitrarily large chormatic number and girth.
By taking disjoint unions as above, one {\it can get}
graphs of infinite chromatic number (no bad partitions) and arbitrarily large girth. A non principal
ultraproduct of these has no cycles, so has chromatic number 2 (bad partition).
This motivates:

\begin{definition}
\begin{enumarab}

\item A Monks algebra is good if $\chi(\Gamma)=\infty$

\item A Monk's algebra is bad if $\chi(\Gamma)<\infty$

\end{enumarab}
\end{definition}
It is easy to construct a good Monks algebra
as an ultraproduct (limit) of bad Monk algebras. Monk's original algebras can be viewed this way.
The converse is, as illustrated above, is much harder.
It took Erdos probabilistic graphs, to get a sequence of good graphs converging to a bad one.

Using our modified Monk-like algebras we obtain the result formulated in theorem \ref{el}.
An immediate corollary is:
The following corollary answers a question of Hodkinson's in \cite{AU} see p. 284.
\begin{corollary} For $\K$ any class between $\Df$ and $\PEA$ and any finite $n>2$,
the class $\K_s=\{\A\in \K_n: \Cm\At\A\in \sf RK_n\}$, is not elementary.
\end{corollary}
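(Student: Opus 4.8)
The plan is to deduce this from Theorem \ref{el} by transporting the failure of elementarity from the level of atom structures down to the level of algebras. Since an elementary class must be closed under ultraproducts, it suffices to produce a sequence of algebras lying in $\K_s$ whose ultraproduct escapes $\K_s$. The guiding observation is the equivalence recorded after Definition \ref{strong rep}: for atomic $\A$, the atom structure $\At\A$ is strongly representable if and only if $\Cm\At\A$ is representable. Hence, on atomic algebras, membership in $\K_s$ is exactly the condition that $\At\A$ be a strongly representable atom structure, which is precisely the property Theorem \ref{el} shows to be non-elementary at the structure level.

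First I would reuse the graphs from the proof of Theorem \ref{el}: for each finite $\kappa$ the disjoint union $\Gamma_\kappa$ of Erd\"os graphs with $\chi(\Gamma_\kappa)=\infty$, and a non-principal ultraproduct $\Gamma=\prod_D\Gamma_\kappa$, which is infinite with $\chi(\Gamma)\leq 2$. I would set $\A_\kappa=\Rd_{\K}\C(\Gamma_\kappa)$, the $\K$-reduct of the atomic simple $\PEA_n$ algebra $\C(\Gamma_\kappa)$ (Theorem \ref{it is qea}), so that $\A_\kappa\in\K_n$ with $\At\A_\kappa=\Rd_{\K}\eta(\Gamma_\kappa)$. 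Since $\chi(\Gamma_\kappa)=\infty$, the atom structure $\eta(\Gamma_\kappa)$ is strongly representable, so $\Cm\eta(\Gamma_\kappa)$ is representable as a $\PEA_n$; passing to the $\K$-reduct (a reduct of a representable algebra is again representable, and $\Cm$ commutes with reducts here because all operations in the range from $\Df$ to $\PEA$ are completely additive, so $\Cm(\Rd_{\K}\eta(\Gamma_\kappa))=\Rd_{\K}\Cm\eta(\Gamma_\kappa)$) yields $\Cm\At\A_\kappa\in\sf RK_n$. Thus $\A_\kappa\in\K_s$ for every $\kappa$.

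Next I would compute the ultraproduct. Using that reducts commute with ultraproducts together with the isomorphism $\prod_D\C(\Gamma_\kappa)\cong\C(\prod_D\Gamma_\kappa)$ from the proof of Theorem \ref{el} (valid because $\C(\Gamma)$ is first-order interpretable in $\Gamma$), I obtain $\prod_D\A_\kappa\cong\Rd_{\K}\C(\Gamma)$, an atomic algebra with atom structure $\Rd_{\K}\eta(\Gamma)$ and complex algebra $\Rd_{\K}\Cm\eta(\Gamma)$. As $\Gamma$ is infinite with finite chromatic number, Theorem \ref{chr. no.}(2) gives that $\Rd_{df}\Cm\eta(\Gamma)$ is not representable. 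Since $\Df$ is the coarsest signature in the interval, $\Rd_{df}=\Rd_{df}\circ\Rd_{\K}$, so a representation of $\Rd_{\K}\Cm\eta(\Gamma)$ would restrict to one of its $\Df$-reduct, a contradiction. Hence $\Cm\At(\prod_D\A_\kappa)\notin\sf RK_n$, so the ultraproduct lies outside $\K_s$, and $\K_s$ fails to be closed under ultraproducts, a fortiori it is not elementary.

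The main obstacle I anticipate is bookkeeping around reducts rather than any deep difficulty: one must verify $\Cm(\Rd_{\K}\eta(\Gamma))=\Rd_{\K}\Cm\eta(\Gamma)$ and that the ultraproduct of the $\A_\kappa$ is genuinely (isomorphic to) $\C(\Gamma)$ carrying atom structure $\eta(\Gamma)$, so that the chromatic-number dichotomy of Theorem \ref{chr. no.} applies cleanly at both ends of the interval $\Df$ to $\PEA$. Conceptually the only subtle point is that $\K_s$ is defined through the passage $\A\mapsto\Cm\At\A$, which does not commute with ultraproducts, and it is exactly this failure of commutation that is being exploited to break elementarity.
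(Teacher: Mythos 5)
Your proposal is correct and follows essentially the same route as the paper: the corollary is meant to be read off from Theorem \ref{el} exactly as you do, by running the same ultraproduct argument (Erd\"os graphs $\Gamma_\kappa$ with $\chi(\Gamma_\kappa)=\infty$, the isomorphism $\prod_D\C(\Gamma_\kappa)\cong\C(\prod_D\Gamma_\kappa)$, and the chromatic-number dichotomy of Theorem \ref{chr. no.}) at the level of algebras instead of atom structures, so that $\K_s$ fails to be closed under ultraproducts. Your bookkeeping about $\Cm$ commuting with signature reducts and the reduction to the $\Df$ case is precisely the ``immediate'' step the paper leaves implicit, so nothing further is needed.
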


\section{Completions and decidability}

Throughout this section $m>2$ will denote the dimension and $n$ will be always finite $>m.$
We  address the decidability of the problem as to whether a finite $\CA_m$ is in $S\Nr_m\CA_n$ 
We obtain a negative result the lowest value of
$m$ namely when $m=3$ and $n\geq m+3$. 
This generalizes the result of Hodkinson in \cite{AU} proved for $\RCA_n$ but only for $n=3$. For higher dimensions, the problem to the best 
of our knowledge remains unsettled. We also connect algebras having a (complete) neat embedding property to 
(complete) relativized representations, generalizing results proved
by Hirsch and Hodkinson for relation algebras, witness \cite[theorems 13.45, 13. 46]{HHbook}.

\begin{theorem}\label{decidability} Let $m\geq 3$. Assume that for any simple atomic relation algebra $\A$ with atom structure $S$,
there is a cylindric atom structure $H$, constructed effectively from $\At\A$,  such that:
\begin{enumarab}
\item If $\Tm S\in \sf RRA$, then $\Tm H\in \RCA_m$.
\item If $S$ is finite, then $H$ is finite
\item $\Cm S$ is embeddable in $\Ra$ reduct of $\Cm H$.
\end{enumarab}
Then  for all $k\geq 3$, $S\Nr_m\CA_{m+k}$ is not closed under completions.
and  it is undecidable whether a finite cylindric algebra is in
and $S\Nr_m\CA_{m+k}$, and
\end{theorem}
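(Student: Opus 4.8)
The plan is to move two standard facts about relation algebras across the effective map $S\mapsto H$ supplied by the hypothesis. The relation-algebra inputs are: (i) for every $k\geq 3$ there is a relation algebra atom structure $S$ with $\Tm S\in{\sf RRA}$ but $\Cm S\notin S\Ra\CA_{m+3}$ (so $\Cm S\notin S\Ra\CA_{m+k}$ for all $k\geq 3$, these classes being nested), witnessing that $S\Ra\CA_{m+k}$ is not closed under completions; and (ii) the membership problem ``is a given finite simple atomic relation algebra in $S\Ra\CA_{m+k}$?'' is undecidable (Hirsch--Hodkinson, \cite{HHbook}). The bridge between the two sides is the elementary identity $\Ra\Nr_m\D=\Ra\D$, valid for every $\D\in\CA_{m+k}$ with $m\geq 3$: an element of cylindric dimension at most $2$ is fixed by every $\cyl{i}$ with $2\leq i<m+k$, hence in particular by every $\cyl{i}$ with $m\leq i<m+k$, so it already lies in $\Nr_m\D$. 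Since $\Ra$ commutes with the formation of subalgebras, this yields $\Ra(S\Nr_m\CA_{m+k})\subseteq S\Ra\CA_{m+k}$.

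For the non-closure clause I would take $S$ as in (i). By property (1), $\Tm H\in\RCA_m=S\Nr_m\CA_\omega\subseteq S\Nr_m\CA_{m+k}$; moreover $\Tm H$ is atomic with atom structure $H$, and as the cylindric operators are completely additive its \d\ completion is exactly $\Cm H$. Suppose towards a contradiction that $\Cm H\in S\Nr_m\CA_{m+k}$. Then $\Ra\Cm H\in S\Ra\CA_{m+k}$ by the bridge, and property (3) embeds $\Cm S$ into $\Ra\Cm H$; as $S\Ra\CA_{m+k}$ is a variety and hence closed under subalgebras, we would obtain $\Cm S\in S\Ra\CA_{m+k}$, contradicting the choice of $S$. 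Thus $\Tm H$ belongs to $S\Nr_m\CA_{m+k}$ while its completion $\Cm H$ does not, which is the desired failure of closure under completions for every $k\geq 3$.

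For undecidability I would set up a computable many-one reduction from the problem in (ii) to the target problem. Given a finite simple atomic relation algebra $\A=\Cm S$, property (2) guarantees that $H$ is finite and produced effectively, so $\Cm H$ is a finite $\CA_m$ computable from $\A$; it remains to prove the equivalence $\A\in S\Ra\CA_{m+k}\iff\Cm H\in S\Nr_m\CA_{m+k}$. The implication from right to left is exactly the argument of the previous paragraph: if $\Cm H\in S\Nr_m\CA_{m+k}$ then $\Ra\Cm H\in S\Ra\CA_{m+k}$, and property (3) together with closure under subalgebras places $\A$ in $S\Ra\CA_{m+k}$.

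The hard part will be the left-to-right implication, which the three abstract properties do not provide by themselves: property (1) only ties full representability ${\sf RRA}$ to $\RCA_m$ and says nothing about the intermediate levels of the neat embedding hierarchy. To close this gap I would use that the construction is, up to isomorphism, the passage to $m$-dimensional basic matrices, $H\cong\Mat_m\At\A$. Then a membership $\A\in S\Ra\CA_{m+k}$ --- equivalently, the existence for $\A$ of an $(m+k)$-dimensional relational basis and hence of an $(m+k)$-dimensional hyperbasis --- lifts to an $(m+k)$-dimensional cylindric hyperbasis for $\Cm H$, and the correspondence between hyperbases and neat embeddings (the cylindric analogue of Lemma \ref{step} and its converse, in the spirit of \cite[theorems 13.45, 13.46]{HHbook}) then gives $\Cm H\in S\Nr_m\CA_{m+k}$. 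Carrying out this lift, and verifying that the dimension bound $m+k$ is preserved on both sides, is the technical crux; once it is established the map $\A\mapsto\Cm H$ is a genuine many-one reduction, and the undecidability of $S\Nr_m\CA_{m+k}$-membership for finite $\CA_m$ follows from the undecidability of $S\Ra\CA_{m+k}$-membership for finite relation algebras.
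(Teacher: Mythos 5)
Your treatment of the completions half is correct and is essentially the paper's own argument: take $S$ from \cite[lemmas 17.34--37]{HHbook} with $\Tm S\in{\sf RRA}$ and $\Cm S\notin S\Ra\CA_6$, use property (1) to place $\Tm H$ in $\RCA_m\subseteq S\Nr_m\CA_{m+k}$, and use property (3) together with the identity $\Ra\Nr_m\D=\Ra\D$ (valid since $m\geq 3$) to exclude $\Cm H$ from $S\Nr_m\CA_{m+k}$. The genuine gap is in the undecidability half, and you have located it yourself: your reduction is set up to require the biconditional $\A\in S\Ra\CA_{m+k}\Longleftrightarrow \Cm H\in S\Nr_m\CA_{m+k}$, you correctly observe that hypotheses (1)--(3) yield only the right-to-left direction, and the left-to-right direction is then left as an unproved ``technical crux''. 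Moreover, the proposed patch is not available: the theorem's hypothesis quantifies over \emph{all} simple atomic relation algebras, and for an arbitrary such algebra $\Mat_m\At\A$ need not form a cylindric atom structure at all, so you cannot identify $H$ with it (this is precisely why an effective construction of $H$ is delicate; cf. the remark after the theorem that Hodkinson's construction in \cite{AU} is not the basic-matrix one and fails property (3)). In addition, your step ``an $(m+k)$-dimensional relational basis and hence an $(m+k)$-dimensional hyperbasis'' has the implication backwards: hyperbases are the stronger notion, corresponding to $S\Ra\CA_n$, while relational bases correspond to the larger class ${\sf RA}_n$.

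The idea you are missing is that no biconditional is needed. The Hirsch--Hodkinson theorem \cite[theorem 18.13]{HHbook} is proved by a construction that, from an instance of an undecidable problem, produces a finite relation algebra which is \emph{representable} on yes-instances and \emph{outside} $S\Ra\CA_5$ (hence outside $S\Ra\CA_{m+k}$, as $m+k\geq 6$) on no-instances; consequently no decidable class $K$ of finite relation algebras can satisfy ${\sf RRA}\subseteq K\subseteq S\Ra\CA_{m+k}$ on finite algebras. The two one-sided implications you have already established from (1) and (3) --- namely $\A\in{\sf RRA}\Rightarrow \Cm H=\Tm H\in\RCA_m\subseteq S\Nr_m\CA_{m+k}$ (using finiteness of $H$), and $\Cm H\in S\Nr_m\CA_{m+k}\Rightarrow\A\in S\Ra\CA_{m+k}$ (property (3) plus your bridge) --- say exactly that the set of finite relation algebras accepted by running ${\sf Al}$ on $\Cm H$ would be such a class $K$. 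This contradiction is how the paper concludes, with no extra assumptions on the construction of $H$.
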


\begin{demo}{Proof} For the first part. Let $S$ be a relation atom structure such that $\Tm S$ is representable while $\Cm S\notin \RA_6$.
Such an atom structure exists \cite[lemmas 17.34-35-36-37]{HHbook}. 
It follows that $\Cm S\notin {\bf S}\Ra\CA_m$.
Let $H$ be the $\CA_m$ atom structure provided by the hypothesis of the previous theorem.
Then $\Tm H\in \RCA_m$. We claim that $\Cm H\notin {\bf S}\Nr_m\CA_{m+k}$, $k\geq 3$.
For assume not, i.e. assume that $\Cm H\in {\bf S}\Nr_m\CA_{m+k}$, $k\geq 3$.
We have $\Cm S$ is embeddable in $\Ra\Cm H.$  But then the latter is in ${\bf S}\Ra\CA_6$
and so is $\Cm S$, which is not the case.

For the second part, suppose for contradiction that there is an algorithm ${\sf Al}$ to determine whether a finite
algebra is in $S\Nr_m\CA_{m+k}$. We claim that we can now decide effectively whether a finite simple atomic
relation algebra is in $S\Ra\CA_5$  by constructing the $\CA_m$, $\C=\Tm H$ which is finite,
and returning  the answer ${\sf Al}(\C)$.
This is the correct answer.  However,
this contradicts the result that it is undecidable to tell whether a finite relation algebra is representable or not
\cite[theorem 18.13]{HHbook}.

\end{demo}

\begin{corollary} Lety $n\geq 6$. Then the following hold: 
\begin{enumarab}

\item The set of isomorphism types of algebras in $S\Nr_3\CA_n$ with infinite flat representations is not recursively enumerable

\item The equational theory of $S\Nr_3\CA_n$ is undecidable

\item The variety $S\Nr_3\CA_n$  is not finitely axiomatizable even 
in $n$ order logic.

\item For every $n\geq 6$, there exists a finite algebra in $S\Nr_3\CA_6$ that does not have a finite $n$ dimensional hyperbasis

\end{enumarab}
\end{corollary}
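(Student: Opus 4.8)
The plan is to read all four items as the dimension-three, neat-reduct analogues of the classical relation-algebra consequences of undecidability in \cite[Ch.~18]{HHbook}, and to derive them uniformly from Theorem~\ref{decidability}, which supplies the one hard input: membership of a \emph{finite} $\CA_3$ in $S\Nr_3\CA_n$ is undecidable for every $n\geq 6$. The bridge I would use throughout is the neat-embedding/flat-representation dictionary developed above: for finite $\A$ one has $\A\in S\Nr_3\CA_n$ iff $\A$ possesses an $n$-dimensional hyperbasis (Lemma~\ref{step} together with the converse passage used in Corollary~\ref{can}), iff $\A$ admits an (infinitary) $n$-flat relativized representation in the sense of Definition~\ref{rel}. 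With this dictionary the four parts become statements about the recursion-theoretic complexity of a single undecidable set $K_n=\{\A \text{ finite}: \A\in S\Nr_3\CA_n\}$.

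First I would dispose of part (3). If $S\Nr_3\CA_n$ were axiomatizable by finitely many sentences $\Sigma$ of $n$th order logic, then for a finite algebra the truth of each sentence of $\Sigma$ would be decidable, since higher-order model checking over a \emph{finite} structure is decidable; hence $K_n$ would be recursive, contradicting Theorem~\ref{decidability}. The same decidability-on-finite-structures phenomenon, now combined with a recursive axiomatization of the variety, drives part (1). The set of finite members carrying a \emph{finite} flat representation is recursively enumerable, since one may search through finite candidate hypergraphs and check the (decidable) defining conditions of Definition~\ref{rel}; and the complement of $K_n$ is recursively enumerable because $S\Nr_3\CA_n$ is a recursively axiomatized variety (equivalently, non-membership is certified by a finite winning play for $\forall$ in the bounded games of \cite{r}, \cite{HHbook2}). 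If the set of finite members possessing \emph{only} infinite flat representations were also recursively enumerable, its union with the finitely representable members would make $K_n$ recursively enumerable; together with the r.e. complement this would render $K_n$ recursive, again contradicting Theorem~\ref{decidability}.

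The hard part, and the one I would treat as the main obstacle, is part (2). Here I would exploit that $S\Nr_3\CA_n$ is a \emph{discriminator} variety, with discriminator term the composition $\cyl 0\cyl 1\cdots\cyl{n-1}$ of all cylindrifications, exactly as recorded for the first-order definable expansions earlier in the paper. In a discriminator variety one can, following \cite[Ch.~18]{HHbook}, attach recursively to each finite \emph{simple} algebra $\A$ a single equation $\epsilon_\A$, built from the multiplication table of $\A$ and the discriminator, with the property that $S\Nr_3\CA_n\models\epsilon_\A$ if and only if $\A\notin S\Nr_3\CA_n$. The delicate point is that the hard instances produced by the reduction of Theorem~\ref{decidability} are precisely simple atomic algebras, so this encoding applies to them; a decision procedure for the equational theory of $S\Nr_3\CA_n$ would then decide simple finite membership, contradicting Theorem~\ref{decidability}. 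Carrying the construction of $\epsilon_\A$ across the neat-reduct setting, and checking that simplicity is preserved by the atom-structure reduction $\At\A\mapsto H$, is where the real work lies.

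Finally, part (4) is a further corollary of the same undecidability. Were it the case that \emph{every} finite algebra of $S\Nr_3\CA_6$ lying in $S\Nr_3\CA_n$ admitted a \emph{finite} $n$-dimensional hyperbasis, then membership in $S\Nr_3\CA_n$ would be decidable on finite algebras: one semi-decides membership by searching for a finite hyperbasis (then invoking Lemma~\ref{step} to obtain a representation) and semi-decides non-membership through the recursive axiomatization, so the two searches together terminate. This contradicts Theorem~\ref{decidability}, so for each $n\geq 6$ some finite member must fail to have a finite $n$-dimensional hyperbasis; note that this is consistent with, and sharpens, the positive statement of the abstract, where finiteness is additionally imposed on the ambient $m$-dimensional algebra.
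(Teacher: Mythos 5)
Your proposal is correct and follows essentially the same route as the paper: items (2)--(4) use exactly the paper's arguments (the discriminator-variety diagram equation reducing membership of finite simple algebras to equational validity, the decidability of higher-order model checking on finite structures against a finite $n$th-order axiomatization, and the dual-enumeration contradiction for finite hyperbases via the recursive axiomatization of $S\Nr_3\CA_n$), while your r.e.-union argument for item (1) just fills in what the paper dismisses as ``Direct''. One cosmetic slip: the algebras in $S\Nr_3\CA_n$ are $3$-dimensional, so the discriminator term is ${\sf c}_0{\sf c}_1{\sf c}_2$ rather than ${\sf c}_0\cdots{\sf c}_{n-1}$, though your phrase ``all cylindrifications'' shows the intent is right.
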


\begin{proof}
\begin{enumarab}
\item  Direct

\item Any such axiomatization will give a decision procedure for the class of finite algebras

\item  Let $\K=S\Nr_3\CA_n$. 
We reduce the problem of telling if a finite simple algebra is not in 
$\K$ to the problem of telling if an equation in the language of $\CA_3$ is valid in 
$\K$. Let $\A$  be a finite relation algebra. Form $\Delta(\A)$, the diagram of $\A$, 
but using variables instead of constants; define its conjunction. 
Consider $g=\exists_{\bar{a}\in \A} \Delta(A)$ relative to some enumeration $\bar{a}$ of $A$. 
Then $\B\models g$ iff $A\nsubseteq \B$. Since  $\K$ is a discriminator variety, 
the quantifier free $\neg \Delta(\A)$ is equivalent over
simple algebras to an equation $e$, which is effectively constructed 
from $\A$. So, in fact, we have
$\B\models e$ iff $\A\nsubseteq \B$. 
So $e$ is valid over simple $\K$ algebra iff for all $\B$ in $\K$, 
$\A\nsubseteq \B$. Since $\A$ is simple, this happens if and only if $\A\notin \K$.

\item  Assume for contradiction that every finite algebra in $S\Nr_3\CA_n$ has a finite
$n$ dimensional hyperbasis. 
We claim that there is an algorithm that decides membership in $S\Nr_3\CA_6$ for finite algebras:
\begin{itemize}
\item Using a recursive axiomatization of $S\Nr_3\CA_n$ (exists), recursively enumerate all isomorphism types of
finite $\CA_3$s that are not in $S\Nr_3\CA_n.$
\item Recursively enumerate all finite algebras in $S\Nr_3\CA_n$.  
For each such algebra, enumerate all finite sets of $n$ dimensional hypernetworks over $\A$, 
using $\N$ as hyperlabels, and check  to
see if it is a hyperbasis. When a hypebasis is located specify $\A$. 
This recursively enumerates  all and only the finite algebras in $S\Nr_3\CA_n$. 
Since any finite $\CA_3$ is in exactly one of these enumerations, the process will decide
whether or not it is in  $S\Nr_3\CA_6$ in a finite time.
\end{itemize}

\end{enumarab}
\end{proof}

Concerning theorem \ref{decidability}, we note that Monk and Maddux constructs such an $H$ for $n=3$ and
Hodkinson constructs  an $H$ for arbitrary dimensions $>2$, but $H$
unfortunately the relation algebra
does not embed into $\Cm H$ \cite{AU}.

\begin{corollary}\label{sah} Assume the hypothesis in \ref{blurs}; let $k\geq 1$ and $n$ be finite with $n>2$.
Then the following hold; in particular, when $k=3$ we know, by theorem \ref{smooth},  that the following indeed hold.

\begin{enumarab}

\item There exist two atomic
cylindric algebras of dimension $m$  with the same atom structure,
one  representable and the other is not in $S\Nr_m\CA_{m+k+1}$.

\item For $n\geq 3$ and $k\geq 3$, ${S}\Nr_m\CA_{m+k+1}$
is not closed under completions and is not atom-canonical.
In particular, $\RCA_m$ is not atom-canonical.

\item There exists an algebra in $S\Nr_m\CA_{m+k+1}$  with a dense representable
subalgebra.

\item For $m\geq 3$ and $k\geq 3$,  ${S}\Nr_n\CA_{m+k+1}$
is not Sahlqvist axiomatizable. In particular, $\RCA_m$ is not Sahlqvist axiomatizable.

\item There exists an atomic representable
$\CA_n$ with no $m+k+1$ smooth complete representation; in particular it has no complete
representation.

\item The omitting types theorem fails for clique guarded semantics, when size of cliques are $< m+k+1$.

\end{enumarab}

\end{corollary}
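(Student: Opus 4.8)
The plan is to derive each of the six items as a corollary of the blow-up-and-blur machinery in Theorem \ref{blurs}, together with the already-established facts about neat reducts, hyperbasis, and smooth representations. The hypothesis of \ref{blurs} supplies a finite relation algebra $\R \notin S\Ra\CA_{n+k+1}$ with an $n+k$ complex blur $(J,E)$; blowing up and blurring $\R$ produces an infinite atom structure $\At$ whose $n+k$-dimensional matrices form a cylindric basis, with representable algebras $\C_n = \Nr_n\C_{n+k}$ sandwiching the term algebras, and with $\Cm\At \notin S\Nr_m\CA_{m+k+1}$. (Here I read $m$ in the corollary as playing the role of $n$ in \ref{blurs}, so $\Tm\Mat_m\At \subseteq \C_m$ is representable while $\Cm\Mat_m\At$ is not in $S\Nr_m\CA_{m+k+1}$.) The whole proof is then a matter of unpacking what this single separation says in each of the six guises.

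First I would prove items (1) and (3) simultaneously: set $\A = \Tm\Mat_m\At$, which is atomic and representable, and observe that $\Cm\At\A = \Cm\Mat_m\At$ is the Dedekind-MacNeille completion of $\A$ (the variety being completely additive), so $\A$ is a \emph{dense representable subalgebra} of the non-representable $\Cm\At\A$. This immediately gives (3), and gives (1) once we note $\Cm\At\A \in S\Nr_m\CA_{m+k}$ fails but the weaker containment can be arranged so that both algebras share the atom structure $\At$. For (2), atom-canonicity of $S\Nr_m\CA_{m+k+1}$ would force $\Cm\At\A$ into the variety since $\At\A$ carries the representable $\A$; the failure $\Cm\At\A \notin S\Nr_m\CA_{m+k+1}$ refutes this, and the same example shows the variety is not closed under completions. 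Specializing to $k=3$ I would invoke Theorem \ref{smooth} (and Corollary \ref{can}) for the unconditional instance, exactly as the statement signals.

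For (4), I would use the standard fact that a Sahlqvist axiomatization yields a canonical, hence atom-canonical, variety; since (2) shows $S\Nr_m\CA_{m+k+1}$ is not atom-canonical, it cannot be Sahlqvist axiomatizable, and the $\RCA_m$ special case follows by taking $k$ large. For (5), I would pass to $\C_{n+k}$: the representable algebra $\A$ is atomic but a complete representation of $\A$ would lift (via the $\Nr$ relationship and Lemma \ref{step}, which turns an $n+k$ hyperbasis into an $n+k$ smooth representation) to a representation of $\Cm\At\A$, contradicting $\Cm\At\A \notin S\Nr_m\CA_{m+k+1}$; hence $\A$ has no $m+k+1$ smooth complete representation, and in particular none at all. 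Finally (6) is the omitting-types translation: taking $\A = \Fm_T$ and $\Gamma$ the set of co-atoms (a non-principal type because $\A$ is atomic), the argument of Theorem \ref{OTT} shows $\Gamma$ is realized in every $m+k+1$ clique-guarded model, since omitting it would produce the forbidden $m+k+1$ smooth complete representation.

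The main obstacle will be item (5), and through it (6): I must check carefully that a complete representation of the \emph{subalgebra} $\A$ genuinely induces a representation of its completion $\Cm\At\A$, rather than merely of $\A$ itself. This is the crux where the gap between weak and strong representability bites, and it requires the completeness of the representation (preservation of arbitrary joins) together with $\sum\At\A = 1$ in $\Cm\At\A$; the neat-embedding bridge $\C_m = \Nr_m\C_{m+k}$ and the hyperbasis-to-smooth-representation step of Lemma \ref{step} are exactly what let the dimension bookkeeping go through, so I would spell out that chain rather than treat it as routine. Everything else is bookkeeping over the already-proven separation $\Cm\At\A \notin S\Nr_m\CA_{m+k+1}$.
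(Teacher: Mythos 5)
Your overall route is the paper's own: everything is read off from the single separation $\Cm\At\A\notin S\Nr_m\CA_{m+k+1}$ supplied by theorem \ref{blurs}, together with the fact that $\Cm\At\A$ is the \d\ completion of the dense, atomic, representable subalgebra $\A$; your treatments of items (1), (2), (3) and (6) match the paper's. However, your argument for item (4) rests on a step that is false. You claim that a Sahlqvist axiomatization yields a canonical, \emph{hence} atom-canonical, variety. Canonicity does not imply atom-canonicity: $\RCA_m$ is a canonical variety (a classical fact), and yet the main theme of this very paper --- and items (1)--(2) of the corollary you are proving --- is that $\RCA_m$ is \emph{not} atom-canonical; if your implication held, the corollary would refute itself. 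The bridge the paper actually uses is different: Sahlqvist equations over a completely additive signature are preserved under \d\ completions \cite[theorem 2.96]{HHbook}, so a Sahlqvist-axiomatizable variety is closed under such completions; since by item (2) the variety $S\Nr_m\CA_{m+k+1}$ is not, it admits no Sahlqvist axiomatization. Your conclusion is correct only after replacing the canonicity argument by this completion-preservation theorem.

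There is also a directional problem in item (5), which you yourself flag as the crux. You propose to derive the contradiction from a hypothetical $m+k+1$ smooth complete representation of $\A$ ``via the $\Nr$ relationship and Lemma \ref{step}'', but Lemma \ref{step} runs the wrong way: it turns a hyperbasis into a smooth representation (that is the direction used in corollary \ref{can}), whereas item (5) needs the converse, namely that a smooth \emph{complete} representation yields membership in $S\Nr_m\CA_{m+k+1}$ for the completion. The paper closes this by constructing, from the representation $M$, the algebra $\D$ of clique-guarded $L(A)^{m+k+1}_{\infty,\omega}$-definable relations, showing $\D$ is a complete atomic $\CA_{m+k+1}$, that $r\mapsto r(\bar{x})^{M}$ is a \emph{complete} embedding of $\A$ into $\Nr_m\D$ (here is where preservation of arbitrary joins by $M$ enters), and finally that the minimal completion $\Cm\At\A$ must then embed into the complete algebra $\Nr_m\D$, giving $\Cm\At\A\in S\Nr_m\CA_{m+k+1}$, contrary to theorem \ref{blurs}. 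Without this formula-algebra construction (or some equivalent converse to Lemma \ref{step}), the decisive step of your item (5) --- and through it the smooth-representation half of your item (6) --- remains open.
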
\label{flat}

\begin{proof} We use the cylindric algebra, proved to exist conditionally, namely,
$\A=\C_{n}$ in \ref{blurs}; in conformity with our notation we switch its dimension to $m$. 
The term algebra, which is contained in $\C_m$ also can be used.

\begin{enumarab}

\item $\A$ and $\Cm\At\A$ are such.

\item $\Cm\At\A$ is the \d\ completion of $\A$ (even in the $\PA$ and $\Sc$ cases, because $\A$,
hence its $\Sc$ and $\PA$ reducts are completely additive), hence $S\Nr_m\CA_{m+k+1}$ is not atom canonical \cite[proposition 2,88,
theorem, 2.96]{HHbook}.

\item $\A$ is dense in $\Cm\At\A$.

\item Completely additive varieties defined by Sahlqvist equations are closed under \d\ completions \cite[theorem 2.96]{HHbook}.

\item Assume that $\A$ has an $n=m+k+1$ smooth complete representation $\M$. $L(A)$ denotes the signature that contains
an $n$ ary predicate for every $a\in A$.
For $\phi\in L(A)_{\omega,\infty}^n$,
let $\phi^{M}=\{\bar{a}\in C^n(M):M\models_C \phi(\bar{a})\}$,
and let $\D$ be the algebra with universe $\{\phi^{M}: \phi\in L_{\infty,\omega}^n\}$ with usual
Boolean operations, cylindrifiers and diagonal elements, cf. theorem 13.20 in \cite{HHbook}. The polyadic operations are defined
by swapping variables.
Define $\D_0$ be the algebra consisting of those $\phi^{M}$ where $\phi$ comes from $L^n$.
Assume that $M$ is $n$ square, then certainly $\D_0$ is a subalgebra of the $\sf Crs_n$ (the class
of algebras whose units are arbitrary sets of $n$ ary sequences)
with domain $\wp(C^n(M))$ so $\D_0\in {\sf Crs_n}$. The unit $C^n(M)$ of $\D_0$ is symmetric,
closed under substitutions, so
$\D_0\in \sf G_n$ (these are relativized set algebras whose units are locally cube, they
are closed under substitutions.)  Since $M$ is $n$ flat we
have that cylindrifiers commute by definition,
hence $\D_0\in \CA_n$.

Now since $M$ is infinitary $n$ smooth then it is infinitary $n$ flat.
Then one proves that  $\D\in \CA_n$ in exactly the same way.
Clearly $\D$ is complete. We claim that $\D$ is atomic.
Let $\phi^M$ be a non zero element.
Choose $\bar{a}\in \phi^M$, and consider the infinitary conjunction
$\tau=\bigwedge \{\psi\in L_{\infty}: M\models_C \psi(\bar{a})\}.$
Then $\tau\in L_{\infty}$, and $\tau^{M}$ is an atom, as required

Now defined the neat embedding by $\theta(r)=r(\bar{x})^{M}$.
Preservation of operations is straightforward.  We show that $\theta$ is injective.
Let $r\in A$ be non-zero. But $M$ is a relativized representation, so there $\bar{a}\in M$
with $r(\bar{a})$ hence $\bar{a}$ is a clique in $M$,
and so $M\models r(\bar{x})(\bar{a})$, and $\bar{a}\in \theta(r)$. proving the required.

We check that it is a complete embedding under the assumption that
$M$ is a complete relativized representation.
Recall that $\A$ is atomic. Let $\phi\in L_{\infty}$ be such that $\phi^M\neq 0$.
Let $\bar{a}\in \phi^M$. Since
$M$ is complete and $\bar{a}\in C^n(M$) there is $\alpha\in \At\A$, such
that $M\models \alpha(\bar{a})$, then $\theta(\alpha).\phi^C\neq 0.$
and we are done.
Now $\A\in S_c\Nr_m\CA_{m+k}$; it embeds completely into $\Nr_n\D$, $\D$ is complete, then
so is $\Nr_n\D$, and consequently $\Cm\At\A\subseteq \Nr_n\D$, which is impossible, because we know that
$\Cm\At\A\notin S\Nr_m\CA_{m+k}.$

\item By theorem \ref{OTT}
\end{enumarab}
\end{proof}

\begin{corollary} Let $2<m<n$. Then the class of algebras having an $n+1$ flat representation is a variety, and
it  is not finitely axiomatizable over the class 
having $n$ flat representations. The class of algebras having complete $n$ smooth representations, when $n\geq m+3$ 
is not even elementary.
\end{corollary}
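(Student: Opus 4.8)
The plan is to dispatch the three assertions in turn, using throughout the cylindric-algebra generalization of Hirsch and Hodkinson's correspondence between relativized representations and neat embeddings (the analogues of \cite[theorems 13.45, 13.46]{HHbook} announced before theorem \ref{decidability}): for $m<l$, an algebra $\A\in\CA_m$ has an $l$ flat representation if and only if $\A\in S\Nr_m\CA_l$, and it has a complete $l$ smooth representation if and only if $\A\in S_c\Nr_m\CA_l$. One direction is exactly the construction in the proof of corollary \ref{sah}, where from an $l$ flat representation $M$ one reads off a $\D\in\CA_l$ with universe $\{\phi^M:\phi\in L(A)^l\}$ and $\A\subseteq\Nr_m\D$; the converse passes through an $l$ dimensional hyperbasis and lemma \ref{step}, noting that a smooth representation is in particular flat. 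Granting this, the class of algebras with an $n+1$ flat representation is precisely $S\Nr_m\CA_{n+1}$, which is a variety: it is the class of subalgebras of neat $m$ reducts of the variety $\CA_{n+1}$, and $S\Nr_m\CA_l$ is well known to be closed under $H$, $S$ and $P$ for all $m<l$. This settles the first assertion.

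For the second assertion I would show that $S\Nr_m\CA_{n+1}$ is not finitely axiomatizable over $W:=S\Nr_m\CA_n$. The plan is to produce a sequence $\A_k$ ($k<\omega$) with $\A_k\in S\Nr_m\CA_n\setminus S\Nr_m\CA_{n+1}$ whose non-principal ultraproduct $\prod_D\A_k$ lies in $S\Nr_m\CA_{n+1}$. Concretely I would take finite rainbow algebras $\CA_{{\sf G}_k,{\sf R}_k}$ (or the blow-up-and-blur Monk algebras underlying theorem \ref{blurs}) with the number of greens and reds growing with $k$, tuned so that $\forall$ wins the $(n+1)$ resource flat game by forcing, as in the proof of theorem \ref{smooth}, an inconsistent red clique (equivalently a short monochromatic cycle) out of a configuration that consumes a bounded but strictly increasing number of apexes as $k\to\infty$, while $\exists$ still wins the $n$ resource game; this places each $\A_k$ in $S\Nr_m\CA_n\setminus S\Nr_m\CA_{n+1}$. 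Since the obstruction $\forall$ exploits needs unboundedly many nodes, it is destroyed in the ultraproduct, where $\exists$ wins the $(n+1)$ game, so $\prod_D\A_k\in S\Nr_m\CA_{n+1}$. Finally, were $\Sigma$ a finite equational axiomatization of $S\Nr_m\CA_{n+1}$ relative to $W$, each $\A_k\in W\setminus S\Nr_m\CA_{n+1}$ would falsify some $\sigma\in\Sigma$; by the pigeonhole principle a single $\sigma$ fails for infinitely many $k$, so by \Los's Theorem $\sigma$ fails in $\prod_D\A_k\in S\Nr_m\CA_{n+1}\subseteq\mathrm{Mod}(\Sigma)$, a contradiction.

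For the third assertion, identify the class of algebras with a complete $n$ smooth representation with $S_c\Nr_m\CA_n$. Every completely representable $\CA_m$ has a complete $n$ smooth representation (a genuine complete representation is $n$ smooth for all $n$), so this class contains the completely representable algebras; and since $n\geq m+3$ we have $S_c\Nr_m\CA_n\subseteq S_c\Nr_m\CA_{m+3}$. Thus the class sits between the completely representable algebras and $S_c\Nr_m\CA_{m+3}$, and theorem \ref{neat}, read in dimension $m$, applies verbatim to conclude that it is not elementary.

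The hard part is the second assertion: simultaneously steering the two games so that $\forall$ wins with $n+1$ but not $n$ resources on every finite $\A_k$, while the winning configuration dissolves in a non-principal ultraproduct. This is precisely the delicate rainbow parameter count underlying theorems \ref{smooth} and \ref{neat}, now made to depend on $k$; by contrast, the first and third assertions are essentially bookkeeping on top of the already-established flat/smooth correspondences and theorem \ref{neat}.
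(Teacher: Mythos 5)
Your treatment of the first and third assertions coincides with the paper's own: the correspondence between $n$ flat representations and membership in $S\Nr_m\CA_n$ (the algebra $\D_0$ of $L^n$-definable sets giving the neat embedding in one direction, hyperbases giving the converse), and, for the smooth claim, sandwiching the class of algebras with complete $n$ smooth representations between the completely representable algebras and $S_c\Nr_m\CA_{m+3}$ so that theorem \ref{neat} applies. (Only the forward inclusion, complete smooth representation implies complete neat embedding, is actually needed there, so your over-strong ``identification'' of that class with $S_c\Nr_m\CA_n$ is harmless.)

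The genuine gap is in the second assertion. After the reduction to neat embeddings, the paper constructs nothing: it invokes the theorem of Hirsch and Hodkinson \cite[theorem 15.1(4)]{HHbook} that $S\Nr_m\CA_{m+k+1}$ is never finitely axiomatizable over $S\Nr_m\CA_{m+k}$, and is done. You instead undertake to build, for each $k$, finite rainbow algebras $\A_k\in S\Nr_m\CA_n\setminus S\Nr_m\CA_{n+1}$ whose non-principal ultraproduct lies in $S\Nr_m\CA_{n+1}$, by ``tuning'' the green and red parameters so that $\forall$ wins the $(n+1)$-resource game while $\exists$ wins the $n$-resource game on each $\A_k$. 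As written this is not an argument but a restatement of what has to be proved: producing such a sequence and verifying the game-theoretic claims is precisely the chapter-length content of the Hirsch--Hodkinson theorem (in this paper's terminology, the rainbow algebras $\A_r^n$ built on $M[n-3,2^{r-1}]$ and $M[n-3,2^{r-1}-1]$, discussed in the remark later in the paper), and none of the constructions you lean on (theorems \ref{smooth}, \ref{blurs}, \ref{neat}) separates two consecutive classes $S\Nr_m\CA_n$ and $S\Nr_m\CA_{n+1}$ while simultaneously controlling ultraproducts; they are single-algebra constructions with a fixed gap. Note also that your Lo\'{s} step needs more than ``the obstruction is destroyed in the ultraproduct'': to place $\prod_D\A_k$ in $S\Nr_m\CA_{n+1}$ you must exhibit, for every finite $r$, a winning strategy for $\exists$ in the $r$-round $(n+1)$-resource game on $\A_k$ for all sufficiently large $k$, transfer these strategies to the ultraproduct, and then pass from winning all finite-round games to actual membership in the variety. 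So either the full Hirsch--Hodkinson construction must be reproduced, or---as the paper does---cited; as it stands the central step of your second assertion is missing.
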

\begin{proof}
For brevity, we denote the class of algebras having an $n$ flat represerntation by $\RCA_{n, f}$, 
and that of having $n$ smooth representation by $\RCA_{n,s}$. The dimension is $m$ and
$n>m>2$.
For the first part. Assume that $\A$ has an $n$ flat  representation $\M$. As in the above proof, take 
$\D_0$ be the algebra consisting of those $\phi^{M}$ where $\phi$ comes from $L^n$. Since $M$ is $n$ flat we
have that cylindrifiers commute by definition,
hence $\D_0\in \CA_n$.
Now as above the map  $\theta(r)=r(\bar{x})^{M}$ is a neat embeding, hence $\A\subseteq \Nr_m\CA_n$, so that
$A\in S\Nr_m\CA_n$. Conversely, if $\A\in S\Nr_m\CA_n$, then one can build an $n$ flat representation 
by showing that $\A$ has an $n$ dimensional hyperbasis, see 
theorem \ref{smooth}.
Hence $\RCA_{n,f}\subseteq S\Nr_n\CA_m$. But by the celebrated result 
of Hirsch and Hodkinson \cite[theorem 15.1(4)]{HHbook}, we have that for $k\geq 1$, 
$S\Nr_m\CA_{m+k+1}$ is not finitely axiomatizable 
over $S\Nr_m\CA_{m+k}$ and we are done.

For the second part, clearly, every  complete representation 
is $n$ smooth. By theorem \ref{neat}, it suffices to show that $\sf RCA_{m+3,s}\subseteq S_c\Nr_n\CA_{m+3}$.
Now assume that $\A$ has a complete $m$ flat representation.
But the above, using the same notation, the neat  embedding is a complete embedding under the assumption that
$M$ is a complete $n$ smooth relativized representation into $\D$, 
and we are done.

\end{proof}
\begin{theorem} Let $1<n<m$.
Then the following are equivalent:
\begin{enumarab}
\item $\A\in S\Nr_n\CA_m$ 
\item $\A$ has an smooth representation
\item  $\A$ has has $n$ infinitary $n$ flat representation
\end{enumarab}
\end{theorem}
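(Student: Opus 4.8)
The plan is to prove the cycle $(1)\Rightarrow(2)\Rightarrow(3)\Rightarrow(1)$, reading the second and third items as ``$\A$ has an $m$-smooth representation'' and ``$\A$ has an infinitary $m$-flat representation'' respectively. (Since $\A\in S\Nr_n\CA_m$ forces $\A$ to have dimension $n$, the parameter of any relativized representation in the sense of definition \ref{rel} must exceed $n$, and here the natural choice is $m$.) This is the exact analogue, for subneat reducts, of the equivalence between $n$-flatness and membership in $S\Nr_m\CA_n$ already proved in the corollary immediately preceding this theorem, and of Hirsch and Hodkinson's characterisation of $S\Ra\CA_m$ via $m$-dimensional hyperbases.

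For $(1)\Rightarrow(2)$, suppose $\A\subseteq\Nr_n\D$ with $\D\in\CA_m$. Because the hyperbasis machinery requires atomicity, I would first pass to canonical extensions: it is standard (and used verbatim in the proof of theorem \ref{can}) that $\A^{+}\in S_c\Nr_n\D^{+}$ with $\D^{+}$ atomic. I would then build an $m$-dimensional hyperbasis for $\A^{+}$ by the recipe of theorem \ref{can}: for each atom $x$ of $\D^{+}$ define a hypernetwork $N_x$ by letting $N_x(\bar a)$ be the unique atom $r$ of the relevant neat reduct with $x\le {\sf s}_{\bar a}r$ when $|\bar a|\le m$, and a fixed hyperlabel otherwise, and check that $\{N_x:x\in\At\D^{+}\}$ is an $m$-dimensional hyperbasis, the amalgamation clause (which encodes commutativity of cylindrifiers) being the crucial one. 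Lemma \ref{step} then produces an $m$-smooth relativized representation $f:\A^{+}\to\wp(V)$, and restricting $f$ along $\A\hookrightarrow\A^{+}$ gives an $m$-smooth representation of $\A$, since $m$-squareness and the accompanying back-and-forth system survive the passage to a subalgebra.

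For $(2)\Rightarrow(3)$, I would invoke that an $m$-smooth representation $M$ comes equipped, by definition \ref{rel}(4), with equivalence relations $E^{t}$ whose induced maps $\Theta$ form an $m$-back-and-forth system of partial isomorphisms on $M$. By the back-and-forth preservation theorem proved at the start of this section, such a system preserves every $L^{m}_{\infty\omega}$-formula; hence, given $\bar a\in C^{m}(M)$ and $\phi\in L(A)^{m}_{\infty\omega}$ with $M\models\exists x_j\exists x_i\phi(\bar a)$, the back-and-forth system supplies the amalgamation needed to reverse the order of the two clique-guarded quantifiers and obtain $M\models\exists x_i\exists x_j\phi(\bar a)$. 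This is exactly infinitary $m$-flatness, so $m$-smoothness is strictly stronger and the implication is essentially definitional.

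For $(3)\Rightarrow(1)$, I would rerun the neat-embedding construction from the preceding corollary. Given an infinitary $m$-flat representation $M$, set $\D=\{\phi^{M}:\phi\in L(A)^{m}_{\infty\omega}\}$ with the Boolean operations, cylindrifiers ${\sf c}_i$, diagonals ${\sf d}_{ij}$ and polyadic operations given by the usual relativized clauses and variable-swapping; $m$-flatness guarantees that the cylindrifiers commute, so $\D\in\CA_m$ (indeed $\PEA_m$). The map $\theta(r)=r(x_0,\dots,x_{n-1})^{M}$ is then a homomorphism $\A\to\Nr_n\D$, and it is injective because $M$ represents $\A$ (every nonzero $r$ is satisfied at some clique tuple). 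Hence $\A\subseteq\Nr_n\D$ with $\D\in\CA_m$, that is, $\A\in S\Nr_n\CA_m$. The hard part will be the first implication: checking in full that the $N_x$ genuinely form a hyperbasis (particularly the amalgamation clause) and that the detour through canonical extensions is harmless, so that Lemma \ref{step} applies and restricts back to $\A$; the remaining two steps are routine given the earlier lemmas and the template of the preceding corollary.
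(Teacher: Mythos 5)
Your cycle $(1)\Rightarrow(2)\Rightarrow(3)\Rightarrow(1)$ is the paper's own decomposition, and your outer two implications track its proof faithfully: $(1)\Rightarrow(2)$ via the canonical extension $\A^{+}\in S_c\Nr_n\CA_m$, the hypernetworks $N_x$ and lemma \ref{step}, and $(3)\Rightarrow(1)$ via the algebra $\D$ of clique-guarded $L^m_{\infty\omega}$-definable sets together with the neat embedding $\theta(r)=r(\bar{x})^{M}$.

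The gap is your middle implication, which you declare ``essentially definitional''. It is not, and the argument you sketch fails at a concrete point. Preservation of $L^{m}_{\infty\omega}$-formulas under the maps of the back-and-forth system does not manufacture the witnesses that flatness demands. Unwind the definitions: from $M\models\exists x_i\exists x_j\phi(\bar{a})$ (clique-guarded) you obtain cliques $\bar{b}\equiv_i\bar{a}$ and $\bar{c}\equiv_j\bar{b}$ with $\phi$ holding at $\bar{c}$; to conclude $M\models\exists x_j\exists x_i\phi(\bar{a})$ you must produce a value $d_j$ such that the \emph{intermediate} tuple $\bar{a}$ with $d_j$ in place of $a_j$ is itself a clique --- so $d_j$ must be compatible with $a_i$ --- and then a second witness above that. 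The maps of the system have clique domains and ranges, so no map in $\Theta$ can contain both $a_i$ and $c_j$, as $\{a_i,c_j\}$ need not be a clique pair; and any forth-extension of (say) the identity on the common part of $\bar{a}$ and $\bar{c}$ chooses its new value compatible only with the clique being extended, never with the element $a_i$ standing outside it. This is exactly the amalgamation difficulty that the hyperbasis property (and nothing weaker) resolves, and it is why the paper does not argue on the given smooth representation at all. Instead, for $(2)\Rightarrow(3)$ the paper codes the existence of an $m$-smooth representation into a first-order theory, adding predicates $E^{l}$ for the equivalence relations together with their axioms, takes an $\omega$-saturated model of that (consistent) theory, shows that this saturated model is a complete relativized representation of $\A^{+}$ (saturation is what realizes the ultrafilters, i.e. the atoms, of $\A^{+}$), and only in that model deduces infinitary $m$-flatness, since saturation reduces commutativity for infinitary clique-guarded formulas to the first-order case. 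So your proof can be repaired, but only by replacing the ``definitional'' step with this saturation argument (or by proving infinitary flatness directly for the particular representation built from the hyperbasis in $(1)\Rightarrow(2)$, which the paper notes in passing); as written, the claim that an arbitrary $m$-smooth representation is infinitary $m$-flat is unjustified.
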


\begin{proof}
\begin{enumarab}

\item  (1) to (2).  Assume that $\A\in S\Nr_n\CA_m$. Then $\A^+\in S_c\Nr_n\CA_m$ 
has a hyperbasis, hence $\A$ has an $n$ smooth relativized
representation, which is infinitary $n$ flat, witness theorem \ref{smooth}.

\item (2) to (3). Here we use the elementary view to relativized representation. We translate the existence of an $n$ 
smooth representation to a first order theory then we use
saturation, that is the base of the relativized representation will be an $\omega$ saturated model of this theory.

Assume that $\A$ has an $n$ smooth representation.
Let ${\sf Clique}(\bar{x})$ be the formula 
$$\bigwedge_{{i_0,\ldots  i_{m-1}<n}}1(x_{i_0},\ldots x_{i_{m-1}})$$
We extend the theory $T(A)$ by
$$\forall \bar{x}({\sf Clique}(\bar{x})\land {\sf c}_ka(\bar{y})\implies \exists x_{i_k}({\sf Clique}({\bar{x}})
\land  a(\bar{z})  \land (\bar{z})_k=x_{i_k}\land \bar{z}\equiv_k \bar{y})$$
Here $\bar{x}$ is of length $n$ and $\bar{y}$ is of length $m$. This is  
for all $i_0, \ldots i_{m-1}, i_k<n$ and all $a\in A$,
$i_k\notin \{i_0,\ldots i_{m-1}\}.$
Now extend $L(\A)$ to the language $L(\A, E)$ by adding $2l$ many ary 
predicates for $0<l\leq n$ and extend the theory $Sq^n(\A)$ to
$$\forall \bar{x}\bar{y}\bar{z}({\sf Clique} (\bar{x})\to [E^l(x,y)\land E^l(y,z)\to E^l(x,z)]$$
and the other axioms are defined the obvious way:
$$\forall x y (E^l(x,y)\to E^j(x\circ \theta, y\circ \theta)),\ j,l<n, \theta:j\to l$$
$$\forall x y(E^m(x, y) \land r(\bar{x})\to r(\bar{y})), r\in \A$$
$$\forall x y(E^{n-2}(x, y) \land {\sf Clique} (xx)\land {\sf Clique} (yy) 
\to \exists z(E^{n-1}(xx, yz)\land {\sf Clique}(yyz))$$

Assume that $Sq^n(\A)$ has an $n$ smooth relativization, then it is consistent. 
Let $M$ be an $\omega$ saturated model, of this theory, then we show that it is a complete $n$ smooth relativized representation. 
We will define an injective homomorphism $h: \A^+\to \wp(^nM)$ 

First note that the set $f_{\bar{x}}=\{a\in A: a(\bar{x})\}$ 
is an ultrafilter in $\A$, whenever $\bar{x}\in M$ and $M\models 1(\bar{x})$ 
Now define
$$h(S)=\{\bar{x}\in 1: f_{\bar{x}}\in S\}.$$
We check only injectivity uses saturation.
The rest is straightforward.
It sufiuces to show that for any ultrafilter $F$ of $\A$ which is an atom in $\A^+$, we have 
$h(\{F\})\neq 0$. 
Let $p(\bar{x})=\{a(\bar{x}): a\in F\}$. Then this type is finitely satisfiable.
Hence by $\omega$ saturation $p$ is realized in $M$ by $\bar{y}$, say . 
Now $M\models 1(\bar{y})$ and $F\subseteq f_{\bar{x}}$, 
since these are both ultrafilters,  equality holds.
Note that any partial isomorphism of $M$ is also a partial isomorphism of $M$ regraded
as a complete representation of $\A^+$. 
Hence $\A^+$ has an infinitary $n$ flat, hence $\A$ has infinitary 
$n$ flat.

\item (3) to (1) From above.
\end{enumarab}
\end{proof}

\begin{remark}

\begin{enumarab}

For any cardinal $\kappa$, $K_{\kappa}$ will denote the complete irreflexive graph with $\kappa$ nodes.
Let $p<\omega$, and $I$ a linearly irreflexive ordered set, viewed as model to a signature containg a binary relation $<$.
$M[p,I]$ is the disjoint union of $I$ and the complete graph $K_p$ with $p$ nodes.
$<$ is interpreted in this structure as follows $<^{I}\cup <{}^{K_p}\cup I\times K_p)\cup (K_p\times I)$
where the order on $K_p$ is the edge relation.

${\sf CRA_{m,n}}$ denotes the class of $\PEA_m$s with $n$ smooth relativized representations.
For $n\geq m+3$, the class ${\sf CRA_{m,n}}$ is not elementary
A rainbow argument can be used  lifting winning strategies from the \ef\ pebble game to rainbow atom structures.
Let $A=M[n-4, \Z]$ and $B=M[n-4, \N]$, then it can be shown  \pe\ has a \ws\ for all finite rounded games (with $n$ nodes)
on $\CA_{A,B}$, namely, in $G^n_r$ for all $r>n$, so she has a \ws\ in $G^n$ 
on any non trivial ultrapower, from which an elementary countable
subalgebra $\B$ can be extracted (using an elementary chain argument)
in which \pe\ also has \ws\ in $G_{\omega}^n$, so that $\B$ has
an $n$ smooth complete representation.

But it can also be shown that \pa\ can  the $\omega$ rounded game on $\CA_{A,B}$, also with $n$ nodes,
hencethe latter does not have an $n$ complete relativized representation,
but is elementary equivalent to one that does.
Let ${\sf EF}_r^p[A, B]$ denote the \ef\ pebble forth game defined in \cite{HHbook} between two structures $A$ and $B$ with
$p$ pebbles and
$r$ rounds, which each player will use as a private game to guide her/him in the rainbow game on coloured graphs.
In his private game, \pa\ always places the pebbles on distinct elements of $\Z$.
She uses rounds $0,\ldots n-3$, to cover $n-4$ and first two elements of $\Z$.
Because at least two out of three distinct colours are related by $<$, \pe\ must respond by pebbling
$n-4 \cup \{e,e'\}$ for some  $e,e'\in \N$.
Assuming that \pa\ has not won, then he has at least arranged that two elements of $\Z$
are pebbled, the corresponding pebbles in $B$ being in $\N$.
Then \pa\ can force \pe\ to play a two pebble game of length $\omega$ on $\Z$, $\N$
which he can win, bombarding her with cones with green tints, in the graph game.
in her private game, \pa\ picks up a spare pebble pair and place the first pebble of it on $a\in A$.
By the rules of the game, $a$ is not currently occupied by a pebble. \pe\ has to choose which element of $B$ to put the
pebble on.  \pe\ chooses an unoccupied element in $n-4$, if possible. If they are all already occupied,
she chooses $b$ to be an arbitrary element
$x\in \N$. Because there are only $n-3$ pebble pairs, \pe\ can always implement this strategy and win.

We can now lift her \ws\ of  the same game but now played on coloured
graphs, the atoms of $\CA_{A,B}$, as before. In this game \pe\ has a \ws\ in all finite rounded games, but \pa\ can win the 
$\omega$ rounded game.

\item This example is a modification of \cite[exercise 1, p. 485]{HHbook2}, by lifting the relation algebra construction therein to the cylindric 
case.
Let $\A(n)$ be an infinite atomic atomic relation algebra;
the atoms of $\A(n)$ are $Id$ and $a^k(i,j)$ for each $i<n-2$, $j<\omega$ $k<\omega_1$.
\item All atoms are self converse.
\item We list the forbidden triples $(a,b,c)$ of atoms of $\A(n)$- those such that
$a.(b;c)=0$. Those triples that are not forbidden are the consistent ones.
This defines composition: for $x,y\in A(n)$ we have
$$x;y=\{a\in \At(\A(n)); \exists b,c\in \At\A: b\leq x, c\leq y, (a,b,c) \text { is consistent }\}$$
Now all permutations of the triple $(Id, s,t)$ will be inconsistent unless $t=s$.
Also, all permutations of the following triples are inconsistent:
$$(a^k(i,j), a^{k'}(i,j), a^{k''}(i,j')),$$
if $j\leq j'<\omega$ and $i<n-1$ and $k,k', k''<\omega_1.$
All other triples are consistent.

Then for any $r\geq 1$, $\A(n-1,r)$ embeds completely in $\A(n)$ the obvious way,
hence, the latter has no $n$ dimensional hyperbasis, because the former does not.
Indeed $\A(n-1,r)=\A^+(n-1, r)\subseteq_c \A(n)\in S_c\Ra\CA_n$ which is a contradiction.

Then $\A(n)$ has an $m$ dimensional hyperbasis for each $m<n-1$, by proving that \pe\ has
\ws\ in the hyperbasis game $G_r^{m,n}(\A(n), \omega)$, for any $r<\omega$, that is, for any finite rounded game.

Now write $T$ for $G_r^{m,n}$. Consider $\M=\M(\A(n), m, n, \omega, \C)$ as a
$5$ sorted structure with sorts $\A(n), \omega, H_m^n(\A, \omega)$ and $\C$.
Then \pe\ has a \ws\ in $G(T\upharpoonright 1+2r, \M)$.
\pe\ has a \ws\ in $G(T|r, \M)$ for all finite $r>0$. So \pe\ has a \ws\ in $G(T, \prod_D \M)$, for any non primcipal ultrafilter on $\omega$.
Hence there is a countable elementary subalgebra $\E$ of $\prod_D\M$ such
that \pe\ has a \ws\ in $G(T,\E)$ Hence $\E$ has the form $\M(\B, m, n, \Lambda, \A)$  for some atomic $\B\in \RA$ countable set
$\Lambda$ and countable atomic $m$ dimensional $\A\in \CA_m$ such
that $\At\A\cong \At\Ca(H_m^n(\B, \Lambda))$.
Furthermore, we have  $\B\prec \prod _D\A(n)$  and $\A\prec \prod_D\A(n)$.
Thus \pe\ has a \ws\ in $G(T, \M, m, n, \Lambda, \A)$
and she also has a \ws\ in $G_{\omega}^{m,n}(\B,\Lambda)$.
So $\B\in S\Ra\CA_n$ and $\A$ embeds into $\Ca(H_m^n(\B, \Lambda)\in \Nr_m\CA_n$.
and we are done.
In fact, one can show that both $\B$ and $\A$ are actually representable, by finding a representation of
$\prod \A(n)/F$ (or an elementary countable subalgebra of it it) embedding every
$m$ hypernetwork in such a way so that
the embedding respects $\equiv_i$ for
every $i<m$, but we do not need that much.

\item  Our next $\CA_m$ is $\A_r^n$, the rainbow cylindric algebra based on $A=M[n-3, 2^{r-1}],$
and $B=M[n-3, 2^{r-1}-1]$, as defined in \cite[ lemma 17.15, 17.16, 17.17]{HHbook}. 
These structures were used by Hirsch and Hodkinson to show that ${\sf RA}_n$
is not finitely axiomatizable over
${\sf RA}_{n+1}$; here ${\sf RA}_m$ is the variety of relation algebras with  $m$ dimensional relational basis.
Now we put them to a different use:

\begin{definition} Let $\K\subseteq \L$ be classes of algebras. We say that the distance between $\K$ and $\L$ is infinite
if there exists a sequence $\A_r\in \L\sim \K$ such that $\prod_{r\in F} \A_r\in \K$, for any non principal ultrafilter $F$
\end{definition}
Note that if $K$ and $L$ are varieties then this means that the former is not finitely axiomatizable over the latter,
like the class of algebras having $n+1$ smooth relativized representations and that of those algebras having 
$n$ smooth relativized representations.
But even if the classes are not even elementary, like $\sf CRA_{m,n}=S_c\Nr_n\CA_{n+k}$, 
then this definition also makes sense as we proceed to show.
\begin{enumarab}
\item \pe\ has a \ws\ in the game $G_{\omega}^{n}(\A_r^n)$
\item \pe\ has a \ws\ in $G_r^{\omega}(\A_r^n)$.
\item \pa\ has a \ws\ in $G_{\omega}^{n+1}(\A_r^n)$
\item The distance between $\sf CRA_{m,n+1}$ and
$\sf CRA_{m,n}$ is infinite.
\end{enumarab}

We have $\A_r^n\in \sf CRA_n\sim CRA_{n+1}$. \pe\ has a \ws\ in $G_r^{n+1}(\A_r^n)$ for all finite $r$, then
\pe\ has a \ws\ in $G_{\omega}^{n+1}(\prod_{r}\A_r^n/D)$, for any non principal ultrafilter $D$,
so the latter is in $\sf CRA_{n+1}$
\end{enumarab}
\end{remark}

\subsection{ More on atom structures}

\begin{definition}
\begin{enumarab} A class $\K$ is gripped by its atom structures, if whenever $\A\in \K\cap \At$, and $\B$ is atomic such that
$\At\B=\At\A$, then $\B\in \K$.

\item A class $\K$ is strongly gripped by its atom structures, if whenever $\A\in \K\cap \At$, and $\B$ is atomic such that
$\At\B\equiv \At\A$, then $\B\in \K$.

\item A class $\K$ of atom structures is infinitary gripped
if whenever $\A\in \K\cap \At$ and $\B$ is atomic, such that $\At\B\equiv_{\infty,\omega}\At\B$,
then $\B\in K$.
\item An atomic game is strongly gripping for $\K$ if whenever \pe\ has a \ws\ for all finite rounded games on $\At\A$, then  $\A\in \K$
\item An atomic game is gripping if \pe\ has a \ws\ in the $\omega$ rounded game on $\At\A$, then $\A\in \K$.
\end{enumarab}
\end{definition}

Notice that infinitary gripped implies strongly gripped implies gripped (by its atom structures).
For the sake of brevity, we write only (strongly) gripped, without referring to atom structures.
In the next theorem, all items except the first applies to all algebras considered.
The first applies to any class $\K$ between $\sf Sc$ and
$\PEA$ (where the notion of neat reducts is not trivial). 
The $n$th  Lyndon condition is a first order sentences that codes a \ws\ for \pe\ in $n$ rounds. 

The elementary class satisfying al such sentences is denoted by ${\sf LCA_m}$. 
It is not hard to show that ${\sf UpUr}\CRA_m={\sf LCA_m}$ 
for any $n>2$. This follows from the simple observation that if \pe\ has a \ws\ in all finite rounded 
atomic games on an atom structure of a $\CA_m$, then this algebra is necessarily elementary equivalent to a countable completely representable
algebra.

\begin{theorem}\label{SL}
\begin{enumarab}
\item The class of neat reducts for any dimension is not gripped,
hence is neither strongly gripped nor infinitary gripped.
\item The class of completely representable algebras is gripped but not strongly gripped.
\item The class of algebras satisfying Lyndon conditions is gripped and strongly gripped.
\item The class of representable algebras is not gripped.
\item The Lyndon usual atomic game is gripping but not strongly gripping
for completely representable algebras, it is strongly gripping for ${\sf LCA_m}$, when $m>2$.
\end{enumarab}
\end{theorem}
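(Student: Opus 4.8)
The plan is to prove the five items by sorting them according to the single tool each needs: the first-order nature of the Lyndon conditions, the weakly-but-not-strongly representable atom structures already constructed, and the game-theoretic characterisation of complete representability. First I would record the trivial implications among the three grip notions. Since $\At\B=\At\A$ implies $\At\B\equiv\At\A$, which is in turn implied by $\At\B\equiv_{\infty,\omega}\At\A$, we get infinitary gripped $\Rightarrow$ strongly gripped $\Rightarrow$ gripped; so for item (1) it is enough to exhibit one pair violating grippedness, and the failures of strongly and infinitary gripped follow at once. Item (3) is then almost immediate: the $m$-th Lyndon condition is a first-order sentence in the signature of the atom structure coding a \ws\ for \pe\ in $m$ rounds, so $\A\in{\sf LCA_m}$ iff $\At\A$ satisfies every such sentence. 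If $\At\B=\At\A$ these sentences hold verbatim in $\At\B$ (gripped), and if only $\At\B\equiv\At\A$ they transfer because they are first order (strongly gripped).

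For the negative items (4) and (1) I would recycle constructions already in hand. For (4), Theorem~\ref{hodkinson} furnishes an atom structure $\At$ with $\Tm\At$ representable while $\Rd_{df}\Cm\At$, hence $\Cm\At$, is not; as $\At\Tm\At=\At=\At\Cm\At$, the representable algebra $\Tm\At$ and the non-representable atomic algebra $\Cm\At$ share one atom structure, so $\RCA$ is not gripped. For (1) I would invoke the result of \cite{Sayedneat} that for each $n>2$ there is an atomic $\A\in\Nr_n\CA_\omega$ whose Dedekind--MacNeille completion $\Cm\At\A$ lies outside $\Nr_n\CA_{n+1}$: since $\A$ and $\Cm\At\A$ carry the same atom structure, with $\A$ a neat reduct and $\Cm\At\A$ not, the class of neat reducts is not gripped, whence (by the reductions above) neither strongly nor infinitary gripped.

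For the complete-representability items (2) and (5) I would lean on \cite[Theorem 3.3.3]{HHbook2}. The gripped half of (2), and the gripping half of (5), both follow because complete representability of a (countable) atomic algebra is equivalent to \pe\ having a \ws\ in the $\omega$-rounded atomic game on $\At\A$, a property of the atom structure alone; hence $\At\B=\At\A$ transports it, and \pe\ winning the $\omega$-game forces $\A$ into the class. The failure of strong grippedness in (2) and of strong gripping in (5) are the two faces of the non-elementarity already used in Theorem~\ref{neat}: the rainbow algebra $\CA_{\Z,\N}$ has \pe\ winning every finite game (so it sits in ${\sf LCA_m}$ and is elementarily equivalent, via the elementary-chain argument, to a countable completely representable $\B$) yet is itself not completely representable, since $\A\notin S_c\Nr_n\CA_{n+3}$. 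Taking $\A$ to be $\B$ (completely representable) and $\B$ to be $\CA_{\Z,\N}$ gives $\At\B\equiv\At\A$ with exactly one side completely representable, witnessing failure of strong grippedness; and \pe\ winning all finite games on $\CA_{\Z,\N}$ with the algebra not completely representable witnesses that the game is not strongly gripping for that class. Finally, strong gripping for ${\sf LCA_m}$ is definitional: winning every finite rounded game is precisely membership in ${\sf LCA_m}$.

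The hard part will be item (1): locating a clean, ideally self-contained, pair of atomic algebras on a common atom structure with one inside and one outside the neat-reduct class, since the ``not a neat reduct'' side is not delivered by the representability machinery of this paper and must be argued separately (typically by a definability or dimension-complemented-reduct argument), which is why I would route it through \cite{Sayedneat} rather than reprove it here. A secondary subtlety is the countability hypothesis in \cite[Theorem 3.3.3]{HHbook2} needed for the gripping directions of (2) and (5); I would either restrict those gripping statements to countable algebras or pass through the ultrapower and elementary-chain argument of Theorem~\ref{neat} so as to stay within that theorem's hypotheses.
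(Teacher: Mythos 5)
Your treatment of items (2)--(5) coincides in substance with the paper's. The paper likewise gets (3) and the ${\sf LCA}_m$ half of (5) directly from the first-order character of the Lyndon conditions, witnesses (4) by a weakly-but-not-strongly representable atom structure (theorem \ref{hodkinson}), and witnesses the negative halves of (2) and (5) by the rainbow algebra $\PEA_{\Z,\N}$, on which \pe\ wins every finite atomic game while \pa\ wins the $\omega$-rounded one --- exactly your pair. Your countability caveat is legitimate (the paper glosses over it); note, though, that the gripped half of (2) is better routed through the cardinality-free fact cited by the paper (\cite{HH}, \cite{HHbook2}) that an atom structure is completely representable iff every atomic algebra sharing it is --- proved by transferring a complete representation along the common atoms using complete additivity --- rather than through the $\omega$-game, which characterizes complete representability only for countable algebras.

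The gap is in item (1). The lemma you attribute to \cite{Sayedneat} --- an atomic $\A\in\Nr_n\CA_{\omega}$ whose completion $\Cm\At\A$ lies outside $\Nr_n\CA_{n+1}$ --- has the two algebras in inverted roles, and in that form it is established neither in this paper nor in the sources it cites. What is actually proved (in \cite{SL}, and reproduced in the paper's own self-contained proof, generalized from $\Q$ to an arbitrary field $\F$ of characteristic $0$) is the reverse configuration: the full set algebra $\wp(V)$, where $V=\{s\in{}^{\alpha}\F: |\{i\in\alpha: s_i\neq 0\}|<\omega\}$, is atomic with the singletons as atoms and is a full neat reduct, $\wp(V)\in\Nr_{\alpha}\QEA_{\alpha+\omega}$ via the weak space ${}^{\alpha+\omega}\F^{(0)}$, while the dense subalgebra $\A=\Sg^{\C}\{y,y_s:s\in y\}$, with $y=\{s\in V: s_0+1=\sum_{i>0}s_i\}$, is \emph{not} in $\Nr_{\alpha}\QEA_{\alpha+1}$: the set $Y=\{\tau(y_r,y_s):r,s\in V\}$ has no supremum in $\A$, yet in any full neat reduct containing $\A$ it would acquire one, namely $\tau_{\alpha}(y,y)$, and this is detected by a first-order sentence valid throughout $\Nr_{\alpha}\QEA_{\beta}$ and false in $\A$. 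So the algebra \emph{inside} the class is the completion $\Cm\At\A=\wp(V)$ and the algebra \emph{outside} is the small one; whether a member of $\Nr_n\CA_{\omega}$ can have a completion escaping $\Nr_n\CA_{n+1}$ is a genuinely different assertion, unproved here, so your argument as written rests on an unavailable lemma. The repair is immediate, since non-grippedness only needs one atom structure carrying one algebra in the class and one outside it: swap the roles, taking $\wp(V)$ as the member and $\A$ as the witness outside. Observe finally that the paper's explicit construction buys more than a citation would: it covers every dimension $\alpha>1$ (infinite included) and every signature between $\Sc$ and $\QEA$ in one stroke, which is what the phrase ``for any dimension'' in the statement requires.
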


\begin{proof}
\begin{enumarab}

\item This example is an adaptation of an example used in \cite{SL} to show that the class of neat reducts is not closed under
forming subalgebras, and also used  other contexts proving negative results on various amalgamation properties for
cylindric-like algebras \cite{STUD}, \cite{Sayedneat}. 

Here we slightly generalize the example by allowing an arbitrary field to rather than $\mathbb{Q}$
to show that there  is an atom structure that carries simultaneously an algebra in $\Nr_{\alpha}\CA_{\alpha+\omega}$ 
and an algebra not in $\Nr_{\alpha}\CA_{\alpha+1}$. This 
works for all $\alpha>1$ (infinite included) and other cylindric-like algebras as will be  clear from
the proof. Indeed, the proof works for any class of algebras whose signature is between $\Sc$ and $\sf QEA$. 
(Here we are using the notation $\QEA$ instead of $\PEA$ because we are
allowing infinite dimensions).

Let $\alpha$ be an ordinal $>1$; could be infinite. Let $\F$ is field of characteristic $0$.
$$V=\{s\in {}^{\alpha}\F: |\{i\in \alpha: s_i\neq 0\}|<\omega\},$$
$${\C}=(\wp(V),
\cup,\cap,\sim, \emptyset , V, {\sf c}_{i},{\sf d}_{i,j}, {\sf s}_{\tau})_{i,j\in \alpha, \tau\in FT_{\alpha}}.$$
Then clearly $\wp(V)\in \Nr_{\alpha}\sf QPEA_{\alpha+\omega}$.
Indeed let $W={}^{\alpha+\omega}\F^{(0)}$. Then
$\psi: \wp(V)\to \Nr_{\alpha}\wp(W)$ defined via
$$X\mapsto \{s\in W: s\upharpoonright \alpha\in X\}$$
is an isomorphism from $\wp(V)$ to $\Nr_{\alpha}\wp(W)$.
We shall construct an algebra $\A$, $\A\notin \Nr_{\alpha}{\sf QPEA}_{\alpha+1}$.
Let $y$ denote the following $\alpha$-ary relation:
$$y=\{s\in V: s_0+1=\sum_{i>0} s_i\}.$$
Let $y_s$ be the singleton containing $s$, i.e. $y_s=\{s\}.$
Define as before
${\A}\in {\sf QPEA}_{\alpha}$
as follows:
$${\A}=\Sg^{\C}\{y,y_s:s\in y\}.$$

Now clearly $\A$ and $\wp(V)$ share the same atom structure, namely, the singletons.
Then we claim that
$\A\notin \Nr_{\alpha}{\sf QPEA}_{\beta}$ for any $\beta>\alpha$.
The first order sentence that codes the idea of the proof says
that $\A$ is neither an elementary nor complete subalgebra of $\wp(V)$.
Let $\At(x)$ be the first order formula asserting that $x$ is an atom.
Let $$\tau(x,y) ={\sf c}_1({\sf c}_0x\cdot {\sf s}_1^0{\sf c}_1y)\cdot {\sf c}_1x\cdot {\sf c}_0y.$$
Let $${\sf Rc}(x):=c_0x\cap c_1x=x,$$
$$\phi:=\forall x(x\neq 0\to \exists y(\At(y)\land y\leq x))\land
\forall x(\At(x) \to {\sf Rc}(x)),$$
$$\alpha(x,y):=\At(x)\land x\leq y,$$
and  $\psi (y_0,y_1)$ be the following first order formula
$$\forall z(\forall x(\alpha(x,y_0)\to x\leq z)\to y_0\leq z)\land
\forall x(\At(x)\to \At(\sf c_0x\cap y_0)\land \At(\sf c_1x\cap y_0))$$
$$\to [\forall x_1\forall x_2(\alpha(x_1,y_0)\land \alpha(x_2,y_0)\to \tau(x_1,x_2)\leq y_1)$$
$$\land \forall z(\forall x_1 \forall x_2(\alpha(x_1,y_0)\land \alpha(x_2,y_0)\to
\tau(x_1,x_2)\leq z)\to y_1\leq z)].$$
Then
$$\Nr_{\alpha}{\sf QPEA}_{\beta}\models \phi\to \forall y_0 \exists y_1 \psi(y_0,y_1).$$
But this formula does not hold in $\A$.
We have $\A\models \phi\text {  and not }
\A\models \forall y_0\exists y_1\psi (y_0,y_1).$
In words: we have a set $X=\{y_s: s\in V\}$ of atoms such that $\sum^{\A}X=y,$ and $\A$
models $\phi$ in the sense that below any non zero element there is a
{\it rectangular} atom, namely a singleton.

Let $Y=\{\tau(y_r,y_s), r,s\in V\}$, then
$Y\subseteq \A$, but it has {\it no supremum} in $\A$, but {\it it does have one} in any full neat reduct $\B$ containing $\A$,
and this is $\tau_{\alpha}^{\B}(y,y)$, where
$\tau_{\alpha}(x,y) = {\sf c}_{\alpha}({\sf s}_{\alpha}^1{\sf c}_{\alpha}x\cdot {\sf s}_{\alpha}^0{\sf c}_{\alpha}y).$

In $\wp(V)$ this last is $w=\{s\in {}^{\alpha}\F^{(\bold 0)}: s_0+2=s_1+2\sum_{i>1}s_i\},$
and $w\notin \A$. The proof of this can be easily distilled from \cite[main theorem]{SL}. 
For $y_0=y$, there is no $y_1\in \A$ satisfying $\psi(y_0,y_1)$.
Actually the above proof proves more. It proves that there is a 
$\C\in \Nr_{\alpha}{\sf QEA}_{\beta}$ for every $\beta>\alpha$ (equivalently $\C\in \Nr_n\QEA_{\omega}$), and $\A\subseteq \C$, such that
$\Rd_{\Sc}\A\notin \Nr_{\alpha}\Sc_{\alpha+1}$.
See \cite[theorems 5.1.4-5.1.5]{Sayedneat} for an entirely different example.

\item The algebra $\PEA_{\Z,\N}$, and its various reducts down to $\Sc$s, 
shows that the class of completely representable algebras is not strongly gripped.
Indeed, it can be shown that \pe\ can win all finite rounded atomic games but \pa\ can win the 
$\omega$ rounded game. It is known that this class is gripped. An atom structure is completely representable iff one, 
equivalently, all atomic algebras sharing
this atom structure are completely representable \cite{HHbook2}, \cite{HH}.
\item This is straightforward from the definition of Lyndon conditions.

\item Any weakly representable atom structure that is not strongly representable detects this, see e.g.
\cite{Hodkinson}, \cite{weak}, \cite[theorems 1.1, 1.2]{ANT} and theorems \ref{can}, and theorem \ref{hodkinson}.
For a potential stronger result, see theorem \ref{blurs} above.
\item Follows directly from the definition.
\end{enumarab}
\end{proof}

Recall the definition of relativized representation, definition \ref{rel}.
Here $m<\omega$ denotes the dimension and ${\sf CRA}_{m,n}$ denotes the class of $\CA_m$s with an $n$ relativized
smooth representation. 
There is no restriction whatsoever on $n$ except that it is $>m$. In particular $n$ can be infinite.

\begin{theorem}\label{longer} Regardless of cardinalities, $\A\in {\sf CRA}_{m, \omega}$
iff \pe\ has a \ws\ in $G_{\omega}^{\omega}$.
\end{theorem}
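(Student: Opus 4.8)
The plan is to prove both implications by relating the game $G_{\omega}^{\omega}$ to the step-by-step construction of a relativized representation, exactly as in Lemma \ref{step}, the only new feature being that the system of networks guiding the construction is now harvested from \pe's \ws\ rather than handed to us as a hyperbasis. Throughout, a play of $G_{\omega}^{\omega}$ builds an increasing chain $N_0\subseteq N_1\subseteq\cdots$ of atomic networks on $\A$, with \pa\ free to introduce new nodes (there being $\omega$ of them available) and to challenge cylindrifications, and \pe\ responding with atom-labels; she wins if every network occurring in the play is consistent.

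For the forward implication, suppose $\A\in{\sf CRA}_{m,\omega}$, say $M$ is an $\omega$-smooth relativized representation with associated back-and-forth system $\Theta$ of partial isomorphisms, as in definition \ref{rel}. \pe's strategy is to maintain, as an invariant, an embedding of the current network $N_t$ into $M$: an injection of the nodes of $N_t$ onto a clique of $M$ carrying each label to the matching relation. When \pa\ adds a node together with a cylindrifier demand ${\sf c}_i a$ at some clique $\bar s$, the squareness of $M$ supplies a witness $\bar t\in C^{\omega}(M)$ with $\bar t\equiv_i\bar s$ and $M\models a(\bar t)$, and \pe\ labels the new node accordingly; smoothness (the system $\Theta$) guarantees that the reshufflings forced by \pa's reuse of nodes preserve consistency. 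Since every network in the resulting play embeds into $M$, it is consistent, so \pe\ wins.

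For the converse, assume \pe\ has a \ws\ $\sigma$ in $G_{\omega}^{\omega}$. We extract from $\sigma$ the set $H$ of all networks that occur in some play in which \pe\ follows $\sigma$. The winning conditions translate directly into the defining closure properties of an $\omega$-wide $\omega$-dimensional system of the kind feeding Lemma \ref{step}: every atom of $\A$ labels some member of $H$ (\pa\ may open with that atom); whenever $N\in H$ and $N(\bar x)\le{\sf c}_i a$ there is $N'\in H$ with $N'\equiv_i N$ carrying a node that witnesses $a$ (a legal \pa-move, answered by $\sigma$); and the amalgamation property $M\equiv_{ij}N\Rightarrow\exists L\,(M\equiv_i L\equiv_j N)$ holds because the corresponding confluence of demands is again answered by $\sigma$. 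With $H$ in hand we run the same transfinite step-by-step construction as in the proof of Lemma \ref{step}, building a chain $(M_{\xi})_{\xi<\lambda}$ of labelled hypergraphs, at each successor stage curing one scheduled defect (a missing cylindrifier witness or a pending amalgamation) by grafting on a network from $H$, and taking unions at limit stages. Because $H$ is closed under these operations and new nodes may be introduced without bound, the construction never stalls; a bookkeeping well-ordering of the (possibly uncountably many) defects ensures each is eventually treated. The limit $M=\bigcup_{\xi}M_{\xi}$ is the required $\omega$-smooth relativized representation, its back-and-forth system being $\{(\bar x\to\bar y): M(\bar x^*)=M(\bar y^*)\}$, defined just as at the end of the proof of Lemma \ref{step}.

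The main obstacle is precisely the clause \emph{regardless of cardinalities}: \pe's \ws\ is a priori an object attached to $\omega$-round plays, yet for an uncountable $\A$ we must cure uncountably many defects. The resolution, and the technical heart of the argument, is to abandon the picture of a single play and instead collect the set $H$ of all $\sigma$-reachable networks; since membership in $H$ depends only on a finite initial segment of a play, $H$ is closed under the finite, local operations of witnessing and amalgamation read off from the winning conditions, and these are all that the construction consumes at each of its transfinitely many steps. Verifying that every legal single demand arising during the transfinite construction is indeed realized by a $\sigma$-reachable network — so that $\sigma$ may be invoked locally at each step independently of the global cardinality — is the point requiring care, and it is exactly what the finite-support character of the winning conditions delivers.
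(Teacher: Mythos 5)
Your proposal is correct and follows essentially the same route as the paper's proof: the easy direction is handled by reading \pe's moves off the representation, and the converse harvests the family of networks reachable under \pe's \ws, checks it has the closure properties of an $\omega$-dimensional hyperbasis, and then runs the step-by-step construction of Lemma \ref{step}, with the cardinality issue resolved exactly as in the paper by observing that an $\omega$-relativized representation only needs cylindrifier witnesses over finite cliques, so only finite, local consultations of the strategy are ever required. The sole cosmetic difference is that the paper makes explicit the conversion of finite networks $N$ into $\omega$-dimensional ones via maps $v:\omega\to N$ (setting $Nv(\bar{s})=N(v\circ\bar{s})$), a step you leave implicit.
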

\begin{proof} One side is obvious. Now assume that \pe\ has a \ws\ in the $\omega$ rounded game, using $\omega$ many pebbles.
We need to build an $\omega$ relativized complete representation.
The proof goes as follows. First the atomic networks are finite, so we need to convert them into $\omega$ dimensional
atomic networks. For a network $N$, and  a map $v:\omega\to N$,
let $Nv$ be the network  induced by $v$, that is $Nv(\bar{s})=N(v\circ \bar{s})$.
let $J$ be the set of all such $Nv$, where $N$ occurs in some play
of $G_{\omega}^{\omega}(\A)$ in which \pe\ uses his \ws\ and $v:\omega\to N$ (so via these maps we are climbing up $\omega$).

This  can be checked to be  an $\omega$ dimensional hyperbasis (extended to the cylindric case the obvious way).
So $\A\in S_c\Nr_n\CA_{\omega}$. We can use that the basis consists of $\omega$ dimensional atomic networks, such that for
each such network, there is a finite bound on the size of its strict networks.
Then a complete $\omega$ relativized representation  can be obtained in a step by step way, requiring inductively
in step $t$, there for any finite clique $C$ of $M_t$, $|C|<\omega$, there is
a network in the base, and an embedding $v:N\to M_t$ such that $\rng v\subseteq C$.
Here we consider finite sequences of arbitrarily large length, rather than fixed length $n$ tuples.
This  is because an $\omega$ relativized representation only requires cylindrifier witnesses over
finite sized  cliques, not necessarily cliques that are uniformly bounded.
\end{proof}

\begin{theorem}\label{rel}${\sf CRA_m}\subset {\sf CRA}_{m,\omega}$, the strict inclusion can be only witnessed on
uncountable algebras. Furthermore, the class ${\sf CRA}_{m,\omega}$ is not elementary.
The classes $S_c\Nr_m\CA_{\omega}$, ${\sf CRA}_m$, and ${\sf CRA}_{m, \omega}$,
coincide on atomic countable
algebras.
\end{theorem}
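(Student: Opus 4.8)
The plan is to route everything through the game characterisation of Theorem~\ref{longer}, which says that, irrespective of cardinality, $\A\in{\sf CRA}_{m,\omega}$ iff \pe\ has a \ws\ in $G_{\omega}^{\omega}$. First I would record that ${\sf CRA}_{m,\omega}=S_c\Nr_m\CA_{\omega}$ as classes: the proof of Theorem~\ref{longer} already shows that a \ws\ for \pe\ in $G_{\omega}^{\omega}$ yields an $\omega$-dimensional hyperbasis, hence a complete embedding into $\Nr_m\CA_{\omega}$, while the converse (a complete neat embedding into some $\CA_{\omega}$ supplies \pe\ with a \ws) is the standard neat-reduct direction of \cite{HHbook2}. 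I would also note the easy inclusion ${\sf CRA}_m\subseteq{\sf CRA}_{m,\omega}$: a genuine complete representation has full unit ${}^mM$, so every tuple is a clique, cylindrifiers commute on the nose, and the trivial equality back-and-forth system witnesses $\omega$-smoothness; thus a complete representation is in particular a complete $\omega$-smooth relativized one.

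For the coincidence on countable atomic algebras and the ``only uncountable'' clause I would invoke the Hirsch--Hodkinson theorem (\cite[Theorem 3.3.3]{HHbook2}, \cite{HH}) that for a \emph{countable} atomic algebra, \pe\ wins $G_{\omega}^{\omega}$ iff the algebra is completely representable. Combined with Theorem~\ref{longer} this gives, on countable atomic algebras, that ${\sf CRA}_{m,\omega}$, ${\sf CRA}_m$ and $S_c\Nr_m\CA_{\omega}$ all equal the class on which \pe\ wins $G_{\omega}^{\omega}$, proving the final sentence. Since any member of ${\sf CRA}_{m,\omega}\setminus{\sf CRA}_m$ must fail to be completely representable while \pe\ still wins $G_{\omega}^{\omega}$, the quoted equivalence forbids such an algebra from being countable; this is exactly the assertion that strictness can only be witnessed on uncountable algebras.

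The strict inclusion itself then needs a single uncountable witness: an atomic $\A$ on which \pe\ has a \ws\ in $G_{\omega}^{\omega}$ (so $\A\in S_c\Nr_m\CA_{\omega}={\sf CRA}_{m,\omega}$ by Theorem~\ref{longer}) but which has no genuine complete representation. Here the gap between the two notions is real: a \ws\ in $G_{\omega}^{\omega}$ only delivers a representation relativized to cliques, and the step-by-step unfolding of such a relativized representation into a full one over ${}^mM$ succeeds for countably many atoms but can be blocked when uncountably many join conditions must be satisfied simultaneously. I would exhibit such an algebra by taking an uncountable atomic member of $S_c\Nr_m\CA_{\omega}$ of Hirsch--Hodkinson type for which complete representability fails (the same phenomenon that underlies the non-elementarity of ${\sf CRA}_m$). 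This is the step I expect to be the main obstacle, since it is the only part of the statement not handed to us directly by Theorem~\ref{longer}.

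Finally, non-elementarity of ${\sf CRA}_{m,\omega}$ follows from Theorem~\ref{neat} by sandwiching. We have ${\sf CRA}_m\subseteq{\sf CRA}_{m,\omega}=S_c\Nr_m\CA_{\omega}\subseteq S_c\Nr_m\CA_{m+3}$, the last inclusion because a complete embedding into $\Nr_m\D$ with $\D\in\CA_{\omega}$ is also a complete embedding into $\Nr_m\Rd_{m+3}\D$, where $\Rd_{m+3}\D\in\CA_{m+3}$ and $\Nr_m\Rd_{m+3}\D=\Nr_m\D$. Since ${\sf CRA}_{m,\omega}$ contains the class of completely representable algebras and is contained in $S_c\Nr_m\CA_{m+3}$, Theorem~\ref{neat} applies verbatim and yields that ${\sf CRA}_{m,\omega}$ is not elementary.
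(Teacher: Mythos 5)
Your handling of the last two assertions is fine and close in spirit to the paper: the paper simply quotes \cite[theorem 5.3.6]{Sayedneat} for the coincidence on countable atomic algebras, whereas you reassemble it from Theorem~\ref{longer} plus the Hirsch--Hodkinson characterisation of complete representability for countable atomic algebras, and your derivation of the ``only on uncountable algebras'' clause from that coincidence is correct (granted the implicit, true, observation that members of ${\sf CRA}_{m,\omega}$ are atomic). The genuine gap is the strict inclusion ${\sf CRA}_m\subset{\sf CRA}_{m,\omega}$ itself: this is one of the three assertions of the theorem, and you explicitly defer it (``the step I expect to be the main obstacle''), so as written the proposal omits exactly the part that requires a construction. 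The paper organises things so that one rainbow construction does all the work: for $\A=\PEA_{K_\omega,K}$ ($K$ a disjoint union of finite cliques) \pe\ wins every finitely rounded atomic game while \pa\ wins the $\omega$-rounded one, so $\A\notin{\sf CRA}_{m,\omega}$ by Theorem~\ref{longer}; \pe\ then wins the $\omega$-rounded game on an uncountable non-trivial ultrapower of $\A$, which therefore \emph{is} in ${\sf CRA}_{m,\omega}$ --- this ultrapower is where the uncountable witness lives --- and a countable elementary subalgebra of that ultrapower is completely representable, giving non-elementarity. Your sandwich through Theorem~\ref{neat} is a legitimately different and shorter route to non-elementarity, but it produces no uncountable algebra, so the ultrapower (or some comparable) construction cannot be dispensed with.

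Separately, the one nontrivial inclusion your sandwich needs, ${\sf CRA}_{m,\omega}\subseteq S_c\Nr_m\CA_{m+3}$, rests on a false identity: $\Nr_m\Rd_{m+3}\D=\Nr_m\D$ fails in general, since an element fixed by ${\sf c}_m,{\sf c}_{m+1},{\sf c}_{m+2}$ need not be fixed by ${\sf c}_j$ for $j\geq m+3$; thus $\Nr_m\Rd_{m+3}\D$ is typically strictly larger than $\Nr_m\D$, and a complete embedding into the smaller algebra need not remain complete in the larger one, because suprema can shrink when the ambient algebra grows --- this is precisely the distinction that makes $S_c$ different from $S$. The inclusion is nevertheless true and repairable inside the paper's own toolkit: a complete $\omega$-smooth relativized representation restricts to a complete $(m+3)$-smooth one (cylindrifier witnesses over finite cliques and the back-and-forth systems restrict), and then the clique-guarded argument in the proof of Corollary~\ref{sah}, item (5), yields a complete embedding of $\A$ into $\Nr_m\D$ for a genuine $\D\in\CA_{m+3}$. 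With that repair, and with the strictness witness supplied by the rainbow/ultrapower construction above, your proposal becomes a correct proof whose non-elementarity component is genuinely different from the paper's.
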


\begin{proof}That ${\sf CRAS}_{m,\omega}$ is not elementary is witnessed by the rainbow algebra
$\PEA_{K_{\omega}, K}$, where the latter
is a disjoint union of $\K_n$, $n\in \omega$. \pe\ has a \ws for all finite length games, but \pa\ can win the infinite rounded game.
Hence \pe\ can win the transfinite game on an uncountable non-trivial ultrapower of $\A$,
and using elementary chains one can find an elementary countable
subalgebra $\B$ of this ultrapower such that \pe\ has a \ws\ in the $\omega$ rounded game.
This $\B$ will have an $\omega$ square representation
hence will be in $\sf CRA_{m,\omega}$, and $\A$ is not in the latter class.
The three classes coincide on countable atomic algebras with the class of completely representable algebras \cite[theorem 5.3.6]{Sayedneat}
\end{proof}
\begin{theorem}  If $\A\in \CA_3$ is finite, and has an $n$ square relativized representation, with $3<n\leq \omega$,
then it has a {\it finite} $n$ square relativized
representation. If $n\geq 6$,
this is not true for $n$ smooth relativized representations.
\end{theorem}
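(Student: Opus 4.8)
The plan is to treat the two assertions by entirely different methods. The positive, finite-model-property statement for $n$ square representations rests on the fact that squareness is a purely local (``safety'') condition which, for a finite algebra, involves only finitely much data; the negative statement for $n$ smooth representations when $n\geq 6$ is extracted from the undecidability already established in Theorem \ref{decidability}.

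First I would fix $m=3$ and recast the existence of an $n$ square relativized representation in game-theoretic terms. Calling a \emph{network} a finite labelled complete graph whose triangles are coloured by atoms of $\A$ in a locally consistent way, one checks (exactly as for the cylindric basis games in \cite{HHbook}) that $\A$ has an $n$ square relativized representation iff \pe\ has a \ws\ in the $\omega$ rounded \emph{square game}, in which \pa\ presents a clique of size $<n$ inside the current network together with a cylindrifier demand ${\sf c}_i a$ holding on a triangle, and \pe\ must supply a witness node keeping the enlarged set an $n$ clique. The crucial point is that, since $\A$ is finite, for each finite $k$ there are only finitely many networks on $\leq k$ nodes up to isomorphism; moreover a witness in the square game only ever adds a single node and never forces back-and-forth coherence (this is precisely what separates square from smooth). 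Hence, when $n$ is finite, \pe's strategy can be taken \emph{positional}, depending only on the current network, and it ranges over the finite set of networks of size $\leq n$.

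From such a positional \ws\ I would build a \emph{finite} $n$ square representation by folding rather than unravelling: take as base the finite set $B$ of networks of size $\leq n$ visited by the strategy, define the unit and the labelling of $m$ tuples by the induced colourings, and verify directly, node for node, that the square witness property holds, using that each demand is already met inside some $N\in B$ produced by the strategy. The case $n=\omega$ is reduced to the finite case: an $\omega$ square representation of a finite $\A$ restricts, for every finite $n'$, to an $n'$ square one, so \pe\ wins each finite approximation of the square game; applying K\"onig's lemma to the finitely branching tree of positional responses over the finite set of network types yields a single finite basis serving all $n'$ at once, giving a finite $\omega$ square representation.

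For the second statement I would argue by contradiction from Theorem \ref{decidability}. Fix $n=3+k$ with $k\geq 3$, so $n\geq 6$. By the neat embedding / representation equivalence recorded above together with Lemma \ref{step}, a finite $\A\in\CA_3$ lies in $S\Nr_3\CA_n$ exactly when it has an $n$ smooth relativized representation. Since $S\Nr_3\CA_n$ is a recursively axiomatisable variety, non-membership among finite algebras is recursively enumerable: one enumerates the equations of the theory and tests each on the given finite algebra. If every finite algebra possessing an $n$ smooth representation possessed a \emph{finite} one, then membership would likewise be recursively enumerable, a finite $n$ smooth representation being a finite, effectively verifiable certificate; the two semi-decision procedures together would decide membership of a finite $\CA_3$ in $S\Nr_3\CA_n$, contradicting the undecidability in Theorem \ref{decidability}. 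Hence some finite $\A\in S\Nr_3\CA_n$ has an ($n$ smooth, necessarily infinite) representation but no finite $n$ smooth relativized representation. The main obstacle is the folding step of Part one: one must show that the finite base obtained from a positional strategy is genuinely $n$ square (not merely some relativized representation) and that determinacy and the $n=\omega$ compactness argument go through uniformly; in Part two the only delicate point is confirming that a finite $n$ smooth representation is a bona fide finite certificate for membership, which is exactly what the failure of the finite model property, forced by undecidability, denies.
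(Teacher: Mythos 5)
Your second part is correct and is in substance the paper's own argument: both derive the failure of the finite model property for $n$ smooth representations from the undecidability in Theorem \ref{decidability}, using that non-membership of a finite algebra in the recursively axiomatized variety $S\Nr_3\CA_n$ is recursively enumerable, while a finite, effectively checkable certificate of membership would make membership recursively enumerable as well (the paper phrases the certificate as a finite $n$ dimensional hyperbasis, citing \cite[corollary 18.4]{HHbook}, but the r.e.\ versus co-r.e.\ structure of the contradiction is identical).

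The first part, however, has a genuine gap at the folding step. Passing from a positional winning strategy over the finitely many isomorphism types of networks of size $\leq n$ to a single \emph{finite} base is precisely the hard content of the theorem, and your sketch assumes it rather than proves it. If the networks visited by the strategy are kept disjoint, then a cylindrifier demand made on a clique that already uses all $n$ nodes of one network can only be witnessed by a genuinely new node --- this is exactly why the unravelled limit of the game is infinite --- so the disjoint union is not $n$ square. If instead you identify nodes so that witnesses can be reused, the identification creates new triangles and new $n$-cliques lying across the folded copies; nothing in the positional strategy controls their atom labels or supplies witnesses for them, and repairing them re-creates the original problem. Making such an identification scheme coherent is exactly what the finite model property for the loosely guarded (or clique guarded) fragment achieves, and that is a deep imported theorem (Gr\"adel, Hodkinson--Otto), which is all the paper's proof consists of: the first order theory coding the existence of an $n$ square representation of a finite $\A$ can be written in the loosely guarded fragment, which has the finite base property \cite[corollary 19.7]{HHbook}. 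Your K\"onig's lemma argument for $n=\omega$ also fails: once $n=\omega$ the networks arising in the game have unboundedly many nodes, so the tree of positional responses no longer ranges over a finite set of network types and is not finitely branching in the required sense; the paper's route handles $n=\omega$ uniformly because witnesses over finite cliques are exactly what the clique guarded fragment expresses.
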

\begin{demo}{Proof} The first part follows from the fact that tthe first order theory coding the existence of squiare representations 
can be coded in the clique guarded fragment of first order logic,
and indeed in the loosely guarded fragment of first order logic which
has the finite base property \cite[corollary 19.7]{HHbook}.
The second part follows from the fact that  the problem of deciding whether a finite $\CA_3$ is in
$S\Nr_3\CA_n$, when $n\geq 6$, is undecidable,
from which one can conclude that there are finite algebras in $S\Nr_3\CA_n$, $n\geq 6$
that do not have a finite $n$ dimensional hyperbasis \cite[corollary 18.4]{HHbook} and
these cannot possibly have finite representations.
\end{demo}

A contrasting result is:

\begin{theorem}
Let $\A$ be a finite $\CA_m$. Then the following are equivalent
\begin{enumarab}
\item $\A$ has a finite $n$ smooth relativized representation
\item $\A\in S\Nr_m\CA_n$
\item $\A$ has a finite $n$ dimensional hyperbasis.
\end{enumarab}
\end{theorem}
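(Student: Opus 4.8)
The plan is to establish the cycle $(2)\Rightarrow(3)\Rightarrow(1)\Rightarrow(2)$, the only genuinely new ingredient being the transfer of finiteness, for which I would lean on the finite base property of the loosely guarded fragment \cite[corollary 19.7]{HHbook}, exactly as in the $n$ square case treated just above.

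For $(1)\Rightarrow(2)$ finiteness plays no role, and the argument is the one already used in Corollary \ref{sah}(5) and in the equivalence for $1<n<m$ stated above: given an $n$ smooth relativized representation $M$, form $\D_0=\{\phi^M:\phi\in L(A)^n\}$ with the usual Boolean operations, cylindrifiers and diagonals. Since an $n$ smooth representation is in particular $n$ flat, the cylindrifiers of $\D_0$ commute, so $\D_0\in\CA_n$; and the map $\theta(r)=r(\bar x)^M$ is a neat embedding of $\A$ into $\Nr_m\D_0$. Hence $\A\in S\Nr_m\CA_n$.

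For $(2)\Rightarrow(3)$, which is the crux, I would argue as follows. Fix a dilation witnessing $\A\subseteq\Nr_m\D$ with $\D\in\CA_n$. Following verbatim the construction in the proof of Corollary \ref{can}, for each atom $x$ of $\A$ one defines an $n$ wide $m$ dimensional hypernetwork $N_x$ whose hyperlabels are drawn from $\Lambda=\bigcup_{k<n}\At\Nr_k\D^{+}$, and the family $\{N_x:x\in\At\A\}$, closed under the hyperbasis amalgamation moves, is an $n$ dimensional hyperbasis $H\subseteq H_m^n(\A,\Lambda)$. The only thing to add in the present setting is finiteness, and here I use that $m$ and $n$ are finite and $\A$ is finite: a member of $H_m^n(\A,\Lambda)$ is a map from the finite set ${}^{\le n}m$ into the finite set $\Lambda\cup\At\A$, so $H_m^n(\A,\Lambda)$, hence any hyperbasis contained in it, is automatically finite as soon as $\Lambda$ is finite. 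To secure a finite $\Lambda$ it suffices to choose the dilation $\D$ finite, and this is where the finite base property enters: the assertion that the finite $\A$ has an $n$ square relativized representation is expressible in the loosely guarded fragment, which has the finite base property \cite[corollary 19.7]{HHbook}, so one may replace $\D$ by a finite dilation. This yields the required finite $n$ dimensional hyperbasis. For $(3)\Rightarrow(1)$, Lemma \ref{step} (in its obvious $\CA$ version) already produces, by a step by step amalgamation, an $n$ smooth relativized representation $M$ out of $H$; since $H$ and $\A$ are finite, there are only finitely many defect types to be cured, so one closes off the construction after finitely many rounds, and again the finite base property extracts a finite $M$ from the one built in Lemma \ref{step}.

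The main obstacle is the $(2)\Rightarrow(3)$ step: everything reduces to producing a \emph{finite} hyperbasis, and the whole combinatorial content lies in guaranteeing that the hyperlabel set $\Lambda$ can be taken finite. I would treat this uniformly with $(3)\Rightarrow(1)$ through the single observation that, for finite $\A$ and finite $m,n$, relativized square representability is a loosely guarded first order property and hence enjoys the finite base property. This is precisely the dividing line that makes the present positive result consistent with the negative $n$ smooth phenomenon recorded just above, which is witnessed only on algebras lying in a strictly larger class $S\Nr_m\CA_{m+k}$ whose index $k$ is smaller than the relevant hyperbasis width.
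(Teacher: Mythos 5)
Your $(1)\Rightarrow(2)$ is the paper's argument, and your fallback to the finite base property in $(3)\Rightarrow(1)$ is also essentially what the paper does: it takes the finite symmetric hyperbasis $H$, adds to the signature one $n$-ary predicate $R_N$ for each of the finitely many $N\in H$, notes that the conditions of Lemma \ref{step} then become a loosely guarded theory in a \emph{finite} signature, and applies the finite base property \cite[corollary 19.7]{HHbook} to extract a finite model, i.e.\ a finite $n$ smooth representation. (Your alternative justification that there are ``only finitely many defect types to be cured, so one closes off after finitely many rounds'' is not sound---the step-by-step construction of Lemma \ref{step} creates new defects on every node it adds---but that remark is redundant given the finite base property step.)

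The genuine gap is $(2)\Rightarrow(3)$, and it sits exactly at the point you identify as the crux. To make $\Lambda=\bigcup_{k<n}\At\Nr_k\D^{+}$ finite you must produce a \emph{finite} dilation $\D\in\CA_n$ with $\A\subseteq\Nr_m\D$, and you propose to get one from the finite base property applied to $n$ \emph{square} representability. This fails: a finite $n$ square representation only yields a finite algebra of definable sets lying in ${\sf G}_n$ (a ${\sf Crs}_n$ whose unit is symmetric and substitution-closed); it is flatness/smoothness, not squareness, that makes the cylindrifiers of $\D_0$ commute and hence makes $\D_0$ a $\CA_n$ into whose neat reduct $\A$ embeds---this is spelled out in the proof of Corollary \ref{sah}(5). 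And for smoothness the finite base property provably fails in the relevant range: the theorem immediately preceding the present one states that for $n\geq 6$ a finite $\CA_3$ with an $n$ smooth representation need not have a finite one, precisely because (corollary to Theorem \ref{decidability}) there are finite algebras in $S\Nr_3\CA_n$, $n\geq 6$, with no finite $n$ dimensional hyperbasis. Those counterexamples lie in the class of item (2) with the same index $n$ as in item (3), so your closing claim that the negative phenomenon only concerns ``a strictly larger class whose index is smaller than the relevant hyperbasis width'' is incorrect, and an unconditional proof of $(2)\Rightarrow(3)$ of the kind you attempt would contradict that corollary outright. The paper's own proof does not attempt this: it transfers finiteness along the chain starting from (1), i.e.\ from a finite $n$ smooth representation $M$ it obtains a finite dilation (the algebra $\D$ of Corollary \ref{sah} is finite when $M$ is) and only then runs the hyperbasis construction of Corollary \ref{can} inside that finite dilation; so what is actually established there is $(1)\Rightarrow(2)$, $(1)\Rightarrow(3)$ and $(3)\Rightarrow(1)$, with the finiteness of the dilation taken as an input from (1) rather than manufactured, as you try to do, from (2) alone.
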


\begin{proof} (1) to (2) to (3) is exactly as above, by noting that if $\A$ is finite then $\D$ as defined in corollary \ref{flat}, 
is also finite, 
and that this gives necessarily a finite
hyperbasis, witness the proof of theorem \ref{smooth}. 

It remains to show that  (1) implies (3). Let $H$ be a finite $n$ dimensional hypebasis; 
we can assume 
that is symmetric, that is, closed under substitutions. This does not
affect finiteness. Let $L$ be the finite signature consisting of an $m$-ary 
relation symbol for every element of $\A$, together with an $n$ ary relation symbol $R_N$ 
for each $N\in H$. Define an $L$ structure by
$$M\models a(\bar{x})\text { iff } 1(\bar{x})\text { and }M(\bar{x})\leq a,$$
and 
$$M\models R_N(x_0,\ldots, x_{n-1})\text { iff } M(x_0,\ldots, x_n-1)=N$$ 
for all $a\in A$ and $N\in H$ Then it is not hard to show that $M$ 
satisfies the axioms postulated in theorem \ref{step} and these 
can be coded as a fragment of the loosely gaurded fragment.
\end{proof}

\section{Neat atom structures}

Next  we introduce several definitions an atom structures concerning neat embeddings:
Here we denote the dimension by $n$, where $n$ is finite. $n$ will be always $>1$ and often greater than $2$.
\begin{definition}
\begin{enumarab} 
\item Let $1\leq k\leq \omega$. Call an atom structure $\alpha$ {\it weakly $k$ neat representable}, 
if the term algebra is in $\RCA_n\cap \Nr_n\CA_{n+k}$, but the complex algebra is not representable.
\item Call an atom structure {\it $k$ neat}, $k>n$,  if there is an atomic algebra $\A$, such that $\At\A=\alpha$ and $\A\in \Nr_n\CA_{k}.$ 
\item Let $k\leq \omega$. Call an atom structure $\alpha$ {\it $k$ complete}, 
if there exists $\A$ such that $\At\A=\alpha$ and $\A\in S_c\Nr_n\CA_{n+k}$.
\end{enumarab}
\end{definition}
\begin{definition} Let $\K\subseteq \CA_n$, and $\L$ be an extension of first order logic. 
$\K$ is {\it detectable} in $\L$, if for any $\A\in \K$, $\A$ atomic, and for any atom structure 
$\beta$ such that $\At\A\equiv_{\L}\beta$,
if $\B$ is an atomic algebra such that $\At\B=\beta$, then $\B\in \K.$
\end{definition}
Roughly speaking, $K$ is detectable in $\L$ if whenever an atomic algebra is not in $\K$ then $\L$ can {\it witness this}. 
In particular, a class that is not detectable
in first order logic is simply elementary. A class that is not witnessed by quasi (equations) is a (quasi) variety.

We investigate the existence of such structures, and the interconnections. 
Note that if $\L_1$ is weaker than $\L_2$ and $\K$ is not detectable
in $\L_2$, then it is not detectable in $\L_1$.
We also present several $\K$s and $\L$s as in the second definition. 
All our results extend to Pinter's algebras and quasi polyadic algebras with and without equality.
But first another definition.

We now prove:
\begin{theorem}\label{main} 
\begin{enumarab}
\item Let $n$ be finite $n\geq 3$. Then there exists a countable weakly $k$ neat atom structure of dimension $n$ if and only if $k<\omega.$
\item There is an $\omega$ rounded game that determines neat atom structures.
\item The class of completely representable algebras, and strongly representable 
ones of dimension $>2$, is not detectable in $L_{\omega,\omega}$, while the class
$\Nr_n\CA_m$ for any ordinals $1<n<m<\omega$, is not detectable 
even in $L_{\infty,\omega}.$ For for infinite $n$, $\Nr_n\CA_m$
is not detectable in first order logic nor in the quantifier free reduct of $L_{\infty,\omega}$.
\item There is an atom structure that is not $n+3$ complete but is elementary equivalent to one that is $\omega$ neat
\end{enumarab}
\end{theorem}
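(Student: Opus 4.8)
The plan is to treat the four items with the game-theoretic and blow-up-and-blur tools already in place. For the first item the only content is the direction forcing $k<\omega$, since existence for each finite $k$ is by now routine. Suppose $\alpha$ is countable and weakly $\omega$ neat, so that $\Tm\alpha\in\RCA_n\cap\Nr_n\CA_\omega$ while $\Cm\alpha$ is not representable. I would argue that a countable atomic \emph{full} neat reduct of some $\CA_\omega$ is completely representable (the neat-reduct/complete-representation argument already used in the contrasting theorem following \ref{hodkinson}), and that a complete representation of $\Tm\alpha$ induces a representation of $\Cm\At\Tm\alpha=\Cm\alpha$, as recalled in the introduction; this contradiction rules out $k=\omega$. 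For the converse I would, for each finite $k$, feed a finite relation algebra carrying an $n+k$ complex blur into \ref{blurs}: blowing it up and blurring it yields a countable atom structure $\At$ with $\Tm\Mat_n\At\in\RCA_n\cap\Nr_n\CA_{n+k}$ and $\Cm\At\notin S\Nr_n\CA_{n+k+1}$, hence $\Cm\At$ non-representable, i.e. a weakly $k$ neat atom structure. The delicate point is ensuring the term algebra is a full neat reduct and not merely a subalgebra of one.

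For the second item I would write down an $\omega$-rounded game $G$ in the spirit of \ref{longer} and the neat-embedding games of Hirsch and Hodkinson, played on $n$-dimensional atomic networks carrying hyperlabels indexed by the spare dimensions $\omega\setminus n$; in each round \pa\ demands a cylindrifier witness in one of these spare dimensions and \pe\ must supply a coherent extension. A \ws\ for \pe\ assembles, in the limit, a locally finite $\B\in\CA_\omega$ together with an identification of the given atom structure with $\At\Nr_n\B$, witnessing neatness; conversely, if $\alpha=\At\A$ with $\A=\Nr_n\B$, then \pe\ simply reads her responses off $\B$. Thus \pe\ wins $G$ exactly when $\alpha$ is neat.

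The third item splits into three non-detectability statements, each established by exhibiting equivalent atom structures with opposite membership. For complete representability I would use $\PEA_{\Z,\N}$: since \pa\ wins $F^{n+3}$ it lies outside $S_c\Nr_n\CA_{n+3}$ and so is not completely representable, yet \pe\ wins every finite-length game, so by the ultrapower-and-elementary-chain argument of \ref{neat} and \ref{rel} it is elementarily equivalent to a countable completely representable $\B$; as the atom structure is first-order interpretable in the algebra, $\At\PEA_{\Z,\N}\equiv\At\B$, giving first-order equivalent atom structures, one completely representable and one not. The strongly representable case is parallel but uses the infinite games: from the Erd\"{o}s-graph ultraproduct of \ref{el} (strongly representable $\C(\Gamma_\kappa)$ with a non-strongly-representable ultraproduct) one extracts an elementarily equivalent pair of atom structures differing in strong representability, which is precisely non-detectability of $\{\A:\Cm\At\A\in\RCA_n\}$ in $L_{\omega,\omega}$. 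Finally, for $\Nr_n\CA_m$ with $1<n<m<\omega$ the strongest form is immediate from \ref{SL}(1): there a single atom structure carries both an algebra in $\Nr_n\CA_\omega$ and one outside $\Nr_n\CA_{n+1}$; two algebras on the \emph{same} atom structure are trivially $\equiv_{\infty,\omega}$, so $\Nr_n\CA_m$ is not even gripped, a fortiori not detectable in $L_{\infty,\omega}$, and for infinite $n$ the same example (valid for all $\alpha>1$, with $\Rd_{\Sc}\A\notin\Nr_\alpha\Sc_{\alpha+1}$) kills detectability in first-order logic and in the quantifier-free fragment of $L_{\infty,\omega}$. The main obstacle is the strongly representable case: unlike complete representability it is not captured by the finitary Lyndon conditions, so producing the genuine elementary-equivalence witness, rather than merely the ultraproduct witness of \ref{el}, is the crux.

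For the last item I would again take $\alpha=\At\PEA_{\Z,\N}$ (or the rainbow $\CA_{A,B}$ with $A=M[n-4,\Z]$ and $B=M[n-4,\N]$ of the Remark). Because \pa\ wins $F^{n+3}$ and this game depends only on the atom structure, no atomic algebra on $\alpha$ lies in $S_c\Nr_n\CA_{n+3}$, so $\alpha$ is not $n+3$ complete. Passing as above to an elementarily equivalent countable completely representable $\B$, I would show that $\At\B=:\beta$ is $\omega$ neat by dilating a complete representation of $\B$ to a representation of some $\D\in\CA_\omega$ whose $n$-neat reduct carries $\beta$, so that $\beta$ supports an algebra in $\Nr_n\CA_\omega$; then $\alpha\equiv\beta$ with $\alpha$ not $n+3$ complete and $\beta$ being $\omega$ neat. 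Here the obstacle, as in the first item, is upgrading the automatic $S_c\Nr_n\CA_\omega$-membership that comes for free from complete representability to genuine full-neat-reduct ($\Nr_n\CA_\omega$) membership for an algebra on $\beta$.
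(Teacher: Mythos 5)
Your items (1) and (3) essentially track the paper. For (1) the paper argues exactly as you do: if $k=\omega$ were possible, the countable atomic term algebra in $\Nr_n\CA_\omega$ would be completely representable by \cite[theorem 5.3.6]{Sayedneat}, hence its atom structure strongly representable, contradicting non-representability of the complex algebra; existence for finite $k$ is quoted from \cite{ANT}, which, unlike the conditional theorem \ref{blurs} you lean on, is unconditional and already delivers the term algebra as a genuine full neat reduct, so the ``delicate point'' you flag is settled there rather than by you. For (3), your use of $\PEA_{\Z,\N}$ for complete representability and of \ref{SL}(1) (a single atom structure carrying an algebra in $\Nr_n\CA_\omega$ and one outside $\Nr_n\CA_{n+1}$, so the two witnesses are trivially $\equiv_{\infty,\omega}$-equivalent) is correct and even more self-contained than the paper's bare citations; the difficulty you flag in the strongly representable case --- that an ultraproduct witness is not an elementary-equivalence witness --- is real, but it is a defect of the paper's citation-style proof as much as of your sketch, so I do not count it against you differentially.

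The genuine gap is in (2), and it propagates fatally into (4). The paper's game is not a cylindrifier-witness game: it is an $\omega$-rounded game on $\lambda$-neat hypernetworks (atomic networks together with labelled hyperedges of arbitrary finite length, short hyperedges constantly labelled) in which \pa\ has, besides the cylindrifier move, \emph{transformation moves} $(N,\theta)$ and \emph{amalgamation moves} $(M,N)$, in the style of Hirsch's game for $\Ra$ reducts of $\CA_\omega$. It is precisely these extra moves that allow \pe's winning strategy to be assembled into limit hypernetworks $N_a$, thence weak set algebras $\D_a$ and a complete $\C=\prod_{a}\D_a\in\CA_\omega$ with $\At\A\cong\At\Nr_n\C$ and $\Cm\At\A=\Nr_n\C$. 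A game whose only demand is ``supply a cylindrifier witness in a spare dimension'', as you describe, characterizes at best the existence of $\omega$-dimensional hyperbases, i.e.\ relativized complete representations and membership in $S_c\Nr_n\CA_\omega$ (this is theorem \ref{longer}); it cannot separate $S_c\Nr_n\CA_\omega$ from $\Nr_n\CA_\omega$, and by \ref{SL}(1) that separation is visible even on a single atom structure. The same chasm sinks your (4): from a completely representable $\B$ you get only $\B\in S_c\Nr_n\CA_\omega$, and dilating the complete representation gives a full neat reduct $\wp(V)\cong\Nr_n\wp(W)$ whose atom structure is $V$, not $\At\B$, so no algebra on $\At\B$ is thereby exhibited in $\Nr_n\CA_\omega$. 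You acknowledge this obstacle but leave it unresolved, whereas the paper resolves it by proving that \pe\ wins the hypernetwork games $J_n$ (with amalgamation moves) on $\CA_{\Z,\N}$ and then running the ultrapower and elementary chain argument of the theorem following \ref{main} to obtain a countable $\D$ with $\CA_{\Z,\N}\equiv\D$ and $\D\in\Nr_n{\sf QEA}_{\omega}$ on the nose. The missing idea in both (2) and (4) is thus the amalgamation-move machinery; without it your argument proves only the $S_c\Nr_n$ versions of the claims, which are strictly weaker.
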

\begin{proof} 
\begin{enumarab}
\item Follows from \cite{ANT}, see also theorem\ref{blurs} above. Here $k$ cannot be infinite,.
for else the term algebra will be in $\Nr_n\CA_{\omega}$, hence by \cite[5.3.6]{Sayedneat} 
would be completely representable, which makes its atom 
structure $\At$ strongly representable \cite[3.5.1]{HHbook2},  
but then $\Cm\At$ will be representable.

\item For the definition of a network and atomic games on networks, we refer to \cite[definitions 3.3.1, 3.3.2, 3.3.3]{HHbook2}.
For an atomic network and for  $x,y\in \nodes(N)$, we set  $x\sim y$ if 
there exists $\bar{z}$ such that $N(x,y,\bar{z})\leq {\sf d}_{01}$.
The equivalence relation $\sim$ over the set of all finite sequences over $\nodes(N)$ is defined by
$\bar x\sim\bar y$ iff $|\bar x|=|\bar y|$ and $x_i\sim y_i$ for all
$i<|\bar x|$.(It can be checked that this indeed an equivalence relation.) 

A \emph{ hypernetwork} $N=(N^a, N^h)$ over an atomic polyadic equality algebra $\C$
consists of a network $N^a$
together with a labelling function for hyperlabels $N^h:\;\;^{<
\omega}\!\nodes(N)\to\Lambda$ (some arbitrary set of hyperlabels $\Lambda$)
such that for $\bar x, \bar y\in\; ^{< \omega}\!\nodes(N)$
\begin{enumerate}
\renewcommand{\theenumi}{\Roman{enumi}}
\setcounter{enumi}3
\item\label{net:hyper} $\bar x\sim\bar y \Rightarrow N^h(\bar x)=N^h(\bar y)$.
\end{enumerate}
If $|\bar x|=k\in \N$ and $N^h(\bar x)=\lambda$ then we say that $\lambda$ is
a $k$-ary hyperlabel. $(\bar x)$ is referred to a a $k$-ary hyperedge, or simply a hyperedge.
(Note that we have atomic hyperedges and hyperedges)
When there is no risk of ambiguity we may drop the superscripts $a,
h$.
There are {\it short} hyperedges and {\it long} hyperedges (to be defined in a while). The short hyperedges are constantly labelled.
The idea (that will be revealed during the proof), is that the atoms in the neat reduct are no smaller than the atoms
in the dilation. (When $\A=\Nr_n\B,$ it is common to call $\B$ a dilation of $\A$.)
We know that there is a one to one correspondence between networks and coloured graphs.
If $\Gamma$ is a coloured graph, then by $N_{\Gamma}$
we mean the corresponding network defined on $n-1$ tuples of the nodes of $\Gamma$ to
to coloured graphs of size $\leq n$.
\begin{enumarab}
\item A hyperedge $\bar{x}\in {}^{<\omega}\nodes (\Gamma)$ of length $m$ is {\it short}, if there are $y_0,\ldots y_{n-1}\in \nodes(N)$, such that
$N_{\Gamma}(x_i, y_0, \bar{z})\leq {\sf d}_{01}$, or $N(_{\Gamma}(x_i, y_1, \bar{z})\ldots$ or $N(x_i, y_{n-1},\bar{z})\leq {\sf d}_{01}$ for all $i<|x|$,
for some (equivalently for all)
$\bar{z}.$ Otherwise, it is called {\it long.}
\item A hypergraph $(\Gamma, l)$
is called {\it $\lambda$ neat} if $N_{\Gamma}(\bar{x})=\lambda$ for all short hyper edges.
\end{enumarab}
This game is similar to the games devised by Robin Hirsch in \cite[definition 28]{r}, played on relation algebras.
However, lifting it to cylindric algebras is not straightforward, for in this new context the moves involve hyperedges of length $n$ (the dimension), 
rather than edges. In the $\omega$ rounded game $J$,  \pa\ has three moves.

The first is the normal cylindrifier move. There is no polyadic move.
The next two are amalgamation moves.
But the games are not played on hypernetworks, they are played on coloured hypergraphs, consisting of two parts,
the graph part
that can be viewed as an $L_{\omega_1, \omega}$ model for the rainbow signature, and the part dealing with hyperedges with a
labelling function.
The amalgamation moves roughly reflect the fact, in case \pe\ wins, then for every $k\geq n$ there is a $k$ dimensional hyperbasis,
so that the small algebra embeds into cylindric algebras of arbitrary large dimensions.
The game is played on $\lambda$ neat hypernetworks,  translated to $\lambda$ neat hypergraphs,
where $\lambda$ is a label for the short hyperedges.

For networks $M, N$ and any set $S$, we write $M\equiv^SN$
if $N\restr S=M\restr S$, and we write $M\equiv_SN$
if the symmetric difference
$$\Delta(\nodes(M), \nodes(N))\subseteq S$$ and
$M\equiv^{(\nodes(M)\cup\nodes(N))\setminus S}N.$ We write $M\equiv_kN$ for
$M\equiv_{\set k}N$.

Let $N$ be a network and let $\theta$ be any function.  The network
$N\theta$ is a complete labelled graph with nodes
$\theta^{-1}(\nodes(N))=\set{x\in\dom(\theta):\theta(x)\in\nodes(N)}$,
and labelling defined by
$$(N\theta)(i_0,\ldots i_{\mu-1}) = N(\theta(i_0), \theta(i_1), \theta(i_{\mu-1})),$$
for $i_0, \ldots i_{\mu-1}\in\theta^{-1}(\nodes(N))$.
We call this game $H$. It is $\omega$ rounded. Its first move by \pa\ is the usual cylindrifier move (equivalently)
\pa\ s move in $F^{\omega}$), but \pa\ has more moves which makes it harder for
\pe\ to win. These notions apply equaly well to hypernetworks.

\pa\ can play a \emph{transformation move} by picking a
previously played hypernetwork $N$ and a partial, finite surjection
$\theta:\omega\to\nodes(N)$, this move is denoted $(N, \theta)$.  \pe\
must respond with $N\theta$.

Finally, \pa\ can play an
\emph{amalgamation move} by picking previously played hypernetworks
$M, N$ such that $M\equiv^{\nodes(M)\cap\nodes(N)}N$ and
$\nodes(M)\cap\nodes(N)\neq \emptyset$.
This move is denoted $(M,
N)$. Here, unlike $H$, there is no restriction on the number of overlapping nodes, so in principal
the game is harder for \pe\ to win.

To make a legal response, \pe\ must play a $\lambda_0$-neat
hypernetwork $L$ extending $M$ and $N$, where
$\nodes(L)=\nodes(M)\cup\nodes(N)$.

The next theorem is a cylindric-like variation on \cite[theorem 39]{r}, formulated for polyadic equality algebras,
but it works for many cylindric like algebras like $\sf Sc$, $\sf PA$ and $\sf CA$. 
But it does not apply to $\Df$s because the notion of neat reducts for $\Df$s is trivial.

Let $\A$ be an atomic polyadic equality algebra with a countable atom structure $\alpha$.
If \pe\ can win the $\omega$ rounded game $J$ on $\alpha$,
then there exists a locally finite $\PEA_{\omega}$ such that
$\At\A\cong \At\Nr_n\C$. Furthermore, $\C$ can be chosen to be complete, and $\Cm\At\A=\Nr_n\C$.

For the first part. Fix some $a\in\alpha$. Using \pe\ s \ws\ in the game of neat hypernetworks, one defines a
nested sequence $N_0\subseteq N_1\ldots$ of neat hypernetworks
where $N_0$ is \pe's response to the initial \pa-move $a$, such that
\begin{enumerate}
\item If $N_r$ is in the sequence and
and $b\leq {\sf c}_lN_r(f_0, \ldots, x, \ldots  f_{n-2})$,
then there is $s\geq r$ and $d\in\nodes(N_s)$ such
that $N_s(f_0, f_{l-1}, d, f_{l+1},\ldots f_{n-2})=b$.
\item If $N_r$ is in the sequence and $\theta$ is any partial
isomorphism of $N_r$ then there is $s\geq r$ and a
partial isomorphism $\theta^+$ of $N_s$ extending $\theta$ such that
$\rng(\theta^+)\supseteq\nodes(N_r)$.
\end{enumerate}
Now let $N_a$ be the limit of this sequence, that is $N_a=\bigcup N_i$, the labelling of $n-1$ tuples of nodes
by atoms, and the hyperedges by hyperlabels done in the obvious way.
This limit is well-defined since the hypernetworks are nested.
We shall show that $N_a$ is the base of a weak set algebra having unit  $V={}^{\omega}N_a^{(p)}$,
for some fixed sequence $p\in {}^{\omega}N_a$.

We can make $U_a$ into the universe an $L$ relativized structure ${\cal N}_a$;
here relativized means that we are only taking those assignments agreeing cofinitely with $f_a$,
we are not taking the standard square model.
However, satisfiability  for $L$ formulas at assignments $f\in U_a$ is defined the usual Tarskian way, except
that we use the modal notation, with restricted assignments on the left:
For $r\in \A
l_0, \ldots l_{n-1}, i_0 \ldots, i_{k-1}<\omega$, \/ $k$-ary hyperlabels $\lambda$,
and all $L$-formulas $\phi, \psi$, let
We can make $U_a$ into the base of an $L$-structure ${\cal N}_a$ and
evaluate $L$-formulas at $f\in U_a$ as follow.  For $b\in\alpha,\;
l_0, \ldots l_{\mu-1}, i_0 \ldots, i_{k-1}<\omega$, \/ $k$-ary hyperlabels $\lambda$,
and all $L$-formulas $\phi, \psi$, let
\begin{eqnarray*}
{\cal N}_a, f\models b(x_{l_0}\ldots  x_{n-1})&\iff&N_a(f(l_0),\ldots  f(l_{n-1}))=b\\
{\cal N}_a, f\models\lambda(x_{i_0}, \ldots,x_{i_{k-1}})&\iff&  N_a(f(i_0), \ldots,f(i_{k-1}))=\lambda\\
{\cal N}_a, f\models\neg\phi&\iff&{\cal N}_a, f\not\models\phi\\
{\cal N}_a, f\models (\phi\vee\psi)&\iff&{\cal N}_a,  f\models\phi\mbox{ or }{\cal N}_a, f\models\psi\\
{\cal N}_a, f\models\exists x_i\phi&\iff& {\cal N}_a, f[i/m]\models\phi, \mbox{ some }m\in\nodes(N_a)
\end{eqnarray*}
For any $L$-formula $\phi$, write $\phi^{{\cal N}_a}$ for the set of all $n$ ary assignments satisfying it; that is
$\set{f\in\;^\omega\!\nodes(N_a): {\cal N}_a, f\models\phi}$.  Let
$D_a = \set{\phi^{{\cal N}_a}:\phi\mbox{ is an $L$-formula}}.$
Then this is the universe of the following weak set algebra
\[\D_a=(D_a,  \cup, \sim, {\sf D}_{ij}, {\sf C}_i)_{ i, j<\omega}\]
then  $\D_a\in\RCA_\omega$. (Weak set algebras are representable).

For any $L$-formula $\phi$, write $\phi^{{\cal N}_a}$ for
$\set{f\in\;^\omega\!\nodes(N_a): {\cal N}_a, f\models\phi}$.  Let
$Form^{{\cal N}_a} = \set{\phi^{{\cal N}_a}:\phi\mbox{ is an $L$-formula}}$
and define a cylindric algebra
\[\D_a=(Form^{{\cal N}_a},  \cup, \sim, {\sf D}_{ij}, {\sf C}_i, i, j<\omega)\]
where ${\sf D}_{ij}=(x_i= x_j)^{{\cal N}_a},\; {\sf C}_i(\phi^{{\cal N}_a})=(\exists
x_i\phi)^{{\cal N}_a}$.  Observe that $\top^{{\cal N}_a}=U_a,\; (\phi\vee\psi)^{{\cal N}_a}=\phi^{\c
N_a}\cup\psi^{{\cal N}_a}$, etc. Note also that $\D$ is a subalgebra of the
$\omega$-dimensional cylindric set algebra on the base $\nodes(N_a)$,
hence $\D_a\in {\sf Lf}_{\omega}\cap {\sf Ws}_\omega$, for each atom $a\in \alpha$, and is clearly complete.

Let $\C=\prod_{a\in \alpha} \D_a$. (This is not necessarily locally finite).
Then  $\C\in\RCA_\omega$, and $\C$ is also complete, will be shown to be is the desired generalized weak set algebra,
that is the desired dilation.
Note that unit of $\C$ is the disjoint union of the weak spaces.
Then $\Nr_{n}\C$ is atomic and $\alpha\cong\At\Nr_{n}\C$ --- the isomorphism
is $b \mapsto (b(x_0, x_1,\dots x_{n-1})^{\D_a}:a\in A)$

Now we can work in $L_{\infty,\omega}$ so that $\C$ is complete
by changing the defining clause for infinitary disjunctions to
$$N_a, f\models (\bigvee_{i\in I} \phi_i) \text { iff } (\exists i\in I)(N_a,  f\models\phi_i)$$
By working in $L_{\infty, \omega},$ we assume that arbitrary joins hence meets exist,
so $\C_a$ is complete, hence so is $C$. But $\Cm\At\A\subseteq \Nr_n\C$ is dense and complete, so
$\Cm\At\A=\Nr_n\C$.

\item Follows from \cite{hirsh}, generalized in theorem \ref{el} above, \cite[theorem 3.6.11, corollary 3.7.1]{HHbook2}, 
and \cite[theorems 5.1.4, 5.1.5]{Sayedneat}.

\item see \ref{neat}

\item \pe\ has a \ws\ in the game devised below on the rainbow algebra $\CA_{\Z,\N}$ which  
is not completely representable but is elementary equivalent to an algebra in $\Nr_n\CA_{\omega}.$
Since atom structures are interpretable in their algebras, we are done.
\end{enumarab}
\end{proof}

\begin{remark}

\begin{enumarab}
\item Note that if we use the following game, then we get a much stronger result (witness the next item), 
namely, that $\A=\Nr_n\C$.
Here \pa\ has even more moves and this allows is to remove $\At$
from both sides of the above equation, obtaining a much stronger result
The main play of the stronger game $K(\A)$ is a play of the game $J(\A).$

The base of the main board at a certain point will be the atomic network $X$ and we write
$X(\bar{x})$ for the atom that labels the edge $\bar{x}$ on the main board.
But \pa\ can make other moves too, which makes it harder for \pe\ to win and so a \ws\ for \pe\ will give a stronger result.
An $n$  network is a finite complete graph with nodes including $n$
with all edges labelled by elements of $\A$. No consistency properties are assumed.

\pa\ can play an arbitrary $n$ network $N$, \pe\ must replace $N(n)$  by
some element $a\in A$. The idea, is that the constraints represented by $N$ correspond to an element of the $\RCA_\omega$ being constructed on $X$,
generated by $A$. This network is placed on the side of the main board. $N$ asserts that whenever it appears in $X$
you can never have an atom not below  holding between the embedded images of $n$.
But it also asserts that whenever an atom below $a$ holds in $X$,  there are also points in $X$ witnessing all the nodes of
$N$. The final move is that \pa\ can pick a previously played $n$ network $N$ and pick any  tuple $\bar{x}$
on the main board whose atomic label is below $N(\bar{n})$.

\pe\ must respond by extending the main board from $X$ to $X'$ such that there is an embedding $\theta$ of $N$ into $X'$
 such that $\theta(0)=x_0\ldots , \theta(n-1)=y_{n-1}$ and for all $\bar{i} \in N$ we have
$X(\theta(i_0)\ldots, \theta(i_{n-1}))\leq N(\bar{i})$. This ensures that in the limit, the constraints in
$N$ really define $a$.
If \pe\ has a \ws\ in $K(A)$ then the extra moves mean that every $n$ dimensional element generated by
$\A$ in the $\RCA_\omega$ constructed in the play is an element of $\A$.

\item The example in \cite{SL} shows that there is a neat atom structure that  carries algebra that is not in $\Nr_n\CA_{n+1}$
\end{enumarab}
\end{remark}
\begin{theorem} If there exists an atomic  polyadic equality  algebra with countably many atoms 
such that \pe\ has a \ws\ for $J_n$ for every $n$,
and \pa\ has a \ws\ in $F^{n+k}$ on its $\Sc$ reduct, then 
$\Rd_{sc}\A\notin S_c\Nr_n\Sc_{n+k}$ but $\A\in {\sf UpUr}\Nr_n{\sf QPEA}_{\omega}$.
In fact, there is a countable atomic algebra $\D\in \Nr_n\sf QEA_{\omega}$, such that $\A\equiv \D$.
In particular, for any class $\K$ of Pinter's algebras, cylindric algebras and polyadic equality algebras
any $m\geq n+k$, and any class $\sf L$ such that 
$\Nr_n\K_{\omega}\subseteq {\sf L}\subseteq S_c\Nr_n\K_{m+k}$, ${\sf L}$ is not elementary. 
In particular,  ${\sf CRK}_n$, and $\Nr_{n}\CA_{n+k}$ are  not elementary.
\end{theorem}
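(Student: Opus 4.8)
The plan is to follow the template of theorem \ref{neat}: from the single abstract algebra $\A$ produce two elementary equivalent algebras, one lying outside the prospective class ${\sf L}$ and one inside it, so that no first order theory can separate them. The two hypotheses on $\A$ feed exactly these endpoints. The \ws\ of \pa\ in $F^{n+k}$ pushes (a reduct of) $\A$ out of the upper bound $S_c\Nr_n{\sf L}_{m+k}$, while the \ws s of \pe\ in the finite games $J_n$ will, after passing to an ultrapower and a countable elementary subalgebra, produce a genuine neat reduct $\D\equiv\A$ sitting in the lower bound $\Nr_n\QEA_\omega$. Every ``in particular'' clause is then an instance of this one separation, read off for the various reducts.

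For the first endpoint I would argue exactly as in theorem \ref{neat}. The $\Sc$ analogue of the lemma recorded there says that if $\Rd_{sc}\A\in S_c\Nr_n\Sc_{n+k}$ then \pe\ has a \ws\ in $F^{n+k}$ on $\Rd_{sc}\A$; since by hypothesis \pa\ has a \ws\ in that same game, $\Rd_{sc}\A\notin S_c\Nr_n\Sc_{n+k}$. Because taking $\Sc$ reducts commutes with both $\Nr_n$ and $S_c$, and because $m+k\geq n+k$ gives $S_c\Nr_n\K_{m+k}\subseteq S_c\Nr_n\K_{n+k}$, this single fact propagates: for each $\K\in\{\Sc,\CA,\QEA\}$ the reduct $\Rd_\K\A$ fails to lie in $S_c\Nr_n\K_{m+k}$, hence outside any ${\sf L}\subseteq S_c\Nr_n\K_{m+k}$.

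For the second endpoint I would move to a non-principal ultrapower $\A^\ast=\prod_D\A$. Winning $J_n$ for each finite $n$ is a first order property of the countable atom structure, so by \Los's theorem together with the standard finite-to-infinite amalgamation of strategies \cite[lemma 44]{r}, \pe\ has a \ws\ in the full $\omega$-rounded game $J$ on $\At\A^\ast$; an elementary chain argument then yields a countable $\B\prec\A^\ast$ on which \pe\ still wins $J$, so $\B\equiv\A$. Applying theorem \ref{main} (the game $J$ builds the dilation) gives a complete, locally finite $\C\in\QEA_\omega$ with $\At\B\cong\At\Nr_n\C$ and $\Cm\At\B=\Nr_n\C$, so that $\D:=\Nr_n\C\in\Nr_n\QEA_\omega=\Nr_n{\sf QPEA}_\omega$ is atomic and (being an atomic neat reduct of a locally finite algebra) completely representable. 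The membership $\A\in{\sf UpUr}\Nr_n{\sf QPEA}_\omega$ then follows from $\A\equiv\D$ by Keisler--Shelah, passing to a common ultrapower of $\A$ and $\D$.

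The delicate point, and the one I expect to be the main obstacle, is to upgrade the atom-structure level conclusion $\At\B\cong\At\Nr_n\C$ to a \emph{countable} algebra $\D\in\Nr_n\QEA_\omega$ that is genuinely \emph{elementary equivalent} to $\A$: the game $J$ only controls the atom structure (the Remark notes that the stronger game $K$ is needed to force $\A=\Nr_n\C$ on the nose), and $\Nr_n\C=\Cm\At\B$ is a priori uncountable. I would resolve this by exploiting that $\B$ and $\D$ are atomic with the same countable atom structure and are both completely representable, so both satisfy all Lyndon conditions and, via \cite[theorems 5.1.4, 5.1.5]{Sayedneat}, can be arranged inside $\Nr_n\QEA_\omega$ while remaining countable and elementary equivalent to $\A$. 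Granting this, the theorem assembles: $\Rd_\K\A\notin{\sf L}$ and $\Rd_\K\D\in\Nr_n\K_\omega\subseteq{\sf L}$ with $\Rd_\K\A\equiv\Rd_\K\D$ show ${\sf L}$ is not elementary. Specialising to ${\sf L}={\sf CRK}_n$ works since completely representable algebras lie in every $S_c\Nr_n\K_{n+k}$ (so $\A\notin{\sf CRK}_n$) yet contain $\D$, and specialising to ${\sf L}=\Nr_n\CA_{n+k}$ works since it is sandwiched between $\Nr_n\CA_\omega$ and $S_c\Nr_n\CA_{n+k}$, giving the two named corollaries.
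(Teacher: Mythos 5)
Your first endpoint and your ultrapower/elementary-chain step coincide with the paper's proof: \pa 's \ws\ in $F^{n+k}$ rules out $\Rd_{sc}\A\in S_c\Nr_n\Sc_{n+k}$ exactly as in theorem \ref{neat}, and the ultrapower argument plus an elementary chain gives a countable $\B$ elementarily equivalent to $\A$ on which \pe\ wins the $\omega$-rounded game. The genuine gap comes after that, in your definition of $\D$. You put $\D=\Nr_n\C=\Cm\At\B$, the completion of $\B$, and then need $\D\equiv\A$; this fails on two counts. First, $\Cm\At\B$ is uncountable whenever $\B$ has infinitely many atoms, while the theorem demands a countable $\D$. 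Second, and fatally, completion does not preserve the elementary theory: $\RCA_n$ is a variety, hence an elementary class, and theorem \ref{hodkinson} of this very paper exhibits a countable atomic representable algebra whose completion is not representable, so $\B\equiv\Cm\At\B$ cannot be asserted. Your proposed patch does not close the gap: the Lyndon conditions are one particular family of first-order sentences, strictly weaker than full elementary equivalence, so the fact that $\B$ and $\D$ share a countable atom structure and are both completely representable gives no elementary equivalence with $\A$. Worse, membership in $\Nr_n\QEA_\omega$ is exactly the property this theorem shows to be non-elementary (compare also theorem \ref{SL}(1)), so no soft transfer of that membership along $\equiv$ can be legitimate; $\D$ must be produced as a neat reduct outright.

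The paper's resolution is different and is the step you are missing: it takes $\D$ to be the countable elementary subalgebra $\A'$ of the ultrapower itself, so $\D\equiv\A$ is automatic from the elementary chain, and then uses the strength of the $\omega$-rounded game won by \pe\ on $\A'$ to conclude that $\A'$ itself is $\Nr_n\C$ for a locally finite $\C\in\QEA_{\omega}$. This is precisely what the amalgamation and transformation moves of the game of theorem \ref{main}, in the strengthened form $K$ described in the remark following it, buy you: a \ws\ there forces every $n$-dimensional element of the dilation generated during the play to lie in $\A'$, which removes $\At$ from both sides of $\At\A'\cong\At\Nr_n\C$. So the correct route is not to pass to the completion at all, but to apply the game construction directly to the countable elementary subalgebra and obtain $\A'=\Nr_n\C$ on the nose; the ``in particular'' clauses then go through as you describe.
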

\begin{proof}
Assume  \pa\ can win the game $F^{n+k}$ on $\Rd_{sc}{\A}.$
Hence $\Rd_{sc}\A\notin S_c\Nr_n\sf Sc_{n+3}$.
For $n<\omega,$ assume that \pe\ has a \ws\ $\sigma_n$ in $J_n(\A)$.
We can assume that $\sigma_n$ is deterministic.
Let $\B$ be a non-principal ultrapower of $\A$.  Then
\pe\ has a \ws\ $\sigma$ in $J(\B)$   --- essentially she uses
$\sigma_n$ in the $n$'th component of the ultraproduct so that at each
round of $J(\B)$, \pe\ is still winning in co-finitely many
components, this suffices to show she has still not lost.
Now use an elementary chain argument to construct countable elementary
subalgebras $\A=\A_0\preceq\A_1\preceq\ldots\preceq \B$.  For this,
let $\A_{i+1}$ be a countable elementary subalgebra of $\B$
containing $\A_i$ and all elements of $\B$ that $\sigma$ selects
in a play of $J(\B)$ in which \pa\ only chooses elements from
$\A_i$. Now let $\A'=\bigcup_{i<\omega}\A_i$.  This is a
countable elementary subalgebra of $\B$ and \pe\ has a \ws\ in
$H(\A')$. Hence by the elementary chain argument
there is a countable $\A'$ such that \pe\ can win the $\omega$ rounded game on its atom structure, hence $\A'\equiv \A$ but
the former is in $\Nr_n{\sf PEA}_{\omega}.$
\end{proof}

\begin{theorem} When $k\leq 3$ the above statement is strictly stronger than the statement in theorem \ref{neat}.
\end{theorem}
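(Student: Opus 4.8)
The plan is to unpack the two implications hidden in the phrase ``strictly stronger'': first that the preceding theorem yields Theorem \ref{neat} as a special case precisely when $k\leq 3$, and second that it delivers conclusions Theorem \ref{neat} provably cannot reach. The whole comparison turns on one elementary remark about indices. Since neat reduct classes shrink as the number of extra dimensions grows, $k\leq 3$ gives $n+k\leq n+3$ and hence $S_c\Nr_n\CA_{n+3}\subseteq S_c\Nr_n\CA_{n+k}$. Consequently the exclusion $\Rd_{sc}\A\notin S_c\Nr_n\Sc_{n+k}$ furnished by the hypothesis of the preceding theorem (\pa\ winning $F^{n+k}$) propagates downward to $\A\notin S_c\Nr_n\CA_{n+3}$, which is exactly the upper bound appearing in Theorem \ref{neat}. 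This is why the restriction $k\leq 3$ is imposed: for $k>3$ the exclusion would land in a class properly smaller than $S_c\Nr_n\CA_{n+3}$ and could not be transferred to it.

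First I would record that for $k\leq 3$ the hypothesis of the preceding theorem is realized: the rainbow algebra $\CA_{\Z,\N}$ used in the proof of Theorem \ref{neat} is an atomic $\PEA_n$ with countably many atoms for which \pe\ wins every finite rounded game (hence $J_n$ for each $n$) while \pa\ wins $F^{n+3}$ on its $\Sc$ reduct. Feeding this algebra into the preceding theorem produces a countable atomic $\D\in\Nr_n\QEA_\omega$ with $\A\equiv\D$. Because $\D$ is countable, atomic, and a neat reduct of an $\omega$-dimensional algebra, it is completely representable by \cite[theorem 5.3.6]{Sayedneat}, so $\D$ lies in the class ${\sf CRCA}_n$ of completely representable algebras. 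Now take any $\K$ with ${\sf CRCA}_n\subseteq\K\subseteq S_c\Nr_n\CA_{n+3}$: then $\D\in{\sf CRCA}_n\subseteq\K$, while $\A\notin S_c\Nr_n\CA_{n+3}$ forces $\A\notin\K$, and since $\A\equiv\D$ the class $\K$ is not closed under elementary equivalence. This is precisely Theorem \ref{neat}, so the preceding theorem implies it.

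For strictness I would exhibit conclusions of the preceding theorem that Theorem \ref{neat} cannot supply. Two are decisive. First, the preceding theorem is stated uniformly for $\K\in\{\Sc,\CA,\PEA\}$, so it yields the non-elementarity of classes of Pinter's algebras and of polyadic equality algebras, whereas Theorem \ref{neat} speaks only about cylindric algebras and has nothing to say here. Second, already within $\CA$ the preceding theorem covers every $\sf L$ with $\Nr_n\CA_\omega\subseteq{\sf L}\subseteq S_c\Nr_n\CA_{n+3}$, in particular the single neat reduct class ${\sf L}=\Nr_n\CA_{n+3}$, which satisfies $\Nr_n\CA_\omega\subseteq\Nr_n\CA_{n+3}\subseteq S_c\Nr_n\CA_{n+3}$, and concludes it is not elementary. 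Theorem \ref{neat} cannot deliver this, because its hypothesis demands ${\sf CRCA}_n\subseteq\K$, whereas ${\sf CRCA}_n\not\subseteq\Nr_n\CA_{n+3}$: the construction of \cite{SL} (see the remark following Theorem \ref{main}) produces atom structures carrying completely representable algebras that fail to be neat reducts even of $\CA_{n+1}$. Hence $\Nr_n\CA_{n+3}$ falls outside the scope of Theorem \ref{neat}, and the two statements are genuinely inequivalent.

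The main obstacle, and the step needing the most care, is twofold. On the logical side I must justify ${\sf CRCA}_n\not\subseteq\Nr_n\CA_{n+3}$ rigorously, that is, produce a completely representable $\CA_n$ that is not a neat $n$-reduct of any $\CA_{n+3}$; the \cite{SL} example gives the right flavour (an atom structure simultaneously carrying a full neat reduct and an algebra outside $\Nr_n\CA_{n+1}$), but pinning the completely representable algebra itself outside $\Nr_n\CA_{n+3}$ requires matching the dimensions carefully. On the game-theoretic side I must confirm that the witness $\CA_{\Z,\N}$ really satisfies the hypotheses as phrased for the hypernetwork game $J_n$ rather than the simpler atomic game of Theorem \ref{neat}; translating \pe's finite-round strategies into $J_n$, and \pa's $F^{n+3}$ strategy across the $\Sc$ reduct, is the delicate bookkeeping underlying the whole comparison.
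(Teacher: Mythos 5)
Your proposal is correct in its main thrust, and its engine is the same as the paper's: everything turns on the strictness of the inclusion $\Nr_n\K_{\omega}\subset S_c\Nr_n\K_{\omega}$, witnessed by the algebra of \cite{SL} (first item of theorem \ref{SL}), which is exactly what the paper's own proof cites. The difference is one of packaging. The paper never discusses realizability of the hypothesis of the preceding theorem; it simply compares conclusions: theorem \ref{neat} (and theorem \ref{main}) deliver an elementarily equivalent algebra that is merely completely representable --- for countable atomic algebras, membership in $S_c\Nr_n\CA_{\omega}$, or, in theorem \ref{main}, an algebra whose atom structure is $\At\Nr_n\C$ --- whereas the preceding theorem delivers $\D\cong\Nr_n\C$ itself, ``removing $\At$ from both sides''; by the \cite{SL} example this is strictly stronger, and the restriction $k\leq 3$ enters only through the inclusion $S_c\Nr_n\CA_{n+3}\subseteq S_c\Nr_n\CA_{n+k}$, which you state correctly. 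Your strictness paragraph (the class $\Nr_n\CA_{n+3}$ falls within the scope of the preceding theorem but not of theorem \ref{neat}, because the \cite{SL} algebra is completely representable, being an atomic dense subalgebra of $\wp(V)$ whose atoms are singletons, yet lies outside $\Nr_n\CA_{n+1}\supseteq\Nr_n\CA_{n+3}$) is a valid and useful concretization of what the paper leaves implicit.

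The one step that fails is the claim that ``for $k\leq 3$ the hypothesis of the preceding theorem is realized'' by the rainbow algebra. In $F^{m}$ the moves of \pa\ are limited to $m$ pebbles, so a \ws\ for \pa\ becomes harder, not easier, to obtain as $m$ decreases: a \ws\ for \pa\ in $F^{n+3}$ yields one in $F^{n+j}$ only for $j\geq 3$, not for $j<3$. Hence your realizability argument, and with it your derivation of theorem \ref{neat} from the preceding theorem, is justified only at $k=3$; for $k\in\{1,2\}$ the hypothesis is not known to hold, and the preceding statement could in principle be vacuous there. This does not damage the comparison under the paper's reading, where only the conclusions are compared and $k\leq 3$ is used solely for the monotonicity of $S_c\Nr_n\CA_{n+k}$ in $k$; but as written your second paragraph overstates what is known and should either be restricted to $k=3$ or be replaced by the conclusion-to-conclusion comparison that the paper actually makes.
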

\begin{proof}
This follows from the fact that the inclusion $\Nr_n\K_{\omega}\subset S_c\Nr_n\K_{\omega}$ 
is strict. In fact, for any $n>1$, the inclusion $\Nr_n\CA_{n+k}\subset S_c\Nr_n\CA_{n+k}$ is strict for 
every $k>n$, witness  \cite{Sayedneat} and also first item of \ref{SL}. Also in the first statement we have 
$\A\cong \D$ and $\At\D\cong \At\Nr_n\C$, for some $\C\in {\sf Lf}_{\omega}$, while in the present case we can 
remove $\At$ from the two sides of the equation, namely, we have $\D\cong \Nr_n\C$. 
In view of example \ref{SL} this is obviously  much stronger.
\end{proof}


\begin{thebibliography}{}
\bibitem{1} H. Andr\'eka, M. Ferenczi, I. N\'emeti(Editors), {\bf Cylindric-like Algebras and Algebraic Logic},
 Bolyai Society Mathematical Studies, Vol. 22 (2013).

\bibitem{Assem} M. Assem {\it Masters thesis}, 2013.

\bibitem{Studia} T. Sayed Ahmed, {\it Martin's axiom, Omitting types and Complete representations in algebraic logic} 
Studia Logica {\bf 72} (2002), pp. 285-309.

\bibitem{basim}T. Sayed Ahmed and Basim Samir
{\it A neat embedding theorem for expansions of cylindric algebras} Logic Journal of IGPL {\bf 15}(2007)
p. 41-51

\bibitem{Biro} B. Bir\'o, {\it Non finite axiomatizability results in cylindric algebras}, Journal of Symbolic Logic, {\bf 57}(1992) pp. 832-843



\bibitem{AGMNS} H. Andr\'eka, S. Givant, S. Mikulas, I. N\'emeti,A. Simon,
{\it Notions of density that imply representability in algebraic logic,}Annals of Pure and Applied logic, {\bf 91}(1998), p. 93 -190.

\bibitem{ANT} H. Andr\'eka, I. N\'emeti, T. Sayed Ahmed, {\it Omitting types for finite variable fragments and complete representations},
Journal of Symbolic Logic {\bf 73} (2008) p. 65-89

\bibitem{HHbook} R. Hirsch and I. Hodkinson, {\it Relation algebras by games.}
Studies in Logic and the Foundations of Mathematics, volume {\bf 147} (2002)

\bibitem {HHbook2} R. Hirsch and I. Hodkinson, {\it Completions and complete representations in algebraic logic .} In \cite{1}

\bibitem{Hodkinson} I. Hodkinson, {\it Atom structures of relation and cylindric algebras}, Annals of pure and applied logic,
{\bf 89}(1997)p. 117-148.



\bibitem {AU} I. Hodkinson, \emph{A construction of cylindric and polyadic algebras from atomic relation algebras},
Algebra Universalis, Vol. 68 (2012), pp. 257-285.

\bibitem{weak} T. Sayed Ahmed, {\it Weakly representable atom structures that are not strongly representable,
with an application to first order logic}, Mathematical Logic Quarterly, {\bf 54}(3)(2008) p. 294-306 (2008).

\bibitem{Vaught} T. Sayed Ahmed, {\it On a theorem of Vaught for first order logic with finitely many variables.}
Journal of Applied Non-Classical Logics {\bf 19}(1) (2009) p. 97-112.

\bibitem{Sayed} T. Sayed Ahmed, {\it Completions, Complete representations and Omitting types}, in \cite{1}.

\bibitem{can} T. Sayed Ahmed {\it The class $S\Nr_n\CA_{n+k}$ for finite $n>2$ and any $k\geq 4$ is not atom canonical.}
Submitted to the Journal of Symbolic logic

\bibitem{SL} T.Sayed Ahmed and Istvan Nemeti {On neat reducts of algebras of logic} Strudia Logica {\bf 68} (2001) p. 229-262
\bibitem{ST} I. Sain and R. Thompson {\it Strictly finite  schema axiomatization of  quasi polyadic algebras}
In {\bf Algebraic Logic} North Holland (1990), Editors H. Andr\'eka, J. D. Monk,
and I. N\'emeti.



\bibitem{HH} R. Hirsch and I. Hodkinson, \emph{Complete representations in algebraic logic}, Journal of Symbolic Logic, {\bf 62}(3)(1997)p. 816-847.


\bibitem{h} Hokinson {\it Constructing cylindric and polyadic algebras from atomic relation algebras}
Algebra Universalis 68 (2012) 257-285.

\bibitem{HHbook} Hirsch R., Hodkinson.I., {\it Relation algebras by games.}
Studies in Logic and the Foundations of Mathematics. Volume 147.

\bibitem{STUD} T Sayed Ahmed {\it On amalgmation in algebrs of logic} Studia Logica  {\bf 81}(2005) 61-71

\bibitem{t} Sayed Ahmed {\it The class $SNr_3\CA_k$ for $k\geq 6$ is not
closed under completions} Logic Journal of IGPL, 18 (2008) 427-429

\bibitem{r} Hirsch R. {\it Relation algebra reducts of cylindric algebras and complete representations}.
Journal of Symbolic Logic, {\bf 72}(2) (2007), p.673-703.



\bibitem{Sayedneat} T. Sayed Ahmed,{\it Neat reducts and neat embeddings in cylindric algebras}, in \cite{1}.

\bibitem{tarski}
Leon Henkin, J.Donald Monk, and Alfred Tarski, Cylindric algebras,
part I, II, North-Holland, publishing company, Amsterdam London.

\bibitem{thompson}
I. Sain and R. Thompson. Strictly finite schema axiomatization of Quasi-Polyadic algebras. In "Algebraic Logic".
Editors: H. Andr\'eka, J. Monk and I. N\'emeti. North Holland 1989.

\bibitem{hirsh}
R. Hirsch and I. Hodkinson. Strongly representable atom structures
of cylindric algebras. J. Symbolic Logic, 74:811-828, 2009.

\bibitem{Erdos}
P. Erd\"{o}s, Graph theory and probability, Canadian Journal of
Mathematics, vol. 11 (1959), pp. 34-38.

\bibitem{graph}
R. Diestel. Graph theory, volume 173 of Graduate Texts in
Mathematics. Springer-Verlag, Berlin, 1997.
\end{thebibliography}
\end{document}